\documentclass[11 pt]{amsart}
\usepackage{amsmath}
\usepackage{etoolbox}

\makeatletter
\def\@seccntformat#1{\csname the#1\endcsname.\space}
\makeatother
\usepackage[foot]{amsaddr}
\usepackage{graphicx} % Required for inserting images
\usepackage{enumitem}
\usepackage{caption}
\usepackage{subfigure}
\usepackage{subcaption}
\usepackage{float}

\usepackage{tikz}
\usetikzlibrary{decorations.pathreplacing}
\usetikzlibrary{patterns}
\usetikzlibrary{calc}
\usepackage{tcolorbox}%for comments
\usepackage{blindtext}
\usepackage{amssymb}
\usepackage{amsthm}
\usepackage{mathtools}
\usepackage{mathrsfs}
\usepackage{mdframed}
\usepackage{lipsum}
\usepackage{mathtools}
\usepackage{color}
\usepackage{setspace} 

\usepackage[margin=1in]{geometry}

\usepackage{thmtools}
\usepackage{thm-restate}
 \usepackage[hidelinks]{hyperref}
\hypersetup{linktocpage, colorlinks=true, linkcolor= red }
\usepackage{cleveref}

\declaretheorem[numberwithin=section]{theorem}

\patchcmd{\section}{\scshape}{\bfseries\large}{}{}
\patchcmd{\subsubsection}{\itshape}{}{}{}

\newtheorem{lemma}[theorem]{Lemma}

\newtheorem{question}[theorem]{Question}

\newtheorem{definition}[theorem]{Definition}
\newtheorem{example}[theorem]{Example}

\newtheorem{proposition}[theorem]{Proposition}
\newtheorem{corollary}[theorem]{Corollary}
\newtheorem{remark}[theorem]{Remark}

\newcommand{\xdownarrow}[1]{%
  {\left\downarrow\vbox to #1{}\right.\kern-\nulldelimiterspace}
}
\expandafter\def\expandafter\normalsize\expandafter{%
\normalsize\setlength\abovedisplayskip{3pt}}
\expandafter\def\expandafter\normalsize\expandafter{%
\normalsize\setlength\belowdisplayskip{3pt}}

\usepackage{framed}
\definecolor{shadecolor}{named}{lightgray}
\newcommand{\Mod}[1]{\ (\mathrm{mod} \ #1) }
\newcommand{\B}{\mathbf{B}}

\newcommand{\N}{\mathbb{N}}
\newcommand{\Z}{\mathbb{Z}}
\newcommand{\A}{\mathcal{A}}
\newcommand{\F}{\mathcal{F}}
\newcommand{\R}{\mathcal{R}}

\newcommand\CB{\mathcal{B}}

\newcommand\CR{\mathcal{R}}

\newcommand\BC{\mathbb{C}}

\newcommand\BN{\mathbb{N}}

\title{Finitely Dependent Processes on Subshifts}
\author[Nishant Chandgotia]{Nishant Chandgotia}
\address{(NC) Tata Institute of Fundamental Research - Centre for Applicable Mathematics}
\email{nishant.chandgotia@gmail.com}

\author[Aditya Thorat]{Aditya Thorat}
\address{(AT) International Centre for Theoretical Sciences (ICTS-TIFR), Tata Institute of Fundamental Research}
\email{adityathorat365@gmail.com}
\makeatletter
\let\@wraptoccontribs\wraptoccontribs
\makeatother
\contrib[with an appendix titled ``Tileability of sufficiently large boxes by a given set of boxes'']{by Jan Grebík, Filip Kučerák and Aditya Thorat}
\keywords{Finitely dependent processes, Multidimensional subshifts, Strongly irreducible subshifts, Cocycle rigidity, Finite extension property, Tiling spaces}
\subjclass[2020]{60G10, 37B51, 60C05}

\begin{document}

\begin{abstract}
Finitely dependent processes generalize independent processes by requiring that the restrictions of the process to sufficiently separated sets are independent. The existence of stationary finitely dependent processes on combinatorial models like $\mathbb Z^d$ subshifts can be quite mysterious. For instance, Holroyd and Liggett constructed such processes on proper $4$-colorings of $\Z^d$ for all $d$ while Holroyd, Schramm and Wilson showed that there are no such processes on proper $3$-colorings of $\Z^d$ for $d>1$. In this paper, we take inspiration from these results and investigate them further. On the positive side, we show that there exists a dense set of stationary finitely dependent processes supported on subshifts with strong mixing properties like the finite extension property. On the negative side, we see that the cohomology of the subshifts can form an obstruction to the existence of such processes. In particular we use Conway-Lagarias-Thurston height functions to characterize when there exists a finitely dependent process on the space of tilings by boxes of $\Z^2$ answering the tiling problem posed by Gao, Jackson, Krohne and Seward in dimension $2$. The ideas also apply to many other models, such as graph homomorphisms and ribbon tilings. On the way, we also show that continuous cocycles on strongly irreducible subshifts valued in a special class of groups (including torsion free Gromov hyperbolic groups and free product of cyclic groups) are perturbations of  group homomorphisms.
\end{abstract}
\maketitle
\tableofcontents

\section{Introduction}
A finitely dependent process on a graph $G=(V, E)$ is a stochastic process $(X_v)_{v\in V}$ such that there is a constant $k$ for which given any two sets $A, B\subset V$ separated by a distance $k$, we have that $(X_v)_{v\in A}$ is independent of $(X_v)_{v\in B}$. The support of a process $(X_v)_{v\in V}$ taking values in a finite set $\A$ is the set of configurations $x \in \A^V$ such that for all finite sets $B\subset V$, 
$$\mathbb P(X_v=x_v\text{ for all }v\in B)>0.$$
In this paper, we study the support of shift-invariant (stationary) finitely dependent processes on the $\Z^d$ lattice. The support $X\subset \A^{\Z^d}$ of any shift-invariant process with state space $\A$ satisfies two natural properties: Firstly it is compact. Secondly, it is invariant under translations by $\Z^d$. These two properties define a subshift $X\subset \A^{\Z^d}$: A closed translation invariant subset of $ \A^{\Z^d}$. Additionally, if the process is finitely dependent (for the constant $k$ as above), then the support $X$ satisfies the property: If $x, y \in X$ and $A, B\subset \Z^d$ are separated by distance $k$ then there exists $z\in X$ such that $z|_A=x|_A$ and $z|_B=y|_B$. Indeed, if the probabilities of $X_v=x_v$ for all $v\in A$ and $X_w=y_w$ for all $w\in B$ are each individually positive then so is the joint probability.  Any subshift which satisfies the above property is said to be strongly irreducible. Thus, if there is a finitely dependent process whose support is contained in a subshift $X$, then $X$ must contain a strongly irreducible subshift. This leads us to one of the main questions motivating our research: 

\begin{question}
Are there general conditions on the subshift which imply the existence or non-existence of shift-invariant finitely dependent processes supported on them?  
\end{question}

One of the first results in this direction comes from the papers by Holroyd and Liggett \cite{Holroyd} and Holroyd, Wilson and Schramm \cite{Schramm}. It was proved in \cite{Holroyd} that if there is a stationary finitely dependent process on the space of proper colorings of $\Z$, then it is not a (so-called) block factor of an iid process. Using this, they went on to construct shift-invariant finitely dependent processes on proper colorings of $\Z$ which are not block factors of an iid process. Although examples of shift-invariant finitely dependent processes which are not block factors of an iid were known \cite{Burton}, these formed a somewhat more natural set of such examples. An a priori surprising result is that while there is a shift-invariant finitely dependent process on the space of proper $4$-colorings of $\Z^d$ for all $d$ \cite{Holroyd}, there are none on the space of proper $3$-colorings \cite{Schramm} of $\Z^d$ for $d\geq 2$. To prove that there are no shift-invariant finitely dependent proper $3$-colorings of $\Z^d$ for $d\geq 2$, Holroyd, Schramm and Wilson analyzed the variance of the height function associated to proper $3$-colorings of $\Z^d$. A somewhat different argument was found in \cite{NoSIsubshifts} to show that proper $3$-colorings do not contain a strongly irreducible subshift, bypassing the probabilistic argument by Holroyd, Schramm and Wilson. Although the two arguments seem very different at first, both of them have a strong common thread: the existence of height functions for the model and their rigid behavior for strongly irreducible subshifts. Taking inspiration from these ideas, we find a large class of subshifts for which there are no shift-invariant finitely dependent processes on them.

The main novel idea comes from the analysis of cocycles (see \cite{Schmidt}) valued in some special groups. Specifically, a cocycle on a subshift $X$ taking values in a group $G$ is a continuous function $c: X\times \Z^d \to G$ such that 
$c(x, i+j)=c(x, i)c(\sigma^i(x),j)$ where $\sigma^i(x)$ is a configuration $y\in X$ such that $y_j=x_{i+j}$ for all $i\in \Z^d$. Many subshifts of interest have interesting and useful cocycles: Tilings of $\mathbb R^2$ by $1\times m$ and $m'\times 1$ boxes for integers $m, m'>1$ (\cite{zbMATH01256699} following constructions by Conway-Lagarias \cite{Conway} and Thurston \cite{zbMATH04175886}), graph homomorphisms from $\Z^d$ to a fixed finite undirected graph $H$ \cite{arXiv:2510.11969} and ribbon tilings \cite{ScottRibbon}. For many of these subshifts, there are natural cocycles which take values in groups (or their direct products) with a very special property (which we call the CIC property): The centraliser of any infinite order element in the group is cyclic. This condition is satisfied by several classes of groups arising naturally in geometric group theory. For example, torsion-free Gromov hyperbolic groups \cite[Chapter III.$\Gamma$, Corollary 3.10]{Bridson} (in particular free groups) and free products of cyclic groups \cite[Chapter 4, exercise 28]{Combinatorialgrouptheory} satisfy this property.

In the following discussion, for a finitely generated group $G$ with generating set $S$ and $g\in G$, we denote by $|g|$ the minimum length of the representation of $g$ in terms of $S$. We extensively use the following rigidity result, which is also the main technical innovation of this paper.
\begin{restatable}{theorem}{cocycles}\label{triviality of cocycles}
Let $X \subset \A^{\Z^d}$ be a strongly irreducible subshift where $ \A$ is finite and $ d \geq 2$. Let $G$ be a hyperbolic group that satisfies the CIC property. Then for any cocycle $c$ on $X$ with values in $G$, there exists a homomorphism $ \theta: \Z^d \rightarrow G$ and a constant $C$ such that for all $x \in X \text{ and } v \in \Z^d$, there exist some $h_1=h_{1}(x, v)$ and $ h_2= h_2(x, v)$ with $ \max(|h_1|, |h_2|) \leq C$ satisfying, 
$$c(x, v ) = h_1 \theta(v) h_2.$$ 
\end{restatable}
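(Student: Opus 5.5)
We would reduce the statement to the behaviour of $c$ on the coordinate directions and then use strong irreducibility to make these behaviours commute up to bounded error. Write $e_1,\dots,e_d$ for the standard basis and $c_i(x)=c(x,e_i)$. By continuity and compactness, $c_i$ takes finitely many values, so there is $K$ with $|c(x,\pm e_i)|\le K$. The cocycle identity gives $c(x,ne_i)=c_i(x)c_i(\sigma^{e_i}x)\cdots c_i(\sigma^{(n-1)e_i}x)$, and commutation of paths around unit squares gives $c(x,e_i)c(\sigma^{e_i}x,e_j)=c(x,e_j)c(\sigma^{e_j}x,e_i)$.

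\textbf{Step 1: periodic points and candidate homomorphism.} Strong irreducibility with gap $k$ implies that $X$ has a dense set of periodic points. Concretely, take a pattern on a large box and glue translates of it along a lattice of period $N$, using a compactness argument to extend to a full periodic configuration. For a point $p$ with period lattice $N\Z^d$, the elements $g_i=c(p,Ne_i)$ commute pairwise, by the cocycle identity and periodicity. In a CIC group, commuting elements of infinite order generate a cyclic group. If all $g_i$ are torsion, hyperbolicity bounds the orders of torsion elements, so they are bounded up to conjugacy. We would therefore fix such a $p$ and try to define $\theta$ on $N\Z^d$ by $\theta(Nv)=c(p,Nv)$. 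Since $c(p,\cdot)$ restricted to $N\Z^d$ is a homomorphism into an abelian subgroup, this lies inside a cyclic group $\langle a\rangle$, and extends to $\Z^d$ after possibly taking roots, up to a bounded error.

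\textbf{Step 2: comparing arbitrary $x$ with $p$.} For any $x$ and $v$, we would use strong irreducibility to build $z\in X$ that agrees with $x$ near a path from $0$ to $v$ in a tube, and agrees with $p$ outside a slightly larger tube, with gap $k$. Continuity, in the form that $c(\cdot,e_i)$ depends on a finite window $r$, then gives $c(z,w)=c(x,w)$ along the path. The point is to compare $c(z,\cdot)$ along a loop that goes along the path inside the $x$-tube and returns far away through the $p$-region. This is where hyperbolicity should enter. In the quasi-tree geometry, the words read along two parallel long paths in $\Z^2$ (a plane spanned by $v$ and a transverse direction), linked by bounded rungs of length $\le Kk'$, are "commensurate": $c(x,v)=h\,c(p,v')\,h'$ with $|h|,|h'|$ bounded, for some shifted $v'$. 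This follows from the ladder relation $u_t=s_t u'_t s_{t}'^{-1}$ together with the fact that in a hyperbolic group two elements conjugate by a bounded element along a long coherent ladder force the paths to fellow-travel or to commute. Using $d\ge2$, we can move transversally, so the tube around the path can be separated from a parallel $p$-tube.

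\textbf{Step 3: removing the non-commuting alternative (the main obstacle).} The hard part is turning the "bounded conjugator along a long ladder" information into $c(x,v)=h_1\theta(v)h_2$ with $C$ independent of $v$. The plan is as follows. If $\theta$ is nontrivial on the line through $v$, let $a=\theta(e)$ have infinite order. The ladder gives $h_t a^{t} h_t'^{-1}$ relations with boundedly many distinct $h_t$ by pigeonhole. Two equal $h$'s at times $s<t$ produce an element commuting with a power of $a$, and CIC then forces it into $\langle\text{root of } a\rangle$. This yields the quasi-geodesic statement: $c(x,tv)$ stays within bounded distance of the axis of $a$, with the correct translation length. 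If $\theta$ is trivial (torsion case), we would need to show that $c(x,v)$ is bounded. For this we would use the Morse lemma: an unbounded $c(x,\cdot)$ along a strongly irreducible gluing with a bounded $p$-region yields a quasi-geodesic loop of unbounded diameter, which is a contradiction. Finally, non-axis directions $v$ are handled by decomposing $v$ into coordinate moves and using the cyclic centraliser again to show that all the $\theta(e_i)$ lie in a common cyclic group, so the errors only accumulate at the two ends. Getting uniformity of $C$ over all $v$, rather than per direction, is what we expect to require the most care.
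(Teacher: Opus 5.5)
\textbf{There is a genuine gap: Step~1 rests on a claim that is false for $d\ge 2$.} Strong irreducibility does not give dense fully periodic points, or even any.

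\emph{Why the periodic point need not exist.} Your gluing-plus-compactness argument does produce a configuration containing the $N\Z^d$-translates of a pattern. But the filler between the copies is whatever the gluing supplies, and it is not $N\Z^d$-periodic. In dimension one this is repaired by a synchronizing word, which is how Proposition~\ref{proposition: periodic in SI} is proved. Nothing analogous exists in higher dimensions: Hochman~\cite{hochman} constructed strongly irreducible $\Z^2$-subshifts with no fully periodic point, as the paper notes just before that proposition. So the point $p$, and with it your candidate $\theta$ on $N\Z^d$, need not exist.

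\emph{The root extraction is not a formality.} Even granting $p$, ``extends to $\Z^d$ after possibly taking roots'' hides a substantive step. Suppose $a=c(p,Ne_1)$ were primitive of infinite order and $N>1$. Pigeonholing the bounded error terms along $c(p,nNe_1)=a^n$ and applying Lemma~\ref{conjuagate powers} shows that no homomorphism $\Z^d\to G$ matches $c(p,\cdot)$ up to bounded two-sided error. So you must prove that $a$ is, up to conjugacy, an $N$-th power.

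\emph{Step~3 is unresolved at its crux.} In $c(x,v_1e_1+v_2e_2)=\alpha g_0^{t_1v_1}\widetilde{\alpha}\,\alpha' g_0^{t_2v_2}\widetilde{\alpha}'$, the bounded middle term $\widetilde{\alpha}\alpha'$ need not commute with $g_0$. So the errors do not simply ``accumulate at the two ends'', and the relative signs of the exponents in different directions are not determined. Separately, your Step~2 comparison $c(x,v)=h\,c(y,v)\,h'$ is just Proposition~\ref{Proposition: SI_subshift_cocycle} and needs no hyperbolicity.

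\textbf{How the paper avoids these problems.} It uses only periodicity along lines, never full periodicity.
\begin{enumerate}
\item Each restriction $X|_{\Z e_i}$ is a one-dimensional SI subshift, so it has periodic points of every large period.
\item Glue a line of period $p$ and a line of period $p+1$ at vertical distance $m>k$, and pigeonhole the connecting vertical segments. This gives $g_1^{j(p+1)}=\eta g_2^{jp}\eta^{-1}$. Lemma~\ref{conjuagate powers} together with $\gcd(p,p+1)=1$ then forces $c(y_1,pe_1)=g_0^{t_1p}$ for a primitive $g_0$. This is exactly the root you need.
\item A second gluing uses period-$p$ horizontal lines at all heights $lm$ plus one vertical periodic line. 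With Lemma~\ref{bounded distance}, it shows that every coordinate direction uses a conjugate of the same $g_0$.
\item For mixed directions, build $\tilde z$ with a vertical periodic line at every $lme_1$. Then all increments $c(\tilde z,lme_1)$ and $c(\sigma^{lme_1}\tilde z,kme_2)$ lie in the single cyclic group $\delta\langle g_0\rangle\delta^{-1}$, so the middle junk cancels.
\item Hyperbolicity enters only through Lemma~\ref{lemma: linear growth}. Combined with Lemma~\ref{lemma: quasimorphism}, it pins the exponents $b_l$ within bounded distance of $\epsilon t_1 lm$, which fixes the signs.
\item Proposition~\ref{Proposition: SI_subshift_cocycle} transfers the estimate from $\tilde z$ on $m\Z^2$ to all $x$ and $v$. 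Induction on $d$ handles $d>2$.
\end{enumerate}
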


Using this, we prove in \Cref{NO SI SUBSHIFTS} that there is no shift-invariant finitely dependent process on the following spaces:
\begin{enumerate}
    \item The space of tilings by $m\times 1$ and $1\times m'$ boxes of $\mathbb R^2$ when $m,m'>1$ are integers. In particular, there is no shift-invariant finitely dependent process on domino tilings.
    \item The space of graph homomorphisms from $\Z^d$ to a fixed undirected simple graph without four cycles.
    \item The space of ribbon tilings.
\end{enumerate}
There is a remarkable relationship between questions about finitely dependent processes and questions in Borel/continuous combinatorics \cite{Bernshteyn,Computerscience} which is still not completely understood. However, it is true that if there is a continuous equivariant map from the free part of the full shift to a given subshift $X$, then there is also a shift-invariant finitely dependent process on $X$. By the above results it follows that there is no continuous equivariant map from the free part of the full shift to the space of tilings by $m\times 1$ and $1\times m'$ boxes of $\mathbb R^2$ when $m,m'>1$ are integers. This answers a question raised in \cite[Problem 2, Section 4.5]{gao2023continuouscombinatoricsabeliangroup} in the negative which asked if there is such a map into the space of tilings by $ m \times m$ and $(m+1)\times (m+1)$ boxes for $ m \geq 2$.

In fact, we answer the following more general question in the affirmative for $d=2$ (\cite[Question 3.1.8]{gao2023continuouscombinatoricsabeliangroup}): Is it algorithmically decidable if for a given finite collection $ \R = \{ R_1, \cdots, R_k\}$ of boxes in $ \mathbb{R}^d$, there exists a continuous equivariant map from the free part of the full shift $\{0,1\}^{\Z^d}$ to the space of tilings by boxes in $ \R$?

With the help of results by Greb\'ik, Ku\v{c}er\'ak and Thorat (see the appendix), we give a complete characterization in the case of tiling by boxes of $\mathbb R^2$.
\begin{restatable}{theorem}{characterisationboxes}\label{theorem: characterisation boxes}
Let $\R = \{R_1, R_2, \ldots, R_k\}$ be a collection of boxes in $\mathbb R^2$ with integer sidelengths. The following are equivalent:
\begin{enumerate}
    \item There exist $m,n>1$ such that the $m\times 1$ and $1\times n$ boxes tile each element of $\R$.
    \item There exists no shift-invariant finitely dependent process on the space of tilings by boxes in $\R$.
    \item There exists no continuous equivariant map from the free part of the full shift to the space of tilings by boxes in $\R$.
\end{enumerate}
\end{restatable}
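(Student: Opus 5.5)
We prove the cycle of implications $(3)\Rightarrow(1)\Rightarrow(2)\Rightarrow(3)$, where the middle implication will be the cohomological obstruction coming from \Cref{triviality of cocycles}. The implication $(2)\Rightarrow(3)$ is the general fact recalled in the introduction: a continuous equivariant map $\phi$ from the free part of $\{0,1\}^{\Z^2}$ to the tiling space is, by compactness, locally a block map on a clopen cover. Composing $\phi$ with a finitely dependent process supported on the free part, such as a suitable finitely dependent colouring (Holroyd--Liggett type), gives a shift-invariant finitely dependent process on the tiling space. Contrapositively, no such process implies no such map.

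For $(1)\Rightarrow(2)$, assume the $m\times 1$ and $1\times n$ boxes tile every $R_i$. Refining each tile of a tiling by $\R$ gives an equivariant (sliding) block map from the tiling space $X_\R$ to the space $X_{m,n}$ of tilings by $m\times 1$ and $1\times n$ boxes. A finitely dependent process on $X_\R$ would push forward to a finitely dependent process on $X_{m,n}$; here finite dependence is preserved because block maps have bounded range. This reduces $(2)$ to the statement that $X_{m,n}$ supports no such process. To prove that, we suppose the support $Y\subset X_{m,n}$ is strongly irreducible and derive a contradiction. We use the Conway--Lagarias/Thurston cocycle $c:Y\times\Z^2\to G$. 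Here $G=\Z_m * \Z_n$, or a suitable torsion-free finite index subgroup, or the height-group quotient; it is a free product of cyclic groups, hence hyperbolic with CIC. Concretely, $c(x,v)$ is the word read along a path of tile edges from the origin to $v$, and it is path independent because boundaries of tiles are trivial. \Cref{triviality of cocycles} then gives $c(x,v)=h_1\theta(v)h_2$ with $|h_i|\le C$. Since $\theta(\Z^2)$ is an abelian subgroup of a CIC group, it is cyclic. We then compare with the explicit geometry: along a horizontal line, $c(x,(km,0))$ is a power of the generator $a$, and along a vertical line, $c(x,(0,kn))$ is a power of $b$. These are both unbounded, yet must lie within bounded distance of a single cyclic subgroup $\langle\theta\rangle$. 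In a free product, $a^k$ and $b^k$ cannot both stay boundedly close to one cyclic subgroup unless $\theta$ is trivial, which would force boundedness. The main obstacle is making this comparison precise: one has to pick the right quotient group so that the horizontal and vertical heights are genuinely unbounded, for instance by using boundary words of boxes, and handle the torsion in $\Z_m*\Z_n$, possibly by passing to a torsion-free finite index subgroup or to an extension.

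For $(3)\Rightarrow(1)$, assume $(1)$ fails. Take $m$ to be the gcd of the widths of the $R_i$, so the $m\times 1$ boxes tile every $R_i$ horizontally, and similarly $n$ for the heights. Failure of $(1)$ with these choices means $m=1$ or $n=1$. Say the widths are coprime, so that every sufficiently large width is a nonnegative combination of them. Using the appendix result of Greb\'ik, Ku\v{c}er\'ak and Thorat, every sufficiently large box, of size $N\times N'$ and $(N+1)\times N'$ for suitable $N'$ and all large $N$, is tileable by $\R$. Then we construct a continuous equivariant map using the standard toast/marker construction. From the free part we continuously choose a sparse set of markers and build a rectangular tiling of $\Z^2$ by boxes whose side lengths are all large, lying in a fixed set of tileable dimensions (coprimality in one direction provides the needed flexibility, as in Gao--Jackson--Krohne--Seward). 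We then fill each box by a fixed tiling from $\R$. The delicate point is producing the large-box decomposition continuously; we would adapt the existing continuous-combinatorics machinery rather than prove it from scratch.
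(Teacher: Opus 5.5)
Your overall architecture matches the paper's: refinement into the $\{m\times 1,1\times n\}$ tiling space, the Conway--Lagarias--Thurston cocycle with \Cref{triviality of cocycles}, \Cref{continuous imples findep}, and the appendix together with \Cref{theorem: perfect tiligs}. However, both substantive steps fail as written.

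\textbf{The step $(1)\Rightarrow(2)$.} The geometric input you use is wrong. A horizontal unit edge can lie inside a vertical $1\times n$ tile, so $c(x,e_1)=v^{j}hv^{-j}$ with $j$ depending on $x$. Hence $c(x,ke_1)=v^{j_1}hv^{j_2-j_1}h\cdots hv^{-j_k}$ is not a power of $h$ (your $a$). Moreover, in $\Z/m\Z*\Z/n\Z$ the powers of $h$ form a finite set, so ``unbounded powers of $a$'' is impossible. You also never establish unboundedness in both directions. And \Cref{triviality of cocycles} by itself is consistent with $\theta(e_1)=g_0^{t_1}$, $\theta(e_2)=g_0^{t_2}$, $t_1t_2\neq 0$, so a cyclic image yields no contradiction.

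What is missing is the obstruction the paper actually uses:
\begin{enumerate}
\item Unboundedness in some direction comes from \Cref{mixing implies infinite}: $\pi(c(x,w))=(w_1\bmod m,w_2\bmod n)$ forces infinite order when $w_1\not\equiv 0\pmod m$ and $w_2\not\equiv 0\pmod n$.
\item Take a configuration that is $p$-periodic along $\Z e_1$ with $p\equiv 1\pmod n$. Step 1 of the rigidity proof, \eqref{equation:conjugation_when_periodic}, then gives an \emph{exact} conjugacy $c(z,pe_1)=\alpha g_0^{tp}\alpha^{-1}$ with $g_0$ cyclically reduced (after conjugating).
\item Applying $\pi$ shows that the $v$-exponents of $g_0^t$ sum to $0$ mod $n$. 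So $g_0^t$ has at least three syllables, and $c(z,rpe_1)$ has syllable length $\ge 3rp-O(1)$.
\item This contradicts the geometric bound $D(c(x,re_1),e)\le 2r+1$ of \Cref{tiling slope}.
\end{enumerate}
Choosing a different target group does not substitute for this. For example, the full tiling group has $h^m$ and $v^n$ central, so it is not CIC.

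\textbf{The step $(3)\Rightarrow(1)$.} You reduce the failure of (1) to ``the widths are coprime or the heights are coprime'' and then assert that all large boxes are tileable. That implication is false. For $\R=\{2\times 1,3\times 2\}$ both the widths and the heights are coprime, yet (1) holds with $m=n=2$. Since every tile is domino-tileable, no odd-by-odd box is tileable by $\R$. Also, if either gcd exceeded $1$ then (1) would already hold, so your dichotomy should read ``and''.

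What you need is the partition condition. If some partition $\R=A_1\sqcup A_2$ had the gcd of the widths in $A_1$ and the gcd of the heights in $A_2$ both different from $1$, then primes dividing them (or $2$ for an empty part) would witness (1). Hence the negation of (1) implies hypothesis (2) of \Cref{thm:eventuallyuniversaltiles} with $d=2$. That gives tileability of all four boxes with sides in $\{k_0,k_0+1\}$, and \Cref{theorem: perfect tiligs} then yields the continuous map; this is \Cref{corollary: positive condition for CONTINUOUS}. Flexibility in one direction alone cannot suffice: $\{N\times N',(N+1)\times N'\}$ with fixed $N'>1$ itself satisfies (1), so by $(1)\Rightarrow(3)$ it admits no continuous tiling of the free part.

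\textbf{The step $(2)\Rightarrow(3)$.} Your compactness justification is not correct, since the free part is not compact and so there is no uniform coding window. Citing \Cref{continuous imples findep}, as the paper does, suffices here.
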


We remark that while this is a condition which is very easy to check, we believe that it is algorithmically undecidable whether there exists a shift-invariant finitely dependent process supported on a homshift. In fact, it is algorithmically undecidable whether there is a continuous equivariant map from the free part of a full shift to a homshift \cite{gao2023continuouscombinatoricsabeliangroup}.

One of the most interesting questions which we still struggle with is coming up with novel methods of constructing shift-invariant finitely dependent processes (see Question \ref{question: structure of findep}). Beyond the block factors of iid, one of the main ways to construct shift-invariant finitely dependent processes on subshifts still relies on constructing one \cite{Schramm} on the so-called $m$-net processes on $\Z^d$ (which in turn uses the `mysterious' shift-invariant  finitely dependent proper $4$-colorings of $\Z$ constructed in \cite{Holroyd} - see the remark in \cite[beginning of Page 2]{zbMATH07206761}). Questions of this form point more towards the second direction of our research. As observed earlier, the support of every shift-invariant finitely dependent process is a strongly irreducible subshift. We do not know whether the converse is true.

\begin{restatable}{question}{stronglyirreducible}\label{question:stronglyirreducible} Given a strongly irreducible $\Z^d$ subshift $X$, does there exist a shift-invariant finitely dependent process on $X$? Is there one with support equal to $X$?
\end{restatable}
We have some partial answers though.
\begin{restatable}{theorem}{onedimension}\label{onedimension}
Let $X$ be a strongly irreducible subshift of $\A^{\mathbb{Z}}$. The set of shift-invariant finitely dependent processes supported on $X$ is $weak$-\textasteriskcentered{} dense in the set of shift-invariant measures on $X$. 
\end{restatable}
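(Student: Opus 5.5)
We approximate an arbitrary shift-invariant measure $\mu$ on $X$ by a random concatenation of independent blocks, glued using strong irreducibility and made stationary by a finitely dependent random partition of $\Z$. By the ergodic decomposition and convexity, it suffices to approximate ergodic measures. In fact it suffices to approximate any $\mu$ that we can approximate in the weak-$*$ topology by $\nu$ with the following property: cylinder frequencies of $\nu$ on a window of size $L$ match those of $\mu$ up to $\varepsilon$. We also observe that convex combinations of finitely dependent processes are not finitely dependent, so we must avoid mixtures. Instead we use the following construction.

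Fix $n$ large and let $k$ be the strong irreducibility constant of $X$. Take a stationary finitely dependent process of ``cut points'' on $\Z$ whose gaps all lie in $[n, 2n]$ (say). Such processes can be produced as block factors of iid processes. Alternatively one can use the Holroyd--Liggett finitely dependent $4$-colouring, or the $m$-net process of \cite{Schramm}, which gives a finitely dependent process whose marked points have gaps in a bounded window $[n,Cn]$. Given the cut points, we fill each interval between consecutive cuts independently. Write the interval as $[a,b)$. On the sub-interval $[a+k, b)$ we place a word sampled from $\mu$ restricted to that interval, independently across intervals. On the buffer $[a, a+k)$ we place a filler chosen by a fixed deterministic rule depending only on the two adjacent words.

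Strong irreducibility in one dimension provides such a filler, so the resulting configuration lies in $X$. Finiteness of the alphabet and of the possible word lengths makes the rule a finite table. The map from (cut process, independent words) to the output is a block factor of a finitely dependent process together with independent local labels. Hence the output is finitely dependent, with range about $2Cn + k$. It is stationary because the construction commutes with shifts: the cut process is stationary, and the interval fillings depend only on interval contents.

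For weak-$*$ closeness, we check cylinder probabilities for a fixed word $w$ of length $\ell \ll n$. The probability that a fixed window of length $\ell$ meets a buffer or straddles a cut is $O((\ell+k)/n)$. Conditional on avoiding these, the window lies inside a single $\mu$-sampled block at some position. Because $\mu$ is shift-invariant, the law of the block restricted to that position equals the $\mu$-law of $w$, independently of where the block sits. So the cylinder probability differs from $\mu([w])$ by $O((\ell+k)/n)$, and letting $n\to\infty$ gives convergence.

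The main obstacle is producing the stationary finitely dependent cut process with bounded gaps. An iid block factor cannot have gaps bounded both below and above in a rigidly controlled way, so we need one of the nontrivial finitely dependent constructions. I would invoke the $m$-net / $4$-colouring process of \cite{Holroyd,Schramm}: in dimension one, a finitely dependent proper colouring yields cuts with bounded gaps after applying a local greedy rule. A second subtlety is that the word placed on $[a+k,b)$ must be sampled from $\mu$ conditioned only on the length $b-a$. This choice is legitimate because the block lengths are determined by the cut process, which is independent of the fillings.
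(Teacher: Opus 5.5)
Your overall architecture matches the paper's: an $m$-net cut process, independent $\mu$-samples on the intervals, short buffers glued by strong irreducibility, and a cylinder estimate (your union bound on the density of cut points is a fine substitute for the paper's ergodic-theorem step). The gluing step, however, has a genuine gap.

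Strong irreducibility gives, for each pair of neighbouring words, a filler with $Y_{i-1}V_iY_i\in L(X)$. It does not follow that $\cdots Y_{i-1}V_iY_iV_{i+1}Y_{i+1}\cdots$ lies in $X$. The fillers $V_i$ and $V_{i+1}$ on the two sides of $Y_i$ are chosen without regard to each other, and $Y_i$ need not decouple its left side from its right side. For an SFT this is harmless, since allowed words longer than the memory are synchronizing. The theorem, however, concerns arbitrary strongly irreducible subshifts.

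Take the even shift (Example~\ref{Even shift}). Let $Y_{i-1}$ end in $1$, let $Y_i=0^L$ with $L$ odd, let $Y_{i+1}$ start with $1$, and take $V_i=V_{i+1}=0^k$. Both $Y_{i-1}0^{k+L}$ and $0^{L+k}Y_{i+1}$ are globally allowed. Yet the concatenation contains $1\,0^{2k+L}\,1$, an odd run of zeros. So a rule that only guarantees pairwise compatibility can output configurations outside $X$ with positive probability, for instance when $\mu$ is fully supported.

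The missing idea is a synchronizing word (Proposition~\ref{sync word}). The fix runs as follows:
\begin{itemize}
\item At each cut point place a synchronizing word $s$ of length $l$ (the paper takes it uniform in $\text{Sync}_l$).
\item Follow it with a gap of length $k$, then the $\mu$-word $Y$, then a second gap of length $k$, then the next synchronizing word $s'$.
\item Choose the two gap words \emph{jointly}, e.g.\ uniformly among pairs $(F_1,F_2)$ with $sF_1YF_2s'\in L(X)$. This set is nonempty by applying strong irreducibility twice.
\end{itemize}
Consecutive segments then overlap exactly in a synchronizing word, so every finite concatenation lies in $L(X)$. Since $X$ is closed, the configuration lies in $X$. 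The extra $l$ sites only change your error bound to $O((\ell+2k+l)/n)$.

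The joint choice matters. The paper's items (3)--(4) pick the two gluing words independently, and read literally that has the same defect: take $s=s'=1$ and $F_1=F_2=0^k$ in the example above.

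Two smaller points. First, drop the opening reduction to ergodic $\mu$. It is not available as stated: it would need the set of approximable measures to be convex, and, as you note yourself, mixtures of finitely dependent processes are not finitely dependent. It is also not needed, since your construction uses only the shift-invariance of $\mu$.

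Second, delete the claim that the cut process can be a block factor of iid. A cut process with bounded gaps that are all at least $2$ yields a block-factor proper $3$-colouring of $\Z$: colour cut points $0$ and alternate $1,2$ in between. This contradicts Holroyd--Schramm--Wilson. So the $m$-net process of Theorem~\ref{m-net process} really is needed, as you conclude later.
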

The results we have for higher dimensional subshifts seem to require a stronger mixing property called the finite extension property. The finite extension property for subshifts of finite type was introduced in \cite{FEP} for a completely different purpose. Several classes of subshifts like the space of proper $d+2$ colorings of $\Z^d$ \cite{Mixing} and the space of graph homomorphisms from $\Z^d$ to finite dismantleable graphs (\cite[Lemma 5.2]{Dismantlable}) satisfy the finite extension property. We prove that any subshift $X$ with the finite extension property admits a shift-invariant finitely dependent process with full support (i.e. support equal to $X$).
\begin{restatable}{theorem}{FEPcase}\label{FEPcase}
    Let $ X \subset \A^{\Z^d}$ be a subshift with the finite extension property. Then, the set of fully supported shift-invariant finitely dependent processes is $weak$-\textasteriskcentered{} dense in the set of shift-invariant processes on $X$.
\end{restatable}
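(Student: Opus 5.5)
Since ergodic measures are weak-* dense in the simplex of invariant measures, and finite convex combinations of finitely dependent processes are not finitely dependent in general, I will approximate an arbitrary shift-invariant measure $\mu$ on $X$ directly. The plan is to build, for each $\epsilon$ and each finite window $[-r,r]^d$, a fully supported finitely dependent process whose $[-r,r]^d$-marginal is $\epsilon$-close to that of $\mu$. The construction is a ``random tiling plus independent filling'' scheme. I will take the finitely dependent $m$-net process of \cite{Schramm} on $\Z^d$ (a shift-invariant finitely dependent process whose points are pairwise at distance at least $m$ and such that every point of $\Z^d$ is within distance $Cm$ of a net point). From it I will form the associated Voronoi-type cells: each cell is a bounded region of diameter $O(m)$, and cells are determined locally by the net, so the cell structure is a block factor of the net and remains finitely dependent.

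Inside each cell I will shrink to a core, namely the cell minus its boundary layer of width $k$, where $k$ is the constant from the finite extension property. In each core, independently across cells and using independent uniform auxiliary randomness, I will place a pattern sampled from a fixed distribution $\nu_{\text{core}}$ on patterns in $X$. I take $\nu_{\text{core}}$ to be the restriction of $\mu$ to the core shape, translated to the cell position; it is allowed to depend on the shape because there are finitely many shapes up to translation. The finite extension property says that a globally admissible partial configuration whose filled regions are separated by distance $k$ extends to a point of $X$, with the values on the gaps determined by a fixed local rule depending only on a bounded neighbourhood. I will therefore fill the boundary layers by a deterministic local rule, for instance the lexicographically least admissible filling given the bounded neighbourhood, or a random choice with independent randomness. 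The point of the finite extension property is exactly that such a local filling exists and is consistent, because of its ``finitely many extension classes'' formulation: admissibility of the whole configuration is checkable locally. The resulting process is a block factor of (net process $\times$ iid), so it is finitely dependent and shift-invariant.

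For weak-* closeness, the $[-r,r]^d$ window avoids all boundary layers with probability $1-O(r k/m)$. On that event the window lies in a single core, and its pattern is distributed as a mixture over the relative position of the window in the core of $\mu$-restrictions. By shift-invariance of $\mu$ each such restriction is exactly the $\mu$-marginal of the window. Hence the distance is $O(rk/m)\to 0$ as $m\to\infty$. Full support I will obtain by mixing in a small probability $\delta$: with probability $\delta$ each core is instead filled by a fully supported distribution on admissible core patterns. Every pattern of $X$ on a box then appears with positive probability, since the net process gives positive probability to a cell configuration containing the box in a core. The process stays finitely dependent and changes the marginals by $O(\delta)$.

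The main obstacle is the filling step. Cores filled independently by samples of $\mu$ are each individually legal, but I must verify, using the precise definition of the finite extension property, that \emph{any} choice of legal core patterns at mutual distance at least $2k$ extends globally. I must also verify that this extension can be chosen by a rule of bounded range, so that the block-factor structure and finite dependence survive. I would first try to extract, from the finite extension property, a bounded-radius extension rule via compactness, and I expect the bulk of the work to lie there.
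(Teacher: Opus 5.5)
There is a genuine gap, and it sits exactly at the step you flagged as the main obstacle. Your overall architecture (net, Voronoi cells, $\mu$-samples on cores, density estimate) matches the paper's.

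The finite extension property does not supply a bounded-range rule for filling gaps. It says that, for a suitable finite forbidden list $\mathcal{F}$ and extension constant $g$, a locally allowed pattern on $S$ that extends to a locally allowed pattern on $S+\B(g)$ is globally allowed. That makes global admissibility locally \emph{checkable}, but checkability is not a local \emph{filling} procedure.

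For $d\ge 2$ the boundary layers of the Voronoi cells form one infinite connected network. The symbols you place at adjacent wall sites, computed from overlapping but different windows, must be mutually compatible all along this network. Site-by-site choices, deterministic or with independent randomness, can produce forbidden words. A ``lexicographically least admissible filling'' of the network is not determined by bounded windows, because constraints propagate along the network from arbitrarily far away. Compactness does not repair this.

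What is missing is a further symmetry-breaking decomposition of the walls into bounded, well-separated pieces, filled in finitely many rounds. The paper obtains this from a second block factor of the net: a colouring $Z$ with $D$ colours, where $Z_v$ is the largest $i\le D$ such that $\B(v,\rho i)$ meets at least $i$ Voronoi cells. Here $\rho$ is a separation scale large compared with $gD$ and the strong irreducibility distance. Each class $W_i$ then has $\rho$-components of uniformly bounded diameter, and the sites whose window lies inside $W_1$ fill all but a small fraction of every cell.

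The filling then proceeds in rounds. In the first round, $\mu$-samples are pasted on the components of $\B(g(D-2))+W_1$; these are globally allowed by strong irreducibility, since the components are far apart. In each later round, the newly exposed bounded, mutually distant components are filled independently and uniformly among locally allowed fillings compatible with what is already placed. The $g$-extension property guarantees that after discarding a collar of width $g$ the configuration is globally allowed, and the initial buffer $g(D-2)$ absorbs these shrinkings. In the last round the bounded components of $W_D$ are surrounded by globally allowed data, so compatible fillings exist; since $X=X_{\mathcal{F}}$, the resulting configuration lies in $X$.

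Your full-support argument has a second, independent gap. The cells of an $m$-net have diameter $O(m)$ with $m$ fixed, so a box much larger than a cell never lies inside a single core. Full support, however, requires positive probability for globally allowed patterns on \emph{every} finite box. Such patterns meet the walls, and with a deterministic wall rule they are functions of the core contents, so they typically get probability zero.

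The paper's scheme avoids this because every round samples from a distribution that charges every admissible filling compatible with the earlier rounds: the mixture $t\mu+(1-t)\beta$ in the first round and uniform measures afterwards. Consequently, for any $x\in X$ and any box, the restriction of $x$ is produced with positive probability conditionally on the colouring, as in the $0$-extension case.
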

While we state and prove most of our results for $\Z^d$ some of them work more generally for other groups. We point out whenever such an extension is possible.

Here is the outline of the paper: In \Cref{definitions}, we give the necessary background and state our main results. In \Cref{Section: m net processes} we discuss the existence of shift-invariant finitely dependent $m$-net processes on finitely generated groups. We discuss the existence and density of shift-invariant finitely dependent processes for strongly irreducible $\Z$ subshifts in \Cref{Section: one dimension}. In \Cref{Section: FINDEP On FEP}, we discuss the existence and density of such processes for $ \Z^d$ subshifts satisfying the finite extension property. In \Cref{Section: rectangular tilings}, we prove the existence and density of the shift-invariant finitely dependent processes on tiling spaces by boxes under some conditions on the sidelengths (stronger than the conditions in the appendix which suffice just for the existence). In \Cref{Section: cocycle rigidity}, we discuss the rigidity of cocycles for strongly irreducible subshifts which take values in hyperbolic groups with the CIC property. We use this rigidity in \Cref{NO SI SUBSHIFTS} to prove the lack of shift-invariant finitely dependent processes on some subshifts. We mention some open questions in \Cref{Section: open directions}. The appendix provides a necessary and sufficient condition for tileability of sufficiently large boxes in $ \mathbb R^d$.

\subsection*{Notation:} \hspace*{\fill} \\
We denote the identity in the group $G$ by $e$. 
Often $ G$ also denotes the right Cayley graph of $ G$ with respect to a symmetric set of generators $S$, that is, a graph whose vertex set is $ G$ and edge set is $ \{ (g, gs): g \in G, s \in S\}$. 
$ d(g, h)$ denotes the length of the shortest path between $g$ and $h$ in the above Cayley graph. When $g=e$, we write $|h|=d(e,h)$. In particular $d(g,h)=|g^{-1}h|$.\\ 
For subsets $ A, B \subset G$, $d(A, B) := \min \{ d(g, h) ~:~ g \in A, h \in B\} $. \\ 
$\mathbf{B}(g,r) := \{h \in G : d(h, g) < r\}$ denotes the open ball of radius $ r$ around $g$ and $\mathbf{B}(r)$ denotes the open ball of radius $r$ around $ e$.\\
We usually view $\Z^d$ as a Cayley graph with the standard set of generators. $||\cdot||_1$ denotes the $l_1$ norm on $ \mathbb{Z}^d$ and $||\cdot||_{\infty}$ denotes the $ l_{\infty}$ norm on $\Z^d$. For $v, w \in \Z^d$, let $d_{\infty}(v, w) := ||v- w||_{\infty}$.\\  
For stochastic processes $ (X_v)_{v \in I}$ and $ (Y_v)_{v \in I}$ indexed by a set $I$, the notation $((X_v)_{v \in I})  \stackrel{d}{=} (Y_v)_{v \in I}$ denotes their equality in distribution.\\
If $G$ and $H$ are groups, then $G$\textasteriskcentered{}$H$ denotes the free product of $G$ and $H$.\\
If $ G$ is a hyperbolic group, then for any $g, h \in G$, $ [g, h]$ denotes a geodesic between $g$ and $h$.\\
For any subset $ F \subset \Z^d$, the internal boundary of $F$ is $ \partial F = \{ v \in F: \text{there exists $w \in \Z^d \setminus F$ adjacent to $v$} \}$ and $ F^{(r)} := F+ \B(r) $ is the $r$-envelope of $F$. If $ K \subset \Z^d$, then $\text{int}_{K}(F) := \{ v \in \Z^d: v+K \subset F\}$. \\
If $ F \subset \Z^d$, $ |F|$ denotes the cardinality of $F$.\\
$\mathbb N=\{1,2,3, \ldots\}$. For $ k \in \N$, $[k]$ denotes the subset $ \{ 0, 1, \cdots, k-1 \}.$\\
Unless stated otherwise, $ e_1, e_2, \cdots, e_d$ denote the standard unit vectors in $ \Z^d$.\\
Often, we will denote a stochastic process $ (Y_v)_{v \in I}$ simply by $Y$ where the indexing is understood.

\subsection*{Acknowledgements}\hspace*{\fill} \\
This research began with a summer project which the second author had with Riddhipratim Basu. We thank him for his guidance and questions. We thank Tom Meyerovitch, Ville Salo and Yinon Spinka for many discussions. We thank Alexander Holroyd for mentioning the example of a shift-invariant finitely dependent Markov random field in two dimensions and for helping us with references. We thank Deepak Dhar for patiently listening to us about our results and suggesting further directions. The second author thanks Lukasz Grabowski, Jan Greb\'ik and Filip Ku\v{c}er\'ak for extensive discussions and many helpful suggestions. We thank Benjamin Hellouin de Menibus for many helpful comments on our first draft. The second author acknowledges the support of the Max Planck Institute for Mathematics in the Sciences in Leipzig for the short term visit and the Department of Atomic Energy, Government of India, under project identification no. RTI 4019 and RTI 4014. The first author thanks the Department of Atomic Energy, Government of India, under project identification no. RTI 4014.

We acknowledge the use of ChatGPT to identify the most general assumptions under which Lemma \ref{conjuagate powers} and \ref{bounded distance} hold.
\section{Background and main results}\label{definitions}

\subsection{Finitely dependent processes}\hspace*{\fill} \\
Here onwards, we will assume that $G$ is a countable group. We will abuse the notation and let $G$ also denote the right Cayley graph of $ G$  with respect to a fixed set of generators. 
\begin{definition}
A stochastic process $(X_{g})_{g \in G}$ is \textbf{$k$-dependent} if for any subsets $ A, B \subset V$ with $d(A, B) > k$, the restrictions $X_{A} = (X_g)_{g \in A}$ and $X_B = (X_g)_{g \in B}$ are independent. A process $(X_g)_{g \in G}$ is \textbf{finitely dependent} if it is $k$-dependent for some $k \in \mathbb{N} \cup \{0\}$.
\end{definition}

A stochastic process $ (X_g)_{g \in G}$ is \textbf{shift-invariant} if $ (X_g)_{g \in G}$ has the same distribution as $(X_{h^{-1} \cdot g})_{g \in G}$ for all $ h \in G$. If $ G= \Z^d$, shift-invariant processes are commonly called stationary processes. But for uniformity, we will refer to them as shift-invariant processes. In this paper, we will only be interested in shift-invariant processes.

Clearly iid processes are shift-invariant finitely dependent processes. One natural way to construct shift-invariant finitely dependent processes is by applying locally defined maps called block factors to iid processes, which we now introduce.

Let $(X_g)_{g \in G}$ and $ (Y_g)_{g \in G} $ be shift-invariant processes where $X_g \in \mathcal{A}$ and $Y_g \in \mathcal{B}$. Then,

\begin{definition}
 A process $(Y_g)_{g \in G} $ is an \textbf{r-block factor} of $ (X_g)_{g \in G} $, 
 if there exists a measurable function $ \psi: \mathcal{A}^{\mathbf{B}(r)} \rightarrow \mathcal{B}$ such that $ Y_g = \psi((X_h)_{h \in g \cdot \B(r)})$ for all $ g \in G$. $(Y_g)_{g \in G}$ is a \textbf{block factor} of $(X_g)_{g \in G}$ if it is an $r$-block factor of $ (X_g)_{g \in G}$ for some $ r \in \mathbb{N}$.
\end{definition}

It is easy to see that block factors of iid processes are shift-invariant finitely dependent processes.
\begin{lemma}
    If $ (X_g)_{g \in G} $ is an iid process and if $ (Y_g)_{g \in G} $ is a $r$-block factor of $ (X_g)_{g \in G} $ for some $ r \in \mathbb{N}$, then $ (Y_g)_{g \in G} $ is a shift-invariant $(2r+1)$-dependent process.
\end{lemma}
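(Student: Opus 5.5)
The plan is to prove shift-invariance and finite dependence separately, both directly from the definition of an $r$-block factor.

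\emph{Shift-invariance.} Fix $h\in G$ and write $\Psi:\mathcal A^G\to\mathcal B^G$ for the map $\Psi(x)_g=\psi((x_{k})_{k\in g\cdot\B(r)})$, so that $Y=\Psi(X)$. Let $\tau_h(x)_g=x_{h^{-1}g}$ denote the shift. Since $k\in g\cdot \B(r)$ if and only if $h^{-1}k\in h^{-1}g\cdot\B(r)$, we get
$$\Psi(\tau_h x)_g=\psi\bigl((x_{h^{-1}k})_{k\in g\cdot\B(r)}\bigr)=\psi\bigl((x_{k'})_{k'\in h^{-1}g\cdot\B(r)}\bigr)=\Psi(x)_{h^{-1}g}.$$
Here we identify coordinates through the left multiplication by $h^{-1}$, which is a bijection $g\B(r)\to h^{-1}g\B(r)$ respecting the indexing of $\mathcal A^{\B(r)}$ by $\B(r)$. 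Thus $\Psi\circ\tau_h=\tau_h\circ\Psi$. An iid process has $\tau_h X\stackrel{d}{=}X$, so $\tau_h Y=\Psi(\tau_hX)\stackrel{d}{=}\Psi(X)=Y$. Measurability of $\Psi$ is immediate, because each coordinate depends measurably on finitely many coordinates; this holds since $\B(r)$ is finite for a finitely generated group.

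\emph{Finite dependence.} Let $A,B\subset G$ with $d(A,B)>2r+1$. Then $Y_A$ is a measurable function of $X_{A\cdot\B(r)}$ and $Y_B$ is a measurable function of $X_{B\cdot \B(r)}$. These index sets are disjoint. Indeed, if $a s=b t$ with $a\in A$, $b\in B$ and $|s|,|t|<r$, then $d(a,b)=|st^{-1}|<2r$, contradicting $d(A,B)>2r+1$. Since $X$ is iid, the families $X_{A\B(r)}$ and $X_{B\B(r)}$ are independent. Independence is preserved under measurable maps, so $Y_A$ and $Y_B$ are independent. This holds for infinite $A,B$ as well, either by applying the same argument on the product $\sigma$-algebras or by reducing to finite subsets via the $\pi$-$\lambda$ theorem.

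There is no real obstacle here. The only point needing care is bookkeeping: the right Cayley graph metric is left-invariant, $d(g,h)=|g^{-1}h|$, which makes the neighbourhoods $g\cdot\B(r)$ behave correctly under left shifts. The bound $2r+1$ is in fact generous, since $2r-1$ would already suffice given that $\B(r)$ is the open ball.
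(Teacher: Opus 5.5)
Your proof is correct and follows the same route as the paper, whose entire argument is that $Y_S$ and $Y_T$ are functions of $X$ restricted to the disjoint sets $S\cdot\B(r)$ and $T\cdot\B(r)$ whenever $d(S,T)>2r+1$. Your explicit check of shift-invariance, which the paper leaves implicit, is a correct addition, and so is your remark that $2r-1$ would already suffice.
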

\begin{proof}
    If $ d(S, T) > 2r+1$, then $ \mathbf{B}(S, r)$ and $ \mathbf{B}(T, r)$ are disjoint subsets of $G$. But $ (Y_g)_{g \in S}$ depends only on $ (X_h)_{h \in \mathbf{B}(S, r)} $ while $ (Y_g)_{g \in T}$ depends only on $ (X_h)_{h \in \mathbf{B}(T, r)}$. \end{proof}

It was an open question for a long time whether all shift-invariant finitely dependent processes arise in this way. Aaronson, Gilat, Keane and de Valk (\cite{Aaronson}) constructed a shift-invariant $1$-dependent process on $ \mathbb{Z}$ which is not a $2$-block factor of any iid process. Aaronson, Gilat and Keane \cite{AaronsonMarkov} constructed a $1$-dependent five-state stationary Markov chain example which is not a $2$-block factor of an iid. Burton, Goulet and Meester \cite{Burton} constructed a $1$-dependent hidden Markov process on $ \Z$ which is not a $ r$-block factor of any iid. More references and a history of this problem can be found in \cite{Holroyd}.

Holroyd and Liggett constructed more examples of shift-invariant finitely dependent processes which are not block factors of any iid in \cite{Holroyd}. Their processes are supported on proper $q-$colorings of $ \Z$ for $ q =3,4$ (see Example \ref{proper colorings}). In \cite{Schramm}, it was proved that there does not exist a block factor of iid supported on proper $q$-colorings  of $\Z$ for any $q \geq 2$. They also proved that there does not exist a shift-invariant one-dependent proper $3$-coloring of $ \Z$. An easy argument \cite[Section 5]{Holroyd} showed that any shift-invariant finitely dependent proper $q$-coloring cannot be a Markov chain. However, Holroyd and Liggett proved the following.
\begin{theorem}$($\cite{Holroyd}$)$\label{1 dependent coloring}
\begin{enumerate}
    \item There exists a shift-invariant $1$-dependent process supported on proper $4$-colorings of $ \mathbb{Z}$.
    \item There exists a shift-invariant $2$-dependent process supported on proper $3$-colorings of $ \mathbb{Z}$.
\end{enumerate}
\end{theorem}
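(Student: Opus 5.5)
This result is quoted from \cite{Holroyd}, and I would reprove it by Holroyd and Liggett's explicit \emph{insertion} construction. The plan is to write down the finite-dimensional distributions by a combinatorial formula, then check consistency, stationarity, support and finite dependence directly by induction on word length.

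\textbf{The construction.} For a proper $q$-colored word $x=x_1\cdots x_n$, a \emph{building} of $x$ is an ordering of its $n$ positions. Inserting the letters in that order must produce, at every intermediate stage, a word with no two equal adjacent letters. Let $B(x)$ be the number of buildings of $x$, and set $B(x)=0$ for improper $x$. For $q=4$ I would define
$$\mathbb P(X_1\cdots X_n=x)=\frac{B(x)}{2^n (n+1)!}.$$
As a sanity check, $n=1$ gives $1/4$ for each color, and $n=2$ gives $1/12$ for each of the $12$ proper words. For $q=3$ I would use the analogous formula with the normalisation adjusted to make the total mass one, and then verify the weaker $2$-dependence. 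Nonnegativity is automatic, the support lies in proper colorings, and the formula is invariant under shifts once we show it defines a consistent family.

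\textbf{Key identities.} Classifying buildings by which position was inserted last gives the recursion
$$B(x)=\sum_{i}B(\hat x^{i}),$$
where $\hat x^{i}$ is $x$ with letter $i$ deleted, and the sum runs over those $i$ for which $\hat x^{i}$ is proper. Using this recursion and induction on $n$, I would prove three identities.
\begin{enumerate}
\item Consistency: $\sum_{c}\mathbb P(xc)=\mathbb P(x)$, and likewise for letters added on the left, so Kolmogorov's theorem gives a stationary process.
\item Normalisation: $\sum_{|x|=n}\mathbb P(x)=1$.
\item $1$-dependence: $\sum_{c}\mathbb P(x\,c\,y)=\mathbb P(x)\,\mathbb P(y)$ for all words $x,y$.
\end{enumerate}
The third identity suffices for $1$-dependence. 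Iterating it over gaps (marginalising longer gaps via consistency) gives independence of blocks separated by at least one site, and independence of arbitrary separated sets follows by summing over cylinders.

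\textbf{Main obstacle.} The hard step is the third identity. My approach would be to expand $B(xcy)$ by the last inserted position, which lies either in $x$, at $c$, or in $y$, and then apply the inductive hypothesis to the shorter words. The delicate point is the boundary terms, where deleting a letter adjacent to $c$ creates an improper word. Summing over $c$ is what produces the cancellation: among the $q-2=2$ or $q-1=3$ admissible colors for $c$, the counts must balance exactly, and this is where the specific value $q=4$ and the normalisation $2^n(n+1)!$ enter. To organise the interleaving count, I would first verify the identity for short words by computer or by hand, then identify the exact weight multiplying $\binom{|x|+|y|+1}{|x|}$.

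For $q=3$ the same scheme applies with the separating gap of length $2$. The identity to prove becomes $\sum_{c,c'}\mathbb P(x\,c\,c'\,y)=\mathbb P(x)\,\mathbb P(y)$, with the same case analysis at the boundary.
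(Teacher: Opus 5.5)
The paper gives no proof of this statement; it only cites Holroyd--Liggett. Your plan is their building construction, with the correct normalisations $Z_n=\prod_{k=1}^{n}\bigl(k(q-2)+2\bigr)$, i.e.\ $2^n(n+1)!$ for $q=4$ and $(n+2)!/2$ for $q=3$. This product is also the number of admissible position--colour insertions at each stage, so $\sum_{|x|=n}B(x)=Z_n$ holds by direct counting.

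The step you leave open closes exactly as you anticipate. For $|x|=n\ge 1$ and $|y|=m\ge 1$, set $\Sigma(x,y)=\sum_c B(xcy)$ and expand by the last inserted position, summing over admissible $c$:
\begin{itemize}
\item deleting $c$ gives $(q-2)B(xy)$;
\item deleting $x_n$ gives $\Sigma(\hat x^{n},y)-B(xy)$, and deleting $y_1$ gives $\Sigma(x,\hat y^{1})-B(xy)$, because the colour $c=x_n$, resp.\ $c=y_1$, is excluded;
\item every other deletion gives $\Sigma(\hat x^{i},y)$ or $\Sigma(x,\hat y^{j})$ exactly.
\end{itemize}
Hence
\[
\Sigma(x,y)=\sum_{i=1}^{n}\Sigma(\hat x^{i},y)+\sum_{j=1}^{m}\Sigma(x,\hat y^{j})+(q-4)B(xy).
\]
For $q=4$ the error term vanishes. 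The base cases $n=0$ or $m=0$ are the consistency relation $\sum_c B(xc)=\bigl((n+1)(q-2)+2\bigr)B(x)$. This holds for every $q$ by the same expansion, where the residual is $(q-2)B(x)$.

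Pascal's rule then gives $\Sigma(x,y)=2\binom{n+m+2}{n+1}B(x)B(y)$, which is precisely $\sum_c P(xcy)=P(x)P(y)$. So the weight is not a fixed multiple of $\binom{n+m+1}{n}$, and you should drop that ansatz.

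For $q=3$, the same bookkeeping with two free letters leaves the residual $2(q-3)\Sigma(x,y)$, which vanishes. One obtains $\sum_{c,c'}B(xcc'y)=2\binom{n+m+4}{n+2}B(x)B(y)$, i.e.\ $\sum_{c,c'}P(xcc'y)=P(x)P(y)$.

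Your reduction from these block identities to independence of arbitrary sets at distance greater than $1$ (resp.\ $2$) is fine.
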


Thus, this theorem also provides examples of shift-invariant finitely dependent processes which are not block factors of any iid.

Given the result for $\Z$, it seems natural to ask for the existence of finitely dependent proper colorings on general graphs. When the underlying graph is $\Z^d$, this was done already in \cite{Holroyd}.
\begin{theorem}\cite[Corollary 5]{Holroyd}
For any $ d \geq 2$, there exists
\begin{enumerate}
    \item a shift-invariant $1$-dependent proper $4^d$-coloring of $\Z^d$.
    \item a shift-invariant $ k$-dependent proper $ 4$-coloring of $\Z^d$, where $ k$ depends on $d$.
\end{enumerate}
\end{theorem}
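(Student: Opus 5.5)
Both parts will be derived from the one-dimensional result, Theorem \ref{1 dependent coloring}(1): a shift-invariant $1$-dependent process $(Y_n)_{n\in\Z}$ with values in $\{1,2,3,4\}$ and $Y_n\neq Y_{n+1}$ almost surely.

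\textbf{Part (1).} I will take $d$ independent copies $Y^{(1)},\dots,Y^{(d)}$ of this process. For $v=(v_1,\dots,v_d)\in\Z^d$, set
$$X_v=\bigl(Y^{(1)}_{v_1},\dots,Y^{(d)}_{v_d}\bigr)\in\{1,2,3,4\}^d .$$
The three required properties are checked as follows.
\begin{itemize}
\item \emph{Properness.} Neighbours $v$ and $v+e_i$ differ in the $i$-th coordinate, since $Y^{(i)}_{v_i}\neq Y^{(i)}_{v_i+1}$.
\item \emph{Shift-invariance.} This follows coordinatewise from the shift-invariance of each $Y^{(i)}$.
\item \emph{$1$-dependence.} Suppose $A,B\subset\Z^d$ have $d(A,B)>1$. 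This does not make the projections $\pi_i(A)$ and $\pi_i(B)$ far apart in $\Z$, so a naive argument fails.
\end{itemize}
The last point is the main obstacle. I will first try to handle it using the $\ell_\infty$ structure, i.e.\ interpreting $1$-dependence for the product in the sense used by \cite{Holroyd}. If that does not work, I will change the construction: replace the product by a block/sum encoding in which $X_v$ depends on $Y^{(i)}$ evaluated along the hyperplane index $v_1+\dots+v_d$. Alternatively, I will use the joint $1$-dependence of the family $\{Y^{(i)}_{v_i}\}$ under the product metric. In every variant the final step is the same: reduce independence of $X_A$ and $X_B$ to independence of $(Y^{(i)}_{\pi_i A})_i$ and $(Y^{(i)}_{\pi_i B})_i$, and then use that the coordinates are independent.

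\textbf{Part (2).} I will start from a $4^d$-coloring as in part (1), or more conveniently from the $m$-net style construction of \cite{Schramm}. I will then reduce the number of colours to $4$ by a block factor. This is the standard greedy colour reduction: process the colour classes $5,\dots,4^d$ one at a time, and in each step recolour every vertex of the current class with a colour from $\{1,\dots,4\}$ that is unused by its neighbours.

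The difficulty is that a vertex of $\Z^d$ has $2d$ neighbours, so the greedy step guarantees a free colour only for $2d+1$ colours, not $4$. To get down to $4$, I will instead use the Holroyd--Liggett trick, which is the content of their Corollary 5. Tile $\Z^d$ by a finitely dependent random partition into large cubes whose boundaries are determined by a finitely dependent process; such a partition comes from a finitely dependent proper colouring (a net) of a coarse lattice. Inside each cube, colour with a fixed $2$-colouring. Along the boundaries, apply the $4$-colourings of $\Z$ given by Theorem \ref{1 dependent coloring}(1) to patch the $2$-colourings together.

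The dependence range $k$ comes out as a function of the cube size, which depends only on $d$. The main obstacle here is making the patching work consistently near corners where several cubes meet. To deal with it, I will choose cube offsets from an $m$-net so that no more than two cubes ever meet at an interface.
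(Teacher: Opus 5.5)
The paper gives no proof of this statement; it quotes it from \cite{Holroyd}. Judged on its own, your proposal has a genuine gap in each part.

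\textbf{Part (1).} You correctly see that $X_v=(Y^{(1)}_{v_1},\dots,Y^{(d)}_{v_d})$ is not $1$-dependent, but none of your repairs fixes it.
\begin{itemize}
\item For $d=2$, the sites $(0,0)$ and $(0,N)$ share the variable $Y^{(1)}_0$ for every $N$, whatever metric you use.
\item The ``hyperplane index'' encoding makes $X_v$ a function of the same variables at every site of $\{v_1+\dots+v_d=s\}$, so it is not finitely dependent either.
\item Your final reduction to $(Y^{(i)}_{\pi_iA})_i$ versus $(Y^{(i)}_{\pi_iB})_i$ is exactly the step that fails, since $\pi_iA$ and $\pi_iB$ can overlap when $A$ and $B$ are far apart.
\end{itemize}
The missing idea is to use independent copies per line, not per coordinate. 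For each $i$ and each line $\ell$ parallel to $e_i$, let $Y^{(i,\ell)}$ be an independent copy of the $1$-dependent $4$-colouring of $\Z$, and set $X_v=\bigl(Y^{(1,v+\Z e_1)}_{v_1},\dots,Y^{(d,v+\Z e_d)}_{v_d}\bigr)$. Two sites then use a common copy only if they lie on a common axis-parallel line. There, their distance in $\Z^d$ equals their distance along the line. So $1$-dependence of each copy, together with independence of the copies, gives $1$-dependence of $X$. Properness and shift-invariance go through as in your sketch.

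\textbf{Part (2).} The cube-and-patch scheme does not work, for three reasons.
\begin{itemize}
\item A random tiling by translates of a single cube of side $L\ge 2$ is never finitely dependent. Along any axis-parallel line its cuts lie in one residue class mod $L$. So ``cut between $0$ and $1$'' and ``cut between $N$ and $N+1$'' are disjoint events, each of probability $1/L$, whenever $N\not\equiv 0 \pmod L$. You need variable side lengths, e.g.\ boxes with sides $m$ or $m+1$ (cf.\ \Cref{theorem: perfect tiligs}).
\item In any tiling of $\mathbb{R}^d$, $d\ge 2$, by boxes, at least three tiles meet at every tile corner. So no choice of offsets can ensure that ``no more than two cubes ever meet''.
\item A wall coloured by a $4$-colouring of $\Z$ can repeat the colour of an adjacent $2$-coloured interior site. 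More fundamentally, a one-dimensional colouring along a wall does nothing about the mismatch of checkerboard phases on its two sides.
\end{itemize}
What is actually needed is a genuinely $d$-dimensional, deterministic, finite-range rule that converts a sufficiently sparse finitely dependent $m$-net (\Cref{m-net process}), or an $\{m,m+1\}$-box tiling, into a proper $4$-colouring. Your sketch does not supply one. Within this paper, the shortest route is to combine the continuous proper $4$-colouring of $F(2^{\Z^d})$ from \cite{GaoJacksonbreakthrough}, which the paper quotes, with \Cref{continuous imples findep}.
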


This result was generalised in \cite{timar2024finitelydependentrandomcolorings} and \cite{Onlinelocality} to show the existence of such colorings for transitive bounded degree graphs using suitably large number of colors. Let $H = (V, E)$ be a graph. Then a stochastic process $(X_v)_{v \in V}$ is automorphism invariant if its distribution is invariant under all automorphisms of $H$. %In particular, (\cite[Theorem 1]{timar2024finitelydependentrandomcolorings}) implies the following result.
 The following result follows from (\cite[Theorem 1]{timar2024finitelydependentrandomcolorings}).
\begin{theorem}\label{Timar theorem}
Let $ H$ be a transitive graph of maximal degree $\delta$. Then there exists a $ 4$-dependent automorphism invariant proper $4^{\frac{\delta(\delta+1)}{2}}$ coloring of $H$.     
\end{theorem}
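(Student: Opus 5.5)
We plan to obtain the statement as a direct specialization of \cite[Theorem 1]{timar2024finitelydependentrandomcolorings}, which we take in the following form. For a bounded degree graph, one considers the graph $H^{(2)}$ on the same vertex set in which two vertices are adjacent when their distance in $H$ is at most $2$. One then colors $H$ in finitely dependent fashion by combining finitely many $1$-dependent $4$-colorings of paths, or more precisely of a suitable decomposition of the edges. The construction is a block factor of the Holroyd--Liggett $1$-dependent $4$-coloring of $\Z$ (Theorem \ref{1 dependent coloring}) applied along each edge class, and automorphism invariance comes from building the construction equivariantly. So the plan is to verify the hypotheses and then track the two parameters: the number of colors and the dependence range.

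\textbf{Decomposition.} Since $H$ is transitive with maximal degree $\delta$, every vertex has degree exactly $\delta$. We split the coloring task into $N=\delta(\delta+1)/2$ independent ``coordinate'' processes, each producing a $4$-coloring. The count $\delta(\delta+1)/2$ is exactly the number of unordered pairs with repetition from a set of size $\delta$. This suggests indexing coordinates by pairs of edge slots at a vertex, and it must be done without breaking automorphism invariance. Each coordinate process should be $1$-dependent with respect to some auxiliary structure whose neighbourhoods lie within bounded $H$-distance. The product of the $N$ coordinate colorings then takes values in $\{1,2,3,4\}^N$, which has $4^{N}$ colors. It is proper because every edge of $H$ is distinguished in at least one coordinate.

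\textbf{Dependence.} Independent coordinates preserve finite dependence, with the range being the maximum over the coordinates. Each coordinate is $1$-dependent on an auxiliary graph whose adjacency corresponds to $H$-distance at most $2$. Consequently, sets at $H$-distance greater than $4$ are independent, which gives $4$-dependence.

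\textbf{Main obstacle.} The main obstacle is automorphism invariance: the edge labelling must be chosen canonically, or randomized using an invariant iid source. Since the paper says the result follows from \cite{timar2024finitelydependentrandomcolorings}, we would not redo this step. We would only check that Timár's theorem is stated for transitive (hence unimodular-free) bounded degree graphs with the constants $4$ and $4^{\delta(\delta+1)/2}$, and then quote it.
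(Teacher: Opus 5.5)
Your plan matches the paper. The paper gives no independent argument: it states the result as a consequence of \cite[Theorem 1]{timar2024finitelydependentrandomcolorings}, and checking that the hypotheses (transitivity, maximal degree $\delta$) and constants ($4$-dependence, $4^{\delta(\delta+1)/2}$ colors) line up is all that is needed. Your reconstruction of Tim\'ar's construction is not needed and should not be presented as part of the proof; in particular, if the edge decomposition is randomized to secure automorphism invariance, the $\delta(\delta+1)/2$ coordinate colorings share that randomness and are not independent, so the ``maximum over independent coordinates'' argument for the dependence range does not apply as stated.
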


\subsection{Subshifts}\hspace*{\fill} \\

The proper $q$-colorings are examples of combinatorial models which are called subshifts of finite type. We will be interested in understanding which combinatorial models allow finitely dependent processes on them and the ubiquity of finitely dependent processes in shift-invariant processes. 
We will refer to $\mathcal{A}^{G}$ as the \textbf{full shift} in the alphabet $\mathcal{A}$. We equip $ \mathcal{A}^{G}$ with the product topology where $ \A$ is given the discrete topology. If $\A$ is finite, then the product topology is metrisable and $\mathcal{A}^{G}$ is a compact metric space. The shift action of $G$ on $\mathcal{A}^{G}$ is defined by,
$ (h \cdot x)_{g} := x_{h^{-1} \cdot g}$ for all $ x \in \mathcal{A}^G$ and for all $ g, h \in G.$ This defines a left action of $ G$ on $ \A^{G}$.

The support of a shift-invariant process $ (X_g)_{g \in G}$ is the set
$$\Omega=\{w\in \mathcal{A}^G~:~\text{ for all finite $B\subset G$}, \mathbb P(X_g=w_g\text{ for all }g\in B)>0\}.$$
The set $ \Omega$ has two essential features.
\begin{enumerate}
    \item  It is shift-invariant, i.e., $ g \cdot \Omega = \Omega$ for each $g \in G$. 
    \item  It is closed in $ \mathcal{A}^{G}$.
\end{enumerate}
\begin{definition}
A \textbf{subshift} of $ \mathcal{A}^{G}$ is a closed shift-invariant subset of $ \mathcal{A}^{G}.$     
\end{definition}
\begin{example}[proper $q$-colorings] \label{proper colorings}
Let $ q \geq 1$ and $ \mathcal{A}= \{ 0,1,2,\cdots, q-1\}$. Then the subshift of proper $q$-colorings of $ G$ is the set of all colorings of $ G$ with the colors from $\mathcal{A}$ such that colors on the adjacent vertices of $ G$ are distinct.
\[ \mathcal{X}_{q}^{G} := \{ x \in \mathcal{A}^{G} : x_{g} \neq x_{h}, \text{ if } g \text{ is adjacent to } h \}.\]
We will denote the set of proper $q$-colorings of $ \Z^d$ by $\mathcal{X}_{q}^{d}.$
\end{example}
\begin{example}[Homshifts]\label{Homomorphism shifts}
    Here is a way to generalise the previous example. Consider $ G$ as an (undirected) right Cayley graph with a fixed symmetric set of generators. Let $ H = (V, E)$ be a countable undirected graph. Let, 
    \[ \textit{Hom}(G, H) := \{ x \in V^{G}: x \text{ is a graph homomorphism from } G \rightarrow H \}. \]
    We will refer to $\textit{Hom}(G, H) $ as \textbf{homshifts} corresponding to the graph $H$.
    
 Note that if $ K_{q} $ is the complete graph with $q$ vertices, then $ \textit{Hom}(G, K_q)  = \mathcal{X}_{q}^{G}$.
 \end{example}

A \textbf{pattern} supported on a finite set $F \subset G$ is an element of $ \A^{F}$. We will denote by $ x|_{F} $ the restriction of $ x \in \A^{G}$ to the subset $F$. A translate of a pattern $ a \in \A^{F}$ by an element $ h \in G$ is the pattern $ h \cdot a \in \A^{h \cdot F}$ defined by $ (h \cdot a)_{g} := a_{h^{-1} \cdot g} $ for each $g \in F$. If $a \in \A^{F_1}$ and $ b \in \A^{F_{2}}$ where $ F_1 \cap F_2 = \emptyset$, then $ a \sqcup b \in \A^{F_1 \sqcup F_2}$ is a pattern whose restriction on $ F_1$ equals $ a$ and restriction on $ F_2$ equals $b$. We reserve the term \textbf{configuration} to elements of $ A^{F}$, where $ F$ is an infinite subset of $G$.

Let $X$ be a subshift of $ \A^{G}$. For $ a \in \A^{F}$, the cylinder set corresponding to $ a$ is defined as, 
\[ [a] := \{ x \in X: x|_{F} = a\} .\]

A pattern (or configuration) $ a \in \A^F$ is \textbf{globally allowed} in $X$, if it appears in some element of $X$, i.e., $ a = (g \cdot x)|_{F}$ for some $x \in X $ and $ g \in G.$ The set of all \textit{finite} globally allowed patterns is called the language of $X$, denoted by $L(X)$. If $ F$ is a finite subset of $ G$, then $L(X, F) = \{x|_{F}:x\in X \}$ denotes the set of all globally allowed patterns supported on $F$. 

There is an alternative and more direct way to define a subshift. Let $ \mathcal{F} = \{ a_1, a_2, \cdots\}$ be a countable collection of (forbidden) patterns, where $ a_i \in \A^{S_i}$ is a pattern supported on a finite set $ S_i \subset G$. Then the subshift corresponding to the collection $\F$ is the set of all configurations $ x \in \A^{G}$ such that  $x$ does not contain a translate of any pattern from $\mathcal{F}$ anywhere, that is,
\begin{equation}\label{Definition: subshift} X_{\mathcal{F}} : = \{ x \in \A^{G}: (g\cdot x)|_{ S_i}\neq a_i \text{ for any }  i \in \N \text{ and } g \in G\}.\end{equation}
Then $X_{\F}$ is a subshift. In fact, every subshift $X$ is of the form $ X_{\mathcal{F}}$ for some countable collection $\mathcal{F}$. See \cite[Chapter 6]{LindMarcus} for the proof when $G=\Z$. The statement for general $G$ follows similarly. $\mathcal{F}$ is called a set of \textbf{forbidden patterns} for $X$.
\begin{definition}
     A subshift $X$ is a \textbf{subshift of finite type} if there exists a finite collection $\mathcal{F}$ of forbidden patterns such that $ X = X_{\mathcal{F}}$. 
\end{definition}
The subshift in Example \ref{proper colorings} is a subshift of finite type. We can equivalently define a subshift of finite type by specifying a finite set of (locally) allowed patterns. Let $ T $ be a finite subset of $G$. Let $ \mathcal{P} \subset A^{T}$ be a collection of patterns supported on $ T$. Then define
\begin{equation} \label{allowed words sft}
X^{\mathcal{P}} := \{ x \in A^{G}: (g \cdot x)|_{T} \in P, \text{ for every } g \in G\}.                                                        
\end{equation}
Then $ X^{\mathcal{P}}$ is a subshift of finite type where the set of forbidden patterns is $ A^{T} \setminus \mathcal{P}$. Conversely, every subshift of finite type is of the form $ X^{\mathcal{P}} $ for some finite set $\mathcal{P}$ of patterns supported on a fixed finite subset of $G$.

A pattern (or configuration) $ w $ supported on a set $ S \subset G $ is called \textbf{locally allowed} if it does not contain a translate of any pattern from $ \mathcal{F}$. One of the challenges in working with multidimensional subshifts of finite type is that unlike one dimensional subshifts of finite type, it is not decidable whether a locally allowed pattern is globally allowed (\cite{Undecidability}).

\begin{example}[Rectangular tilings]\label{Rectangular tilings}
Let $ \mathcal{R} = \{ R_1, R_2, \cdots, R_k \} $ be a set of boxes in $ \mathbb{R}^2$ whose lower left corner is at the origin. Throughout the paper, we will always assume that the boxes have integer sidelengths. These boxes will be our tiles. A tiling of $ \mathbb{R}^2$ with the tile-set $\mathcal{R}$, is a decomposition of $\mathbb{R}^2$ into a union of translates of boxes in $ \mathcal{R}$ such that any two boxes intersect only at the boundary and corners of each box in the tiling lie on the $ \mathbb{Z}^2$ lattice. Whenever we refer to tilings of $\mathbb{R}^2$ with tiles from $ \mathcal{R}$, the corners of tiles will always be assumed to lie on the integer lattice. Let $ X(\mathcal{R})$ be the set of all tilings of $ \mathbb{R}^2$ with tile-set $\mathcal{R}$. We can define a finite set of forbidden patterns $\mathcal{F}$ such that each element of the corresponding subshift of finite type corresponds to a tiling of $ \mathbb{R}^2$ by $ \mathcal{R}$ and vice versa $($\cite[Section 4.2]{Schmidt}, \cite[Section 2]{Einsiedler}$)$. Hence $X(\mathcal{R})$ is a subshift of finite type of $ \A^{\mathbb{Z}^2}$ for some $\A$. We illustrate this for the case when $ \mathcal{R} = \{ R_1, R_2\}$ where $ R_1$ is a $ 1 \times 2$ box in $ \mathbb{R}^2$ and $ R_2$ is a $ 2 \times 1$ box in $ \mathbb{R}^2$. We refer to the boxes $R_1$ and $R_2$ as dominoes.

Note that tilings of $ \mathbb{R}^2$ by dominoes are in one-to-one correspondence with perfect matchings of the dual graph $\mathbb{Z}_{\text{dual}}^2$ of $ \Z^2$, where the correspondence is obtained by creating an edge in the dual graph if and only if the edge is enclosed within a domino in the tiling. Thus, we can equivalently consider $ X(\mathcal{R})$ to be the set of perfect matchings of $ \mathbb{Z}_{\text{dual}}^2$. Here (and only here), for convenience, we translate the tilings by the vector $ (0.5, 0.5)$ and view $ X(\mathcal{R})$ instead as the set of perfect matchings of $ \Z^2$. We refer to edges in perfect matchings as dominos.
\begin{figure}[!htbp]
    \centering
    \includegraphics[width=0.3\linewidth]{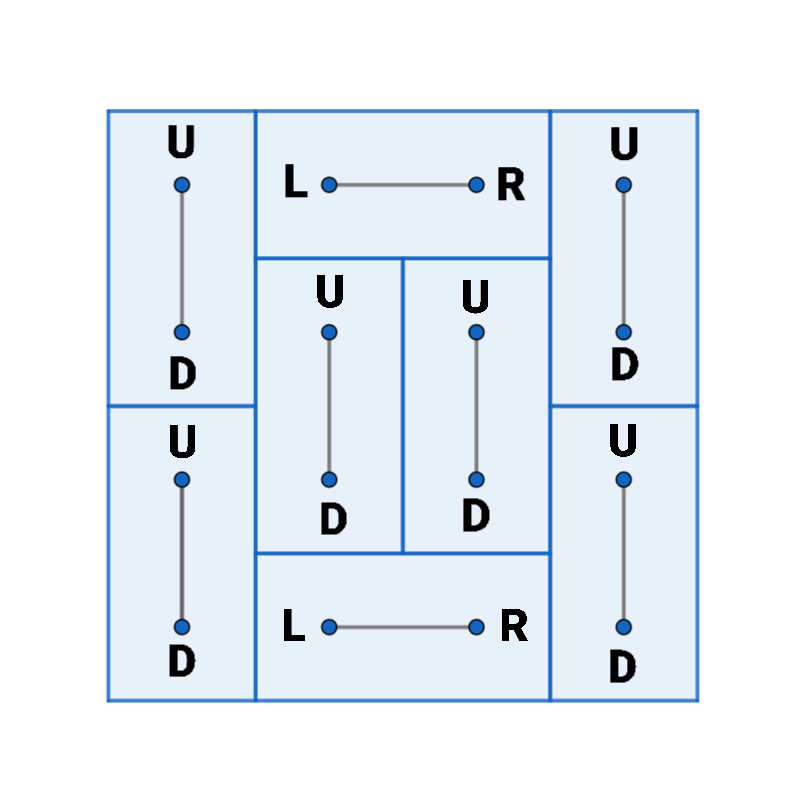}
    \caption{A pattern of domino tilings and corresponding perfect matching}
    \label{Domino SFT}
\end{figure}
Let $ \A= \{L,R,U,D\}$, $ T = \{ (0,0), (0,1), (1,0)\} \subset \mathbb{Z}^2$ and 
\[ P:= \{ \big(\begin{smallmatrix}
 \text{not }U & \text{ }\\
 L & R
\end{smallmatrix} \big), \big(\begin{smallmatrix}
 U & \text{ }\\
 D & \text{ not }R
\end{smallmatrix} \big), \big(\begin{smallmatrix}
 \text{not }U & \text{ }\\
 R & \text{ not }R
\end{smallmatrix} \big), \big(\begin{smallmatrix}
 \text{not }U & \text{ }\\
 U & \text{ not }R
\end{smallmatrix} \big) \} .\]
Here ``not $U$" means any element in $\A$ except $U$. Then by interpreting an occurrence of $L$ as a left vertex of a horizontal domino and an occurrence of $D$ as bottom vertex of a vertical domino etc., we see that $ X(\mathcal{R}) = X^{\mathcal{P}}$ (see Figure \ref{Domino SFT} and Definition \ref{allowed words sft}). Therefore, $ X(\mathcal{R})$ is a subshift of finite type of $ \{ L, R, U, D \}^{\mathbb{Z}^2}$.

\end{example}
\begin{example}[Ribbon tilings]\label{Ribbon tilings}
    For a fixed $ n$, color the unit square in $ \mathbb{R}^2$ with the lower left corner at $ (i, j) \in \Z^2$ by color $ (i+j) \Mod{n}\in \Z/n\Z$. A ribbon tile of order $n$ is a connected set of unit squares whose corners lie on the $\Z^2$ lattice that contains exactly one square of each color in $\Z/n\Z$ (see Figure \ref{ribbon figure}).
    \begin{figure}[!htbp]
        \centering
        \includegraphics[width=0.75\linewidth]{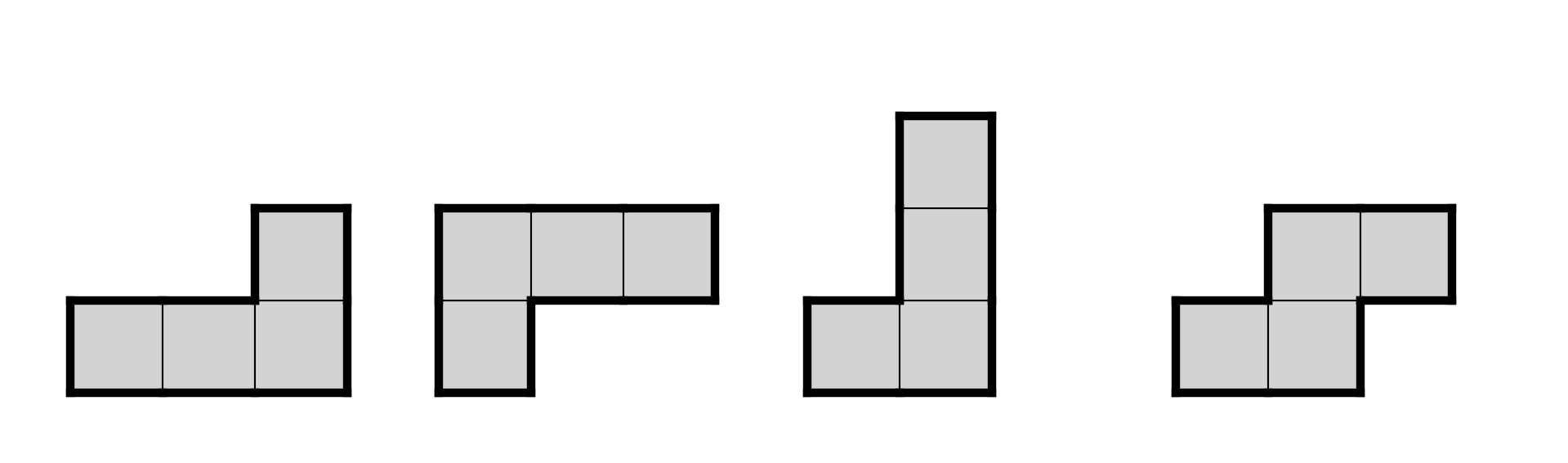}
        \caption{Some ribbon tiles of order 4 }
        \label{ribbon figure}
    \end{figure}
    As in Example \ref{Rectangular tilings}, it is easy to see that the set of all tilings of $ \mathbb{R}^2$ using translates of ribbon tiles of order $n$ (such that the corners of the tiles lie on the $\Z^2$ lattice) is a subshift of finite type. Note that the colors are helpful in describing what the tiles are but a tile is just a subset of $\mathbb{R}^2$ (without any reference to colors). We will denote this subshift by $ X(ribbon, n).$ 
\end{example}
A map $\Psi$ from $X \subset \A^{G}$ to $ Y \subset \mathcal{B}^G$ is \textbf{equivariant} if $ \Psi(g \cdot x) = g \cdot \Psi(x)$ for all $ g \in G$ and $x \in X$.
\begin{definition}
    A map $\Psi$ from a subshift $X \subset \A^{G}$ to a subshift $ Y \subset \mathcal{B}^G$ is called a \textbf{sliding block code}, if there exists a finite set $ F$ and a map $ \psi: L(X, F) \rightarrow \mathcal{B}$ such that $ \Psi(x)_g= \psi(x|_{g.F})$ for all $ x \in X$ and $ g \in G$.  The sliding block code is called a \textbf{factor} if it is surjective.
\end{definition}

 We will refer to $\psi$ as the \textbf{block map} of the sliding block code $\Psi$. The \textbf{coding radius} of the map $ \Phi$ is the smallest $r$ such that ball of radius $r$ around identity contains the set $ F$. We note that any sliding block code is continuous and equivariant.
A subshift $Y$ is called \textbf{sofic} if there exists a factor map from a subshift of finite type \textit{to} $Y$.
Here is an example of a subshift which is sofic but not of finite type \cite[Chapter 3]{LindMarcus}.
\begin{example}[Even shift]\label{Even shift}
 Let $ X \subset \{ 0,1\}^{\mathbb{Z}}$ be the set of all bi-infinite strings in letters $\{ 0,1\}$ such that the number of zeroes between any two successive occurrences of $1'$s is even. Then $ X$ is called an \textbf{even shift}. Here, the set of forbidden patterns which describes the subshift is $ \mathcal{F} = \{ 10^{m} 1: m \text{ is odd} \} $. Then $ X = X_\mathcal{F}$. 
 It is easy to see that there is no finite collection of forbidden patterns $ \mathcal{F}'$ such that $X = X_{\mathcal{F}'}$: For instance, assume that $\mathcal{F}' \subset A^n$ is such a collection. Then since all subpatterns of length $n$ in the pattern $10^{2n+1}1$ are globally allowed in $X$, $ 10^{2n+1}1 \notin \mathcal{F}'$, a contradiction. Hence, $X$ is not a subshift of finite type.
 \end{example}
 
\begin{definition}
    A subshift $ X \subset A^{G}$ is \textbf{topologically mixing} if for every pair of globally allowed patterns $ a \in A^F$ and  $b \in A^{F'}$ in $X$ there exists $ n \in \mathbb{N}$ such that for all $ g \in G$ with $ d(g,e) > n$, 
 we have $ g \cdot [a] \cap [b] \neq \phi $.
 
\end{definition}
Equivalently $ X$ is topologically mixing if for every pair of globally allowed patterns $ a$ and $b$, there exists $ n= n(a, b) \in \mathbb{N}$ such that for every $ g \in G$ with $ d(g,e) > n$, there exists an $ x = x(a, b, g)\in X$ such that $ x|_{g \cdot F} = a$ and $ x|_{F'}= b$.

\begin{definition}
    A subshift $X $ is \textbf{strongly irreducible} (SI) if there exists $k \in \mathbb{N}$ such that for any two subsets $F$ and $F'$ in $G$ with $d(F, F') > k$ and any pair of globally allowed patterns $ a \in A^F$ and $ b \in A^{F'}$, we have $[a] \cap [b] \neq \emptyset$, that is, there exists $ x \in X$ such that $ x|_{F} = a$ and $ x|_{F'} =b$. The smallest such $k$ is called the SI distance for $X$.
\end{definition}

For example, the even shift is strongly irreducible with SI distance $k=1$. Clearly, strong irreducibility implies topological mixing. For one-dimensional subshifts of finite type, topological mixing and strong irreducibility are equivalent. This is no longer the case for multidimensional subshifts of finite type. For example, let $ \mathcal{R} = \{ R_1, R_2\}$ where $ R_1$ is a $ m_1 \times n_1 $ box and $ R_2$ is a $m_2 \times n_2$ box in $ \mathbb{R}^2$. It follows from (\cite[Lemma 2.1]{Einsiedler}) that if $ gcd(m_1, m_2) = 1$ and $gcd(n_1, n_2) =1$, then the space of the tilings $ X(\mathcal{R})$ (see Example \ref{Rectangular tilings}) is topologically mixing. However, we can show that if $ n_1>1$ and  $m_2 > 1$, then  $ X(\mathcal{R})$ is not strongly irreducible. We illustrate this for the case of domino tilings ($ m_1 =n_2=1, m_2 = n_1=2$). The general case is similar.
\begin{figure}[!htbp]
    \centering
    \includegraphics[width=0.7\linewidth]{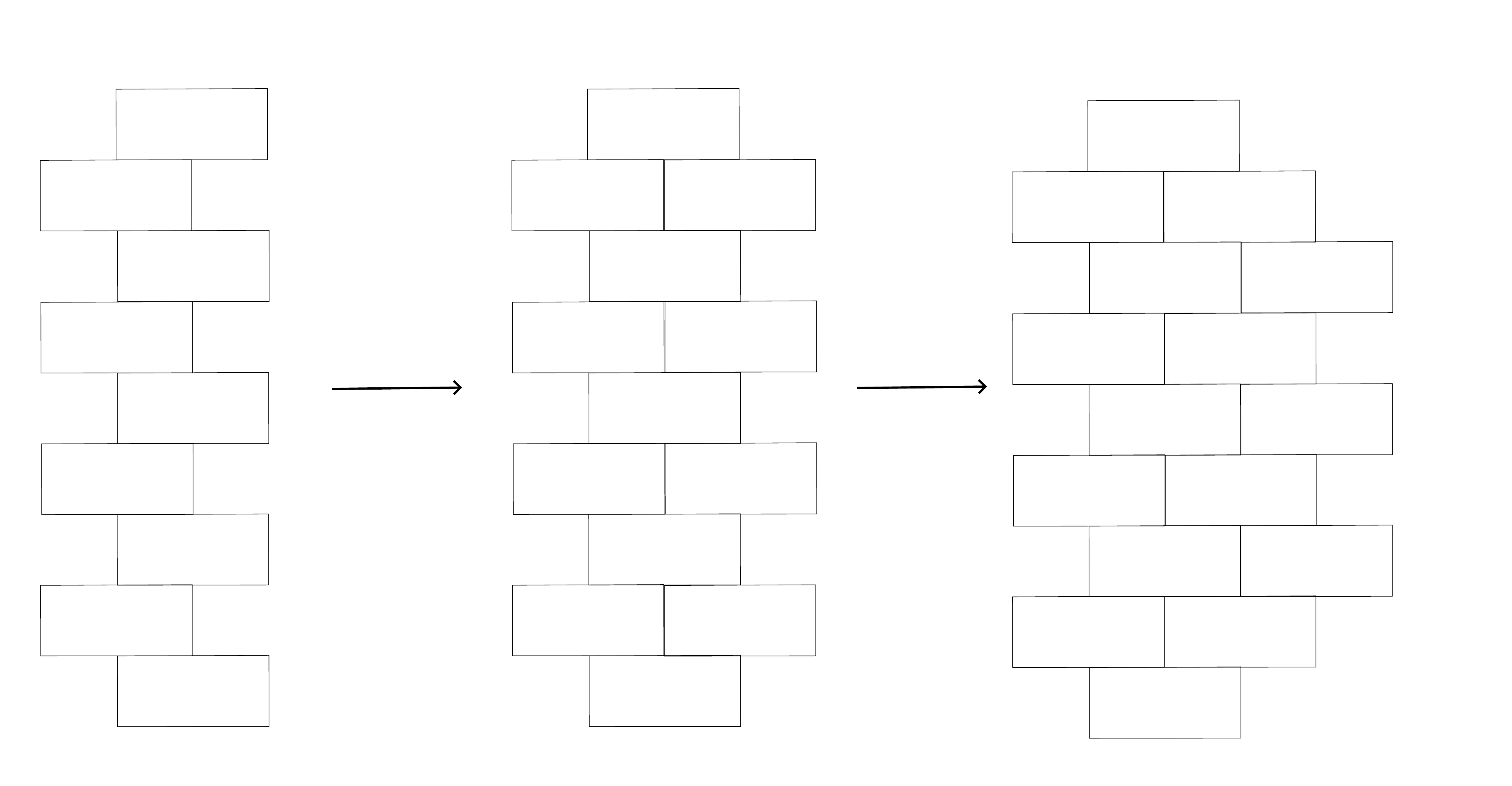}
    \caption{Domino tilings are not strongly irreducible}
    \label{Domino}
\end{figure}

Consider the pattern on the left-hand side of the Figure \ref{Domino}. If this pattern appears in some $ x \in X(\mathcal{R})$, then it forces the patterns to the horizontal distance $\lfloor \frac{n}{2} \rfloor$, where $n$ is the height of the pattern (we have shown the first two steps of pattern forcing). Since $ n $ can be made arbitrarily large, it follows that there is no $k$ such that we can put any two globally allowed patterns together in a tiling of $\mathbb{Z}^2$ whenever the distance between the two patterns is greater than $k$. Hence $X(\mathcal{R})$ is not strongly irreducible.

We also have the following theorem.
\begin{theorem}
\begin{enumerate}
    \item $\mathcal{X}_{q}^{d}$ is topologically mixing iff $q\geq 3$ (see \cite[Section 4.3]{Schmidt}).
    \item For $ 3 \leq q \leq d+1$,  $\mathcal{X}_{q}^{d} $ is not strongly irreducible (\cite{Mixing}).
\end{enumerate}
\end{theorem}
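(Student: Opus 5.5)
For item (1), the approach is to treat $q\le 2$ and $q\ge 3$ separately.

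\emph{Small $q$.} For $q=1$ the space $\mathcal{X}_1^d$ is empty. For $q=2$ it consists of the two checkerboard configurations, so it is not mixing: the pattern $0$ at the origin and the pattern $0$ at $g$ cannot coexist when $\|g\|_1$ is odd, and such $g$ are arbitrarily far away.

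\emph{$q=3$.} I will use the height function. For a proper $3$-coloring $x$ there is a function $h:\Z^d\to\Z$ with $h(v)\equiv x_v \Mod{3}$ and $|h(v)-h(w)|=1$ for adjacent $v,w$. Such an $h$ is defined up to an additive constant in $6\Z$. It also satisfies $h(v)\equiv \|v\|_1+\text{const}\Mod{2}$.

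Conversely, let $h$ be defined on a set $S$, be $1$-Lipschitz for the graph metric, and satisfy this parity condition. The McShane extension $\tilde h(v)=\min_{u\in S}(h(u)+\|u-v\|_1)$ is again $1$-Lipschitz and has the right parity. Hence $|\tilde h(v)-\tilde h(w)|=1$ on edges, and $\tilde h \bmod 3$ is a proper $3$-coloring.

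Now take globally allowed patterns $a$ on $F$ and $b$ on $F'$. Realize each in a configuration, and take the height functions $h_a$ on $F$ and $h_b$ on $F'$. Shift $h_b$ by a constant $c$ such that:
\begin{enumerate}
\item $c\equiv 0 \Mod{3}$, so that $h_b+c$ still reduces to $b$;
\item $c$ has the parity needed to make the combined function on $F\cup g\cdot F'$ satisfy the global parity condition;
\item $|c|\le 6+\operatorname{diam}$, so that the values of $h_a$ and $h_b+c$ differ by at most a bounded amount $M$ depending only on $a,b$.
\end{enumerate}
Once $d(F,g\cdot F')>M$, the combined function is $1$-Lipschitz, and the McShane extension produces the required configuration.

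\emph{$q\ge 4$.} The plan is to reduce to the case $q=3$. Given a realization $x$ of $a$, first recolor outside $F^{(1)}$ into colors $\{0,1,2\}$, using only the current colors and a greedy/height argument on the layer $F^{(1)}\setminus F$. This layer is the step I expect to be the main technical obstacle for (1). Failing a clean greedy argument, I would instead choose the configuration realizing $a$ more cleverly, by taking $a$ as a subpattern of a configuration and using the frozen-free structure of $q\ge4$ colorings. The gluing itself is then done by the $3$-color height argument in the region between $F$ and $g\cdot F'$.

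For item (2) I will exhibit frozen configurations. Let $3\le q\le d+1$ and pick weights $w_1,\dots,w_d\in\{1,\dots,q-1\}$ taking every nonzero residue mod $q$; this is possible since $d\ge q-1$. Define
$$x_v=\sum_i w_iv_i \Mod{q}.$$
This $x$ is proper, because every $w_i\neq0$ mod $q$. Moreover, the neighbors $v\pm e_i$ of $v$ carry colors $x_v\pm w_i$, which exhaust all residues other than $x_v$. So in any proper $q$-coloring that agrees with $x$ on all neighbors of $v$, the color at $v$ is forced.

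Starting from $x$ restricted to a large box $B_n$, I will show inductively that the color is forced on a slightly larger region: a site adjacent to the box whose whole neighborhood except itself is already determined gets forced. Iterating this, the colors are determined on a region containing points at distance growing linearly in $n$ from $B_n$. Making this induction precise near corners (which sites have all $2d$ neighbors determined) is the main obstacle, and I would handle it with diamond-shaped ($\ell_1$-ball) regions instead of boxes.

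Given the forcing, the conclusion is as follows. Take $a=x|_{B_n}$ and $b$ the translate pattern $(x+1)|_{B'}$ on a single site or small box $B'$ at distance $k$ inside the forced region. These two are incompatible, and $k$ can be made arbitrarily large, so no strong irreducibility constant exists.
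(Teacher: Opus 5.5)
Your cases $q\le 3$ of (1) are fine: the height function together with the McShane extension gives a complete proof of mixing for $q=3$. One small correction: the height function of a $3$-colouring is determined up to an additive constant in $3\Z$, not $6\Z$. The increment on each edge is forced to be the element of $\{\pm1\}$ congruent to $x_w-x_v$, and your parity condition on $c$ genuinely uses the odd multiples of $3$. There are, however, two real gaps.

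\textbf{Item (2): the forcing runs the wrong way.} The pattern $x|_{[-n,n]^d}$ forces nothing outside the box.

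\emph{Why your induction never starts.} A site outside a box or an $\ell_1$-ball always has at least $d$ neighbours outside it, so your neighbourhood rule never fires. This is not a bookkeeping issue. If $v_i>n$, choose $c\not\equiv 0,-w_i \Mod{q}$, which is possible since $q\ge 3$. The colouring equal to $x$ on $\{u: u_i\le n\}$ and to $x+c$ on $\{u: u_i\ge n+1\}$ is proper, because edges crossing the interface change colour by $w_i+c\not\equiv 0$. It agrees with $x$ on the box and differs from $x$ at $v$ (symmetrically if $v_i<-n$). So the region where you want to place $b$ is empty, for boxes and diamonds alike.

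\emph{The fix: force inward.} Swap the roles of the two patterns. The forward neighbours alone already carry every colour except $x_v$, since $\{x_v+w_i: 1\le i\le d\}$ exhausts all residues other than $x_v$. Let $F=\{v\in\Z^d:\|v\|_\infty=n\}$, $a=x|_F$, $F'=\{\mathbf 0\}$, and let $b$ be the colour $x_{\mathbf 0}+1$. Suppose some $z\in\mathcal X_q^d$ had $z|_F=a$ and $z_{\mathbf 0}=b$. Take $v$ in the finite nonempty set $\{u\in[-n+1,n-1]^d: z_u\neq x_u\}$ maximising $v_1+\dots+v_d$. Every $v+e_i$ lies either in $[-n+1,n-1]^d$, where $z=x$ by maximality, or in $F$, where $z=x$ by assumption. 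Hence $z_v\notin\{x_v+w_i\}$, which forces $z_v=x_v$, a contradiction. Since $d(F,F')=n$ is arbitrary, $\mathcal X_q^d$ is not strongly irreducible, and no corner bookkeeping is needed.

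\textbf{Item (1) for $q\ge 4$ is not proved.} The step you flag is genuinely missing. As literally stated (keep $x$ on $F^{(1)}$ and recolour the rest with $\{0,1,2\}$), it can fail: a site just outside $F^{(1)}$ may see all of $0,1,2$ on its neighbours in the layer. Your fallback does not help either. It appeals to a ``frozen-free structure'' of $q\ge4$ colourings, but by item (2) there are frozen $q$-colourings whenever $q\le d+1$, e.g.\ $q=4$, $d=3$.

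\emph{A one-layer argument that works.} First replace $F$ by its bounding box $B$; this is legitimate since $a$ is the restriction of some realisation $x$. Every site $v$ with $d_\infty(v,B)=1$ has at most one neighbour $u$ in $B$. Put $\epsilon(v)=\|v\|_1 \bmod 2$. Define $y=x$ on $B$, and $y_v=\epsilon(v)$ when $d_\infty(v,B)\ge 2$. On the layer, set $y_v=\epsilon(v)$ unless $x_u=\epsilon(v)$, in which case set $y_v=2+\epsilon(v)$.

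\emph{Checking properness.} Edges inside $B$ are fine because $x$ is proper. Edges from the layer to $B$ are fine by construction. Two adjacent modified sites get $2$ and $3$. A modified and an unmodified site get colours from $\{2,3\}$ and $\{0,1\}$ respectively. All remaining edges are checkerboard edges.

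\emph{Conclusion.} Every box pattern therefore extends to a configuration equal to the checkerboard outside its $1$-neighbourhood. Any two box patterns at $\ell_\infty$-distance at least $3$ can then be glued. This gives mixing (indeed block gluing) for $q\ge4$ directly, without going through $q=3$. The fourth colour is exactly what is unavailable when $q=3$, which is why the height function is needed there.
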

The following proposition links finitely dependent processes to strongly irreducible subshifts.
\begin{proposition}\label{FINDEP implies SI}
If $ (X_g)_{g \in G} \in \mathcal{A}^{G}$ is a shift-invariant finitely dependent process, then its support is a strongly irreducible subshift.    
\end{proposition}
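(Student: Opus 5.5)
The plan is to verify the two defining properties of a strongly irreducible subshift directly from the definition of the support $\Omega$. Throughout, $k$ denotes the dependence constant of the process.

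First, I will check that $\Omega$ is a subshift. Closedness holds because the complement of $\Omega$ is open: if $w\notin\Omega$, there is a finite $B$ with $\mathbb P(X_g=w_g \text{ for all } g\in B)=0$. Every $w'$ that agrees with $w$ on $B$ then also lies outside $\Omega$, so a whole cylinder around $w$ is excluded. Shift invariance follows from the shift invariance of the law. For $h\in G$, $w\in\Omega$ and finite $B$, we have $\mathbb P(X_g=(h\cdot w)_g,\ g\in B)=\mathbb P(X_{g}=w_{h^{-1}g},\ g\in B)$. Re-indexing gives the probability of the event $\{X_{g'}=w_{g'},\ g'\in h^{-1}B\}$ for the shifted process, which has the same law, so this probability is positive.

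Next, I will identify the globally allowed patterns. Since $\Omega$ is shift invariant, a finite pattern $a\in\A^F$ is globally allowed in $\Omega$ exactly when $a=w|_F$ for some $w\in\Omega$. This forces $\mathbb P(X_F=a)>0$.

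For strong irreducibility, I will take $k$ as the SI constant. Let $F,F'$ be finite with $d(F,F')>k$, and let $a\in\A^F$ and $b\in\A^{F'}$ be globally allowed. By independence, $\mathbb P(X_F=a,\ X_{F'}=b)=\mathbb P(X_F=a)\,\mathbb P(X_{F'}=b)>0$. So the cylinder event $E=\{X_F=a,X_{F'}=b\}$ has positive probability, and it remains to produce a point of $\Omega$ inside this cylinder.

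The only real obstacle is this last step, passing from a positive-probability cylinder to an actual element of $\Omega$. I will argue as follows. By a countable union over finite sets $B$ (there are countably many since $G$ is countable), the event $\{X\notin\Omega\}$ equals the union of the null events $\{X_B=c\}$ over pairs $(B,c)$ with $\mathbb P(X_B=c)=0$. Hence $\mathbb P(X\in\Omega)=1$. Therefore $E\cap\{X\in\Omega\}$ has positive probability and is in particular nonempty, and any realization in it gives $x\in\Omega$ with $x|_F=a$ and $x|_{F'}=b$.

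If the definition of strong irreducibility is meant to include infinite $F,F'$, I will handle them by compactness. Take increasing finite exhaustions $F_n\uparrow F$ and $F'_n\uparrow F'$; the finite case gives points $x_n\in[a|_{F_n}]\cap[b|_{F'_n}]$. Any limit point of $(x_n)$ lies in the closed set $\Omega$ and agrees with $a$ on $F$ and with $b$ on $F'$. This step uses the finiteness of $\A$, which is implicit in the setting.
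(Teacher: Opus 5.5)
Your proof is correct and takes the same route as the paper. Independence across sets at distance $>k$ gives $\mathbb{P}(X_F=a,\,X_{F'}=b)=\mathbb{P}(X_F=a)\,\mathbb{P}(X_{F'}=b)>0$, which yields a point of the support in $[a]\cap[b]$. You also supply details the paper leaves implicit: that the support is closed and shift-invariant, that it has full measure (so a positive-probability cylinder actually meets it), and the compactness extension to infinite $F,F'$.
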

\begin{proof}
    Let $\text{supp}(X)$ denote the support of $ (X_g)_{g \in G}$. Then clearly $\text{supp}(X)$ is a subshift of $ \mathcal{A}^{G}$. Assume that $ (X_g)_{g \in G}$ is $ k$-dependent. Let $ a \in \mathcal{A}^{S} $ and $ b \in \A^{T}$ be two globally allowed patterns in $\text{supp}(X)$ with $ d(S, T)> k$. Then $ \mathbb{P}(X_{S} = a) > 0$ and $ \mathbb{P}(X_{T} =b) > 0$. Hence, 
    \[ \mathbb{P}(X_{S} = a , X_{T} = b) = \mathbb{P}(X_S = a)\mathbb{P}(X_T=b) >0 .\]
    Thus, there exists $ x \in \text{supp}(X)$ such that $ x_{S} = a $ and $ x_{T} =b$. Hence, $\text{supp}(X)$ is strongly irreducible. 
    \end{proof}
    
Conversely, we prove that every strongly irreducible subshift on $\Z$ supports a dense set of shift-invariant measures. 
\onedimension*
Although we do not know if every strongly irreducible $\Z^d$-subshift supports a shift-invariant finitely dependent process for $ d \geq 2$, we prove the existence of shift-invariant finitely dependent processes on subshifts satisfying a stronger mixing property than strong irreducibility. The finite extension property is a strengthening of strong irreducibilty for subshifts of finite type which was introduced in (\cite{FEP}). 
\begin{definition}
    A subshift of finite type $ X \subset A^{G}$ satisfies the $k$-extension property if there exists a finite collection $ \mathcal{F} $ of forbidden patterns for which $ X = X(\mathcal{F})$ and such that any locally allowed pattern $ w$ supported on a set $S \subset G$ which extends to a locally allowed pattern on $ S \cdot \mathbf{B}(k) $, is globally allowed.  $ X$ has finite extension property (FEP) if it satisfies $ k$-extension property for some $k \in \N$.
\end{definition}
For example, it was proved in \cite{Mixing} that the subshift $\mathcal{X}_{q}^{d}$ satisfies the finite extension property if and only if $q \geq d+2$.  It is easily seen that any FEP subshift is strongly irreducible. However, there are strongly irreducible subshifts of finite type which do not satisfy FEP (see \cite[Theorem 8.16]{poirier2024contractiblesubshifts}, for example).  

We prove that the finite extension property is sufficient for the existence of shift-invariant finitely dependent processes on $\Z^d$. Additionally, we can make sure that such a process gives positive probabilities to all finite allowed patterns in $X$ (i.e.\! fully supported).
\FEPcase*
 Since the subshift of proper $k$-colorings of $ \Z^d$ is not strongly irreducible for $ k \leq d+1$ and has the finite extension property for $ k \geq d+2$, we have the following corollary.
\begin{corollary}
    There exists a fully supported shift-invariant finitely dependent process on proper $k$-colorings of $\Z^d$ iff $ k \geq d+2$.
\end{corollary}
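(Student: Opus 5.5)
The plan is to prove the two directions separately. For sufficiency I invoke \Cref{FEPcase}; for necessity I combine \Cref{FINDEP implies SI} with the known failure of strong irreducibility for small $k$.

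\emph{Sufficiency ($k\geq d+2$).} By \cite{Mixing}, $\mathcal{X}_k^d$ has the finite extension property when $k\geq d+2$. Then \Cref{FEPcase} says that fully supported shift-invariant finitely dependent processes are weak-\textasteriskcentered{} dense in the set of shift-invariant processes on $\mathcal{X}_k^d$. That set is nonempty: $\mathcal{X}_k^d$ is a nonempty compact subshift (it contains, for example, the periodic coloring $x_v=\sum_i v_i \bmod k$), and shift-invariant measures exist on it by Krylov--Bogolyubov, or because $\mathbb{Z}^d$ is amenable. A dense subset of a nonempty set is nonempty, so a fully supported process exists.

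\emph{Necessity.} Suppose there is a fully supported shift-invariant finitely dependent process on $\mathcal{X}_k^d$. Its support is then all of $\mathcal{X}_k^d$, and by \Cref{FINDEP implies SI} the subshift $\mathcal{X}_k^d$ must be strongly irreducible. I rule this out in three cases.
\begin{enumerate}
\item If $3\leq k\leq d+1$, strong irreducibility fails by the theorem from \cite{Mixing} quoted above.
\item If $k=1$, then $\mathcal{X}_1^d$ is empty, so there is no process at all.
\item If $k=2$, then $\mathcal{X}_2^d$ consists of the two checkerboard configurations. The globally allowed patterns ``$0$ at the origin'' and ``$1$ at $2n e_1$'' never occur in the same configuration, however large $n$ is, because the parity of the color is determined by the parity of the coordinate sum. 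So $\mathcal{X}_2^d$ is not strongly irreducible.
\end{enumerate}
In every case with $k\leq d+1$ we reach a contradiction, so $k\geq d+2$.

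I do not expect a genuine obstacle here. All the substance lies in \Cref{FEPcase} and in the cited mixing results. The only points that need care are the small cases $k\leq 2$, which the quoted theorem from \cite{Mixing} does not cover, and the observation that density in a nonempty set yields existence.
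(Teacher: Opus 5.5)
Your proof is correct and takes the same route as the paper. The paper derives the corollary from two facts: for $k\geq d+2$, $\mathcal{X}_k^d$ has the finite extension property (via \cite{Mixing} and \Cref{FEPcase}); for $k\leq d+1$, it is not strongly irreducible, which combined with \Cref{FINDEP implies SI} rules out such a process. Your separate treatment of $k\leq 2$, which the quoted result from \cite{Mixing} does not cover, and your remark that density in the nonempty set $\mathcal{M}_\sigma(\mathcal{X}_k^d)$ yields existence, fill in details the paper leaves implicit.
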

   The question of the existence of shift-invariant finitely dependent processes on subshifts is intimately linked to questions in descriptive combinatorics (\cite{gao2023continuouscombinatoricsabeliangroup,Computerscience}). We briefly mention the connection here: Consider the free action of $ \Z^d$ (in general, of any countable group) on a standard Borel space $ \mathcal{S}$. For a set of generators $ F\subset \Z^d$, the Schreier graph for this action is the graph with the vertex set $ \mathcal{S}$, where $ x$ is connected to $y$ iff $ y = \sigma^{v} (x)$ for some $ v \in F$. One is often interested in the combinatorial properties (e.g. chromatic number) of this graph. A map $f: \mathcal{S} \rightarrow \{ 1,2, \cdots, k \}$ is a proper $k$-coloring of $\mathcal{S}$, if $ f (x) \neq f(y)$ whenever $ x$ is adjacent to $y$. The measurable/Borel/continuous chromatic number of $\mathcal{S}$ is the minimum $ k$ for which there exists a measurable/Borel/continuous proper $ k$-coloring of (Schreier graph) $\mathcal{S}$. 

   An important standard Borel space which admits a free action of $\Z^d$ is the free part of $ \{0,1\}^{\Z^d}$, which is the set of all elements in $ \{0,1\}^{\Z^d}$ which do not have any non-trivial period. 
\[ F(\{0,1\}^{\Z^d}) := \{ x \in \{0,1\}^{\Z^d}: \sigma^{v}(x) \neq x \text{ for all } v \in \Z^d\setminus\{\mathbf{0}\}\}.\]
We will denote $\{0,1\}^{\Z^d}$ by $2^{\Z^d}$. $F(2^{\Z^d})$ naturally inherits the subspace topology of $ 2^{\Z^d}$ under which it is a Polish space. If $d >1$ and $ \mathcal{S} = F(\{0,1\}^{\Z^d})$, then it was proved in \cite{gao2023continuouscombinatoricsabeliangroup} that there is no continuous proper $3$-coloring of $ \mathcal{S}$, however, as proved in \cite{GaoJacksonbreakthrough} there exists a continuous proper $ 4$-coloring of $\mathcal{S}$.  In other words, the continuous chromatic number of $\mathcal S$ is $4$.
In general, given a subshift of finite type $Y \subset \A^{\Z^d}$, we say that there exists a measurable/Borel/continuous coloring of $ \mathcal{S}$ which satisfies the constraints of $ Y$, if there exists a measurable/Borel/continuous map $ f: \mathcal{S} \rightarrow \A$ such that the element $ f(g \cdot x)_{g \in \Z^d} \in \A^{\Z^d}$ lies in $Y$ for all $ x \in \mathcal{S}$. The following theorem follows from \cite{Holroyd}, \cite{Bernshteyn} (see also \cite[Theorem 7.2 and Theorem 7.4]{Computerscience})
\begin{theorem}\label{continuous imples findep}
    Let $ X \subset \A^{\Z^d}$ be a subshift of finite type. If there exists a continuous equivariant map $\Psi: F(2^{\Z^d}) \rightarrow X$, then there exists a shift-invariant finitely dependent process supported on $X$.
\end{theorem}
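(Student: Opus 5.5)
The idea is to push forward a finitely dependent ``clock'' process through the continuous map $\Psi$. The main tool is a shift-invariant finitely dependent process whose samples are, almost surely, good enough inputs for $\Psi$, so that the coordinate at each site depends only on a bounded window of that process.

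First, reduce $\Psi$ to a local rule on a ``marker'' structure. Since $\Psi$ is continuous and equivariant and $X$ is a subshift of finite type, a standard compactness argument (as in the Gao--Jackson--Seward framework) lets us convert $\Psi$ into a locally checkable rule. The reduction goes as follows. Continuity on $F(2^{\Z^d})$ alone does not give a uniform coding radius, because $F(2^{\Z^d})$ is not compact. So we use the fact that the existence of such a map is equivalent to finding, for some $n$, a pattern-level ``twelve tiles''-style construction. More concretely, we restrict $\Psi$ to a compact subshift $Z\subset F(2^{\Z^d})$ of the free part, namely a subshift all of whose points are aperiodic and even ``$n$-hyperaperiodic''. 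Gao--Jackson--Seward show that such compact free subshifts exist. On a compact set, continuity plus equivariance makes $\Psi|_Z$ a sliding block code with some finite coding radius $r$.

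Second, build a finitely dependent process that is locally a pattern of $Z$, i.e.\ a process $(W_v)_{v\in\Z^d}$ such that every window of size comparable to $r$ sees a pattern in $L(Z)$. By the locality of the block code, it suffices that every $W$-window of radius $r$ agrees with some $z\in Z$. Pure finite dependence cannot make $W$ globally lie in $Z$, since $Z$ is aperiodic and may lack finitely dependent measures. Instead we only need local correctness, and we take $Z$ to be a subshift of finite type, or replace it by its finite-type approximation at scale $r$. A Gao--Jackson construction yields finitely many local rules on a marker-based SFT that factors into $F(2^{\Z^d})$. Then we take a shift-invariant finitely dependent process on that SFT. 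Such processes arise from the $m$-net (marker) processes of Holroyd--Schramm--Wilson, built from the finitely dependent $4$-colorings of Theorem \ref{1 dependent coloring}: the net gives a finitely dependent family of well-separated markers, and the local tile structure is then an iid-block factor on top of it.

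Finally, set $Y_v=\psi(W|_{v+\B(r)})$. This is a block factor of a finitely dependent shift-invariant process, hence shift-invariant and finitely dependent. Its configurations are locally identical to images $\Psi(z)$, and since $X$ is of finite type with forbidden patterns of size at most $r$, they lie in $X$.

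The main obstacle is the middle step. We must show that continuity of $\Psi$ on the noncompact free part yields a compact marker SFT that carries a finitely dependent process, which is precisely the content of the cited results \cite{Bernshteyn} and \cite[Theorems 7.2, 7.4]{Computerscience}. I would invoke these results directly: Bernshteyn's characterization states that $\Psi$ exists iff $X$ admits a finitary ``local algorithm'' of bounded complexity, and Holroyd's $m$-net finitely dependent processes then supply the randomness that the local algorithm consumes.
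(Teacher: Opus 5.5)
The gap is in your middle step. Your outer steps are fine: a block factor of a shift-invariant finitely dependent process is again one, and it lies in $X$ if its input lies almost surely in an SFT that the block code maps into $X$. Your closing appeal to \cite{Bernshteyn} and \cite[Theorems 7.2, 7.4]{Computerscience} is also all the paper itself does. But the reduction you sketch is not what those results give, and it fails as stated.

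Restricting $\Psi$ to a compact free $Z$ and passing to the SFT $Z_N$ (forbid every $N$-pattern outside $L(Z)$) does give a block code $Z_N\to X$. This needs all windows of radius $r+s$ to lie in $L(Z)$, where $s$ bounds the diameter of the forbidden patterns of $X$; radius $r$ alone is not enough. The problem is that nothing makes $Z_N$ carry a finitely dependent process, and for a bad $Z$ it cannot. Take $d=1$ and $Z$ the Thue--Morse subshift. It is infinite and minimal, hence contained in $F(2^{\Z})$, and the parity of a position is read off from a bounded window via the occurrences of $00$ and $11$. For large $N$ the same local rule defines $\pi\colon Z_N\to\Z/2\Z$ with $\pi(\sigma w)=\pi(w)+1$, so every periodic point of $Z_N$ has even period. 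The support of a finitely dependent process on $Z_N$ would be strongly irreducible by \Cref{FINDEP implies SI}, hence would contain points of odd period by \Cref{proposition: periodic in SI}. So no such process exists, even though proper $3$-colorings of $\Z$, for instance, do admit a continuous equivariant map from $F(2^{\Z})$.

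The ``marker-based SFT that factors into $F(2^{\Z^d})$'' does not repair this. Read literally, it has no periodic points, so it is empty when $d=1$. Read as an SFT through which $\Psi$ factors, its existence together with a finitely dependent process on it is exactly what must be shown. If you have in mind a Gao--Jackson space of tilings by boxes with sides $n$ and $n+1$, that would be circular here: the paper derives \Cref{corollary: perfect tilings} from the very theorem in question.

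What is missing is the right intermediate object: an SFT that carries a finitely dependent process for independent reasons, namely range-$k$ proper colorings. The precise statement to import is that if $\Psi$ exists, then for some $k$ there is a sliding block code $\phi$ from the SFT of range-$k$ proper colorings of $\Z^d$ (with a suitable finite palette) into $X$. This follows from two facts:
\begin{enumerate}
\item Bernshteyn's equivalence between continuous solutions on $F(2^{\Z^d})$ and deterministic LOCAL algorithms running in $O(\log^* n)$ rounds.
\item The normal form of such algorithms: compute a distance-$k$ coloring, then apply a constant-round rule $\phi$.
\end{enumerate}
So ``bounded complexity'' is the wrong summary. Constant-round rules are block factors of iid, which cannot even output proper colorings of $\Z$ \cite{Schramm}; the $\log^* n$ part is precisely what a finitely dependent coloring replaces.

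Given this, the proof takes two lines: apply $\phi$ to the shift-invariant finitely dependent range-$k$ coloring of \Cref{FINDEP RANGE M COLORING}. The palette size is harmless, since an $O(\log^* n)$ algorithm that uses a $q$-coloring as identifiers needs only $O(\log^* q)$ rounds, which is absorbed by taking $k$ large. The compact free subshift $Z$ plays no role.
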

For $d=1$, the converse of the above theorem also holds (\cite[Corollary 6]{Holroyd}). It is not known whether the converse of the above theorem is true for $ d >1$.
\subsection{Cocycles}\label{section:cocycle}\hspace*{\fill} \\
In this subsection, we introduce cocycles on subshifts and discuss some examples. If we wish to prove that there are no shift-invariant finitely dependent processes supported on a subshift $X$, then it is enough to show that there are no strongly irreducible subshifts in $X$. In Section \ref{NO SI SUBSHIFTS}, we will use the cocycles introduced in this subsection to prove that there are no strongly irreducible subshifts within some subshifts.

Here, We restrict to $\mathbb{Z}^d$ subshifts. We will denote the action of the element $ v $ of $\mathbb{Z}^d$ on $ A^{\mathbb{Z}^d}$ by $ x \rightarrow \sigma^{v}(x)$.
\begin{definition}
    Let $ X \subset A^{\mathbb{Z}^d}$ be a subshift and $H$ be a group. A $ H$-valued \textbf{cocycle} on $ X$ is a continuous function $c: X \times \mathbb{Z}^d \rightarrow H$ satisfying, \begin{equation}\label{cocycle} 
     c(x, v+ w) = c(x, v)\cdot c(\sigma^{v}(x), w).  
    \end{equation}
    \end{definition}
    \begin{remark}\label{remark: cocycle with respect to a subaction}
        In this paper $($see \Cref{ribbon tiling cocycle}$)$, we will also need to consider cocycles with respect to subactions instead of the full $\Z^d$ action. For instance, if $W$ is a subgroup of $ \Z^d$, then a function $c$ like in the definition above is called a cocycle with respect to the $W$-subaction if \Cref{cocycle} is satisfied for $v, w\in W$.
    \end{remark}
In the literature, cocycles (with respect to a finite index subgroup) are often referred to as height functions. Note that if $ c$ is a cocycle and $ c(\cdot, v)$ is constant for every $v \in \mathbb{Z}^d$, then $c$ is a homomorphism from $ \mathbb{Z}^d \rightarrow H$.

 \begin{example}

 There is a cocycle $c: \mathcal{X}_{3}^{d} \rightarrow \mathbb{Z}$ on proper $3$-colorings of $\mathbb{Z}^d$ which is defined as follows. Let $ x \in \mathcal{X}_{3}^{d}$ and let $e_i, i=1,2,\cdots, d$ be the standard unit vectors in $\Z^d$. Then define
\begin{equation}
c(x, e_i) = \begin{cases}
 1  & \text{ if } (x_{\mathbf{0}}, x_{e_i}) \in \{ (0,1), (1,2), (2,0) \}. \\
 -1 & \text{ if } (x_{\mathbf{0}}, x_{e_i}) \in \{ (0,2), (1,0), (2,1) \}. \\
\end{cases}    
\end{equation}
 
We then extend $c(x, \cdot) $ to $ \mathbb{Z}^d$ using the cocycle equation \eqref{cocycle}. In other words, to determine $ c(x, v)$ we consider a path from the origin to $ v$, we initialise $c(x, \mathbf{0})$ to $0$ and add $ 1$ to the cocycle value if we encounter configurations $ 0 - 1 \text{ $($a 1 next to a 0$)$ }$, $1 - 2$ , $2 - 0$ along an edge and subtract $1$ otherwise. Since the total change incurred after traveling around any pattern on a unit square in $ \Z^2$ is zero, the result is independent of the chosen path.

This cocycle was used in $($\cite[Proposition 12.2]{NoSIsubshifts}$)$ to show that there are no strongly irreducible subshifts in proper $3$-colorings of $ \mathbb{Z}^d$ for $d \geq 2$. This gives an alternate proof of the fact that there are no shift-invariant finitely dependent proper $3$-colorings of $ \mathbb{Z}^d$ for $d \geq 2$, which was proved in \cite{Schramm} using probabilistic ideas $($such as the analysis of variance of $ c(x, n e_1 )$ as $ n \rightarrow \infty)$. 
\end{example}

\begin{example}\label{tiling cocycle}
    Let $\R = \{ R_1, R_2\}$ where $R_1$ and $ R_2$ are two boxes with dimensions $ m \times 1$ and $ 1 \times n$ respectively. Let $X(\mathcal{R})$ be the set of all tilings of $ \mathbb{R}^2$ using $R_1$ and $R_2$. Conway and Lagarias $($\cite{Conway}$)$ and Thurston $($\cite{zbMATH04175886}$)$ introduced cocycles on tiling spaces to answer questions related to tileability of regions of the plane by a given set of tiles. The cocycle on $X(\mathcal{R})$ was defined by C. Kenyon and R. Kenyon \cite{zbMATH01256699} as follows:

Consider the free group $\mathbb{F}_2$ with generators $\{ h,v\}$. Our cocycle will take values in a quotient of this group. Here, $v$ denotes the change in cocycle value after traveling vertical edge of length 1 and $h$ denotes the change in cocycle value after traveling horizontal edge of length 1. To ensure that the cocycle values are defined without any ambiguity, we require that the total change in the value of the cocycle incurred after traveling along tiles $R_1$ or $R_2$ is identity. Hence, we must have 
\begin{equation}\label{tilingcocycle}
v  \cdot  h^{m}  \cdot  v^{-1}  \cdot  h^{-m}= e. \text{ and } v^{n}  \cdot  h  \cdot  v^{-n}  \cdot  h^{-1}= e.   
 \end{equation}  
 
To this end, consider the group defined by the relations \[ \Gamma = \langle h,v | v  \cdot  h^{m}  \cdot  v^{-1}  \cdot  h^{-m}, \text{ } v^{n}  \cdot  h  \cdot  v^{-n}  \cdot  h^{-1} \rangle .\]
The group $ \Gamma$ is called the tiling group associated with tiles $ R_1$ and $R_2$. 

We can achieve both of the above conditions of \Cref{tilingcocycle} simply by demanding: $h^{m}=e$ and $v^{n}=e$. Hence, we consider the group $G = \langle h,v | h^m, v^{n} \rangle$.
It is easy to see that $G \cong \mathbb{Z}/m \mathbb{Z}* \mathbb{Z}/n \mathbb{Z}$.

Let $x \in X(\mathcal{R})$ be a tiling and $ i \in \mathbb{Z}^2$. Let $\gamma$ be any path from the origin to $i$ along the tiling $x$. We define
 \begin{center}  $c_{\R}(x, i)$ := product of group elements in $G$ along $\gamma$.\end{center} 
See Figure \ref{rectangle height function} for an example. $ c_{\R}(x, i)$ is independent of the path $ \gamma$ chosen, thanks to the relation \eqref{tilingcocycle}. It is easy to check that $ c_{\R}$ satisfies the cocycle equation.
\begin{figure}[!htbp]
    \centering
    \includegraphics[width=0.4\linewidth]{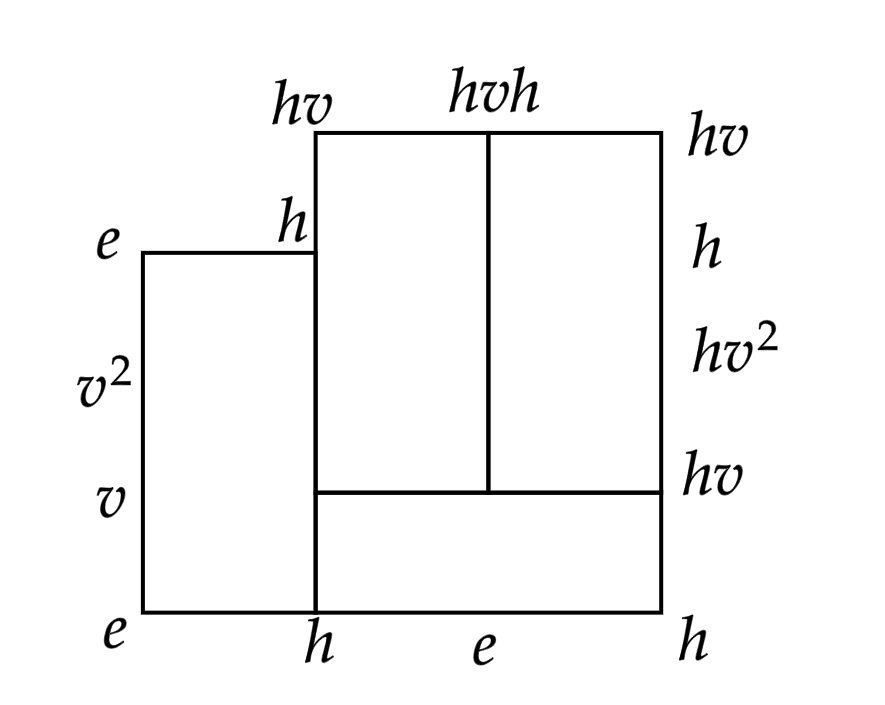}
    \caption{The cocycle for the $2 \times 1 $ and $ 1 \times 3$ boxes takes values in the group $\langle h, v | h^2 ,v^3  \rangle $}
    \label{rectangle height function}
\end{figure}
We consider this cocycle on $X(\mathcal{R})$ henceforth.

Here is a quick application of the cocycle introduced above.
\begin{lemma}\label{lemma: length or breadth is tileable}
If a box $R$ in $ \mathbb{R}^2$ can be tiled by $ \{R_1, R_2\}$ where $ R_1$ is a $ m \times 1$ box and $ R_2$ is a $ 1 \times n$ box, then either $ R$ can be tiled by $ R_1$ or $R$ can be tiled by $ R_2$.    
\end{lemma}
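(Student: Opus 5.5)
Let $R$ be an $a\times b$ box ($a$ the horizontal side, $b$ the vertical side) with a tiling by $R_1$ and $R_2$. We use the cocycle $c_{\R}$ of Example \ref{tiling cocycle}, which takes values in $G=\langle h,v\mid h^m,v^n\rangle\cong \Z/m\Z * \Z/n\Z$. The boundary of any tile evaluates to $e$ in $G$, so the product of the generators read along the closed boundary of $R$ is trivial in $G$. Reading counterclockwise from the lower left corner, this product is
\[ h^{a}\, v^{b}\, h^{-a}\, v^{-b} = e \quad \text{in } G. \]

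The goal is to show this forces $m\mid a$ or $n\mid b$. Suppose neither holds. Then $h^{a}$ is a nontrivial element of the factor $\Z/m\Z$, and $v^{b}$ is a nontrivial element of the factor $\Z/n\Z$. Hence $h^{a}v^{b}h^{-a}v^{-b}$ is a word whose letters alternate between nontrivial elements of the two factors, and it has length $4$. By the normal form theorem for free products, such a reduced alternating word is not the identity. This contradicts the displayed relation, so $m\mid a$ or $n\mid b$.

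It remains to turn divisibility into tileability. If $m\mid a$, each row of $R$ has length $a$ and splits into $a/m$ horizontal $m\times 1$ boxes, so $R$ is tiled by $R_1$. Similarly, if $n\mid b$, each column splits into $b/n$ vertical $1\times n$ boxes, so $R$ is tiled by $R_2$.

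The main point needing care is justifying that the boundary word is trivial. One can argue by induction on the number of tiles: removing a tile changes the boundary word by a conjugate of a tile's boundary word, which is trivial. Alternatively, one can embed the tiling of $R$ into the path-independence argument already described for $c_{\R}$, noting that path independence only needs the relations \eqref{tilingcocycle} on the tiled region. The free-product normal form step is standard and should pose no difficulty.
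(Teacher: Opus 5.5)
Your proof is correct and follows the paper's argument: the boundary word $h^{a}v^{b}h^{-a}v^{-b}$ must be trivial in $\Z/m\Z * \Z/n\Z$, and the free-product normal form then forces $m\mid a$ or $n\mid b$. Beyond the paper, you spell out the normal-form step and the easy passage from divisibility to an actual tiling by rows or columns. Of your two justifications that the boundary word is trivial, the path-independence route is the cleaner one: extend the tiling of $R$ to a tiling of the plane by horizontal $m\times 1$ bars, then apply path independence of $c_{\R}$. The tile-removal induction instead needs care to keep the remaining region simply connected.
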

\begin{proof}
Let $l$ and $ b$ be horizontal and vertical sidelengths of $ R$. Since $ R$ can be tiled by $\{R_1, R_2\}$, the word corresponding to the boundary of $ R$ must be identity, i.e., $ h^lv^bh^{-l}v^{-b}= e$. Clearly, this implies that either $ h^l= e$ or $ v^b=e$ in $ \Z/m\Z*\Z/n\Z$. But then either $ l$ is a multiple of $ m$ or $ b $ is a multiple of $n$.     
\end{proof}
It is easy to see that the abelianization of the group $G$ is isomorphic to $\mathbb{Z}/m \mathbb{Z} \times \mathbb{Z}/n\mathbb{Z}$. There is a unique homomorphism $\pi: G \rightarrow \mathbb{Z}/m \mathbb{Z} \times \mathbb{Z}/n\mathbb{Z}$ which satisfies $\pi(h) = (\bar{1}, \bar{0}) $ and $ \pi(v) = (\bar{0}, \bar{1}).$ We will often refer to $ \pi(g)$ as \textbf{modulo class} of $g$.

The following observation will be useful in Section \ref{NO SI SUBSHIFTS}. 
\begin{lemma}
Let $ i = (a, b) \in \mathbb{Z}^2$. Then for any $x \in X(\R)$, $ \pi(c_{\R}(x, i)) = (a \Mod m, b  \Mod {n})$. In particular, $ \pi(c_{\R}(x, i))$ is independent of $ x.$
\end{lemma}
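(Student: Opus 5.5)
The plan is to compute $c_{\R}(x,i)$ along a well-chosen path and then push it through the homomorphism $\pi$. Since $c_{\R}(x,i)$ does not depend on the path $\gamma$ chosen along the tiling (this is guaranteed by the relations in \eqref{tilingcocycle}), we may fix any admissible path $\gamma$ in the tiling $x$ from the origin to $i=(a,b)$.

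Along $\gamma$ we read off a word in the letters $h^{\pm1}$ and $v^{\pm1}$: one letter for each unit horizontal or vertical edge traversed, with the sign given by the direction of travel. Because $\gamma$ starts at $\mathbf{0}$ and ends at $(a,b)$, two counts are forced:
\begin{enumerate}
\item the net number of horizontal steps (the number of $h$'s minus the number of $h^{-1}$'s) equals $a$;
\item the net number of vertical steps equals $b$.
\end{enumerate}

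Next we apply $\pi$. It is a homomorphism into the abelian group $\mathbb{Z}/m\mathbb{Z}\times\mathbb{Z}/n\mathbb{Z}$, so the order of the letters is irrelevant. Using $\pi(h)=(\bar1,\bar0)$ and $\pi(v)=(\bar0,\bar1)$, the image of the word is therefore $(a \Mod m,\, b \Mod n)$. This expression involves only $i$, not $x$, which gives the independence statement as well.

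The only point that needs care is that paths must run along tile edges of $x$, so one cannot simply take the straight-line path. This causes no difficulty, because the net displacement of any lattice path from $\mathbf{0}$ to $(a,b)$ is $(a,b)$ regardless of its shape, and that displacement is all the computation uses.

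An alternative route avoids choosing a path altogether. By the cocycle identity, $\pi\circ c_{\R}(x,\cdot)$ is additive in $i$. It then suffices to check that $\pi(c_{\R}(x,e_1))=(\bar1,\bar0)$ and $\pi(c_{\R}(x,e_2))=(\bar0,\bar1)$; each of these can be verified by choosing a path to $e_1$ or $e_2$ along the tiling and counting its net displacement as above.
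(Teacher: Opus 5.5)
Your proof is correct and essentially the paper's. The paper follows your ``alternative route'': it notes that $c_{\R}(x,e_1)=v^khv^{-k}$ (and similarly for $e_2$), so $\pi(c_{\R}(x,e_1))=(\bar1,\bar0)$ and $\pi(c_{\R}(x,e_2))=(\bar0,\bar1)$, and then extends to all $(a,b)$ inductively via the cocycle equation. Your main route does the same exponent-sum count in the abelian quotient along a whole path at once, which avoids both the induction and the need to identify the local form of $c_{\R}(x,e_i)$.
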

\begin{proof}

For any $x \in X(\mathcal{R})$, looking at the tile configuration near the origin, we see that $c_{\R}(x, e_1) = v^k   \cdot   h   \cdot   v^{-k}$ for some $ k= k(x) \in \{ 0,1, \cdots, n-1\}$. Hence $\pi(c_{\R}(x, e_1)) = ({1}\Mod {m},{0}\Mod {n})$. Similarly $\pi(c_{\R}(x, {e}_2)) = ({0}\Mod {m},{1}\Mod {n})$ for any $x \in X(\mathcal{R})$. By using the cocycle equation (\Cref{cocycle}), we can prove inductively that if $i = (a, b) \in \Z^2$, then $\pi(c_{\R}(x, i)) = (a  \Mod m, b  \Mod {n})$ for any $ x \in X(\R)$.
\end{proof}
In Section \ref{NO SI SUBSHIFTS}, we use the cocycle introduced above to prove the following theorem. 

\begin{restatable}{theorem}{NOSIINTILINGS}\label{theorem: no_SI_rectangular_tiling}
    Let $ \R = \{R_1, R _2, \cdots, R_k\}$ be a collection of boxes in $ \mathbb{R}^2$ with integer sidelengths. If there exist $m,n>1$ such that the $m\times 1$ and $1\times n$ boxes tile each element of $\R$, then there is no strongly irreducible subshift in $ X(\R)$.
\end{restatable}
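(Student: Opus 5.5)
The plan is to argue by contradiction: suppose $Y\subset X(\R)$ is a strongly irreducible subshift, and use \Cref{triviality of cocycles} on a suitable cocycle on $Y$.

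\textbf{Step 1: the cocycle on $X(\R)$.} First I transport the Kenyon--Kenyon cocycle of Example~\ref{tiling cocycle} to $X(\R)$. Let $G=\langle h,v\mid h^m,v^n\rangle\cong \Z/m\Z*\Z/n\Z$. Since each $R_i$ (of size $l_i\times b_i$) is tiled by $m\times 1$ and $1\times n$ boxes, its boundary word is $h^{l_i}v^{b_i}h^{-l_i}v^{-b_i}$. This word equals $e$ in $G$: it is a product of the boundary words of the small tiles in a tiling of $R_i$, each of which is trivial. Hence the path products along edges of a tiling $x\in X(\R)$ are path independent. They define a continuous cocycle $c:X(\R)\times\Z^2\to G$, and, as in the earlier lemma, $\pi(c(x,(a,b)))=(a \bmod m,\, b\bmod n)$ for every $x$.

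\textbf{Step 2: applying the rigidity theorem.} The group $G$ is a free product of cyclic groups, hence virtually free and so hyperbolic, and it has the CIC property. (For $m=n=2$, $G$ is the infinite dihedral group, and the CIC property is checked directly.) Applying \Cref{triviality of cocycles} to $c|_Y$ gives a homomorphism $\theta:\Z^2\to G$ and a constant $C$ with $c(x,u)=h_1\theta(u)h_2$, where $|h_1|,|h_2|\le C$. The image $\theta(\Z^2)$ is an abelian subgroup of a free product of cyclic groups. By Kurosh, such a subgroup is either conjugate into a factor (so finite) or infinite cyclic. In particular $\theta(e_1)$ and $\theta(e_2)$ are powers of a common element $g_0$.

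\textbf{Step 3: local structure of $c(x,\cdot)$.} Along a horizontal step, $c(x,u+e_1)=c(x,u)\,v^{k}hv^{-k}$ with $0\le k<n$, where $k$ is read off from the tile containing that edge. Similarly, a vertical step multiplies by $h^{j}vh^{-j}$. So $c(x,\cdot)$ is a path in the Cayley graph of $G$ in which horizontal moves are conjugates of $h$ and vertical moves are conjugates of $v$.

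\textbf{Step 4: the contradiction (main obstacle).} I would try to show both directions are incompatible with the near-homomorphic form.

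In the first case, $\theta(e_1)$ has infinite order. Then $g_0$ is a cyclically reduced (after conjugation) alternating word in the normal form of $G$. Its powers $\theta(ae_1)$ must be shadowed by the products $\prod_j v^{k_j}hv^{-k_j}$ along every horizontal row. This forces the row-words to be essentially periodic, with the same axis in every row and in every configuration of $Y$. I would then use strong irreducibility to glue an arbitrary allowed pattern near the origin with an arbitrary allowed pattern near $Ne_2$. This should produce a configuration whose vertical cocycle $c(x,Ne_2)$ can be altered by a fixed nontrivial element in the middle of the word. That alteration contradicts bounded distance from $\theta(Ne_2)$, because both sides must lie within $2C$ of a geodesic along the same axis, while the conjugating shifts $h^j$ on vertical edges cannot all be absorbed. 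If instead $\theta(e_2)$ has infinite order and $\theta(e_1)$ does not, the argument is symmetric.

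In the second case, $\theta(\Z^2)$ is finite. Then $c$ is bounded on $Y$, so $c(x,\cdot)$ takes finitely many values. For $a$ not divisible by $m$, bounded row-words $\prod_j v^{k_j}hv^{-k_j}$ force each consecutive block of $m$ horizontal edges in a row to lie in the same $v$-level, that is, to be cut by a horizontal tile boundary in a rigid way. This recreates the ``forcing to distance $\lfloor N/2\rfloor$'' phenomenon of Figure~\ref{Domino}. Strong irreducibility then lets me place two patterns with incompatible levels at bounded distance, which is impossible.

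I expect Step 4 to be the main difficulty, in particular making the infinite-order case precise via normal forms in the free product. If that becomes unwieldy, I would first try to reduce to the finite case by showing directly from Step 3 and the modulo-class identity that $\theta(e_1)$ and $\theta(e_2)$ must commute with conjugates of both $h$ and $v$, and hence are trivial.
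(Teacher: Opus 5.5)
There is a genuine gap: Steps 1--3 are only set-up, and Step 4, which has to carry the whole proof, is not an argument.

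\textbf{What is missing in Step 4.}
\begin{itemize}
\item In the infinite-order case you never say what is contradicted. Inserting a fixed element in the middle of $c(x,Ne_2)$ is not shown to be incompatible with $c(x,u)=h_1\theta(u)h_2$. The claim that ``the conjugating shifts $h^j$ cannot all be absorbed'' is asserted, not proved.
\item The bounded case (``each block of $m$ horizontal edges lies in the same $v$-level'', followed by a forcing argument) is likewise unjustified.
\item The fallback, that $\theta(e_1),\theta(e_2)$ commute with conjugates of $h$ and $v$, has no basis.
\end{itemize}

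\textbf{A smaller issue in Step 1.} Products along tile edges do not define $c(x,\cdot)$ at lattice points interior to a tile of $\R$. Fix once and for all a tiling of each $R_i$ by $m\times 1$ and $1\times n$ bars. This gives a sliding block code $X(\R)\to X(\R')$ with $\R'=\{m\times 1,1\times n\}$, and the image of an SI subshift under it is again SI. So you may work in $X(\R')$; this is exactly the paper's reduction.

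\textbf{The missing idea.} It is a clash between syllable length (the metric $D$) and the abelianization $\pi$.
\begin{enumerate}
\item \emph{The bounded case.} SI implies mixing. For $w$ with $w_1\not\equiv 0\pmod m$ and $w_2\not\equiv 0\pmod n$, both coordinates of $\pi(c(x,w))$ are nonzero. So $c(x,w)$ is not conjugate into $\langle h\rangle$ or $\langle v\rangle$, and it has infinite order. Then \Cref{mixing implies infinite} makes $c$ unbounded, say along $e_1$.
\item \emph{Exact conjugacy on a periodic row.} Take $y$ with $y|_{\Z e_1}$ periodic of period $p\equiv 1\pmod n$, after a higher-block recoding so that $c(\cdot,e_1)$ depends only on $x_{\mathbf 0}$. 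Then $c(y,pe_1)=\alpha g^{p}\alpha^{-1}$ \emph{exactly}, for a fixed infinite-order $g$; this is \eqref{equation:conjugation_when_periodic}.
\item \emph{Getting this from your $\theta$.} Pigeonholing the bounded pairs $(h_1,h_2)$ in $c(y,pe_1)^r=c(y,rpe_1)=h_1\theta(e_1)^{rp}h_2$ gives $c(y,pe_1)^s=h_1\theta(e_1)^{ps}h_1^{-1}$ for some $s\neq 0$. \Cref{conjuagate powers}, with equal exponents on both sides, then yields $c(y,pe_1)=h_1\theta(e_1)^{p}h_1^{-1}$.
\item \emph{Applying $\pi$.} Since $\pi$ is conjugation invariant, $(p,0)=p\,\pi(g)$. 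Because $p\equiv 1\pmod n$, the $v$-exponent sum of $g$ vanishes mod $n$.
\item \emph{Syllable count.} A cyclically reduced conjugate of $g$ alternates and has an even number ($\ge 2$) of syllables. The exponent condition forces at least two $v$-syllables, so $D(c(y,rpe_1),e)\ge 4rp-O(1)$.
\item \emph{Contradiction.} Your own Step 3 shows that a row word $\prod_j v^{k_j}hv^{-k_j}$ of length $N$ has at most $2N+1$ syllables (\Cref{tiling slope}).
\end{enumerate}

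This is why your Step 4 has nothing to bite on as written. The bounded-error form from \Cref{triviality of cocycles} alone transmits no information through $\pi$, since $\pi$ takes values in a finite group. You need the exact conjugacy on a periodic row and the choice $p\equiv 1\pmod n$.
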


In particular, this implies that $ X(\R)$ does not support any shift-invariant finitely dependent processes in this case.

By a \textbf{continuous tiling} of $ F(2^{\Z^2})$ by the set of tiles in $ \mathcal{R}$, we mean a continuous equivariant map from $\Psi: F(2^{\Z^2}) \rightarrow X(\R)$. Problem 2 in \cite[Section 4.5]{gao2023continuouscombinatoricsabeliangroup} asked if there exists a continuous tiling of $ F(2^{\Z^2})$ by tiles in $ \R = \{ R_1, R_2 \}$ where $ R_1$ is $ m\times m$ box and $ R_2$ is $ (m+1)\times(m+1)$ box, for an integer $ m \geq 2$. The following corollary answers this question in the negative.
\begin{restatable}{corollary}{NOCONTINUOUSTILINGS}\label{NO CONTINUOUS ON Tilings}
      Let $ \R = \{R_1, R _2, \cdots, R_k\}$ be a collection of boxes in $ \mathbb{R}^2$ with integer sidelengths. If there exist $m,n>1$ such that the $m\times 1$ and $1\times n$ boxes tile each element of $\R$, then there is no continuous tiling of $ F(2^{\Z^2})$ by $ X(\mathcal{R})$.
\end{restatable}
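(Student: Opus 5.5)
The corollary should follow by combining \Cref{theorem: no_SI_rectangular_tiling} with \Cref{continuous imples findep} and \Cref{FINDEP implies SI}. Argue by contradiction: suppose there is a continuous equivariant map $\Psi: F(2^{\Z^2}) \rightarrow X(\R)$.

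First check that $X(\R)$ meets the hypothesis of \Cref{continuous imples findep}. By Example \ref{Rectangular tilings}, $X(\R)$ can be recoded as a subshift of finite type of $\A^{\Z^2}$ for a suitable finite alphabet $\A$. A continuous tiling of $F(2^{\Z^2})$ is by definition a continuous equivariant map into this SFT. Since the recoding is a topological conjugacy between the tiling space and the SFT, continuity and equivariance are preserved. So \Cref{continuous imples findep} applies and gives a shift-invariant finitely dependent process $(Y_v)_{v\in\Z^2}$ whose support is contained in $X(\R)$.

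Next, by \Cref{FINDEP implies SI} the support of $(Y_v)$ is a strongly irreducible subshift. It is nonempty, since the support of any probability measure on a compact space is nonempty, and it lies inside $X(\R)$. This contradicts \Cref{theorem: no_SI_rectangular_tiling}, which says that under the stated tileability hypothesis on $\R$ the space $X(\R)$ contains no strongly irreducible subshift. Hence no continuous tiling exists.

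The only point needing care, and it is minor, is that "support contained in $X(\R)$" really means a subshift of $X(\R)$, via the SFT encoding of tilings. All the substantive work sits in \Cref{theorem: no_SI_rectangular_tiling}, which is assumed here. For the specific question of Gao, Jackson, Krohne and Seward, one should also note that $m\times m$ and $(m+1)\times(m+1)$ boxes are tiled by the $m\times 1$ and $1\times(m+1)$ boxes: tile the $m\times m$ box by vertical $1\times\ldots$ strips cut into $m\times 1$ pieces, and tile the $(m+1)\times(m+1)$ box by $1\times(m+1)$ columns. Since $m\ge 2$, both $m>1$ and $m+1>1$ hold.
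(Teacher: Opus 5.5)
Your proof is correct and follows the paper's route: the paper derives this corollary from \Cref{continuous imples findep} and \Cref{corollary: NO FINDEP on tilings}, and the latter is just \Cref{theorem: no_SI_rectangular_tiling} combined with \Cref{FINDEP implies SI}, which is exactly your chain, including the check that the support is nonempty. One small correction to your aside, which is not needed for the corollary: the $m\times m$ box is tiled by stacking $m$ horizontal $m\times 1$ bars, not by vertical strips.
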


Note that by \Cref{lemma: length or breadth is tileable}, the condition in the above corollary is equivalent to the existence of a partition $\R = A \sqcup B$, $($where $A$ or $B$ are possibly empty$)$ such that the horizontal $($resp. vertical $)$ sidelength of each rectangle in $A$ $($resp. in $B$ $)$ is divisible by $m$ $($resp. by $n$ $)$. It will follow from \Cref{corollary: positive condition for CONTINUOUS} that if a collection of boxes $ \R$ in $\mathbb{R}^2$ does not satisfy the assumptions of \Cref{theorem: no_SI_rectangular_tiling}, then there exists continuous tiling of $F(2^{\Z^2})$ by $ X(\mathcal{R})$. Thus, we have the following characterization.
\characterisationboxes*
\end{example}
\begin{example}\label{Hom cocycle}
For $d \geq 2$, consider the homshift $ \textit{Hom}(\Z^d , H)$ where $ H = (V, E)$ is a countable undirected graph. We further assume that $ H$ is simple, i.e., it does not contain self-loops and multiple edges. Cocycles on $ \textit{Hom}(\Z^d, H)$ were used in \cite{gao2023continuouscombinatoricsabeliangroup}, \cite{entropyminimality} and \cite{chandgotia2025undecidabilityblockgluingclasses} to answer various questions related to these homshifts.

 For every edge $e=(x,y) \in E$, we denote the reverse edge $ (y,x) $ by $ \bar{e}$. Consider the free group $F(E)$ whose generators are the set of edges in $ E$, where the inverse of any edge $e$ is declared to be $ \bar{e}$, that is, $E$ is a symmetric set of generators for $ F(E)$. Our cocycle will take values in a quotient of this group. Intuitively, if we want to ensure that the cocycle value at a point $v \in \Z^d$ is independent of the path from origin to $ v$, it is enough to ensure that any word obtained by traveling around a pattern on a square in $\Z^d$ is identity.

More precisely, a cycle of length $n$ in $ H$ is a sequence $ (v_0, v_1,\cdots, v_n)$ of vertices in $H$ such that $ (v_i, v_{i+1})\in E $ for all $i$ and $ v_n= v_0$. Let $ e_i:=(v_{i-1}, v_i)$ for $ 1 \leq i\leq n $. Then the word corresponding to the $n$-cycle $ (v_0, v_1,\cdots, v_n)$ is $e_1e_2\cdots e_n$. We demand that every word corresponding to a $4$-cycle is equal to identity. To this end, let $\mathcal{P}_4 $ denote the collection of all words corresponding to a $4$-cycle in $H$. Let $ N$ be the smallest normal subgroup of $ F(E)$ that contains all words in $\mathcal{P}_4$.  Then we define $ \mathcal{G_{H}} = F(E)/N$. Let $ \pi: F(E) \rightarrow F(E)/N$ denote the natural quotient map.

For $ x \in Hom(\Z^d, H)$, we define
\[ c(x, e_i) := \pi((x_{\mathbf{0}},x_{e_i})).\]
This is well defined since $ (x_{\mathbf{0}}, x_{e_i})\in E$.

Now, if $ x \in Hom(\Z^d, H)$ and $v \in \Z^d$, then for any path $ \gamma$ from the origin to $v$ in $ \Z^d$, we define $c_{H}(x, v)$ as the product of $\pi(e)$ over edges along $\gamma$, in the group $\mathcal{G}_{H}$. The value $c_{H}(x, v)$ is independent of $\gamma$, since we have quotiented the group $ F(E)$ by the subgroup $N$.

A $ 4$-cycle $(v_0, v_1, v_2, v_3, v_0)$ is \textit{non-backtracking} if $ v_0 \neq v_2$ and $ v_1 \neq v_3$. If the graph $ H$ does not contain a non-backtracking $4$-cycle, then we call it four-cycle free. If $ H$ is four-cycle free, then $N$ is the trivial group and therefore, $\mathcal{G_{H}}= F(E)$.
\begin{remark}\label{remark: Hom cocycle is same as universal cover}
    The cocycle $($or height function $)$ used in \cite{entropyminimality} takes values in universal cover of the graph $H$. It is easy to see that when $ H$ is four cycle free graph, then we can naturally equip the universal cover with a group structure so that this cocycle is the same as that of $ c_{H}$.
\end{remark}

We will use this cocycle to prove the following theorem.
\begin{restatable}{theorem}{NOSIINHOM}\label{NO SI shifts in Hom}
    If $ H$ is a finite four-cycle free simple graph. Then for $d \geq 2$, $\textit{Hom}(\Z^d, H)$ does not contain any strongly irreducible subshifts.
\end{restatable}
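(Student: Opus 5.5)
We prove Theorem~\ref{NO SI shifts in Hom} by contradiction, using the cocycle rigidity of Theorem~\ref{triviality of cocycles}. Suppose $Y\subset \textit{Hom}(\Z^d,H)$ is strongly irreducible with SI distance $k$. Since $H$ is four-cycle free, $\mathcal{G}_H=F(E)$ is a free group. Its generating set is the finite symmetric set $E$, so after choosing one orientation per edge it is the free group on $|E|/2$ letters. Free groups are torsion-free hyperbolic and therefore have the CIC property. Restricting $c_H$ to $Y$ gives a continuous cocycle, so Theorem~\ref{triviality of cocycles} supplies a homomorphism $\theta:\Z^d\to F(E)$ and a constant $C$ with $c_H(y,v)=h_1\theta(v)h_2$ and $|h_1|,|h_2|\le C$.

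\textbf{Step 1: $\theta$ has cyclic image.} The image of $\theta$ is an abelian subgroup of a free group, hence cyclic. So $\theta(v)=g^{\phi(v)}$ for some $g\in F(E)$ and some homomorphism $\phi:\Z^d\to\Z$. Since $d\ge 2$, $\ker\phi$ contains a nonzero vector $w$. Along multiples of $w$ we then get $|c_H(y,nw)|\le 2C$ for every $y\in Y$ and every $n$. In the word metric this means the height $c_H(y,\cdot)$ stays in a bounded ball of the tree $F(E)$ along the entire line $\Z w$. Moreover, $c_H(y,v)$ is the image in $F(E)$ of the walk $y$ traces in $H$ along a path, so it is the reduced (non-backtracking) path in the universal cover tree $T$ of $H$ (Remark~\ref{remark: Hom cocycle is same as universal cover}).

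\textbf{Step 2: use strong irreducibility to force large heights along $\Z w$.} Pick any $y\in Y$ and an edge $(a,b)=(y_{\mathbf 0},y_{e_1})$. Consider the pattern $p$ on a long segment $\{0,e_1,\dots,Le_1\}$ obtained by restricting $y$ to it. The height differences along $e_1$ may themselves be bounded, so this alone need not help. The plan is instead to find, in some element of $Y$, heights that differ by a large amount between two points at bounded $\ell_1$ distance from the line $\Z w$. Using strong irreducibility, glue into a single configuration translates of patterns taken from $y$ and $\sigma^{u}y$ (possibly several such pieces), placed at positions $0$ and $Nw$.

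Two facts drive this. First, the heights of a homomorphism are $1$-Lipschitz into $T$. Second, pieces glued at separation $k$ can shift the tree coordinate only by a bounded amount, controlled by the number of edges and $k$. Since a single configuration keeps the height along $\Z w$ within $2C$, the contradiction must come from the following observation. Strong irreducibility lets us place, at the point $Nw$, a pattern whose height relative to the origin grows like $N\cdot\epsilon$. This uses a direction $u$ with $\phi(u)\ne 0$: glue a row of copies of patterns, each contributing a step of displacement along $g$. Alternatively, if $\phi\equiv 0$, a different argument is needed.

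\textbf{Step 3: case $\phi\equiv0$ (all heights bounded).} Then $c_H(y,\cdot)$ takes values in a finite ball of $T$ for every $y$. Heights in a tree change by an edge at each step, so $y$ is essentially a homomorphism into a finite subtree lifted to $T$, and $Y$ has a periodic parity structure. Use strong irreducibility to glue two patterns whose lifts sit at tree distance more than $2C+k$ apart. Lipschitz-ness into the tree then blocks the gluing: any path of length about $k$ connecting the two patterns moves the lifted height by at most $k$. Concretely, take $p$ = $y$ restricted to a large box, where $y$ has heights near one leaf-direction of the ball, and $q$ a translate chosen so that the connecting height must be far away. Since the map $x\mapsto c_H(x,\cdot)$ with bounded image is constrained near every point, the glued configuration would need a height at least $R-k$ somewhere, contradicting the bound $2C$ once $R$ is large.

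The main obstacle is making Step 2 and Step 3 rigorous. The boundedness $|h_i|\le C$ controls heights only up to conjugation-like perturbations. So the honest contradiction must come from exhibiting, inside $Y$ itself, configurations whose height along $\Z w$ exceeds $2C$. I would first try to obtain such configurations by iterated strong-irreducibility gluing of a fixed pattern whose height displacement is a nontrivial reduced word $t$, placed at spacing $k+\text{diam}$ along $w$. In a free group the gluing corrections cannot cancel the accumulated $t^N$ except by bounded amounts, and this gives $|c_H(z,Nw)|\to\infty$.
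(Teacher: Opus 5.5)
Your Step~1 is fine, but the whole contradiction is deferred to Steps~2 and~3, and neither mechanism can work. Bounded heights along a line $\Z w$ are not contradictory in themselves. Theorem~\ref{triviality of cocycles} forces such a bounded line for every SI subshift carrying a free-group-valued cocycle when $d\ge 2$; for instance, a coboundary on a full shift is bounded along every line. So any contradiction must exploit specific features of $c_H$.

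In Step~2, the claim that the gluing corrections cannot cancel $t^N$ is unsupported. Strong irreducibility gives no control over the increments $\gamma_j$ across the gaps. In a free group, $t\gamma_1t\gamma_2\cdots t\gamma_N$ is forced to grow only when $\tau(t)$ is large compared with the $|\gamma_j|$. But by the rigidity you already applied, every displacement in $Y$ between points differing by a multiple of $w$ has length at most $2C$, so no such $t$ is available. Step~3 is ill-posed. Heights are normalised by $c_H(x,\mathbf 0)=e$, so two patterns have no ``tree distance'' until the glued configuration fixes their relative position. Moreover, no pattern in $Y$ has heights spreading by much more than $4C$ anyway.

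What is missing are the two ingredients the paper uses.

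\emph{Bounded case} (your case $\phi\equiv 0$). Use parity: for $\|v\|_1$ odd, $c_H(x,v)$ is a reduced word of odd length in the free group $F(E)$, hence nontrivial and of infinite order. Lemma~\ref{mixing implies infinite} then contradicts boundedness: take $x$ whose height set $S_x$ is maximal and glue a second copy of it at such an offset $v$. Maximality gives $c_H(z,v)S_x=S_x$, so all positive powers of $c_H(z,v)$ lie in $S_x$.

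\emph{Unbounded case} (some $\phi(e_i)\ne 0$). Compare slopes. Since heights are $1$-Lipschitz, $\Phi(e_i)\le 1$. Your Step~1 gives $|c_H(x,ne_i)|\ge n|\phi(e_i)|\tau(g)-2C$ with $\tau(g)\in\N$, because translation lengths of nontrivial elements of a free group are positive integers. Hence $\Phi(e_i)=1$ exactly. Strong irreducibility upgrades this to $|c_H(x,re_i)|=r$ for all $x\in Y$ and $r\in\N$; otherwise, gluing copies of a backtracking segment periodically along $\Z e_i$ drops the slope below $1$. Lemma~\ref{lemma: rigidity given slope 1}, which is where four-cycle freeness genuinely enters, then shows that allowed patterns on boxes are determined by their boundaries. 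So $Y$ is frozen, which is impossible for a nontrivial SI subshift (Lemma~\ref{slope}).

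Your proposal uses four-cycle freeness only to make $\mathcal G_H$ free and has no substitute for this slope-$1$-implies-frozen step.
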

\end{example}
\begin{example}\label{ribbon tiling cocycle}
    For an integer $ n \geq 3$, consider the subshift $X(ribbon, n)$ $($see \Cref{Ribbon tilings} $)$ of the tilings of $\mathbb{R}^2$ using ribbon tiles of order $n$. As before, the corners of the ribbon tiles are assumed to lie on the $ \Z^2$ lattice. In addition, recall that we color the unit square with the lower left corner at $ (i, j) \in \Z^2$ by color $ (i+j) \Mod{n}$. The following was introduced by Scott Sheffield in \cite{ScottRibbon}.
    \begin{figure}[!htbp]
        \centering
        \includegraphics[width=0.4\linewidth]{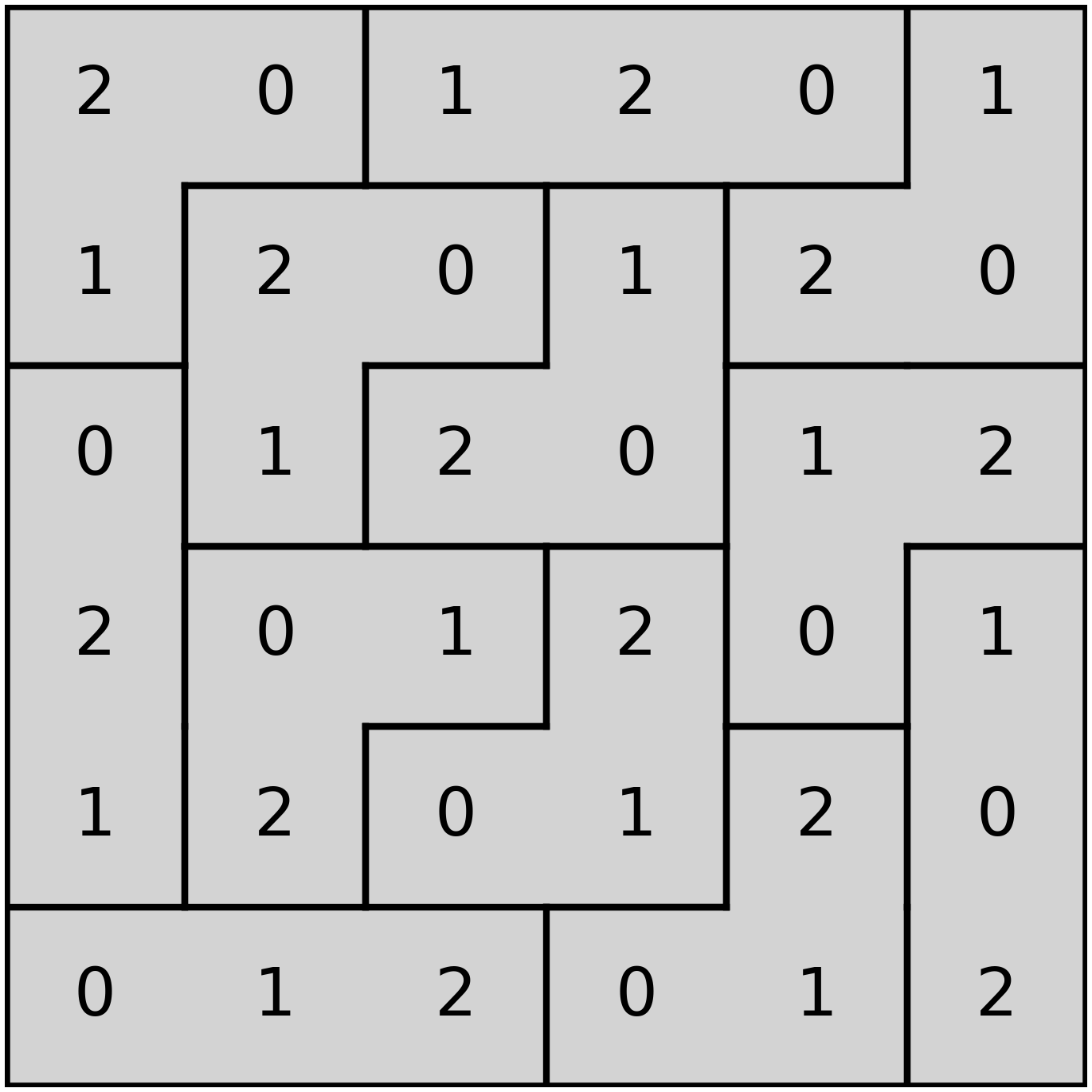}
        \caption{A pattern of ribbon tiling}
        \label{fig:placeholder}
        \vspace{-12pt}
    \end{figure}
    
    We first define a function $c_{rib}: X(ribbon, n) \times \Z^2 \rightarrow \Z^n$ $($whose values will be referred to as `heights' $)$ by specifying height changes across edges. To this end, let $ \mathbf{e}_{i, i+1}, i=0,1,\cdots, n-1$ denote the standard basis elements of $ \Z^n$ $($the choice of notation will become clear soon $)$. We say that an edge in $\Z^2$ has the type $(i,i+1)$, if it lies at the intersection of squares of color $i$ and $i+1$. Note that every edge of $\Z^2$ is of the type $(i,i+1)$ for some $i \in \Z/n\Z$.
    We associate the weight $\mathbf e_{i,i+1}$ with the directed edge of type $ (i, i+1)$ if it is oriented so that the square of color $ i+1$ on the left and the weight $ -\mathbf e_{i,i+1}$ if the square of color $ (i+1)$ is on the right of the edge. Note that if we go around any square of color $i$ in the clockwise direction, then the sum of weights we get is $2 \mathbf e_{i,i+1}-2\mathbf e_{i-1, i}$. Let $x \in X(ribbon, n)$ be a ribbon tiling. We say that an edge in $\Z^2$ lies on a tile in $x$ if it lies on the boundary of some tile in the tiling $x$. Otherwise, if the edge lies in the interior of some tile in $x$, then we say that the edge crosses a tile in $x$. We say that a path $\gamma$ in $ \Z^2$ lies entirely on a tiling $x$, if every edge in $ \gamma$ lies on a tile in $x$. For all $x\in X$, $v\in \Z^2$ and path $\gamma$ from the origin to $v$ lying on the tiling $x$ define
    \[ c_{rib}(x , v) := \text{sum of weights along the directed edges in $ \gamma$}.\]
    It follows that this is independent of the path $ \gamma$ because the sum of weights along the edges of a ribbon tile $($in either orientation $)$ is equal to the sum of the weights around squares of each color, which in turn equals $\mathbf0$.

    However, $ c_{rib}$ is not a cocycle since one needs to be `aware' of the location of an edge in $ \Z^2$ to determine the height change along that edge. This problem can be solved as follows:

    Let $ W$ be the subgroup of $ \Z^2$ defined by
    \[ W:= \{ (i,j): i+j = 0\Mod{n}\}.\]
    Note that if we translate an edge of type $(i, i+1)$ in $ \Z^2$ by a vector in $ W$, then the translated edge also has type $ (i, i+1)$. Therefore, we see that for any $ x \in X(ribbon, n)$ and $ w_1, w_2 \in W$,
    \[ c_{rib}(x, w_1 + w_2 )= c_{rib}(x, w_1) + c_{rib}(\sigma^{w_1}(x ), w_2).\]
    In other words, $ c_{rib}$ is a cocycle with respect to $W$ on $X(ribbon,n)$.

    We now note some observations about this cocycle $($with respect to $W$ $)$ which will be useful in Section \ref{NO SI SUBSHIFTS}. For $ u= (u_1, u_2)$ and $ v = (v_1, v_2)$ in $ \Z^2$, we say that the unit square $ s_{v}$ with its lower-left corner at $v$ is higher $($lower $)$ than the unit square $ s_u$, if $ v_1+v_2 > u_1+u_2$ $($$ v_1+v_2 < u_1+u_2$ $)$. Note that for every ribbon tile, one can go from the lowest square to the highest square by moving either upwards or rightwards increasing the color by $1$ at each step. Thus, if the lowest square has color $j+1$, then the highest square has color $(j+1 +(n-1))\Mod n= j\Mod n$. We say that a tile has type $(j, j+1)$ if the lowest and highest squares in the tile have colors $j+1$ and $j$, respectively. If a directed edge joining adjacent vertices $ u$ and $ v$ in $\Z^2$ has type $(i,i+1)$ with a square of color $(i+1)$ on its left and crosses a tile of type $(j, j+1)$ in a tiling $x$, then the cocycle value changes by $2\mathbf{e}_{j, j+1}-\mathbf{e}_{i, i+1}$ when going from $ u$ to $v$ $($i.e. $c_{rib}(x,v)-c_{rib}(x,u)= 2\mathbf{e}_{j, j+1}-\mathbf{e}_{i, i+1}$ $)$: This can be seen by just summing up the weights going anticlockwise around the squares from color $i+1$ to $j$ to get $2\mathbf{e}_{j,j+1}$ and then subtracting the weight of the edge from $u$ to $v$ because it is not on the boundary of the tile. This implies that for any $x\in X(ribbon,n)$, if the path $(0,0)\rightarrow (1,0) \rightarrow(1,-1)$ is contained in the tile of type $(j, j+1)$ in the tiling $x$, then
    \begin{equation}\label{equation:diagonal_descent}
      c_{rib}(x, (1,-1))= 2e_{j, j+1}. 
    \end{equation}

     Let $ P_i : \Z^n \rightarrow \Z$ denote the projection on $\mathbf e_{i, i+1}$-th coordinate.
    \begin{lemma}\label{ribbonlemma}
    If $ u = (u_1, u_2)$ and $ \sum_{i= 0}^{n-1}P_i(c_{rib}(x, u)) $ is independent of $ x$ and in fact equal to $ u_1 - u_2$. 
    \end{lemma}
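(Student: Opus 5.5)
The plan is a direct computation: the sum $\sum_{i}P_i$ of the coordinates of each edge weight is $\pm1$, and the sign depends only on the direction of the edge, not on its type or on the tiling. Since $c_{rib}(x,u)$ is by definition the sum of weights along a path $\gamma$ from $\mathbf 0$ to $u$ lying on the tiling $x$, the quantity $\sum_i P_i(c_{rib}(x,u))$ will be the signed count of horizontal steps of $\gamma$ minus the signed count of vertical steps. That count is $u_1-u_2$ for any lattice path from $\mathbf 0$ to $u$.

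\emph{Horizontal edges.} Consider the directed edge from $(a,b)$ to $(a+1,b)$. The square to its left (above it) has lower-left corner $(a,b)$, hence color $a+b$. The square to its right (below it) has color $a+b-1$. So the edge has type $(i,i+1)$ with $i=a+b-1$, and the square of color $i+1$ is on the left. Its weight is therefore $+\mathbf e_{i,i+1}$, whose coordinate sum is $+1$. Reversing the edge gives $-1$.

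\emph{Vertical edges.} Consider the directed edge from $(a,b)$ to $(a,b+1)$. The square on its left has lower-left corner $(a-1,b)$ and color $a+b-1$. The square on its right has lower-left corner $(a,b)$ and color $a+b$. So the square of color $i+1$ is now on the right, the weight is $-\mathbf e_{i,i+1}$, and the coordinate sum is $-1$. A downward edge gives $+1$.

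\emph{Summing along $\gamma$.} Summing over the edges of $\gamma$ gives
\[(\#\text{right}-\#\text{left})-(\#\text{up}-\#\text{down}) = u_1-u_2,\]
which is independent of $x$.

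\emph{Existence of a path on the tiling.} This is the only point needing care: we must know that for every $x$ and $u$ there is a path from $\mathbf 0$ to $u$ lying on the tiling $x$, which the paper implicitly assumes when defining $c_{rib}$. I would argue this as follows.
\begin{itemize}
\item A ribbon tile is a staircase of squares moving only up or right. Hence no lattice point lies in the interior of a tile: an interior lattice point would require all four squares around it to lie in the tile, giving two squares of the same color. So every lattice point lies on the boundary of some tile.
\item The union of the tile boundaries is connected. Each tile boundary is a closed lattice curve, and two tiles sharing a square corner share that lattice point. Adjacent tiles therefore have intersecting boundaries, and the adjacency graph of the tiles in a tiling of the plane is connected.
\end{itemize}
This supplies the required path $\gamma$, and the computation above finishes the proof.
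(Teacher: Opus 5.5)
Your argument is correct, and your orientation bookkeeping checks out: a rightward edge has its color-$(i+1)$ square on the left, as the paper also asserts, and this agrees with the paper's clockwise identity $2\mathbf{e}_{i,i+1}-2\mathbf{e}_{i-1,i}$ around a square.

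You share the paper's core observation: each unit step contributes coordinate sum $+1$ if rightward and $-1$ if upward. The difference is which increments you apply it to.

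\begin{itemize}
\item \textbf{The paper's route.} It considers $c_{rib}(x,v')-c_{rib}(x,v)$ across \emph{every} unit edge of $\mathbb{Z}^2$. This includes edges that cross a tile of type $(j,j+1)$, where the increment is $2\mathbf{e}_{j,j+1}-\mathbf{e}_{i,i+1}$ (horizontal) or $-2\mathbf{e}_{j,j+1}+\mathbf{e}_{i,i+1}$ (vertical). These still have coordinate sums $\pm 1$, so the paper can sum along an arbitrary lattice path without caring whether it lies on $x$.
\item \textbf{Your route.} You stay on the tiling, where only the raw weights $\pm\mathbf{e}_{i,i+1}$ occur. So you never need the crossing formula, whose justification in the paper is only a sketch. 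The cost is that you must exhibit a path from $\mathbf{0}$ to $u$ lying on $x$.
\end{itemize}

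Your justification of that path is sound. No lattice point is interior to a ribbon tile, because the squares with lower-left corners $(a,b-1)$ and $(a-1,b)$ have the same color. The tile boundaries form a connected graph, because the tile adjacency graph is connected. This also fills in a point that the definition of $c_{rib}$ silently assumes.

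The paper's route is the more economical one in context. Its per-edge increment description is exactly what it reuses later, in the proof that ribbon tilings contain no strongly irreducible subshift. There it gives the diagonal identity $c_{rib}(x,u+v_0)-c_{rib}(x,u)=2\mathbf{e}_{j,j+1}$ and the bound $P_j(c_{rib}(x,v_0))\le 2$.
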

    \vspace{-15pt}
    \begin{proof}
    Let $v'=v+(1,0)$. Then $(v,v')$ is a directed edge of type $(i, i+1)$ with $(i+1)$ on the left for some $i$. For such an edge, $c_{rib}(x, v')-c_{rib}(x, v)$ equals either $2\mathbf{e}_{(j, j+1)}- \mathbf{e}_{(i, i+1)}$ or $\mathbf{e}_{(i, i+1)}$ for some $j$ depending on whether the edge crosses a tile or not. Note that $2\mathbf{e}_{(j, j+1)}- \mathbf{e}_{(i, i+1)}= \mathbf{e}_{(i, i+1)}$ when $j=i$. Similarly, if $v'=v+(0,1)$ then the possible values that $c_{rib}(x, v')-c_{rib}(x, v)$ can take are $-2\mathbf{e}_{(j, j+1)}+\mathbf{e}_{(i, i+1)}$ for some $i$ and $j$. From this, the result follows.
    \end{proof}
We will use this to prove the lack of strongly irreducible subshifts in $ X(ribbon, n)$ for $ n \geq 3$ in \Cref{subsection: NO SI in ribbon tilings}. Notice that $ X(ribbon, 2)$ is the space of domino tilings, which also does not contain any strongly irreducible subshifts by \Cref{theorem: no_SI_rectangular_tiling}. Thus, we have the following theorem.

\begin{restatable}{theorem}{NOSIINRIBBON}\label{No SI subshifts in ribbon tilings}
   For $ n \geq 2$, $ X(ribbon, n)$ does not contain a strongly irreducible subshift. 
\end{restatable}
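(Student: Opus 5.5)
The plan is to reduce to the Conway--Lagarias--Thurston/Sheffield height function $c_{rib}$ and show that the cocycle rigidity of \Cref{triviality of cocycles} contradicts the local behaviour of ribbon heights. The case $n=2$ is domino tilings, which is covered by \Cref{theorem: no_SI_rectangular_tiling}, so assume $n\ge 3$. Suppose, for contradiction, that $Y\subset X(ribbon,n)$ is a strongly irreducible subshift.

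\textbf{Step 1: pass to the subaction.} The subgroup $W=\{(i,j): i+j\equiv 0 \Mod n\}$ has finite index in $\Z^2$ and is isomorphic to $\Z^2$. I will recode $Y$ as a $W$-subshift over the finite alphabet of $Y$-patterns on a fundamental domain of $W$. Strong irreducibility for the $\Z^2$-action transfers to this $W$-subshift, with a larger SI distance. Each coordinate $P_i\circ c_{rib}$ is a continuous $\Z$-valued cocycle for the $W$-action. Since $\Z$ is hyperbolic and trivially has the CIC property, \Cref{triviality of cocycles} (with $d=2$) applies to each coordinate. This gives a homomorphism $\theta:W\to\Z^n$ and a constant $C$ with
\begin{equation*}
\|c_{rib}(x,w)-\theta(w)\|\le C \quad\text{for all } x\in Y,\ w\in W.
\end{equation*}
By \Cref{ribbonlemma}, the coordinate sum satisfies $\sum_i P_i\theta(u)=u_1-u_2$.

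\textbf{Step 2: local structure along the antidiagonal.} Consider $w_0=(1,-1)\in W$. For a step $u\to u+(1,0)\to u+(1,-1)$, the increment of $c_{rib}$ is either $2\mathbf e_{j,j+1}$, when the path lies inside one tile of type $(j,j+1)$ (by \eqref{equation:diagonal_descent}), or a sum of two basis-type terms read off from the edge types. In every case the increment is a nonnegative vector with coordinate sum $2$. Hence $a:=\theta(w_0)$ has nonnegative entries summing to $2$, and $c_{rib}(x,kw_0)$ is a monotone lattice path tracking $ka$ to within $C$. Doing the same with the other generator of $W$ (for example $(n,0)$ or $(0,n)$, whose increments are computed from the edge types as in the proof of \Cref{ribbonlemma}) pins down $\theta$ up to finitely many possibilities. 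It also expresses each $\theta$-value as an average of local tile statistics: the frequency of each tile type along a line is determined by $\theta$ up to $O(1)/k$ errors, uniformly over all lines in all $x\in Y$.

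\textbf{Step 3: contradiction via gluing.} I would use strong irreducibility to glue $x\in Y$ on one region with $\sigma^v x'$ on a region separated by the SI distance. Then I would compare heights along two long $W$-paths with common endpoints, one running through each region. Path-independence of $c_{rib}$ forces the two sums to agree exactly, while the uniform tile-type frequencies from Step 2 force them to agree with $\theta$. To get a contradiction I want a region where the local configuration is forced to have the ``wrong'' frequency. The first thing to try is the analogue of the domino argument in \Cref{Domino}: exhibit, inside any $x\in Y$, a long antidiagonal on which all steps are of a single kind (all $2\mathbf e_{j,j+1}$, or all boundary steps). Such a segment has slope $2\mathbf e_{j,j+1}$ or some other extreme point of the simplex, so $a$ must equal that extreme point. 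Rotating the choice of $j$ using the colour-shift symmetry visible in the steps above, together with the $W$-invariance of edge types, then gives incompatible values of $a$ once $n\ge3$. If such monochromatic segments cannot be found directly, the fallback is to use that SI $\Z^2$-subshifts have dense periodic points. For a periodic $p\in Y$, $\theta$ is then computed exactly on the period lattice, and comparing two periodic points glued by strong irreducibility should give two different values of $\theta$.

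The main obstacle is Step 3: converting the uniform linear rigidity of the heights into a genuine combinatorial contradiction for ribbon tiles. Steps 1 and 2 are routine given \Cref{triviality of cocycles} and \Cref{ribbonlemma}.
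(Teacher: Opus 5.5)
Steps 1 and 2 are fine, and Step 1 is exactly \Cref{ribbon tiling slope}. The gap is Step 3, which is where the proof actually lives: it is not carried out, and neither route you sketch works as stated. Step 2 only tells you that $a=\theta(w_0)$ is a nonnegative integer vector with coordinate sum $2$, so $a=\mathbf e_{j,j+1}+\mathbf e_{j',j'+1}$ with $j\neq j'$ is still open. In that case the uniform bound $\|c_{rib}(\sigma^{kw_0}x,Lw_0)-La\|\le C$ forces the two kinds of diagonal steps to balance up to $O(C)$ in every window. So the long single-kind antidiagonal segments you want to exhibit exist only if $a$ is already an extreme point, and finding them is no easier than what they are meant to prove. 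Your fallback rests on a false statement: strongly irreducible $\Z^2$-subshifts need not have any periodic points at all (Hochman's examples, cited in the paper). What is true is only that one-dimensional restrictions such as $Y|_{\Z w_0}$ have periodic points of every large period (\Cref{proposition: periodic in SI}).

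The missing idea in the paper is a parity argument. Every diagonal step $u\to u+e_1\to u+w_0$ changes $c_{rib}$ by exactly $2\mathbf e_{j,j+1}$ for a single $j$, never by a sum of two distinct basis vectors. Both edges border the square with lower-left corner $u-e_2$, so if both crossed tiles they would cross the same tile. That tile would then contain the two equally coloured squares with lower-left corners $u$ and $u+w_0$, which is impossible. Now take $y\in Y$ whose restriction to $\Z w_0$ (in a higher block presentation) is periodic with odd period $p$. Then $p\,\theta(w_0)=c_{rib}(y,pw_0)\in 2\Z^n$, and together with Step 2 this gives $\theta(w_0)=2\mathbf e_{j,j+1}$. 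Gluing copies of a window along $\Z w_0$ by strong irreducibility then shows $c_{rib}(\sigma^v x,w_0)=2\mathbf e_{j,j+1}$ for all $x\in Y$ and $v\in\Z^2$. A short case analysis contradicts this: $j=n-1$ would make every edge a tile boundary, and $j\neq n-1$ fails at a corner of a tile.

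Alternatively, the colour-shift symmetry you mention closes the proof by itself, but it needs a shift \emph{outside} $W$; the $W$-invariance of edge types is irrelevant. Translating by $e_1$ raises all edge and tile types by one, so $c_{rib}(x,e_1+w)-c_{rib}(x,e_1)=\pi\bigl(c_{rib}(\sigma^{e_1}x,w)\bigr)$, where $\pi$ is the cyclic shift $\mathbf e_{i,i+1}\mapsto\mathbf e_{i+1,i+2}$. The left side differs from $c_{rib}(x,w)$ by $O(1)$, $\sigma^{e_1}Y=Y$, and your bound is uniform in $x$. Hence $\pi\circ\theta-\theta$ is a bounded homomorphism on $W$, so it is zero. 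Then all coordinates of $\theta(w_0)$ equal an integer $t$ with $nt=2$ by \Cref{ribbonlemma}, which is impossible for $n\ge 3$. As written, though, you invoke the symmetry only after pinning $a$ to an extreme point, which you never achieve.
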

\end{example}
\subsection{Hyperbolic groups}\hspace*{\fill}\\
Let $ G$ be a finitely generated group with a finite generating set $S$. Then for any $ g, h \in G$, a \textbf{geodesic segment} $[g,h]$ joining $ g$ and $ h$ is a path from $ g$ to $h$ of the shortest length in the Cayley graph of $G$. Note that geodesic need not be unique and therefore $ [g,h]$ denotes any one choice among possibly many. 
\begin{definition}
    Let $I$ be a (possibly infinite) sub-interval of $\Z$ and $K, \epsilon > 0$. A map $\phi: I \rightarrow G$ is a $(K, \epsilon)$-\textbf{quasi-geodesic} if for any $ m, n \in I$, 
    \[ \frac{|m-n|}{K} - \epsilon \leq d(\phi(m), \phi(n)) \leq K|m-n| + \epsilon.\]
\end{definition}
For $x, y, z \in G$, a geodesic triangle $ xyz$ is a union of three geodesics $ [x, y], [y, z]$ and $[z,x]$.
\begin{definition}
    Let $ G$ be a finitely generated group with a finite generating set $S$. Then $ G$ is $ \delta$-\textbf{hyperbolic} with respect to $S$, if for any geodesic triangle in the Cayley graph of $G$, any side is contained in the union of the $ \delta$-neighborhoods of the other two sides. We say that $ G$ is \textbf{hyperbolic} if $G$ is $\delta$-hyperbolic with respect to a finite generating set $S$, for some $ \delta\geq 0$.
\end{definition}

For example, free groups are $ 0$-hyperbolic with respect to a free set of generators. If $ G $ is a finitely generated group, then for any $ g \in G$, the function $ n \rightarrow |g^n|$ is subadditive. Therefore, by Fekete's lemma, $\lim_{n \rightarrow + \infty} \frac{|g^n|}{n}$ exists and
\begin{equation}\label{eq: translation length}
\tau(g) := \lim_{n \rightarrow +\infty} \frac{|g^n|}{n} = \inf_{n \in \N} \frac{|g^n|}{n}. \end{equation}
$\tau(g)$ is called the \textbf{translation length} of $g$. Note that $ \tau(g^r) = |r| \tau(g)$ for all $ g \in G$ and $ r \in \Z$. If $ g$ is an infinite order (non-torsion) element in a hyperbolic group $ G$, then the map $ n \rightarrow g^n$ is a quasi-geodesic (\cite[Chapter III.$\Gamma$, Corollary 3.10]{Bridson}) and therefore $ \tau(g) > 0$. 
The following lemma says that in a hyperbolic group, a quasi-geodesic stays close to a geodesic.
\begin{lemma}[Morse lemma]\label{lemma: Morse lemma}
    Let $ G$ be a $\delta$-hyperbolic group with respect to a finite generating set $S$. Let $ \phi: [-k, k'] \rightarrow G$ be a $(K, \epsilon)$-quasi-geodesic for some $ k, k' \in \N$. Then there exists a constant $ A(K, \epsilon, \delta)$ such that the (image of) quasi-geodesic $\phi$ is contained in the $A$-neighborhood of the geodesic $ [\phi(-k), \phi(k')]$ (for any choice of geodesic joining $\phi(-k)$ and $\phi(k')$).
\end{lemma}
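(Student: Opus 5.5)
This is the classical stability of quasi-geodesics, so one option is simply to cite \cite[Chapter III.H, Theorem 1.7]{Bridson}. If I were to prove it, I would follow the standard three-step argument. Throughout I work in the Cayley graph of $G$ with respect to $S$, viewed as a geodesic metric space.

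\emph{Step 1 (taming).} I would replace $\phi$ by a continuous path $c$ obtained by joining $\phi(i)$ to $\phi(i+1)$ by a geodesic segment, for $-k\le i<k'$. Each such segment has length at most $K+\epsilon$. It follows that every point of $c$ is within $K+\epsilon$ of the image of $\phi$, and every point of $\phi$ lies on $c$. The key property to establish is a length estimate: for points $c(t),c(t')$ of $c$, the length of the subpath of $c$ between them is at most $k_1\, d(c(t),c(t'))+k_2$, with $k_1,k_2$ depending only on $K$ and $\epsilon$. This follows from the lower quasi-geodesic inequality applied to the nearby integer parameters.

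\emph{Step 2 (logarithmic lemma).} Next I would show that in a $\delta$-hyperbolic space, if $\gamma$ is a continuous rectifiable path from $p$ to $q$ of length $\ell$, then every point $x$ of a geodesic $[p,q]$ satisfies $d(x,\gamma)\le \delta\log_2 \ell+1$. The proof is by repeated bisection. Cut $\gamma$ at its midpoint $m$ and use the $\delta$-thin triangle $p\,q\,m$ to move $x$ to within $\delta$ of $[p,m]$ or of $[m,q]$. Iterate this about $\log_2\ell$ times, until the remaining subpath has length at most $1$.

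\emph{Step 3 (geodesic near the quasi-geodesic).} Let $D=\max\{d(x,c): x\in[\phi(-k),\phi(k')]\}$, attained at some point $x$. Take points $y,z$ on the geodesic at distance $2D$ from $x$ on either side, or the endpoints if the geodesic is shorter. Choose $y',z'\in c$ with $d(y,y'),d(z,z')\le D$; then $d(y',z')\le 6D$. Consider the path $[y,y']$, then the subpath of $c$ from $y'$ to $z'$, then $[z',z]$. By Step 1 its length is at most $2D+6Dk_1+k_2$. Its distance from $x$ is at least $D$: points of $c$ are at distance $\ge D$ by maximality, and points of $[y,y']$ are at distance $\ge 2D-D$. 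Step 2 then gives
$$D\le \delta\log_2\bigl((6k_1+2)D+k_2\bigr)+1.$$
This forces $D\le D_0(K,\epsilon,\delta)$.

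\emph{Step 4 (quasi-geodesic near the geodesic).} This is the converse inclusion and I expect it to be the main obstacle, since it needs a connectedness argument. Consider a maximal subpath $c'$ of $c$ lying outside the $D_0$-neighbourhood of the geodesic, running from $c(s)$ to $c(s')$; its endpoints are within $D_0$ of the geodesic. The geodesic is covered by the $D_0$-neighbourhoods of the two pieces of $c$ before $c(s)$ and after $c(s')$, and both covering sets are closed and nonempty. Since the geodesic is connected, some point $w$ of it is within $D_0$ of both pieces, say of $c(t_1)$ with $t_1\le s$ and $c(t_2)$ with $t_2\ge s'$. Then $d(c(t_1),c(t_2))\le 2D_0$, so by Step 1 the whole subpath between them, which contains $c'$, has length at most $2k_1D_0+k_2$. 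Hence every point of $c'$ is within $2k_1D_0+k_2+D_0$ of the geodesic. Adding the error $K+\epsilon$ from Step 1 gives the constant $A(K,\epsilon,\delta)$.
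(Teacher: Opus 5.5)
Your proposal is correct. It is the standard proof of \cite[Chapter III.H, Theorem 1.7]{Bridson}: taming, the logarithmic bound, the geodesic lying near the path, and then the connectedness argument for the inclusion the statement actually asks for. The paper itself gives no proof and simply invokes this classical result, so your argument matches its intended justification. One cosmetic fix: in Step 2 the bound should read $\delta\,|\log_2\ell|+1$ (or assume $\ell\ge 1$, which you can arrange in Step 3 by taking $k_2\ge 1$), since as written it fails for paths of length $\ell<1$.
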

We can use the Morse lemma to prove that in a hyperbolic group, $ |g^n|$ grows linearly with bounded error.
\begin{lemma}\label{lemma: linear growth}
    Let $ G$ be a hyperbolic group and $ g \in G$ be an element of infinite order. Then there exists a constant $ A := A(g)$ such that for all $ n \in \Z$,
    \[ |(|g^n| -|n|\tau(g))|\leq A.\]
\end{lemma}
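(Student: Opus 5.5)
Since $n\mapsto |g^n|$ is subadditive, Fekete's lemma gives $|g^n|\ge |n|\tau(g)$ for all $n$: note $|g^{-n}|=|g^n|$ and $\tau(g)=\inf_m |g^m|/m$. So only the upper bound $|g^n|\le |n|\tau(g)+A$ needs proof, and by symmetry it suffices to take $n\ge 1$. I will show that $n\mapsto |g^n|$ is \emph{almost superadditive}: there is a constant $B=B(g)$ with
\begin{equation*}
|g^{m+n}|\ \ge\ |g^m|+|g^n|-B \qquad\text{for all } m,n\ge 0 .
\end{equation*}
The standard Fekete argument applied to the superadditive sequence $a_n:=|g^n|-B$ then yields $a_n/n\le \lim_k a_k/k=\tau(g)$, i.e.\ $|g^n|\le n\tau(g)+B$. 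Hence $A=B$ works.

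To prove almost superadditivity, use that $k\mapsto g^k$ is a $(K,\epsilon)$-quasi-geodesic on $\Z$ (cited above from Bridson--Haefliger). Restrict it to the interval $[-m,n]$, after translating by $g^{m}$ so the endpoints are $e$ and $g^{m+n}$; more concretely, consider $k\mapsto g^k$ on $[0,m+n]$. By the Morse lemma (Lemma \ref{lemma: Morse lemma}), the point $g^m$ lies within distance $A_0=A(K,\epsilon,\delta)$ of some point $p$ on a geodesic $[e,g^{m+n}]$. Since $p$ lies on a geodesic,
\begin{equation*}
|g^{m+n}|=d(e,p)+d(p,g^{m+n}).
\end{equation*}
The triangle inequality gives $d(e,p)\ge |g^m|-A_0$ and $d(p,g^{m+n})\ge d(g^m,g^{m+n})-A_0=|g^n|-A_0$, using left-invariance of the word metric. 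Therefore $B=2A_0$ works. Note that the Morse constant $A_0$ depends only on $(K,\epsilon,\delta)$ and not on the length of the interval, which is exactly what makes $B$ uniform in $m$ and $n$.

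The only delicate point is this uniformity, i.e.\ that the Morse lemma is applied to segments of arbitrary length with a fixed constant; this is exactly how Lemma \ref{lemma: Morse lemma} is stated. The index-shift is harmless because the lemma allows any interval $[-k,k']$, or alternatively one can translate by $g^{-m}$ and work on $[-m,n]$ directly. The remaining step is the routine Fekete limit for $a_n$: from $a_{qk+r}\ge q a_k+a_r$ we get $\lim a_n/n\ge a_k/k$ for every $k$, and the limit equals $\tau(g)$ because $B/n\to 0$.
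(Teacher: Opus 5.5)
Your proof is correct and follows essentially the same route as the paper: the Morse lemma applied to the quasi-geodesic $k\mapsto g^k$ on $[0,m+n]$ gives the almost-superadditivity $|g^{m+n}|\ge |g^m|+|g^n|-2A$, and a Fekete-type limit then yields $|g^k|\le k\tau(g)+2A$, with the lower bound coming from $\tau(g)=\inf_m |g^m|/m$. The only cosmetic difference is that the paper iterates the inequality along the subsequence $kn$ and lets $n\to\infty$, whereas you invoke superadditive Fekete for $a_n=|g^n|-B$.
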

\begin{proof}
    By \Cref{eq: translation length}, we already have $ |g^n|\geq |n| \tau(g)$ for all $ n \in \Z$. To prove the upper bound, assume that $G$ is $ \delta$-hyperbolic with respect to the given set of generators. For fixed $ m, n \in \N$, consider the geodesic segment $\gamma=[1, g^{m+n}]$. The map $ i \rightarrow g^{i}$ is a $(K, \epsilon)$-quasi-geodesic from $ \Z \rightarrow G$ for some $ K, \epsilon >0$. Hence, Morse's lemma guarantees a constant $A(K, \epsilon, \delta)$ such that the quasi-geodesic $ 1 \rightarrow g \rightarrow g^2 \rightarrow\cdots \rightarrow g^{m+n}$ is contained in $ A$-neighborhood of $ \gamma$. In particular, there exists $ p \in \gamma$ such that $d(p, g^m) \leq A$. We have that
    \begin{eqnarray*}
    |g^{m+n}| &= &d(1, g^{m+n}) = d(1, p)+ d(p, g^{m+n})\\
    &\geq& (d(e, g^m) - d(p, g^m)) + (d(g^{m}, g^{m+n})- d(p, g^m))\geq |g^m| + |g^n| -2 A.
    \end{eqnarray*}

    Thus, for any $k \in \N$, 
    $$\frac{|g^{kn}|}{kn} \geq \frac{n |g^k| - 2nA}{kn}= \frac{|g^k|}{k}- \frac{2 A}{k}.$$
Letting $ n \rightarrow + \infty$, we get 
\[ |g^k|\leq k \tau(g) + 2A.\]
\end{proof}
\section{Finitely dependent m-net processes}\label{Section: m net processes}
The main tool that we use in our constructions of finitely dependent processes on subshifts is the finitely dependent $m$-net process which we describe in this section. Let $ G $ be a finitely generated group. We will abuse the notation and let $ G$ also denote the (bounded degree) right Cayley graph of $ G$ with respect to a finite and symmetric set of generators $S \subset G$.
\begin{definition}
For $ m \in \mathbb{N}$, an element $x \in \{ 0,1\}^{G}$ is called an $ \mathbf{m}$-\textbf{net} (with respect to distance $d$ on $G$) if the set $ \{ g \in G: x_g =1\}$ is a maximal $m$-separated set. In other words, it is
\begin{enumerate}
    \item $m$-covering i.e. for every $ g \in G$ there exists $h\in G$ such that $ d(g,h) \leq m$ and $x_h= 1$, and
    \item $ m$-separated i.e. for every $ g $ with $x_g = 1 $ and $ h $ such that $ d(g, h) \leq m$ we have $ x_{h}=0$.
\end{enumerate}

\end{definition}
A process $ (J_g)_{g \in G}$ is called an $\mathbf{m}$-\textbf{net process} if $ (J_g)_{g \in G} \in \{ 0,1\}^{G}$ is an $ m$-net almost surely. It essentially follows from the results of (\cite{Schramm}), \cite{timar2024finitelydependentrandomcolorings} and \cite{Onlinelocality} that there exist shift-invariant finitely dependent $m$-net processes for any $m \geq 1$. We will sketch the proof here for completeness.

 %The existence of automorphism invariant finitely dependent proper $q$-colorings on bounded degree graphs was proved in \cite{timar2024finitelydependentrandomcolorings} and \cite{Onlinelocality}. The following result follows from (\cite[Theorem 1]{timar2024finitelydependentrandomcolorings}).
%\begin{theorem}\label{Timar theorem}
%Let $ H$ be a transitive graph of maximal degree $\delta$. Then there exists a $ 4$-dependent automorphism invariant proper $4^{\frac{\delta(\delta+1)}{2}}$ coloring of $H$.     
%\end{theorem}
 A process $ (Y_g)_{g \in G}$ is called a range-$m$ proper $q$-coloring if $ Y_g \in \{ 0,1, \cdots, q-1\}$ and $ Y_g \neq Y_h$ for any $g$ and $h$ with $d(g,h) \leq m$. The following is an immediate corollary of \Cref{Timar theorem}.
\begin{proposition}\label{FINDEP RANGE M COLORING}
Let $ G $ be a finitely generated group whose Cayley graph has degree $d$. Then there exists a shift-invariant finitely dependent range-$m$ proper $q$-coloring for $q \leq exp(C \cdot m)$ where $ C$ depends only on $d$.
\end{proposition}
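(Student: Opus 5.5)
We apply \Cref{Timar theorem} to the $m$-th power of the Cayley graph. This already gives a shift-invariant finitely dependent range-$m$ coloring, but with doubly exponentially many colors. We then bring the number of colors down to the degree of the power graph plus one, using a deterministic greedy recoloring of finite radius, which preserves both shift-invariance and finite dependence.

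\textbf{Step 1 (the power graph).} Let $G^{(m)}$ be the graph on $G$ in which $g\neq h$ are adjacent iff $d(g,h)\le m$. It is the right Cayley graph of $G$ with respect to the finite symmetric generating set $\B(m+1)\setminus\{e\}$, so it is transitive. Its degree is
$$\Delta_m=|\B(m+1)|-1\le d^{m+1}.$$
A proper coloring of $G^{(m)}$ is exactly a range-$m$ proper coloring of $G$. \Cref{Timar theorem} yields a $4$-dependent (with respect to the metric of $G^{(m)}$) automorphism-invariant proper $N$-coloring $X=(X_g)_{g\in G}$ of $G^{(m)}$, where $N=4^{\Delta_m(\Delta_m+1)/2}$. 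Left translations are automorphisms of $G^{(m)}$, so $X$ is shift-invariant. Since $d_{G^{(m)}}(g,h)=\lceil d(g,h)/m\rceil$, the process $X$ is $4m$-dependent in $G$.

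\textbf{Step 2 (greedy color reduction).} Define colorings $Y^{(N)}=X, Y^{(N-1)},\dots,Y^{(\Delta_m+1)}$ recursively. To pass from $Y^{(c)}$ to $Y^{(c-1)}$, every vertex $g$ with $Y^{(c)}_g=c-1$ (colors are $0,\dots,c-1$) is recolored to the least color in $\{0,\dots,\Delta_m\}$ not used by $Y^{(c)}$ on its $G^{(m)}$-neighbours; all other vertices keep their color.
\begin{itemize}
\item Such a free color exists because $g$ has only $\Delta_m$ neighbours in $G^{(m)}$.
\item The recolored vertices form an independent set of $G^{(m)}$, since they all had the same color in a proper coloring. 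Hence no two of them conflict, and properness is preserved.
\item Each round is a sliding rule of radius $m$ that commutes with left translation, since it uses only the colors on $g\cdot\B(m+1)$.
\end{itemize}
After $N-\Delta_m-1$ rounds, the final coloring $Y=Y^{(\Delta_m+1)}$ is an $r$-block factor of $X$ with $r=m(N-\Delta_m-1)$. This $r$ is huge, but finite, and that is all we need.

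\textbf{Step 3 (conclusion).} Because the block map is equivariant and $X$ is shift-invariant, $Y$ is shift-invariant. Finite dependence transfers as in the lemma on block factors of iid processes. The restriction $Y|_A$ is a function of $X|_{A\cdot\B(r)}$. So if $d(A,B)>4m+2r$, then $d(A\cdot\B(r),B\cdot\B(r))>4m$, and $Y|_A$, $Y|_B$ are independent. Thus $Y$ is a shift-invariant finitely dependent range-$m$ proper $q$-coloring with
$$q=\Delta_m+1\le d^{m+1}\le \exp(Cm),$$
where $C$ depends only on $d$ (e.g.\ $C=2\log d$ for $m\ge1$, $d\ge2$).

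\textbf{Main obstacle.} The only real difficulty is that applying \Cref{Timar theorem} directly to $G^{(m)}$ gives $\log q\approx d^{2m}$, which is far from the claimed $\exp(Cm)$ bound. Step 2 fixes this at the cost of a much larger dependence range. The points to check carefully are that the recoloring is genuinely equivariant, so that shift-invariance survives, and that the independent-set argument keeps the coloring proper in every round.
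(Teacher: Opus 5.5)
Your proof is correct, and it proves more than the paper's own argument does. The paper's proof is only your Step 1. It applies \Cref{Timar theorem} to the power graph $G^{(m)}$, notes that left translations are automorphisms, and concludes with a proper $q$-coloring ``for some $q$''. It never checks the bound $q\le \exp(Cm)$. In fact Tim\'ar's bound $4^{\Delta_m(\Delta_m+1)/2}$ is doubly exponential in $m$ for groups of exponential growth, and is $\exp(\Theta(m^{2D}))$ for groups of polynomial growth of degree $D$. So Step 1 alone does not give the stated estimate, which is what you observed.

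Your Step 2 supplies the missing part. You recolor one color class at a time; each class is an independent set in $G^{(m)}$, and the recoloring is a radius-$m$ equivariant greedy rule. This brings the palette down to $\Delta_m+1=|\B(m+1)|\le d^{m+1}$. The price is a huge but finite dependence range $4m+2m(N-\Delta_m-1)$. The device of processing color classes of a range-$m$ coloring one by one is the same one the paper uses in the proof of \Cref{m-net process}.

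The paper can skip this because every later use of the proposition, namely \Cref{m-net process} and \Cref{thm: TSSM}, only needs $q$ to be finite. Your version is the one that justifies the quantitative claim as stated. For groups of exponential growth your bound is also of the right order, since a ball of radius $\lfloor m/2\rfloor$ is a clique in $G^{(m)}$.
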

\begin{proof} Let $S$ be a finite symmetric set of generators of $ G$. Let $d(g,h)$ denote the distance between $g$ and $h$ in the right Cayley graph of $G$ with respect to $S$. Consider the graph $ G^{(m)}$ whose vertex set is $ G$ and there is an edge between two vertices $ g$ and $h$ iff $d(g,h) \leq m$. By \Cref{Timar theorem}, there exists a finitely dependent automorphism-invariant proper $q$-coloring $(Y_g)_{g \in V(G^{(m)})}$ of the graph $G^{(m)}$ for some $q$. Since $ V(G^{(m)}) = V(G) = G$ and translation by any element $g$ is an automorphism of the graph $ G^{(m)}$,  $ (Y_g)_{g \in G}$ is a shift-invariant finitely dependent range-$m$ proper $q$-coloring of $G$.
\end{proof}

The following theorem also follows from the proof of \cite[Corollary 11]{Schramm}. Here we give a proof for completion.

\begin{theorem}\label{m-net process}
    Let $G$ be an infinite finitely generated group. Then for every $ m \geq 1 $, there exists a shift-invariant finitely dependent $m$-net process on $G$. 
\end{theorem}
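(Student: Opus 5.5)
Our plan is to obtain the $m$-net process as a block factor of a finitely dependent range-$m$ proper coloring. By \Cref{FINDEP RANGE M COLORING} there is a shift-invariant $k$-dependent range-$m$ proper $q$-coloring $(Y_g)_{g\in G}$ for some finite $q$ and $k$. From it we run the greedy construction of a maximal $m$-separated set color class by color class. Set $S_{-1}=\emptyset$. For $j=0,1,\dots,q-1$ let
\[ S_j = S_{j-1}\cup\{g : Y_g=j,\ d(g,S_{j-1})>m\}, \]
and define $J_g=\mathbf 1_{g\in S_{q-1}}$.

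First we check that $J$ is an $m$-net almost surely.

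\emph{Separation.} Two distinct points added at the same stage have the same color, so they are at distance $>m$ because $Y$ is a range-$m$ proper coloring. A point added at stage $j$ is at distance $>m$ from all of $S_{j-1}$ by construction.

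\emph{Covering.} Take any $g$ and let $j=Y_g$. Either $g$ was added at stage $j$, or $d(g,S_{j-1})\le m$. In both cases $d(g,S_{q-1})\le m$.

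Next we show that $J$ is a block factor of $Y$. By induction on $j$, membership of $g$ in $S_j$ depends only on $Y$ restricted to $g\cdot\mathbf B(jm+m+1)$. Indeed, $g\in S_j\setminus S_{j-1}$ is decided by $Y_g$ together with the membership in $S_{j-1}$ of the points $h\in g\cdot\mathbf B(m+1)$, and each such membership depends on a ball of radius $(j-1)m+m+1$ around $h$. Hence $J_g=\psi\big((Y_h)_{h\in g\cdot\mathbf B(r)}\big)$ with $r=qm+m+1$, and $\psi$ is the same function for every $g$, since the rule refers only to the left-invariant distance $d$ and to colors.

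Consequently $J$ is shift-invariant: shifting $Y$ shifts $J$, and $Y$ is shift-invariant. It is also finitely dependent. If $d(A,B)>k+2r$, then $A\cdot\mathbf B(r)$ and $B\cdot\mathbf B(r)$ are at distance $>k$. So $J_A$ and $J_B$ are functions of independent pieces of $Y$, which makes $J$ $(k+2r)$-dependent. This is the same argument as the lemma on block factors of iid processes, with "independent" replacing "disjoint".

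The main obstacle is the bookkeeping showing that the greedy rule is local with a uniform radius. This is exactly where the finiteness of $q$ from \Cref{FINDEP RANGE M COLORING} is essential, because the number of stages bounds the radius. The hypothesis that $G$ is infinite only serves to match the setting of \Cref{Timar theorem}, which requires a transitive graph; the construction itself works regardless.
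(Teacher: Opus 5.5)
Your proof is correct and is essentially the paper's argument: build the net greedily, color class by color class, from a shift-invariant finitely dependent range-$m$ proper $q$-coloring, and note that the result is a block factor of that coloring. Your version is more careful than the paper's sketch, since it makes the locality radius and the $(k+2r)$-dependence explicit and avoids the paper's off-by-one in indexing the colors. (One minor aside: Cayley graphs of finite groups are also transitive, so infiniteness is not what makes the Tim\'ar result applicable, though this does not affect your proof.)
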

\begin{proof}
Let $ (Y_g)_{g \in G}$ be a shift-invariant finitely dependent range$-m$ proper $q$-coloring of $G$. We will construct the desired $m$-net process by means of a sequence of block factors of $(Y_g)_{g \in G}$. To this end, we will define processes $ (J_g^{(i)})_{g\in G}$ $ 1\leq i \leq q$ sequentially. Let 
    \begin{equation} J^{(1)}_{g} = \begin{cases}
    1 &  \text{ if $Y_g =1$ , }\\
       0 & \text{ otherwise.}
        
    \end{cases}
    \end{equation}
Having defined the processes $ (J^{(i)}_{g})$ for $ 1 \leq i \leq q-1$, we define
\begin{equation} J^{(i+1)}_{g} = \begin{cases}
    1 &  \text{ if $ J_{g}^{(i)} = 1$ or  ($J^{(i)}_{g\cdot h} = 0$ for all $ h $ with $|h| \leq m$ and $Y_g =i$) , }\\
       0 & \text{ otherwise.}
    \end{cases}
    \end{equation}
 Then each of the processes $ (J_{g}^{(i)})_{g \in G}, 1 \leq i \leq q$ is $ m$-separated almost surely and a block factor of $(Y_g)_{g \in G}$. It is easy to see that $ (J_g)_{g \in G} := (J_g^{(q)})_{g \in G}$ is an $m$-net process.
    \end{proof}

\section{When do $\Z$-subshifts support shift-invariant finitely dependent processes?}\label{Section: one dimension}
Holroyd and Liggett (\cite[Theorem 22]{Holroyd}) proved that there exists a shift-invariant finitely dependent process on any strongly irreducible subshift of finite type on $ \Z$. Here we prove that shift-invariant finitely dependent processes are dense amongst the space of invariant probability measures for strongly irreducible $\Z$-subshifts (not necessarily of finite type) and discuss when can we make sure that the process gives positive probabilities to all globally allowed patterns.

Let $X$ be a strongly irreducible subshift of $\A^{\mathbb{Z}}$ (not necessarily of finite type) for a finite alphabet $\A$. Let $k$ be the SI distance for $X$. For such a $k$ given any two globally allowed patterns $u, w \in L(X)$ and $k'\geq k$ there exists a globally allowed pattern $v$ of length $k'$ such that $uvw \in L(X)$. For any two globally allowed patterns $ u_0$ and $w_0$, let $ \mathcal{S}_{k}(u_0, w_0)$ be the set of all globally allowed patterns of length $k$ which join $ u_0$ and $w_0$ i.e.
\begin{equation}\label{connecting words}
\mathcal{S}_{k}(u_0, w_0) = \{ v \in \A^k: u_0 v w_0 \in L(X)\} 
\end{equation}
A pattern $v \in L(X)$ is called synchronizing if, for any $u, w \in L(X)$ satisfying $ uv \in L(X)$ and $ vw \in L(X)$, $uvw \in L(X)$. It is easy (\cite[Theorem 2.1.8]{LindMarcus}) to see that if $ X$ is a subshift of finite type for which the maximum length of forbidden pattern is $l$, then any globally allowed pattern of length $ (l-1)$ is a synchronizing pattern. The following proposition is well known \cite[Theorem 1]{SyncWord}. 
\begin{proposition}\label[proposition]{sync word}
    If $ X$ is a strongly irreducible subshift of $ \A^{\mathbb{Z}}$, then there exists a synchronizing pattern in $L(X)$.
\end{proposition}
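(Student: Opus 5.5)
The plan is a minimality (stabilization) argument on finite context sets, with strong irreducibility used to glue. Let $k$ be the SI distance of $X$ and fix $N\ge k$. For a globally allowed pattern $v$ let
$$E_N(v)=\{(a,b)\in \A^N\times\A^N:\ avb\in L(X)\},$$
and let $P_N(v)$ and $F_N(v)$ be its two coordinate projections (the length-$N$ predecessors and followers of $v$). These are finite, nonempty subsets of a fixed finite set.

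They are also monotone under extension. If $v'=xvy\in L(X)$, then every allowed predecessor of $v'$ yields an allowed predecessor of $v$, by reading off the last $N$ symbols of $\alpha x$ for $\alpha\in P_N(v')$. So passing from $v$ to an extension never increases complexity in a suitable sense. Accordingly, I would choose $v$ minimizing the number of distinct \emph{infinite} follower classes seen through $N$-windows. Concretely, I would pick $v$ minimizing $|E_N(v)|$ among all patterns containing some fixed pattern, and then pass to a "stable" pattern $v_0$: every extension $xv_0y$ induces, via the above restriction, a bijection on the relevant context sets. This can be arranged by iterating extension finitely many times, because the cardinalities are bounded.

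Next I would show that the stable pattern is synchronizing after padding, taking $w_0=v_0sv_0$ with $s$ a connecting word of length $k$, which exists by strong irreducibility. Suppose $uw_0\in L(X)$ and $w_0w\in L(X)$. Strong irreducibility lets me realize, in a single configuration, $u$ followed by the left copy of $v_0$ on one side and $w$ preceded by the right copy of $v_0$ on the other, with the two copies separated by a gap of size at least $k$. The task is then to replace the filler between them by $s$. Stability says that, seen from far away, the constraint imposed by a copy of $v_0$ on its left context is independent of what lies beyond the gap. So $u v_0$ and $v_0 w$ can be joined through any allowed middle, in particular through $s$. Compactness handles arbitrary lengths of $u$ and $w$, via the decreasing intersection of the nonempty closed sets $[u\,w_0]\cap\sigma^{\cdot}[w_0 w]$ over finite truncations.

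The main obstacle is the stability step. For a non-sofic strongly irreducible shift, infinite follower sets can take infinitely many values, so one cannot simply minimize over follower sets. The argument has to show that, because of strong irreducibility, the \emph{finite} $N$-window data of a stable pattern already determine compatibility of infinite left and right contexts. My first attempt would be to prove the following: if $(a,b)\in P_N(v_0)\times F_N(v_0)$ then $(a,b)\in E_N(v_0)$, i.e.\ $E_N(v_0)$ is a product set. I would argue that otherwise, extending $v_0$ by a suitable witness would strictly decrease $|E_N|$, contradicting minimality. With $N\ge k$, this product property is then bootstrapped to arbitrary contexts using strong irreducibility across the two copies of $v_0$ in $w_0$.
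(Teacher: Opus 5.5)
Your argument breaks at the stabilization step, which you flag yourself. The problem is structural: $|E_N(\cdot)|$ is not monotone under extension. The restriction map $E_N(xvy)\to E_N(v)$ is in general neither injective nor surjective. As soon as $|x|,|y|\ge N$, its image is the single pair (last $N$ symbols of $x$, first $N$ symbols of $y$). Consequences:
\begin{itemize}
\item Choosing $v$ with $|E_N(v)|$ minimal gives no control over its extensions.
\item No decreasing integer makes ``iterate until stable'' terminate.
\item Taken literally, the bijection property would force $|E_N(xv_0y)|=1$ for all long extensions. That means unique left extensions, which is impossible in a nontrivial strongly irreducible shift.
\item Your proof of the product property fails for the same reason. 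Passing from $v_0$ to $av_0$ removes $b$ from the right contexts, but replaces $P_N(v_0)$ by the set of $N$-predecessors of $av_0$, whose size is unrelated. So no strict decrease of $|E_N|$ follows.
\end{itemize}
More fundamentally, a product $E_N(v_0)$ for a single $N\ge k$ does not bootstrap. A word $v$ is synchronizing iff $E_M(v)$ is a product for \emph{every} $M$. Passing from one window to all windows is the entire content of the proposition. Your sentence that the constraint imposed by $v_0$ on its left context is independent of what lies beyond the gap assumes exactly this.

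Here is a concrete failure. Let $X\subset\{0,1,2\}^{\Z}$ consist of the configurations in which any two consecutive $1$'s are separated by an even number of $0$'s (the $2$'s are ignored). This $X$ is strongly irreducible with $k=1$. Any stretch between consecutive $1$'s that is not contained in a single run of $F$ or of $F'$ contains a free site, and that site can be set to $0$ or $2$ to fix the parity. With $N=k=1$:
\begin{itemize}
\item $E_1(2)=\{0,1,2\}^2$ is a product, and $s=2$ is a connecting word.
\item Yet $w_0=222$ is not synchronizing: $10\,222$ and $222\,1$ lie in $L(X)$, while $10\,222\,1$ does not.
\end{itemize}

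The repair is to minimize over the filler rather than over contexts. For $a,b\in L(X)$ put $M(a,b)=\{s\in\A^k: asb\in L(X)\}$. This set is nonempty by strong irreducibility, and it is genuinely monotone: $M(xa,by)\subseteq M(a,b)$. Let $(a_0,b_0)$ minimize $|M(a,b)|$. Then $M(xa_0,b_0y)=M(a_0,b_0)$ for all allowed extensions. For any $s_0\in M(a_0,b_0)$, the word $a_0s_0b_0$ is synchronizing. Indeed, from $ua_0s_0b_0\in L(X)$ and $a_0s_0b_0w\in L(X)$ we get $ua_0,b_0w\in L(X)$. Hence $s_0\in M(ua_0,b_0w)$, i.e.\ $ua_0s_0b_0w\in L(X)$. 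The paper gives no argument of its own here; it cites \cite[Theorem 1]{SyncWord}.
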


Let $ \mathcal{M}_{\sigma}(X)$ denote the set of all shift-invariant probability measures on $X$. Every stationary stochastic process $ (X_v)_{v \in \Z}$ where $ X_v \in \A$, corresponds to a shift-invariant measure on $ \A^{\Z}$ defined by, 
\[ \mu([a]) = \mathbb{P}((X_1,X_2, \cdots, X_m)=a) \]
for every $ a \in \A^m$. Henceforth, we will switch between the stationary stochastic process and its corresponding shift-invariant measure for convenience. 
\onedimension*
In the following if $ p < q$ are integers, then we will denote the set $[p,q] \cap\Z $ simply by $ [p,q]$.
\begin{proof}
    Let $k$ be the SI distance of $X$. By Proposition \ref{sync word}, there exists a synchronizing pattern $w \in L(X)$. Let $length(w)=l$ and $ \text{Sync}_{l}$ denote the set of synchronizing patterns of length $ l$. 
    
    Recall that, in order to approximate $ \mu \in \mathcal{M}_{\sigma}(X)$ in weak-\textasteriskcentered{}, it is enough to produce for every  $r \in \mathbb{N}$ and $ \epsilon$, a measure  $\nu_r$ such that
    \begin{equation} \label{weak} |\nu_{r}([a]) - \mu([a])| < \epsilon    \cdot \end{equation}
    for all  $a \in \A^r \cap L(X)$. Now, let $\mu \in \mathcal{M}_{\sigma}(X)$. Take $n$ large enough so that $\frac{2k+l+r}{n} < \frac{\epsilon}{4}$.
   
    Before giving a formal construction of approximating measures $ \nu_r$ which we are going to construct, we will give a brief informal description. We first consider a stationary finitely dependent $n$-net process $\mathbf{J}=(J_v)_{v \in \mathbb{Z}}$ for $n$ chosen as above. Now we build the measure $ \nu_r$ using $J$ as follows: At each occurrence of $1's$ in $\mathbf{J}$, we will place the synchronizing pattern uniformly from the set $ \text{Sync}_{l}$. Within the gap between two consecutive occurrences of the synchronizing patterns, we will sample patterns according to the measure $\mu$, leaving gaps of size $k$ after the occurrence of one synchronizing pattern and before the occurrence of the next synchronizing pattern so that we can legally fill the gaps to obtain an element of $X$. The resulting measure $\nu_r$ will satisfy $\Cref{weak}$ and is finitely dependent.
    
    More precisely, since $X$ is strongly irreducible, for every globally allowed pattern $ a \in L(X)$ and $ w' \in \text{Sync}_{l}$, there exists $ v \in \A^k \cap L(X)$ such that $w'va \in L(X)$. Let $\mathcal{S}_{k}(u,v)$ be as defined before (see \Cref{connecting words}). 
   
    Let $(J_v)_{v \in \mathbb{Z}}$ be a shift-invariant finitely dependent $n$-net process. Recall that the gap between successive $ 1$'s in the $ n$-net lies in $ [n+1,2n+1]$.
     
    Let $ \mathcal{C} = \prod\limits_{t= n+1}^{2n+1} \A^{t-2k-l}$. Thus, each element of $\mathcal{C}$ is a tuple of patterns $ \mathbf{u} = (u_{n+1}, u_{n+2}, \cdots, u_{2n+1})$. For $n+1 \leq t \leq 2n+1$, let $\mathbf{u}_t$ denote the $t$-th component of $ \mathbf{u}$.  We can equip $ \mathcal{C}$ with the product measure $\eta := \prod\limits_{t= n+1}^{2n+1} \mu_{t-2k-l}$ where $ \mu_{i}$ denotes the marginal distribution of $ \mu$ on $ L(X, [1,i])$. Let $(Y_v)_{v \in \mathbb{Z}}$ be an iid process independent of $(J_v)_{v \in \mathbb{Z}}$, valued in $\mathcal{C}$ corresponding to the measure $ \eta$ (that is, each $ Y_v$ has distribution $ \eta$). Let $ \zeta$ be the uniform measure on (finite) set $ Sync_{l}$. Let $ (S_v)_{v \in \mathbb{Z}}$ be the iid process independent of $(J_v, Y_v)_{v \in \mathbb{Z}}$ valued in $Sync_{l}$ such that each $S_v$ has distribution $\zeta$. Finally, consider an iid process $ (F_v)_{v \in \Z}$ independent of $(J_v, Y_v, S_v)_{v \in \Z}$, where each $ F_v$ is a random function from  $L(X)\times L(X)$ to $L(X, [1,k])$ such that for any $ (a, b) \in L(X) \times L(X)$, $ F_v(a, b)$ is uniformly distributed on ${S}_{k}(a, b)$.
    Then we define a process $ (X_v)$ as a block factor of $(J_v, Y_v, S_v, F_v)_{v \in \mathbb{Z}}$ as follows: 
    
    If for some $ v \in \mathbb{Z}$ we have $ J_v =1$ and $ t >0$ is the smallest integer such that $ J_{v + t} = 1$, then define
    \begin{enumerate}
        \item $(X_{v}, X_{v+1}, \cdots, X_{v+l-1}) = S_v$.
        \item $(X_{v+l+k}, X_{v+l+k+1}, \cdots, X_{v+(t-k-1)}) = (Y_v)_{t}$.
        \item $ (X_{v+l}, X_{v+l+1}, \cdots, X_{v+l+k-1})= F(S_v, (Y_v)_{t}) $.
        \item $ (X_{v + t- k}, X_{v+t-k+1}, \cdots, X_{v+t-1}) = F((Y_v)_{t}, S_{v+t}).$
    \end{enumerate}

    Then the process $ (X_{v})_{v \in \mathbb{Z}}$ is stationary and finitely dependent because it is $ 4n+2$-block factor of the stationary finitely dependent process $(J_v, Y_v, S_v, F_v)_{v \in \mathbb{Z}}$.

    Let $\nu_r$ be the measure corresponding to $ (X_v)_{v \in \mathbb{Z}}$ and $\eta$ be the measure corresponding to $ (J_v)_{v \in \mathbb{Z}}$. Given a sample of the process $ J_v$, we say that $ i \in \mathbb{Z}$ is a \emph{good} position, if $ i$ lies in the union of intervals where the samples from the process $ Y_v$ are pasted but not in the last $r$ positions in any of these intervals, that is there exists $ v \in \mathbb{Z}$ and minimum $ t \in \mathbb{N}$ such that $ J_v = J_{v+t} =1$ and $ v+l+k+1 \leq i \leq v+(t-k)-r$. Then for any $a \in \A^r \cap L(X)$,
    \begin{align*}
        \mathbb{P}_{\nu_r}\bigl( (X_0, X_1, \cdots, X_{r-1})=a \bigr) = \text{ }&\mathbb{P}\bigl( (X_0, X_1, \cdots, X_{r-1})=a | 0 \text{ is good}\bigr) \mathbb{P}\bigl(0 \text{ is good})+ \\ &\mathbb{P}\bigl( (X_0, X_1, \cdots, X_{r-1})=a | 0 \text{ is not good}\bigr) \mathbb{P}\bigl(0 \text{ is not good}).
    \end{align*}
    where probabilities on the right hand side of the equation are w.r.t. the joint process $(J_v, S_v, Y_v, F_v)_{v \in \mathbb{Z}}$.  
    Now, $ \mathbb{P}\bigl((X_0, X_1, \cdots, X_{r-1})=a | 0 \text{ is good}\bigr) = \mu([a])$ and given our choice of $n $ and the fact that the process $(J_v)_{v \in \mathbb{Z}}$ is ergodic, we can use   
  the ergodic theorem to conclude that $\mathbb{P}\bigl(0 \text{ is not good})= \mathbb{P}_{\eta}\bigl(0 \text{ is not good}) < \frac{\epsilon}{2}$, hence we have
    \[ | \nu_{r}([a]) - \mu([a])| < \epsilon .  \]\end{proof}

\begin{remark}\label{remark: d bar approximation}
Although we do not prove this here, we can strengthen the conclusion of \Cref{onedimension}. We can use the finitely dependent processes constructed in the above proof to also show the entropy density of the finitely dependent processes in $ \mathcal{M}_{\sigma}(X)$, that is, given $\mu \in \mathcal{M}_{\sigma}(X)$, there exist shift-invariant finitely dependent process $ \mu_n$ such that $ h(\mu_n) \rightarrow h(\mu)$, where $ h(\nu)$ denotes the measure-theoretic entropy of the measure $ \nu$. 

 A process $ (Y_v)_{v \in \Z} \in B^\Z$ is a \textbf{factor} of $ (X_v)_{v \in \Z} \in A^{\Z}$, if there is a measurable map $ \phi: A^{\Z} \rightarrow B$ such that $ Y_v = \phi((X_{v+w})_{w \in \Z})$. Any factor of an iid (FIID) process is finitely determined and the set of processes in $\mathcal{M}_{\sigma}(X) $ which are FIID is closed under the $ \bar{d}$-metric (See \cite[Chapter 5]{Kalikow}). It follows from \cite{zbMATH00177172,Yinonfd} that the finitely dependent processes in $\mathcal{M}_{\sigma}(X) $ are FIID. Therefore, for any SI subshift $X$, the $ \bar{d}$-closure of the set of finitely dependent measures in $ \mathcal{M}_{\sigma}(X)$ is precisely the set of FIID processes in $ \mathcal{M}_{\sigma}(X)$. 
\end{remark}
\begin{remark}
A subshift which has a synchronizing word is called the synchronizing subshift. The above proof can also be made to work for synchronizing subshifts satisfying a weaker mixing condition called non-uniform specification (\cite{zbMATH03656251, nonuniformspecification}). For example, if $ S= \{ n_1, n_2, n_3,\cdots\} \subset \N $ where $ n_1 < n_2 < n_3 < \cdots$ is such that $ gcd(\{s+1: s \in S\}=1$ and $ n_{k+1} -n_{k} = o(n_k)$, then the $S$-gap shift satisfies these properties. 
\end{remark}
\begin{remark}
    We can use the above construction to construct a dense set of shift-invariant finitely dependent processes which are not block factors. Note that if $ w$ is any synchronizing pattern and $ uwv \in L(X)$ then $uwv $ is also a synchronizing pattern. Thus, by the strong irreducibility of $X$, we can find a synchronizing pattern $w'$ which contains all globally allowed letters in $ \A \cap L(X)$. The process we get by just pasting (deterministic pattern) $w'$ at locations of $1's$ and following the rest of construction as it is, cannot be a block factor of an iid: For, as shown in \cite[Proposition 5]{Schramm} any block factor of an iid gives a positive probability to the pattern $a^n$ for any $n $ and for any globally allowed letter $a$. But for a sufficiently large $n $, $ a^n$ does not appear in the process we constructed. 
\end{remark}

The finitely dependent process constructed above does not give positive probabilities to all globally allowed patterns in general. For example, if $ X $ is an even shift (see Example \ref{Even shift}), then any synchronizing pattern must contain symbol $1$. Hence, the finitely dependent process constructed above does not assign a positive probability to the patterns $ 0^{n}$ for large enough $n$.

Now we prove that if $X$ is a strongly irreducible sofic subshift, then there exists a finitely dependent process which gives positive probabilities to all globally allowed patterns. 

\begin{definition}
    A stochastic process which lies in subshift $ X$ almost surely is \textbf{fully supported} in $X$ if it gives positive probabilities to all finite globally allowed patterns in $X$.
\end{definition}
\begin{proposition}\label{fully supported process}
  Let $ X \subset \A^{\mathbb{Z}}$ be a strongly irreducible sofic subshift on $\mathbb{Z}$. Then there exists a fully supported shift-invariant finitely dependent process on $X$.    
\end{proposition}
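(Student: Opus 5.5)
We lift to a cover by a strongly irreducible (in fact mixing) subshift of finite type, build a fully supported finitely dependent process there, and push it forward by the factor map. The key steps are as follows.

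\textbf{Step 1.} Since $X$ is sofic, there is a factor map $\Phi: Y \to X$ from a one-dimensional subshift of finite type $Y$. We would like $Y$ to be irreducible and in fact mixing. For this we take the minimal right-resolving (Fischer) presentation of $X$. Since $X$ is strongly irreducible, hence topologically mixing, this presentation is irreducible, and its edge shift $Y$ is an irreducible SFT with a $1$-block factor $\Phi: Y\to X$.

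The first obstacle is that $Y$ need not be aperiodic. We would handle this using the standard fact that the Fischer cover of a mixing sofic shift is mixing. The reason is that a period $p>1$ of $Y$ would force the synchronizing words of $X$ to return only at multiples of $p$, which contradicts mixing of $X$. Once $Y$ is a mixing SFT on $\Z$, it is strongly irreducible.

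\textbf{Step 2.} We build a fully supported finitely dependent process on $Y$, using the construction in the proof of \Cref{onedimension} but changing how the gaps are filled. Let $l$ be such that every $Y$-word of length $l$ is synchronizing; this holds because $Y$ is an SFT. Let $k$ be the SI distance of $Y$. Take a shift-invariant finitely dependent $n$-net process $(J_v)$ from \Cref{m-net process}. At each $1$ of $J$, place a uniformly random word from $\text{Sync}_l = L(Y,[1,l])$. In each gap of length $t\in[n+1,2n+1]$ between consecutive $1$'s, fill the region strictly between the two synchronizing words with a word chosen uniformly from all words $v$ such that $s\,v\,s'$ is allowed, where $s,s'$ are the two synchronizing words at the ends of the gap. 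This set is nonempty because $t-l\ge n+1-l\ge k$, so strong irreducibility applies.

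This filling is done by an auxiliary iid random function $F_v$, exactly as in the proof of \Cref{onedimension}. The output is therefore a finite-range block factor of $(J_v,S_v,F_v)$, and hence shift-invariant and finitely dependent. Since $s\,v\,s'$ is allowed and the words $s,s'$ are synchronizing, consecutive blocks concatenate legally. Thus the output lies in $Y$ almost surely.

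\textbf{Step 3.} We check full support. Let $a\in L(Y)$ be arbitrary. Since $Y$ is mixing, for all sufficiently large $t$ there are words $s, s'\in \text{Sync}_l$ and a filling of a gap of length $t$ that contains $a$ at a prescribed position. Choose $n$ large enough that this holds for every $t\in[n+1,2n+1]$. By full support of the net process (which we may arrange, or simply use positive probability of some gap pattern), the event that there is a gap of some length $t$ at a suitable location has positive probability. Conditioned on that event, the uniform choices assign positive probability to $a$ appearing.

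The subtle point is that a single $n$ must work for all $a$. It cannot, since words longer than $n$ cannot fit inside one gap. To fix this, we take a countable convex combination of the processes $\nu_n$ over all $n$. A mixture of finitely dependent processes with different ranges is not finitely dependent, however. Instead, we argue that a fixed $n$ already suffices for words longer than the gap. The word $a$ can be realized across several consecutive gaps: pick a $Y$-orbit containing $a$ followed by a synchronizing word every $t\in[n+1,2n+1]$ steps. Such an orbit exists because every long $Y$-word can be split at its own subwords of length $l$, and every $l$-word is synchronizing. So any $Y$-word can be cut into pieces of lengths in $[n+1,2n+1]$ at synchronizing positions, and a net configuration with exactly these gaps has positive probability. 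Hence every $a\in L(Y)$ has positive probability.

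\textbf{Step 4.} We push the process forward by $\Phi$. A $1$-block factor of a finitely dependent shift-invariant process is again finitely dependent and shift-invariant. Since $\Phi$ is surjective, every $b\in L(X)$ has a preimage word in $L(Y)$, which has positive probability by Step 3. So the pushed-forward process is fully supported on $X$.

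We expect the main obstacle to be Step 1, namely producing a mixing SFT cover of $X$. If the Fischer cover turns out to be periodic, the fallback is Step 3's cutting argument, run with gap lengths restricted to the residues allowed by the period. This would require an $n$-net whose gap lengths are multiples of $p$, obtained by working on the subgroup $p\Z$.
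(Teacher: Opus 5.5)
Your proposal is correct and follows essentially the paper's route. Both pass to an SFT cover in which short words are synchronizing (for the paper, single letters after recoding to a nearest-neighbour SFT), plant uniformly random such words at the points of a finitely dependent $n$-net, fill each gap uniformly among legal fillers, and push forward by the factor map. Your Fischer-cover argument in Step 1 usefully supplies a justification the paper leaves implicit, namely that the SFT cover can be taken mixing, hence strongly irreducible.

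One small correction in Step 3: the net process is not known to give positive probability to a prescribed gap pattern, so that assertion is unjustified, but you do not need it. Since every $l$-word is synchronizing, conditioning on whatever net pattern actually occurs near the window already gives $a$ (extended to a point of $Y$) positive probability. This also makes the mixture digression and the final fallback remark unnecessary.
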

\begin{proof}
The proof is a simple modification of the proof of \Cref{onedimension}. First note that a block factor of a fully supported shift-invariant finitely dependent process is a fully supported shift-invariant finitely dependent process. Hence, we can assume that $X$ is a subshift of finite type. We can further assume without loss of generality (\cite[Theorem 2.3.2]{LindMarcus}) that all forbidden patterns of $X$ are of length $2$ (i.e. $ X$ is a nearest neighnour subshift of finite type). Let $k$ be the SI distance for $X$. 

The main observation is that any globally allowed letter (patterns of length $1$) in $X\subset \A^\Z$ is synchronizing. Now as in the proof of \Cref{onedimension}, we can take a $n$-net process $(J_v)_{v\in \Z}$ for some $n>k$. At each appearance of $1$ in a sample of the process, place uniformly and independently a letter from $\A\cap L(X)$. Now uniformly choose a valid pattern between successive letters. It is easy to check that the support of the resulting process is entire subshift $X$.
\end{proof}

We do not know if there exists a fully supported shift-invariant finitely dependent process on every strongly irreducible subshift on $ \Z$.

\section{Finitely dependent processes on $\Z^d$-subshifts with the finite extension property}\label{Section: FINDEP On FEP}
In this section, we prove the existence of a dense set of fully supported shift-invariant finitely dependent processes on a $\Z^d$-subshift satisfying the finite extension property.

We first prove the existence of shift-invariant finitely dependent processes for the simpler case of subshifts satisfying the $ 0$-extension property (which is equivalent to the topological strong spatial mixing \cite[Proposition 2.12]{FEP}) since the proof is simpler and works for a general class of groups.
\subsection{Finitely dependent processes on subshifts with  $0$-extension property}\label{subsection: TSSM}\hspace*{\fill} 
\begin{theorem}\label{thm: TSSM}
    Let $G$ be an infinite finitely generated group. Let $ X \subset \A^{G}$ be a subshift of finite type which satisfies $0$-extension property. Then there exists a fully supported shift-invariant finitely dependent process on $X$.  
\end{theorem}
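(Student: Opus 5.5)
We will build the process as a sequential block factor of a finitely dependent proper coloring, in the same spirit as the proof of \Cref{m-net process}. Because $X$ has the $0$-extension property, any locally allowed pattern is globally allowed. This lets us fill in the configuration greedily, one color class at a time, with no danger of getting stuck. The sequential filling is the whole mechanism; the only care needed is in choosing the range of the coloring.

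The setup is as follows.
\begin{itemize}
\item Fix a finite set $\mathcal{F}$ of forbidden patterns witnessing the $0$-extension property, and let $r$ be such that every pattern in $\mathcal{F}$ has support of diameter at most $r$ (after translation).
\item By \Cref{FINDEP RANGE M COLORING}, take a shift-invariant finitely dependent range-$m$ proper $q$-coloring $(Y_g)_{g\in G}$ with $m > 2r$. Then any two vertices of the same color are far enough apart that no forbidden pattern window meets both of them.
\item Take an independent iid family $(U_g)_{g\in G}$ of uniform $[0,1]$ variables, used to make random choices.
\end{itemize}

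We then define patterns $Z^{(0)},Z^{(1)},\dots,Z^{(q)}$, where $Z^{(i)}$ is supported on $D_i=\{g: Y_g<i\}$. Start with $Z^{(0)}$ the empty pattern. Given $Z^{(i)}$, which is locally allowed (hence globally allowed by the $0$-extension property), do the following at each $g$ with $Y_g=i$.
\begin{enumerate}
\item Form the finite set $\mathcal{A}_g$ of symbols $a\in\mathcal{A}$ such that $Z^{(i)}|_{g\B(r)\cap D_i}\sqcup a_g$ is locally allowed.
\item Pick an element of $\mathcal{A}_g$ uniformly, using $U_g$.
\end{enumerate}

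Three checks complete the argument.
\begin{itemize}
\item \emph{The choices are always possible.} $\mathcal{A}_g$ is nonempty because $Z^{(i)}$ is globally allowed, so some point of $X$ extends it.
\item \emph{Simultaneous choices are compatible.} Since $m>2r$, no forbidden window contains two new sites. So if each single-site extension is locally allowed, the simultaneous extension $Z^{(i+1)}$ is locally allowed too, and hence globally allowed.
\item \emph{The result is a valid finitely dependent process.} After $q$ steps we have a full configuration that is locally allowed everywhere, so it lies in $X$. Each $Z^{(i+1)}_g$ depends only on $Y,U$ in a ball of radius about $(i+1)r$ around $g$, so the final process is a block factor of the finitely dependent process $(Y,U)$. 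It is therefore finitely dependent, and it is shift-invariant because the construction is equivariant.
\end{itemize}

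The step that needs more thought is full support: every globally allowed pattern $a\in\mathcal{A}^F$ must receive positive probability. We argue by induction on the stages. Condition on a realization of $Y$ on a neighborhood of $F$; this event has positive probability, though not every coloring pattern need be attainable, so we simply fix one that occurs with positive probability. Then we show that with positive probability, for each $i$ and each $g\in F$ with $Y_g=i$, the uniform choice selects $a_g$. For this it suffices that at each stage the current pattern, restricted to $F\cap D_i$, agrees with $a$, and that the pattern outside $F$ also agrees with some fixed $x\in X$ with $x|_F=a$.

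The key point is that we follow $x$ on a whole neighborhood of $F$, not only on $F$. At every stage $x|_{D_{i+1}\cap N}$, where $N$ is a bounded neighborhood of $F$, is locally allowed as a sub-pattern of $x$. Hence $x_g\in\mathcal{A}_g$ for each $g\in N$, because $\mathcal{A}_g$ depends only on the already-filled sites near $g$, which agree with $x$ on $N$ once $N$ is large enough that interior sites' windows stay inside it. Choosing the value $x_g$ at each such site then has probability at least $|\mathcal{A}|^{-1}$.

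There are finitely many such sites, and the choices are independent given $Y$. So the probability of the pattern $a$ is positive, and the process has full support.
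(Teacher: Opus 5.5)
Your construction is the paper's. You fill the color classes of a finitely dependent range-$m$ proper coloring one at a time, choosing uniformly among symbols that keep the pattern locally allowed, and the $0$-extension property makes every choice set nonempty and puts the final configuration in $X$. (The paper defines $\mathcal{A}_g$ globally, as the symbols extendable to a point of $X$ agreeing with earlier stages, and then uses the $0$-extension property to show it equals the local set you write down.)

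\textbf{The gap.} The step that fails as written is in the full-support argument. It is not true that $x_g\in\mathcal{A}_g$ for every $g\in N$ at every stage. If $g$ is within distance $r$ of $G\setminus N$, then $\mathcal{A}_g$ depends on sites outside $N$ that were filled at earlier stages with no control, and these can exclude $x_g$. For proper $3$-colorings of $\Z$, a neighbor of $g$ just outside $N$ that received the symbol $x_g$ at stage $0$ already does this. Your remark about interior sites points the right way, but a margin of width $r$ is not enough. The region on which agreement with $x$ is guaranteed shrinks by $r$ at every stage, so the margin must be about $qr$.

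\textbf{The repair.} Use nested neighborhoods. Put $N_0=F\B(qr+1)$ and $N_{j+1}=\{g: g\B(r+1)\subseteq N_j\}$; with the paper's open balls, the window around $g$ should be $g\B(r+1)$ rather than $g\B(r)$. At stage $i$, demand the choice $x_g$ only at sites $g\in N_i$ with $Y_g=i$. Suppose this has happened at all earlier stages. Then for $i\ge 1$, every already-filled site $h\in g\B(r+1)$ has stage $j=Y_h<i$ and lies in $N_{i-1}\subseteq N_j$, so it carries $x_h$; hence $x_g\in\mathcal{A}_g$. Since $N_j\supseteq F\B((q-j)r+1)$, the finite set $C$ of controlled sites contains $F$. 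So, conditionally on a positive-probability value of $Y|_{N_0}$, the output agrees with $x$ on $F$ with probability at least $|\mathcal{A}|^{-|C|}$.

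The paper's own full-support paragraph passes over the same boundary effect. It conditions only on the coloring of $\B(n)$ and asserts that $a|_{W_{i+1}\cap\B(n)}$ receives positive probability. However, $\mathcal{A}_g$ for $g$ near the boundary of $\B(n)$ depends on uncontrolled values outside $\B(n)$. Tracking a global extension $x$ on shrinking neighborhoods, as you proposed, is what makes that step rigorous.
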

\begin{proof}
    As $X$ has the $0$-extension property, there exists a finite set $ \F$ of forbidden words such that $ X= X_{\F}$ and that every locally allowed configuration with respect to $ \F$ is globally allowed. Let $ k$ be the maximum diameter of a forbidden word in $ \F$. Let $ m > 10k$. Let $ (Z_g)_{g \in G}$ be a shift-invariant finitely dependent range-$m$ proper $ q$-coloring guaranteed by Proposition \ref{FINDEP RANGE M COLORING}. For $1 \leq i \leq q$, let $ W_i = Z^{-1}(i) := \{g \in G: Z_g = i\}$. Note that the sets $ \{W_i: 1 \leq i \leq q\}$ partition $G$ and since $ (Z_g)_{g \in G}$ is a range-$m$ proper $q$-coloring, $ W_i$ is an $m$-separated subset of $G$ for each $i$, almost surely. 

    We now define processes $ (Y_{g}^{(i)})_{g \in G}$, $ 1 \leq i \leq q$ in stages so that process $ (Y_{g}^{(i)})_{g \in G}$ will reveal the output of the desired process on the set $W_1 \sqcup \cdots \sqcup W_i$ and $(Y^{(i)}, Z)$ is a shift-invariant finitely dependent process. Finally, $(Y_{g}^{(q)})_{g \in G}$ will be our desired process.
    
    Let us first construct $Y^{(1)}$. Let $(a_g^{(1)})_{g\in G}$ be an iid uniform process with alphabet $\A\cap L(X)$. Choose a letter $* \notin \A$. Define
    $Y^{(1)}$ by
    $$Y_g^{(1)}= \begin{cases}
        a_g^{(1)}&\text{ if }g\in W_1\\
        *&\text{ otherwise.}
        \end{cases}$$
    Let us make some observations about this process. Since $W_1$ is $m$-separated for some $m>k$ and $k$ is the maximum diameter of the forbidden words, we have that $Y^{(1)}|_{W_1}$ is locally allowed and hence globally allowed. $(Y^{(1)}, Z)$ is shift-invariant and finitely dependent because it is a factor of a shift-invariant finitely dependent process.
    
    Now, suppose that for some $i; 1 \leq i \leq q-1$, the process $ Y^{(i)}$ has been defined such that $(Y^{(i)}, Z)$ is finitely dependent and $Y^{(i)}|_{W_1 \sqcup \cdots \sqcup  W_i}$ is locally allowed and hence globally allowed configuration. Let us construct $Y^{(i+1)}$. Consider the random subset $W_{i+1}$ and the selection of symbols $(\A_g)_{g\in G}$ given by
    $$\A_g=\begin{cases}
        \left\{Y^{(i)}_g\right\}&\text{ if }g\in \sqcup_{j=1}^iW_j\\
        \left\{a\in \mathcal A~:~ \exists x\in X\text{ for which }x_g=a\ \&\  x_h=Y^{(i)}_h\text{ for }h\in W_1\sqcup \ldots \sqcup W_i \right\}&\text{ if }g\in W_{i+1}\\
        \{*\}&\text{ otherwise.}
    \end{cases}$$
    First note that $\A_g$ is non-empty for all $g\in G$. This is obvious for $g\notin W_{i+1}$. For $g\in W_{i+1}$ this follows because $Y^{(i)}|_{W_1\sqcup \ldots \sqcup W_i}$ is globally allowed. Further, $(\A_g)_{g\in G}$ is shift-invariant and finitely dependent since it is a block factor of the shift-invariant finitely dependent process $(Y^{(i)},Z)$: To see this note that since $X$ has the zero extension property and $W_{i+1}$ is $m$-separated for $m>k$ it follows that for $g\in W_{i+1}$,
    $$\A_g=\{a\in \mathcal A~:~ \exists b\in \mathcal L(X, g.\B(k)) \text{ such that }b_g=a\ \&\ b_h=Y^{(i)}_h\text{ for }h\in (W_1\sqcup\ldots\sqcup W_i)\cap g.\B(k)\}.$$
    Since $m>k$ this also implies that if $x\in (\A\cup \{*\})^G$ satisfies $x_g\in \A_g$ for all $g\in G$, then by the $0$-extension property $x|_{W_1\sqcup\ldots W_{i+1}}$ is locally allowed and hence globally allowed. Now, define $Y^{(i+1)}_g$ as a uniform and independent choice of $\A_g$ for all $g\in G$. Consequently $(Y^{(i+1)}, Z)$ is a finitely dependent process such that $Y^{(i+1)}|_{W_1\sqcup\ldots \sqcup W_{i+1}}$ is globally allowed.
    
    It follows that the process $Y^{(q)}$ obtained in this way is a shift-invariant finitely dependent process on $X$. Now we will show that $(Y^{(q)}_{g})_{g \in G}$ is fully supported. Clearly, it is enough to show that for any $n > m$, the probability of any pattern $ a \in L(X, \B(n))$ w.r.t. $ (X^{(q)}_{g})_{g \in G}$ is positive. To this end, let $z \in \{ 1,2,\cdots, q\}^{\B(n)}$ be a range-$ m$ proper $q$-coloring of $ \B(n)$ (i.e. $z_g \neq z_h$ whenever $ g, h \in \B(n)$ are two elements such that $ d(g, h) \leq m$) which has a positive probability w.r.t. the process $(Y_g)_{g \in G}$. Then conditioned on $(Y_g)_{g\in \B(n)}=z$, the probability of $(Y^{(q)}_g)_{g\in \B(n)}=a$ is positive: For conditioned on the event $\{(Y_g)_{g \in \B(n)} = z \}$ (with respect to the joint distribution $(Y^{(q)}, Z)$), the conditional measure of $ Y^{(q)}|_{W_{i+1} \cap \B(n)}$ given the event $Y^{(q)}|_{W_{i} \cap \B(n)}= a|_{W_i \cap \B(n)}$ is a product measure and thus assigns positive probabilities to all patterns on $ W_{i+1} \cap \B(n)$ which are consistent with $a|_{W_i \cap \B(n)}$, in particular to the pattern $ a|_{W_{i+1} \cap \B(n)}$, for every $ 1 \leq i \leq q$. \end{proof}
\begin{remark}
    If the Cayley graph of a group $G$ has degree $d \in \N$, then the space of proper $d+1$ colorings of $ G$ has the $0$-extension property: Given any locally allowed pattern with support $ F \subset G$, we can extend the pattern to $G$ simply by filling the smallest available color at each $g \in G\setminus F$ sequentially (in some order). Thus, there exists a fully supported shift-invariant finitely dependent process on $ \mathcal{X}_{d+1}^{G}$.
\end{remark}

\subsection{Finitely dependent processes on subshifts with the finite extension property}\label{subsection: FEP}\hspace*{\fill} \\

Now we will prove the existence and density of shift-invariant finitely dependent processes on subshifts with the finite extension property for $ G= \Z^d$. In this subsection, the distance between two points will be the $ l_{\infty}$ distance unless stated otherwise.

We first need an auxiliary construction. For $ r \in \N$, we consider a graph $\Z_{r}^{d}$ obtained by connecting any two points $u$ and $ v$ of $ \Z^d$ by an edge if $|| u - v||_{\infty} \leq r$. A subset $ F \subset \Z^d$ is $r$-connected if any two elements of $ F$ can be connected by a path in $ \Z_{r}^{d}$ that lies entirely in $F$. The $r$-connected components of $ W \subset \Z^d$ will be referred to as $r$-components of $W$.

Given any $ r \in \N$, we will construct a shift-invariant finitely dependent coloring $ (Z_{v})_{v \in \Z^d}$, where \linebreak $ Z_{v} \in \{ 1,2,\cdots, D\}$ for a constant $ D \in \N$ to be specified later, with the following property: For $1 \leq i \leq D$, the monochromatic sets $ W_i := \{ v \in \Z^d: Z_{v} =i\}$  will consist of disjoint union of finite $r$-components. In addition, we will ensure that the $r$-components of $W_1$ occupy an arbitrarily large proportion of $ \Z^d$. 

We will construct such a coloring by first obtaining a tiling of $\Z^d$ using a finitely dependent $m$-net process for an appropriate $m$ and then by coloring the regions of each of these tiles using colors from $\{ 1,2,\cdots, D\}$ so that the parts of the tile with higher local complexity (explained later) are given a higher color (with respect to the usual order on $\{ 1,2,\cdots, D\}$).

To this end, let $ m \in \N$. By Theorem \ref{m-net process}, there exists a shift-invariant finitely dependent $ m$-net process $ (J_{v})_{v \in \Z^d}$. We first build a (random) covering of $ \Z^d$ with Voronoi cells whose centers lie in the set $\{v \in \Z^d: J_{v}=1\}$: A \textbf{Voronoi covering} around a subset $ S \subset \Z^d $ is a covering of $ \Z^d$ by subsets $(\mathcal{C}_{S}(v))_{v \in S}$ defined by 
\[ \mathcal{C}_{S}(v) := \{ w \in \Z^d: || w - v ||_{\infty} \leq || w-v'||_{\infty}, \text{for all } v' \in S\}.\]
That is, the cell $ \mathcal{C}_{S}(v)$ around $ v \in S $ consists of those points that are closer to $v$ than any other point in $S$. The subset $ \mathcal{C}_{S}(v)$ is called a \textbf{Voronoi cell} around $v$. Let $ S(\omega)$ be the random subset of $\Z^d$ given by $ S(\omega) := \{ v \in \Z^d: J_{v}(\omega) =1 \}$. Then $ \{ C_{S(\omega)}(v): v \in S(\omega)\}$ gives a random covering of $ \Z^d$.

Note that Voronoi covering is obtained by applying a block factor to the finitely dependent $m$-net process $J$. 

We will use some geometric observations about $ \Z^d$. Recall that $ x \in \{ 0, 1\}^{\Z^d}$ is an $m$-net configuration if $ \{ v \in \Z^d: x_{v} =1\}$ is an $m$-separated and $ m$-covering subset of $ \Z^d$.
\begin{lemma}\label{geometric lemma}
    Let $ x \in \{ 0, 1\}^{\Z^d} $ be an $ m$-net configuration for some $ m \in \N$. Then for all $ c> 0$ and $v\in \Z^d$ there exists $D$ which depends only on $c$ and $d$ such that for any $ m \geq 2$, $\B(v, \lfloor cm \rfloor)$ intersects at most $ D$ Voronoi cells in the Voronoi covering around the set $ \{ w \in \Z^d~:~ x_{w} =1\}$.
\end{lemma}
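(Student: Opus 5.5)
The plan is a volume-counting argument in the $l_\infty$ metric, which is the metric used in this subsection. Let $S=\{w\in\Z^d : x_w=1\}$. Since $x$ is an $m$-net, $S$ is $m$-covering and $m$-separated. Fix $v$ and put $R=\lfloor cm\rfloor$.

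First, I would show that every Voronoi cell meeting $\B(v,R)$ has its centre close to $v$. Suppose $w\in\B(v,R)\cap\mathcal{C}_S(s)$ with $s\in S$. By $m$-covering there is $s'\in S$ with $\|w-s'\|_\infty\le m$ (and $\|\cdot\|_\infty\le\|\cdot\|_1$, so this holds whichever graph metric is used for the net). The definition of the cell then gives $\|w-s\|_\infty\le\|w-s'\|_\infty\le m$, so
\[
\|s-v\|_\infty\le R+m\le (c+1)m .
\]
Hence the number of cells meeting $\B(v,R)$ is at most $|S\cap \B(v,(c+1)m+1)|$.

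Second, I would bound this count by packing. Points of $S$ are pairwise more than $m$ apart. If $m$-separation is in $l_\infty$, then the boxes $s+[0,\lfloor m/2\rfloor]^d$ for $s\in S$ are pairwise disjoint. If it is in $l_1$, I would instead use boxes of side about $m/(2d)$, which are disjoint because $\|\cdot\|_1\le d\|\cdot\|_\infty$. All these boxes lie inside a box of side $O((c+2)m)$ around $v$. Comparing volumes gives
\[
|S\cap\B(v,(c+1)m+1)|\le \frac{(2(c+2)m+1)^d}{(\lfloor m/(2d)\rfloor+1)^d}.
\]
For $m\ge 2$ this is bounded by a constant $D=D(c,d)$, for instance $(8d(c+3))^d$, using $\lfloor m/(2d)\rfloor+1\ge m/(2d)$.

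The main obstacle is bookkeeping rather than ideas. I need to make sure the metric in the net definition and the $l_\infty$ metric of the Voronoi cells are reconciled, and that the floor and $+1$ terms do not spoil uniformity in $m$; the hypothesis $m\ge2$ is what handles the latter. Ties in the definition of Voronoi cells cause no trouble, since the argument only uses $\|w-s\|_\infty\le\|w-s'\|_\infty$.
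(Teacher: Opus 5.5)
Your proposal is correct and is essentially the paper's argument. You bound the number of cells meeting $\B(v,\lfloor cm\rfloor)$ by the number of net points within distance $(c+1)m$ of $v$, using $m$-covering and the defining inequality of a Voronoi cell, and then bound that count by packing disjoint boxes of side about $m/2$ and comparing volumes. Your handling of a possible $l_1$ versus $l_\infty$ mismatch is a harmless extra, since the paper fixes the $l_\infty$ distance throughout that subsection.
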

\begin{proof}
    The proof is a simple volume packing argument. Let $ S = \{ w \in \Z^d : x_{w}=1 \}$. Consider the Voronoi covering $ (C_{S}(w))_{w \in S}$ around the set $S$. Since for any $v \in \Z^d$, there exists an element of $S$ in the $m$-net at a distance at most $m$ from $v$, we see that the number of Voronoi cells intersecting $\B(v, \lfloor cm \rfloor)$ is at most equal to the number of elements in $ \B(v, \lfloor (c+1)m \rfloor) \cap S$. Since $ x$ is an $m$-net configuration, $\lfloor \frac{m}{2} \rfloor$ radius balls around any two elements of $S$ are disjoint. Hence if $ \B(v, \lfloor (c+1)m \rfloor)$ contains $ K$ number of $ 1$'s in $x$, then $ \B(v, \lceil (c+\frac{3}{2})m \rceil)$ contains at least $ K$ disjoint $ \lfloor \frac{m}{2} \rfloor$ radius balls. As the volume of $ l_{\infty}$ ball of radius $ t$ in $ \Z^d$ is $(2t+1)^d$, we obtain the following inequality by comparing volumes.
    \begin{align*}
     \Bigl(3 \left \lceil \left(c+\frac{3}{2} \right)m \right \rceil\Bigr)^d & \geq K \left \lfloor \Bigl(\frac{m}{2}\Bigr)\right \rfloor^d . \\
     \implies K & \leq        4^d \left \lceil 3\left( c+2 \right) \right \rceil^d
    \end{align*}
 Thus, $ \B(v, \lfloor cm \rfloor)$ intersects at most $ D= 4^d \left \lceil 3\left( c+2 \right) \right \rceil^d$ Voronoi cells in the Voronoi covering around the set $S$.     
\end{proof}
For any point $v \in S$, the Voronoi cell $ C_{S}(v)$ around $v$ is contained in the ball of radius $2m$ around $v$. Thus, the above lemma implies that $ C_{S}(v)$ intersects with at most $48^d$ neighboring Voronoi cells.
\begin{lemma}\label{lemma:invariance of Voronoi cells}
    Let $ x \in \{ 0, 1\}^{\Z^d} $ be an $ m$-net configuration for some $ m \in \N$. Let $ S= \{ w \in \Z^d~:~ x_{w} = 1\}$. There exists a constant $ K$ which depends only on $d$, such that for any Voronoi cell $ \mathcal{C}$ around a point $ v \in S$ and any $r \in 
    \N$, $ \frac{|\partial \mathcal{C}^{(r)}|}{|\mathcal{C}|} \leq \frac{Kr}{m}$.
\end{lemma}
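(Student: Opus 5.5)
Throughout, $\partial\mathcal{C}^{(r)}$ means $(\partial\mathcal{C})^{(r)}$, the $r$-envelope of the internal boundary, and all distances are $l_\infty$. The plan is to bound $|\mathcal{C}|$ below by $c_d m^d$ and $|(\partial \mathcal{C})^{(r)}|$ above by $C_d\, r\, m^{d-1}$, and then take $K = C_d/c_d$.

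\textbf{Lower bound.} Distinct points of $S$ are at distance $> m$. So if $\|w-v\|_\infty \le \lfloor m/2\rfloor$, then $\|w-v'\|_\infty > m/2$ for every other $v'\in S$. Hence $\B(v,\lfloor m/2\rfloor)\subset\mathcal{C}$, which gives $|\mathcal{C}|\ge c_d m^d$.

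\textbf{Trivial regime $r\ge m$.} By the $m$-covering property, $\mathcal{C}\subset \B(v,m+1)$. Therefore $(\partial\mathcal{C})^{(r)}\subset \B(v,m+r+1)$, whose size is $O((r+m)^d)$. Since $r \ge m$, this is $O(r^d)$, and the ratio is at most $O((r/m)^d)$. So I will first reduce to the regime $r<m$. (If this crude estimate turns out to be too weak, one can instead use the bound $|\partial \mathcal{C}^{(r)}| \le |\partial\mathcal{C}|\,(2r+1)^d$ restricted suitably; in any case the main work is the regime $r<m$.)

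\textbf{Upper bound for $r<m$.} Take $w\in\partial\mathcal{C}$. Then $w$ has a neighbour $u\notin\mathcal{C}$, so there is $v'\in S$ with $\|u-v'\|_\infty<\|u-v\|_\infty$. All such $v'$ lie within $3m$ of $v$, so by \Cref{geometric lemma} there are at most $D=D(d)$ of them. Set
\[
g_{v'}(w)=\|w-v\|_\infty-\|w-v'\|_\infty .
\]
This function is $2$-Lipschitz, and a boundary point satisfies $g_{v'}(w)\in[-2,0]$. Consequently every point $p$ of $(\partial\mathcal{C})^{(r)}$ lies in $\B(v,2m)$ and satisfies $|g_{v'}(p)|\le 2r+2$ for one of the $D$ choices of $v'$. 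It therefore suffices to prove, for a fixed $v'$ with $\|v-v'\|_\infty>m$, that
\[
\#\{p\in\B(v,2m): |g_{v'}(p)|\le 2r+2\}\le C\, r\, m^{d-1}.
\]

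\textbf{Main obstacle: thickness of $l_\infty$ bisectors.} The difficulty is that $l_\infty$ bisectors are not hyperplanes and can have flat pieces. My plan is the following.
\begin{enumerate}
\item Choose a coordinate $i$ with $|v_i-v'_i|=\|v-v'\|_\infty>m$, and slice $\B(v,2m)$ into the $O(m^{d-1})$ lines parallel to $e_i$.
\item On each such line, try to show that $g_{v'}$ is monotone, and changes by at least $1$ per unit step, except on a set of $O(1)$ or $O(r)$ points.
\item Do this by a case analysis on whether the maximum in $\|p-v\|_\infty$ and in $\|p-v'\|_\infty$ is attained in coordinate $i$. When both are, $g_{v'}$ has slope $\pm2$ along the line.
\end{enumerate}
The plateau case, where both maxima come from other coordinates, is the genuinely delicate one. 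I expect that it forces $p$ to be far from the region where $|g_{v'}|$ is small, using $|v_i-v'_i|>m$. If that fails, the fallback is to break ties by a lexicographic or Euclidean rule, and to observe that only the overlap region of cells matters.

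Once each line meets the sublevel set in $O(r+1)$ points, summing over the $O(m^{d-1})$ lines and over the $D$ choices of $v'$ gives $|(\partial\mathcal{C})^{(r)}|\le C_d\, r\, m^{d-1}$. Combined with the lower bound on $|\mathcal{C}|$, this proves the lemma.
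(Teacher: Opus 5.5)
The step that fails is your reduction to
\[
\#\{p\in\B(v,2m):\ |g_{v'}(p)|\le 2r+2\}\le C\,r\,m^{d-1}.
\]
This inequality is false, and the cause is exactly the plateau case you flagged: $l_\infty$ bisectors contain full-dimensional tie regions on which $g_{v'}\equiv 0$.

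For example, $S=(m+1)\Z^d$ is an $m$-net. Take $v=\mathbf{0}$ and $v'=(m+1)e_1$. Then $g_{v'}(p)=0$ whenever $0\le p_1\le m+1$ and $\max_{j\ge 2}|p_j|\ge\max(p_1,m+1-p_1)$. This set has order $m^d$ points inside $\B(v,2m)$, so already for $r=1$ your bound fails once $m$ is large.

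Your expectation that plateaus keep $|g_{v'}|$ large is backwards. If both maxima sit in the same coordinate $j\ne i$ and $v_j=v'_j$, then $g_{v'}$ vanishes identically on that plateau. What your union bound throws away is that a point of $\partial\mathcal C$ lies near a \emph{transition} of $g_{v'}$ from $\le 0$ to $>0$, i.e.\ near the boundary of the sublevel set $\{g_{v'}\le 0\}$. Points deep inside a plateau satisfy $|g_{v'}|\le 2r+2$ but are far from any such transition. Changing the tie-breaking rule does not help either: it changes the cells (the paper's cells include ties) and leaves the near-zero set of $g_{v'}$ as large as before.

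The missing ingredient is the one the paper's proof relies on: the boundary lies in boundedly many hyperplanes. To get it, view $g_{v'}$ on $\mathbb{R}^d$. It is affine on each cell of the arrangement of the $O(d^2)$ hyperplanes $\{w_j-v_j=\pm(w_k-v_k)\}$, $\{w_j=v_j\}$ and their analogues for $v'$. Hence the topological boundary of $\{g_{v'}\le 0\}$ is contained in these hyperplanes together with the zero sets of the nonconstant affine pieces. Each $w\in\partial\mathcal C$ is within distance $1$ of this set for one of the $O_d(1)$ relevant $v'$, because the unit segment to its outside neighbour crosses it. So for $r\le m$, the set $(\partial\mathcal C)^{(r)}$ lies in the $(r+1)$-neighbourhoods of $O_d(1)$ hyperplanes intersected with a ball of radius $O(m)$, which contain $O_d(rm^{d-1})$ lattice points. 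The paper's wording ``intersection of at most $48^d$ halfspaces'' is not literally right for $l_\infty$ cells, which need not be convex. However, the count it actually uses (each face lies in one of boundedly many hyperplanes) does hold.

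Finally, your regime $r\ge m$ is not actually handled, since $O((r/m)^d)$ is not $O(r/m)$. It cannot be handled: for $d\ge 2$ the set $(\partial\mathcal C)^{(r)}$ contains a translate of $\B(r)$, of size $(2r-1)^d$, while $|\mathcal C|\le (2m+1)^d$. So the statement is only true for $r$ at most a constant times $m$. This restriction is implicit in the paper's estimate $|F^{(r)}|\le K_1 r m^{d-1}$ and is all that is needed in \Cref{lemma: coloring of group}. You should impose it explicitly rather than treat large $r$ as trivial.
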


\begin{proof}
    Let $v \in S$. Then the Voronoi cell $\mathcal{C}:= C_{S}(v)$ around $v$ is contained in $ \B(v, 2m)$ (ball in the $ l_{\infty}$ metric). Since each Voronoi cell intersects with at most $48^d$ neighboring cells, $\mathcal{C}$ is an intersection of at most $48^d$ halfspaces. Thus, $ \partial C$ has at most $ 48^d$ faces. Each face $F$ of $ \partial C$ is contained in the intersection of a hyperplane with $ \B(v, 2m)$. Thus, there exists $ K_1 = K_1(d)$ such that $ |F^{(r)}|\leq K_1 rm^{d-1}$. Therefore, $ |\partial C^{(r)}| \leq 48^d K_1 rm^{d-1}$. On the other hand, $ \mathcal{C}$ contains $ \B(v, \lfloor\frac{m}{2} \rfloor)$, thereby $ |\mathcal{C}| \geq K_2 m^d$ for some $K_2 = K_2(d)$. \end{proof}

We now construct the coloring $(Z_{v})_{v \in \Z^d}$ promised before. The proof of this lemma is similar to the proof of \cite[Lemma 6.4]{poirier2024contractiblesubshifts}.
\begin{lemma}\label{lemma: coloring of group}
Let $\epsilon > 0$. Then there exists $ D \in \N$ satisfying the following: For any $ r,l \in \N$, there exists $m \in \N$ such that any shift-invariant $ m$-net process $(J_{v})_{v \in \Z^d}$ has a block factor $(Z_v)_{v \in \Z^d}$ with $ Z_v \in \{1,2,\cdots, D\}$ for which
\begin{enumerate}
    \item There exists $M \in \N$ such that for any $ 1 \leq i \leq D$, the $r$-components of the sets $ W_i = \{ v \in \Z^d: Z_{v} =i\}$ have diameter less than $M$.
     \item In the Voronoi covering corresponding to the $ m$-net process $(J_{v})_{v \in \Z^d}$, the subset $ G_{l} := \{ v \in \Z^d: \B(v, l) \subseteq W_1\}$ occupies at least $ (1-\frac{\epsilon}{2})$ volume of each Voronoi cell i.e. $ \frac{|G_{l} \cap \mathcal{C}|}{\mathcal{|C|}} \geq (1 - \frac{\epsilon}{2})$ for each Voronoi cell $ \mathcal{C}$.
\end{enumerate}

\end{lemma}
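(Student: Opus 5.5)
The plan is to colour each Voronoi cell by layers according to distance to the cell boundary, in the spirit of \cite[Lemma 6.4]{poirier2024contractiblesubshifts}. The deep interior of every cell gets colour $1$. A bounded number of thin shells near cell boundaries get the higher colours, arranged so that every $r$-component of a fixed colour stays inside a region of bounded diameter.

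Fix $\epsilon$ and take $D$ from \Cref{geometric lemma} with a suitable constant $c$, so that any ball of radius $cm$ meets at most $D_0$ cells, with $D_0$ depending only on $d$; we will choose $D$ of order $D_0+1$ (or $2^{D_0}$ if needed). Given $r,l$, let $m$ be large. For a point $v$ in the Voronoi covering, let $N(v)$ be the set of centres $w$ with $\|v-w\|_\infty \le \min_{w'}\|v-w'\|_\infty + s$, where $s=Cr$ is a slack parameter. In words, $N(v)$ is the set of cells that are ``almost closest'' to $v$.

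Points with $|N(v)|=1$ lie deep inside a single cell. We give them colour $1$ if, moreover, $\B(v,l)$ consists of such points; this membership test is a block factor of $J$ of radius $O(m)$. Every other point gets colour $i=|N(v)|$, taken from a range of size at most $D_0$ by \Cref{geometric lemma}. To make $r$-components of each colour small, we use nested slacks. Let $N_j(v)$ be the set $N(v)$ computed with slack $s_j=(3^j)r$ for $j=1,\dots,D_0$. Colour $v$ by the least $j$ at which the sets $N_j$ stabilise in a suitable sense, that is, $N_j(v)=N_{j+1}(v)$. Among the nested sets $N_1(v)\subseteq\cdots\subseteq N_{D_0+1}(v)$ of size at most $D_0$, some two consecutive ones must coincide.

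The key step, and the main obstacle, is to check that two $r$-close points with the same colour $j$ have the same ``cluster'' $N_j$. Once that holds, each $r$-component of colour $j$ lies in the set of points that are nearly equidistant to a fixed set of at least two centres. That set has diameter $O(m)$, so $M$ can be taken $O(m)$. Concretely, if $\|u-v\|_\infty\le r$, then distances to centres change by at most $r$. Hence $N_j(u)\subseteq N_{j+1}(v)=N_j(v)$ whenever the slack ratios exceed $2r$, and symmetrically $N_j(v)\subseteq N_j(u)$. I expect this nesting bookkeeping to be the delicate part. Colour $1$ must also be handled: it is assigned where $|N_1|=1$ holds on the whole $l$-ball, and its $r$-components lie inside single cells, which have diameter at most $4m$.

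Finally, for property (2), the points not in $G_l$ lie within distance $O(l+3^{D_0}r)$ of the boundary of their cell. By \Cref{lemma:invariance of Voronoi cells}, their proportion in each cell is at most $K(l+3^{D_0}r)/m$, which is below $\epsilon/2$ once $m$ is chosen large relative to $r$ and $l$. The map is a block factor of $J$ because all quantities involved are determined by $J$ on a ball of radius $O(m)$.
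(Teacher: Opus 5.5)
Your colouring is a sound variant of the paper's. The paper colours $v$ by the largest $k\le D$ such that $\B(v,rk)$ meets at least $k$ Voronoi cells. You colour by the first index at which the nested sets $N_j(v)$ (slack $3^jr$) stabilise. Your invariant $N_j(u)\subseteq N_{j+1}(v)=N_j(v)$ for $\|u-v\|_\infty\le r$ does give (1), since every point of a colour-$j$ component is within $m+3^jr$ of a fixed centre. Two routine bookkeeping fixes are needed. You use $N_{D_0+1}$, so take $m\ge 3^{D_0+1}r$ to keep all relevant centres in a ball of radius $2m$. You must also shift the stabilisation index by one so it does not collide with the reserved colour $1$, giving $D=D_0+1$.

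The genuine gap is in (2). You assert that the points outside $G_l$ lie within $O(l+3^{D_0}r)$ of the boundary of their cell, i.e. that near-ties $|N_1(u)|\ge 2$ occur only near cell boundaries. For the $\ell_\infty$ Voronoi covering used here this is false: $\ell_\infty$ distance functions have flat pieces, so exact ties fill full-dimensional regions deep inside cells.

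A counterexample in $d=2$: take the $m$-net $S=(m+1)\Z\times(2m+1)\Z$; a uniformly shifted copy is a shift-invariant $m$-net process.
\begin{itemize}
\item Every $u=(x,y)$ with $\max(x,m+1-x)\le|y|\le m$ is at distance exactly $|y|$ from both $(0,0)$ and $(m+1,0)$, and strictly farther from all other centres.
\item Together with the mirror region for $(-(m+1),0)$, such ties make up about two fifths of the cell of $(0,0)$.
\item A point like $(\lfloor 0.7m\rfloor,\lfloor 0.85m\rfloor)$ lies at distance roughly $0.07m$ or more from the complement of each of the two cells containing it.
\end{itemize}
At these points $|N_j(u)|\ge2$ for every slack, so none of them receives colour $1$. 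Hence (2) fails for small $\epsilon$, however large $m$ is.

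The paper's proof of (2) rests on the same assertion: that the ball of radius $2r$ around a point of $\mathrm{int}_{\B(2r+l+1)}(\mathcal{C})$ meets only one cell. So the defect is inherited, but it is real.

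One repair for your argument is to measure the distances defining $N_j$ in the Euclidean norm, with slacks $3^j\sqrt d\,r$.
\begin{itemize}
\item Property (1) then goes through verbatim.
\item $|N_1(u)|\ge2$ now confines $u$ to an $O(r)$-thick slab around the Euclidean bisector of two centres at distance $O(m)$ from $u$. This holds because the gradient of $\|u-w'\|_2-\|u-w\|_2$ is bounded below there, as $\|w-w'\|_2>m$.
\item Only boundedly many such slabs meet a given $\ell_\infty$ cell, and each contributes $O((l+r)m^{d-1})$ points after thickening by $2l$.
\item The cell of $w$ contains $\B(w,\lfloor m/2\rfloor)$.
\end{itemize}
So (2) holds once $m$ is large compared with $l+r$.
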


\begin{proof}
  Let $K$ be as \Cref{lemma:invariance of Voronoi cells}. We set $c= \frac{\epsilon}{8K}$. Let $ D$ be as in \Cref{geometric lemma}. Choose $ m \in \N$ be large enough so that $ \frac{\lfloor cm \rfloor }{D} > r$ and $ \frac{l+1}{m} < \frac{\epsilon}{4K}$. Let $ (J_{v})_{v \in \Z^d}$ be a shift-invariant $m$-net process. Let $ S(\omega) = \{v \in \Z^d: J_{v}(\omega) =1\}$. Consider the Voronoi covering $ (\mathcal{C}_{S(\omega)}(v))_{v \in S(\omega)}$. Since $ rD < \lfloor cm \rfloor$, we know that the ball $ \B(v, rD) \subset \B(v, \lfloor cm \rfloor)$ intersects at most $D$ Voronoi cells. Define $ Z_{v}$ to be the largest $ k \in \{ 1, 2, \cdots, D \}$ such that the ball $\B(v, rk)$ intersects at least $ k$ of the Voronoi cells (in which case it will intersect exactly $ k$ Voronoi cells otherwise $ Z_{v} \geq k+1$). See Figure \ref{Colored Voronoi} for a schematic picture of this coloring. Clearly $(Z_{v})_{v \in \Z^d}$ is a block factor of $ (J_{v})_{v \in \Z^d}$.

  \begin{figure}[ht]
      \centering
      \includegraphics[width=0.5\linewidth]{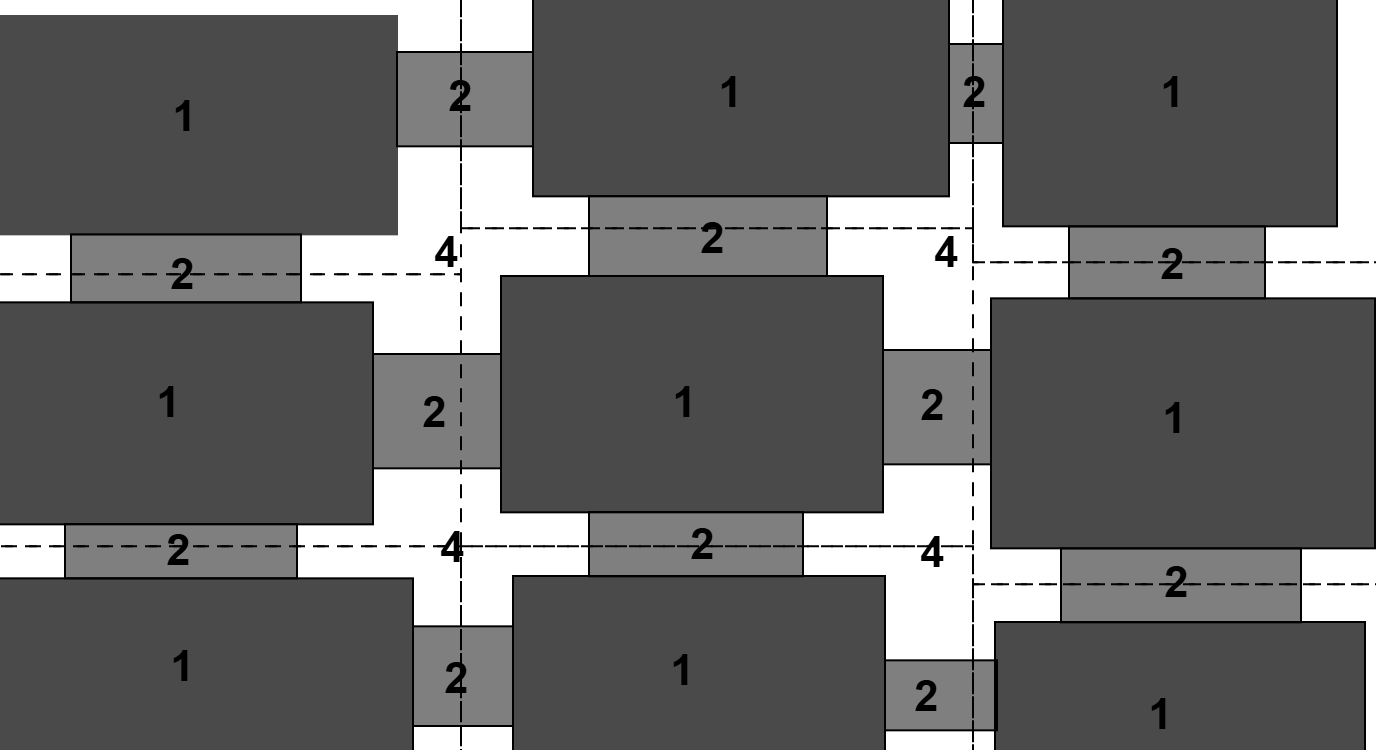}
      \caption{A schematic representation (not up to scale) of coloring $ Z_{v}$ for $r=1$. The dotted boxes are Voronoi cells. All points in a same colored polygons have same color label. Note that while the figure does not have any polygon labeled $3$, they may appear in other configurations.}
      \label{Colored Voronoi}
  \end{figure}
  
  \textit{Proof of $1$}): We first show that the $r$-components of the sets $ W_i = Z^{-1}(i)$ are finite a.s. Suppose on the contrary that $ v_1, v_2, \cdots $ is an infinite sequence of distinct elements such that $ Z_{v_j} = i$ and $ || v_j - v_{j+1}||_{\infty} \leq r$. Then $ \B(v_1, ri)$ intersects $ i$ Voronoi cells, say $ \mathcal{C}_1, \mathcal{C}_2, \cdots, \mathcal{C}_{i}$. Let $ j$ be the first index at which $ \B(v_j, ri)$ does not intersect one of the Voronoi cells of the collection $\{ \mathcal{C}_1, \mathcal{C}_2, \cdots, \mathcal{C}_{i}\}$. Let us say $ \B(v_j, ri)$ does not intersect with $\mathcal{C}_1$ but since $ Z_{v_j} =i$; $ \mathcal{C}_1$ must be replaced by another Voronoi cell, say $\mathcal{C}_{i+1}$ which intersects $ \B(v_j, ri)$. But then $ \B(v_{j-1}, r(i+1))$ intersects $ \mathcal{C}_1, \mathcal{C}_2, \cdots, \mathcal{C}_{i}$ and $\mathcal{C}_{i+1}$, contradicting the fact that the color at $ v_{j-1}$ is $ i$. This shows that the monochromatic $r$-component containing $v_1$ is contained in $ (\mathcal{C}_1  \cup \cdots \cup \mathcal{C}_{i})^{(r)} \subseteq \mathcal{C}_1^{(r)} \cup \cdots \mathcal{C}_{i}^{(r)}$. Each Voronoi cell is contained in a $ l_{\infty}$ ball of radius $ 2m+1$. Hence $ |\mathcal{C}_j^{(r)}| \leq (3(2m+1+r))^d $. Since $1 \leq i \leq D$, this implies that the cardinality of each monochromatic $r$-component is less than $ i\cdot (3(2m+1+r))^d \leq D(3(2m+1+r))^d :=M'$. Therefore, the diameter of each monochromatic $r$-component is bounded by at most $ M:= M'r.$ 

  \textit{Proof of $2$}): Notice that for any Voronoi cell $ \mathcal{C}_{S}(w)$, all points in $ \text{int}_{\B(2r+l+1)}(\mathcal{C}_{S}(w))$ are in $ G_l$, because if $ v' \in  \text{int}_{\B(2r+l+1)}(\mathcal{C}_{S}(w))$ then the ball of radius $ 2r$ around any point in $ v' + \B(l)$ intersects only one Voronoi cell (namely $\mathcal{C}_{S}(w)$) and hence all elements in $ v' + \B(l)$ are contained in $W_1$. Since 
  $$\frac{2r+l+1}{m} < \frac{2c}{D} + \frac{l+1}{m} \leq 2c+ \frac{l+1}{m}< \frac{\epsilon}{2K},$$
  it follows from Lemma \ref{lemma:invariance of Voronoi cells} that
  $$ \frac{|G_{l} \cap \mathcal{C}|}{|\mathcal{C}|} \geq \frac{|\mathcal{C} \setminus \mathcal{\partial C}^{(2r+l+1)}|}{|\mathcal{C}|}\geq 1- \frac{|\partial C^{(2r+l+1)}|}{|\mathcal{C}|}\geq 1- \frac{\epsilon}{2},$$
  that is, the elements in $ G_l$ occupy more than $1-\frac{\epsilon}{2}$ proportion of each Voronoi cell.
  \end{proof}

We now have the necessary ingredients to prove the main theorem of this section.
\FEPcase*
\begin{proof}

    Let $ X$ be a finite extension property subshift with the FEP constant $ g \in \N$ and finite forbidden list $ \mathcal{F}$. Since, $X$ is a strongly irreducible subshift, let $ k$ be the SI constant of $X$. Let $N$ be the maximum diameter of a word in $ \mathcal{F}$. 

     Let $ \mu \in \mathcal{M}_{\sigma}(X)$ and $ \epsilon >0$, $ l \in \N$ be given. We will first construct a shift-invariant finitely dependent process $ \nu$ such that
    \[ |\mu([\alpha]) - \nu([\alpha])| < \epsilon\]
    for all $ \alpha \in \A^{\B(l)}$ and later show how to modify the construction to make $\nu$ fully supported.

    Let $ D$ be as in \Cref{lemma: coloring of group}. Set $r = 2gD + k+1$. Let $ m$ be as guaranteed in \Cref{lemma: coloring of group} and $ (J_{v})_{v \in \Z^d}$ be a shift-invariant finitely dependent $m$-net process. 
    
    By \Cref{lemma: coloring of group}, there exists a coloring $ (Z_{v})_{v \in \Z^d}$ of $\Z^d$ using colors from $\{ 1,2, \cdots, D\}$ which is a block factor of $(J_{v})_{v \in \Z^d}$, such that for each $ 1\leq i \leq D$,  the $r$-components of the monochromatic sets $ W_i = Z^{-1}(i)$ have diameter less than $M$, for some $M$. Note that $r$-components of $ W_i$ are separated from each other by distance at least $r$. We now define processes $ (Y_{v}^{(i)})_{v \in \Z^d}$, $1 \leq i \leq D$ in stages so that the final process $ Y_{v}^{(D)}$ satisfies the desired properties.

    Step i): The $r$-components of $ W_1$ are separated from each other by distance at least $r$. Since $ r > 2gD+k$, the set $ \B(g(D-2)) + W_1$ consists of the (countable) disjoint union of finite subsets which are separated from each other by a distance greater than $k$. For the sake of definiteness and ease of reference, we write this as $\B(g(D-2)) + W_1 = \sqcup_{j =1}^{\infty} V_{j}^{(1)}$, where each $ V_{j}^{(1)} = \mathcal{C}_j + \B(g(D-2))$ where $\mathcal{C}_j$ is a $r$-component of $W_1$.
    
    We will paste samples from the measure $\mu$ on each $ V_{j}^{(1)}$. Since we need to perform this procedure locally, we randomly choose a center from $V_{j}^{(1)}$ and then sample from the marginal distribution of $ \mu$ on the shape $ V_{j}^{(1)}$ as `observed' from this center. To this end, consider an iid process $ U := (U_{v})_{v \in \Z^d}$ independent of the process $ (J_v, Z_v)_{v \in \Z^d}$ where each $ U_{v}$ is a uniform random variable on $[0,1]$.
    Within each finite set $V_{j}^{(1)}$, we first choose the center of $ V_{j}^{(1)}$ to be $ c_{j}^{(1)} \in V_{j}^{(1)}$ such that $ U_{c_{j}^{(1)}} = \min\{ U_{v} : v \in V_{j}^{(1)}\}$. Note that the centers are uniquely defined almost surely. Now we sample patterns on sets $ V_{j}^{(1)}$ according to the marginal distribution of the measure  $ \mu$ on ($ V_{j}^{(1)} - c_{j}^{(1)})$; doing this independently on different sets $ V_{j}^{(1)}$. We label each element in $\Z^d \setminus (\B(g(D-2) + W_1)$ with a fixed symbol not in $\A$, say \textasteriskcentered{}. As the diameter of each $V_{j}^{(1)}$ is uniformly bounded, this defines a shift-invariant finitely dependent process $ (Y_{v}^{(1)})_{v \in \Z^d}$. Since sets $ V_{j}^{(1)}$ are separated by distance at least $k$, the restriction of the process $Y^{(1)}$ on set $ \B(g(D-2))+ W_1$ is a globally allowed configuration. 

    Step ii): Again, $ W_2$ is a disjoint union of $ r$-components that are separated by the distance at least $ r$. Therefore, $ \B(g(D-2)) + W_2$ (and hence $ (\B(g(D-2))+W_2) \setminus (\B(g(D-2))+ W_1)$ is a disjoint union of finite subsets that are separated by distance at least $k$.
    For the ease of reference we write this as, \linebreak $ (\B(g(D-2))+W_2) \setminus (\B(g(D-2))+W_1) = \sqcup_{j=1}^{\infty} V_{j}^{(2)}$. Define the center of the set $V_{j}^{(2)} $ to be that element $ c_{j}^{(2)} \in V_{j}^{(2)}$ for which $ U_{c_j}^{(2)} = \min \{ U_v : v \in V_{j}^{(2)} \}$. Each $ V_{j}^{(2)}$ might be adjacent to some components $ V_{j'}^{(1)}$. Now, sample from the \textit{uniform} distribution on locally allowed patterns on $ V_{j}^{(2)}$ given the already existing patterns from previous step on the set $ (\B(N) + V_{j}^{(2)}) \setminus V_{j}^{(2)}$, making sure that no forbidden words appear, doing this independently on each $ V_{j}^{(2)}$. This gives a locally allowed configuration on $ \B(g(D-2)) + (W_1 \sqcup W_2)$ which may not be globally allowed. However, thanks to the $ g$-extension property of $ X$, the restriction of this configuration to the $ g$-interior, $ \text{int}_{\B(g)} (\B(g(D-2)) + (W_1 \sqcup W_2)) \supset \B(g(D-3)) + (W_1 \sqcup W_2)$ is a globally allowed configuration. Labeling the points in the complement of $\B(g(D-3)) + (W_1 \sqcup W_2)$ by \textasteriskcentered{} defines a shift-invariant finitely dependent process $(Y_{v}^{(2)})_{v \in \Z^d} $. We continue inductively until the Step $ D-1$. We obtain a shift-invariant finitely dependent process $ Y_{v}^{(D-1)}$ whose restriction on $ W_1 \sqcup W_2 \cdots \sqcup W_{D-1}$ is a globally allowed configuration.

    Step iii) : Finally, let $ W_{D} = \sqcup_{j=1}^{\infty} V_{j}^{(D)}$ be the decomposition of $ W_{D}$ into its $ r$-components. Assign the center $ c_{j}^{(D)}$ as before. Now each $ r$-component $V_{j}^{(D)}$ is surrounded by regions from $ W_1 \sqcup W_2 \sqcup \cdots \sqcup W_{D-1}$. We sample uniformly among the locally allowed patterns on each $ V_{j}^{(D)}$ so that no forbidden word appears in $ \B(N) + V_{j}^{(D)}$ (such patterns exist because the already assigned configuration on $ W_1 \sqcup \cdots \sqcup W_{D-1}$ is globally allowed). This gives a configuration on the whole $ \Z^d$ that does not contain a forbidden word and therefore must be a globally allowed configuration. This is our desired shift-invariant finitely dependent process $ (Y_{v}^{(D)})_{v \in \Z^d}$. 

    Now we show that the measure $\nu$ corresponding to the process $ (Y_{v}^{(D)})_{v \in \Z^d}$ indeed approximates $ \mu$ to the desired accuracy. Given a sample of the process $(Z_{v})_{v \in \Z^d}$, we say that $ v \in \Z^d$ is in \textit{good} position if $ v + \B(l)$ is contained in $ W_{1}$. It follows from \Cref{lemma: coloring of group} that the elements in good position occupy more than $(1-\frac{\epsilon}{2})$ proportion of each Voronoi cell in the Voronoi covering corresponding to the $m$-net process $(J_{v})_{v \in \Z^d}$. Consequently, if $ \eta$ is the measure corresponding to the process $ (J_{v})_{v \in \Z^d}$, then $ \mathbb{P}_{\eta}(\mathbf{0} \text{ is good}) > (1 - \frac{\epsilon}{2})$ by the pointwise ergodic theorem. This implies the desired weak-\textasteriskcentered{} approximation by a similar argument as in the proof of Theorem \ref{onedimension}.

    To make sure that the process is also fully supported, we perform the same procedure except that in Step 1, on each set $V_{j}^{(1)}$, we independently sample according to a mixture $ t \mu|_{(V_{j}^{(1)}- c_{j}^{(1)})} + (1-t) \beta_{j}^{(1)}$, where $0 < t <1$ and $\beta_{j}^{(1)}$ is the uniform distribution on globally allowed patterns on $ V_{j}^{(1)}$. The rest of the steps are carried out as above, where we sample uniformly on the allowed patterns on $ V_{j}^{(i)}$, $2 \leq i \leq D$, which are consistent with pre-existing patterns created in steps $ 1, 2, \cdots, i-1$. The proof that the resulting process is fully supported is similar to the argument in \Cref{thm: TSSM}. Furthermore it is easy to see that, if $ t$ is sufficiently close to $1$, then the resulting process also weak-\textasteriskcentered{} approximates $\mu$ to desired accuracy. \end{proof}
\begin{remark}
    As in \Cref{remark: d bar approximation}, it follows that for any $\Z^d$ subshift $X$ satisfying FEP, the $\bar{d}$-closure of the set of finitely dependent processes $\mathcal{M}_{\sigma}(X)$ is precisely the set of FIID processes in $ \mathcal{M}_{\sigma}(X)$. 
\end{remark}
\begin{remark}
    We can use the above proof to show that there exists a fully supported shift-invariant process on any $ G$ subshift with FEP, where $G$ is a countable group with polynomial growth. However, the proof of density of such processes requires that the Voronoi cells in the Voronoi covering are approximately invariant in the F\o lner sense (analog of \Cref{lemma:invariance of Voronoi cells}). We do not know whether Theorem \ref{FEPcase} holds when $ G$ is any countable amenable group. 
\end{remark}
\begin{remark}\label{tower ffiid on fep}
    In \cite[Corollary 11]{Schramm} it was proved that there is a $m$-net process which is a finitary factor of an iid whose coding radius admits a tower function decay. The above proof constructs a block factor from the product of an $m$-net process and an iid process onto an FEP subshift. Thus, there exists a fully supported finitary factor of iid with tower function decay on any factor of an FEP subshift. 
\end{remark}
\begin{remark}
    A similar construction of a (topological) sliding block code from the subshift of $ m$-net configurations to FEP subshifts appears in \cite{BLAND_2025}. However, the map there need not be surjective. We can take the product of the $m$-net subshift with an appropriate full shift and get a factor onto an FEP subshift using the above proof. 
\end{remark}

\section{Finitely dependent processes on certain rectangular tilings}\label{Section: rectangular tilings}

Let $\mathcal{R} =\{R_1$, $R_2$, $\cdots$, $R_k$\} be the set of boxes (rectangles) in $\mathbb{R}^2$ with integer side lengths. Let $ X(\mathcal{R})$ be the corresponding tiling space. In this section, we give a sufficient condition on the side lengths of $ R_i$ which implies that the set of stationary finitely dependent processes supported on $ X(\mathcal{R})$ is weak-\textasteriskcentered{} dense in the set of shift-invariant measures on $ X(\mathcal{R})$.

Let $H$ be the set of horizontal sidelengths that appear in $R_i, i=1, 2, \cdots, k$ and $ V$ be the set of vertical lengths that appear in $R_i$. As an example, if $ R_1, R_2, R_3, R_4$ are boxes of dimensions $ m \times (m+1)$, $ m \times (m+2)$, $ (m+3) \times (m+1)$ and $ (m+3) \times (m+2)$, then $H= \{m, m+3\}$ and $ V= \{m+1, m+2\}$. For each $ h \in H$, let $ V_h$ denote the set of vertical lengths of (all possible) boxes which have $ h$ as horizontal side lengths. So in the above example, $V_m= V_{m+3}= \{ m+1, m+2\}$. Similarly, for each $ v \in V$, let $H_v$ denote the set of horizontal lengths of boxes (among $R_i, i=1,2, \cdots, k$) whose vertical length is $v$. 

Let $ \mathbf{a}$ be a globally allowed pattern of tiles in $X(\mathcal{R})$, which is supported on a subset $ F \subset \mathbb{Z}^2$. Let $ \mathcal{E}(F) $ denote the set of edges in the subgraph obtained by restricting the (undirected) Cayley graph of $ \mathbb{Z}^2$ to $F$. We say that an edge $ e \in \mathcal{E}(F)$ is \textit{closed} in pattern $\mathbf{a}$, if $ e$ lies on the boundary of one of the boxes in $\mathbf{a}$ and is \textit{open} otherwise. For a box $R$, a globally allowed pattern $\bf a$ on $R$ is said to have a closed boundary if all edges joining any two adjacent vertices on $\partial R$ are closed in ${\bf a}$.

 We will use the following lemma. Recall that for $F \subset \Z^2$,  the notation $F^{(N)} = \{\mathbf{v} \in \Z^2 : d_{\infty}(\mathbf{v}, F)\leq N \}$. For a subset $ A \subset \mathbb{N}$, let gcd($A$) denote the greatest common divisor of elements of $A$.
 \begin{lemma}\label{lemma: extending configurations}
     Let $ \R$ be a collection of boxes such that $ gcd(V_h) = gcd(H_v) = 1$ for all $ h \in H$ and $v \in V$. Then there exists $ N \in \mathbb{N}$, such that any globally allowed pattern $ \mathbf{a}$ supported on a rectangular shape $ F \subset \mathbb{Z}^2$ extends to a globally allowed pattern $\tilde{\mathbf{a}}$ on the box $ F^{(N)}$ which has a closed boundary.
 \end{lemma}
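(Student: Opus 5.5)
Fix a globally allowed pattern $\mathbf{a}$ on a rectangle $F$ and pick a tiling $x\in X(\mathcal{R})$ that contains it. The plan is to extend $\mathbf{a}$ by using only the tiles of $x$ that touch $F$, and then to fill the leftover frame with explicit strips. Let $T_0$ be the union of the tiles of $x$ that meet $F$ (viewed as closed squares in $\mathbb R^2$). Set $L=\max$ of all sidelengths in $\mathcal{R}$. Then $T_0$ is contained in the box $F^{(L)}$, but $T_0$ is a staircase-like region and not a box. We will choose $N=L+C$, where $C$ depends only on $\mathcal{R}$. The goal is to tile $F^{(N)}\setminus T_0$ by tiles from $\mathcal{R}$ while keeping the tiles of $T_0$. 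The outer boundary of $F^{(N)}$ is then automatically closed, because every edge on $\partial F^{(N)}$ lies on the boundary of some tile.

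The key arithmetic input is the hypothesis $\gcd(V_h)=\gcd(H_v)=1$. Fix $h\in H$. Since $\gcd(V_h)=1$, every sufficiently large integer $n\ge n_0(h)$ is a nonnegative integer combination of elements of $V_h$. So any $h\times n$ rectangle with $n\ge n_0(h)$ can be tiled by a single column of boxes of width $h$. Symmetrically, for $v\in V$, any $n\times v$ rectangle with $n\ge n_0'(v)$ can be tiled by a row of boxes of height $v$. Let $n_*$ be the maximum of all these thresholds.

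The filling proceeds in four steps.

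\begin{enumerate}
\item \textbf{Straighten the top.} Look at the top edge of $T_0$. Above $F$, the region $F^{(L)}\setminus T_0$ near the top consists of the spaces above tiles protruding from $F$. Each tile of $T_0$ meeting the top row of $F$ has some width $h\in H$. Above it we stack a column of width $h$ whose height brings its top to the common line $y=\text{top}(F)+L+n_*$. This is possible because the required height is at least $n_*$.
\item \textbf{Straighten the other sides.} Do the same on the bottom, and with rows of boxes of matching height $v$ on the left and right. Since $T_0\cap(\text{row band})$ is a union of tiles crossing the top row, the columns above them are disjoint and exactly cover the band above those tiles within their horizontal extent.
\item \textbf{Handle the corners.} Near the corners, the columns from the top and the rows from the sides would collide. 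To avoid this, only straighten the top and bottom over the horizontal extent of $T_0$'s top and bottom layers. Then treat the left and right sides over a vertical range that includes these new columns. The complement is then a union of an ``L''-shaped corner region plus the side strips. The side strips are bounded by a vertical profile made of the left edges of tiles, which are stacked vertically with heights in $V$, so each can be completed by rows of the corresponding height.
\item \textbf{Fill the corner regions.} The four corners that remain are honest rectangles, both of whose sidelengths are at least $n_*$. Any such rectangle is tileable: cut it into horizontal strips of heights from some $V_h$ summing to its height, then tile each strip of height $v$ by a row using $H_v$. This works once both sides are at least a threshold; enlarging $C$ handles it.
\end{enumerate}

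The main obstacle is the geometric bookkeeping in steps 2 and 3: the boundary of $T_0$ is jagged, and the top columns and side rows must not overlap near the corners of $F$. To handle this, I would first make the region rectilinear in one direction only. Extend vertically above and below each tile meeting the top or bottom row of $F$. Then note that the resulting region, intersected with each horizontal line, has left and right endpoints given by tile edges whose heights lie in $V$. This reduces the left and right filling to rows of a single height per strip. A cleaner route would be to replace the condition ``sufficiently large'' by a uniform $n_*$ and to choose the outer box so that all gaps exceed $n_*$. The dependence on $\mathbf{a}$ enters only through $T_0\subset F^{(L)}$, so $N$ is uniform.
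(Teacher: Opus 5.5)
Your proposal is correct and is essentially the paper's argument: you stack columns with heights from $V_h$ over (and under) the tiles crossing the top (and bottom) side of $F$, then pile rows with lengths from $H_v$ against the resulting vertical stack of tiles on each side, which is exactly how the paper's sequential top--left--bottom--right extension handles the corners. Once the side rows span from the bottom line to the top line and reach the outer boundary, nothing is left over, so with $N=L+n_*$ your step 4 (and the talk of leftover corner rectangles) is unnecessary, though harmless.
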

 \begin{proof}
Since $ gcd(V_h) = gcd(H_v) = 1$ for all $ h \in H$ and $ v \in V$, there exists $ N_0 \in \mathbb{N}$ such that any $n \geq N_0$ can be written as a non-negative integer combination of elements from any one set $ V_h$ or $H_v$ for $h \in H$ and $ v \in V$. Let $D$ be the maximum diameter of a box in $\R$ with respect to the $l_{\infty}$ norm. Let $N= N_0 + D$.

Let $\mathbf{a}$ be a globally allowed pattern supported on a box $ F= [x,y] \times [z,w]$ for some integers $ x, y, z$ and $w$. There might be some edges on $ \partial F$ which are open in pattern $\mathbf{a}$. We will show that $\mathbf{a}$ extends to a pattern $ \mathbf{\tilde{a}}$ on box $F^{(N)}$ such that all the edges on $\partial F^{(N)}$ are closed in the pattern $ \mathbf{\tilde{a}}$. Consider first the top side of the pattern $\mathbf{a}$. We can extend $\mathbf{a}$ to a tiling $ \mathbf{a}_1$ of $F_1 = [x, y] \times [z, w+N]$ such that all edges on the top side of box $F_1$ are closed in the tiling $\mathbf{a}_1$. To this end, first complete (in any allowed way) the incomplete boxes (if there are any) present on the top side of pattern $\mathbf{a}$ to obtain a pattern $\mathbf{a}_c$. On the top of any box of horizontal sidelength $h$ bordering the top side of $\mathbf{a}_c$, pile up appropriate combinations of tiles from $ V_h$ to reach the closed boundary at height $(w+N)$. We can do this because $gcd(V_h)=1$ for all $ h \in H$. \Cref{a,a_1} illustrate this procedure.
\begin{figure}[ht]
\centering
\begin{minipage}{0.25\textwidth}
\centering
\includegraphics[width=\linewidth]{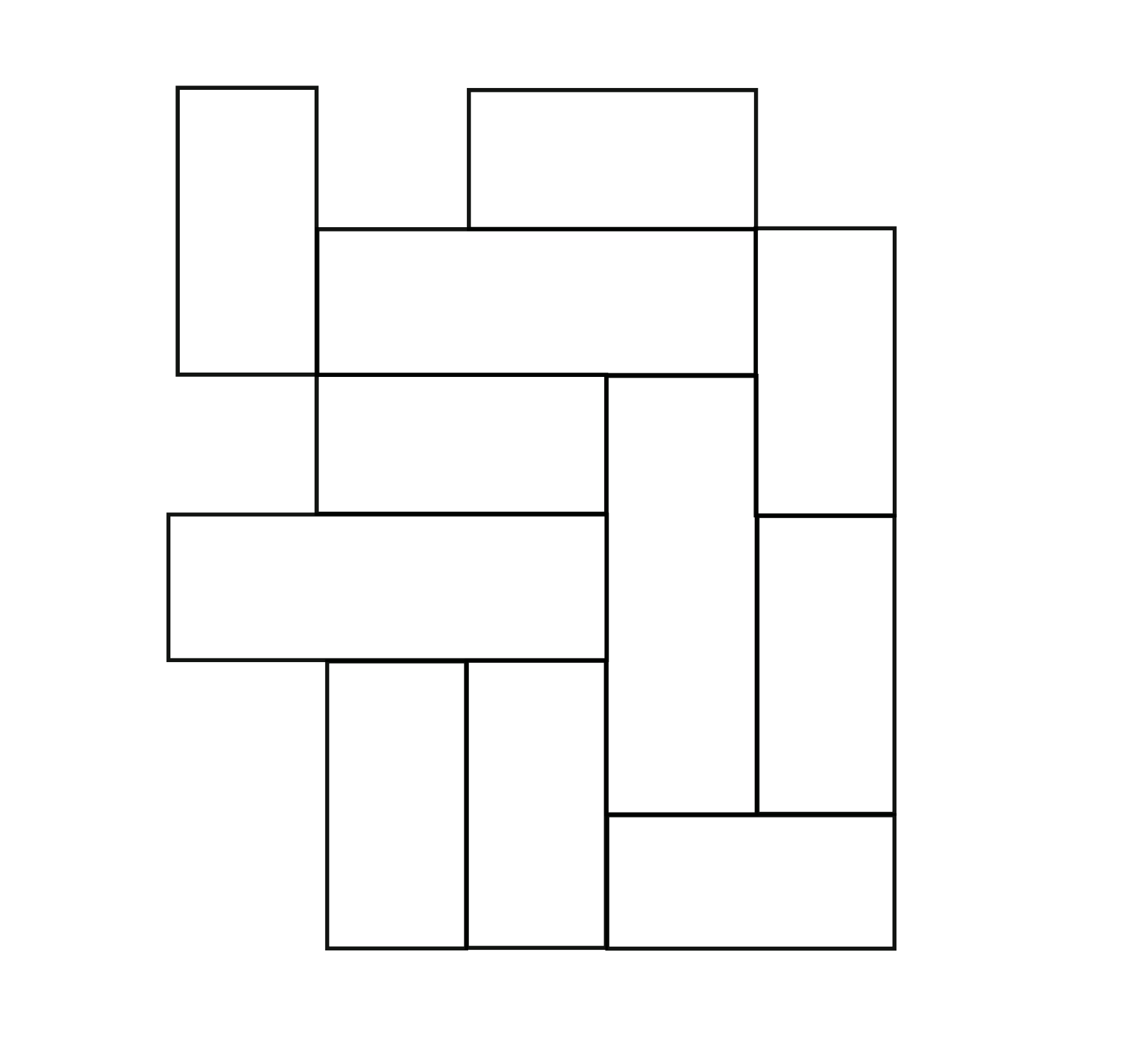}
\caption{$\mathbf{a}$}
\label{a}
\end{minipage}
\begin{minipage}{0.25\textwidth}
\centering
\includegraphics[width=\linewidth]{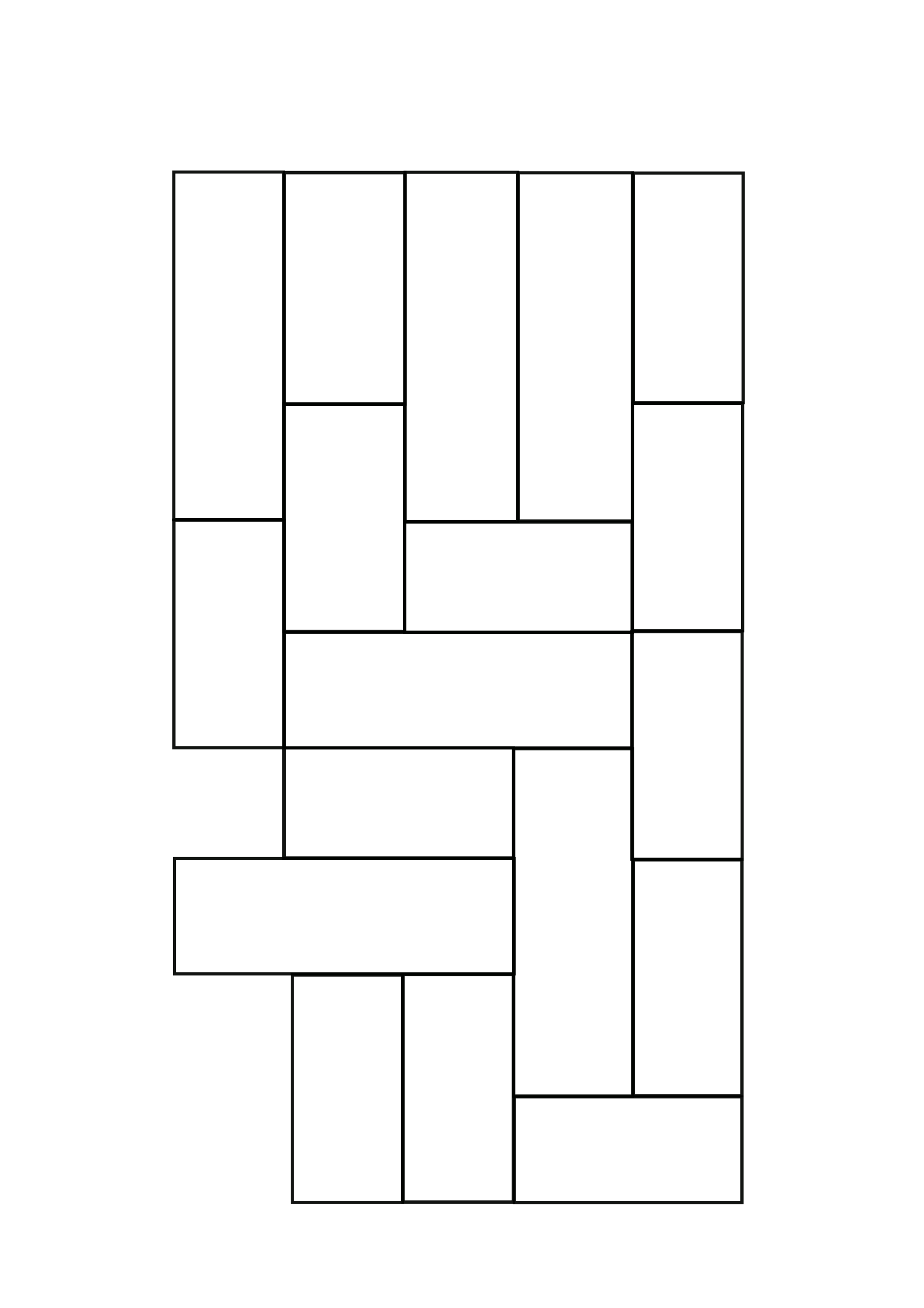}
\caption{$\mathbf{a}_1$}
\label{a_1}
\end{minipage}

\vspace{0.5cm}

\begin{minipage}{0.3\textwidth}
\centering
\includegraphics[width=\linewidth]{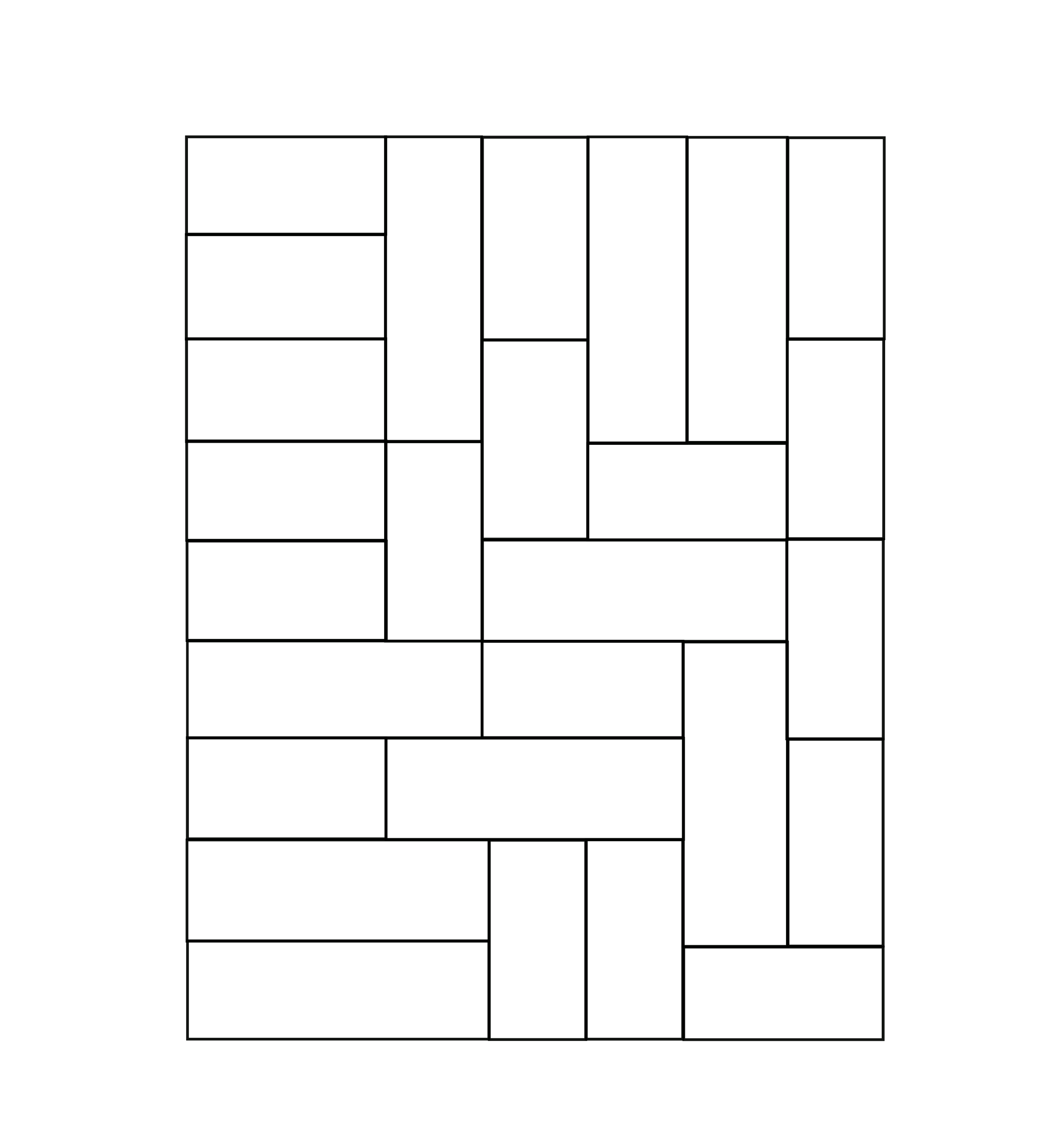}
\caption{$\mathbf{a}_2$}
\label{a_2}
\end{minipage}
\caption{This figure illustrates the steps of the proof of \Cref{lemma: extending configurations} giving an instance of the patterns $\mathbf{a}, \mathbf{a}_1$ and $\mathbf{a}_2$.}
\end{figure}
If there are some open edges on the left side of the pattern $\mathbf{a}_1$, then repeat the above procedure in a similar way on the left side of $\mathbf{a}_1$ and extend the left side of $ \mathbf{a}_1$ to obtain a pattern $\mathbf{a}_2$ on $ F_2=[ x-N, y] \times [z, w+N]$ such that all the edges on the left side of the box $F_2$ are closed in the pattern $\mathbf{a}_2$ (see \Cref{a_2}). If necessary, repeat the same procedure on the bottom and right sides of $ \mathbf{a}_2$ to obtain a pattern $\mathbf{\tilde{a}}$ on the box $ F^{(N)}= [x-N, y+N] \times [z-N, w+N]$ such that all the boundary edges of $F^{(N)} $ are closed in the pattern $\mathbf{\tilde{a}}$. \end{proof}
To prove the main theorem of this section we will need the following theorem which follows from \cite[Theorem 3.1]{GaoJacksonbreakthrough} and its corollary.
\begin{theorem}\label{theorem: perfect tiligs}
    Let $m \in \N$. Let $ \R$ be a collection of all boxes in $\mathbb{R}^d$ such that sidelength of each side is either $m$ or $m+1$. Then there exists a continuous tiling of $ F(2^{\Z^d})$ by $ X(\R)$.
\end{theorem}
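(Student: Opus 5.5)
The plan is to build the tiling in three layers. First a continuous marker structure on $F(2^{\Z^d})$. Then a continuous equivariant partition of $F(2^{\Z^d})$'s Schreier graph into \emph{large axis-parallel boxes} whose side lengths all lie in a window $[L, 2L]$. Finally each large box is subdivided locally and canonically into boxes of side lengths $m$ or $m+1$.

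The third layer is elementary. Every integer $n \geq m^2 - m$ is a non-negative combination of $m$ and $m+1$. So if $L \geq m^2$, each side $[0,n)$ of a large box can be partitioned into consecutive intervals of lengths $m$ and $m+1$ by a fixed rule depending only on $n$, for instance as many $m$'s as possible followed by $(m+1)$'s. Taking the product of these interval partitions over the $d$ coordinates partitions the large box into boxes all of whose sides are $m$ or $m+1$. Since the rule depends only on the shape of the large box, the resulting tiling is continuous and equivariant provided the large box partition is.

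For the first layer I would use the standard fact from the descriptive setting: for every $R$ there is a clopen set $M_R \subset F(2^{\Z^d})$ that is $R$-separated and $R$-covering, i.e.\ a continuous $R$-net. It is obtained by a continuous range-$R$ proper coloring with finitely many colors, which exists on the free part, followed by the greedy procedure from the proof of \Cref{m-net process}. Freeness makes finitely many colors suffice continuously.

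The second layer, converting markers into boxes, is the main obstacle and is exactly the content of \cite[Theorem 3.1]{GaoJacksonbreakthrough}. The Voronoi cells around $M_R$ from \Cref{geometric lemma} are not boxes, and their faces are arbitrary hyperplanes. I would follow the ``orthogonal marker region'' construction. Start with coarse clopen regions around the points of $M_R$ that are box-like up to boundary error $o(R)$. Then straighten the faces one coordinate direction at a time. At stage $i$, move every face orthogonal to $e_i$ to a common hyperplane within a slab of width much smaller than $R$. The adjustments are made using markers from a finer scale $M_{R_i}$ with $R_i \ll R$, so that each decision is local and hence continuous, and the already straightened directions are not destroyed. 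After $d$ stages the regions become unions of boxes with all sides comparable to $R$. Any leftover non-box regions can be cut along faces into boxes of size at least $L$, with $R$ chosen large relative to $L$. The delicate point, which I expect to be hardest, is controlling the corners where several regions meet in dimension $d \geq 2$, so that the straightened pieces are genuine boxes with sides in $[L,2L]$ rather than small slivers. If I were writing the argument from scratch I would handle this by induction on dimension, straightening lower-dimensional faces first.

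Since the statement is quoted from \cite{GaoJacksonbreakthrough}, in the paper I would simply invoke their Theorem 3.1, which produces continuous tilings of $F(2^{\Z^d})$ by boxes with sides in $\{L, L+1\}$ for large $L$ (or in a bounded range above $L$). I would then apply the subdivision step above to reach sides $m, m+1$.
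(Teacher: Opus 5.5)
Your proposal is correct and takes essentially the same route as the paper, which simply derives the statement from \cite[Theorem 3.1]{GaoJacksonbreakthrough} and its corollary. Your subdivision step is a valid local, equivariant way to pass from large boxes to boxes with sides in $\{m,m+1\}$, since every $n \geq m^2-m$ is a non-negative combination of $m$ and $m+1$. Your layers one and two are only a heuristic sketch, and nothing depends on them once the citation is used. For instance, the existence of continuous finite colorings of $F(2^{\Z^d})$ is itself a nontrivial marker lemma: it does not follow from freeness alone, because $F(2^{\Z^d})$ is not compact.
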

Together with \Cref{continuous imples findep} this implies the following.
\begin{corollary}\label{corollary: perfect tilings}
    Let $ m \in \N$. Let $ \R' = \{ R_1, R_2, R_3, R_4\}$ be boxes in $ \mathbb{R}^2$ whose dimensions are $ m \times m$, $ m \times (m+1)$, $ (m+1)\times m$ and $ (m+1) \times (m+1)$ respectively. Then there exists a shift-invariant finitely dependent process supported on $ X(\R')$. 
\end{corollary}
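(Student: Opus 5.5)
This is a direct composition of two earlier results: \Cref{theorem: perfect tiligs} in dimension $d=2$, followed by \Cref{continuous imples findep}.

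First, I specialise \Cref{theorem: perfect tiligs} to $d=2$. The collection of all boxes in $\mathbb{R}^2$ whose sides each have length $m$ or $m+1$ is exactly $\R'$: the boxes $m\times m$, $m\times(m+1)$, $(m+1)\times m$ and $(m+1)\times(m+1)$. So the theorem gives a continuous tiling of $F(2^{\Z^2})$ by $X(\R')$. By definition, a continuous tiling is a continuous equivariant map $\Psi: F(2^{\Z^2})\to X(\R')$.

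Second, I apply \Cref{continuous imples findep}, which requires the target to be a subshift of finite type. As in Example \ref{Rectangular tilings}, $X(\R')$ can be encoded as a subshift of finite type of $\A^{\Z^2}$, with a finite alphabet recording which tile covers each cell and at which relative position. I need to check that this encoding is compatible with $\Psi$, i.e.\ that composing $\Psi$ with the identification of tilings as points of $\A^{\Z^2}$ is still continuous and equivariant. This holds because the identification is a topological conjugacy of $\Z^2$-actions. \Cref{continuous imples findep} then yields a shift-invariant finitely dependent process supported on $X(\R')$.

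The only possible obstacle is this bookkeeping: confirming that the notion of continuous tiling in \cite{GaoJacksonbreakthrough} matches the map into the SFT encoding of $X(\R')$ used here, with corners on the integer lattice and the same shift action. Once that is confirmed, the corollary follows immediately.
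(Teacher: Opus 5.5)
Your proposal is correct and matches the paper's argument: the paper also obtains this corollary by specialising \Cref{theorem: perfect tiligs} to $d=2$, where the boxes with sides in $\{m,m+1\}$ are exactly $\R'$, and then applying \Cref{continuous imples findep} to the subshift of finite type $X(\R')$. The compatibility you flag needs no extra work, because the paper defines a continuous tiling as a continuous equivariant map into $X(\R')$, viewed as an SFT as in Example~\ref{Rectangular tilings}.
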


\begin{theorem}\label{rectangle}
    Let $ \R = \{R_1, R_2, \cdots, R_k \}$ be boxes in $\mathbb{R}^2$ with integer sidelengths satisfying $ gcd(V_h) = gcd(H_v) = 1$ for all $ h \in H$ and $v \in V$. Let $ X(\mathcal{R})$ denote the corresponding tiling space. Then the set of shift-invariant finitely dependent processes is dense in $ \mathcal{M}_{\sigma}(X(\mathcal{R}))$. 
\end{theorem}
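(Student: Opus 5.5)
The plan is to imitate the proofs of \Cref{onedimension} and \Cref{FEPcase}. The skeleton comes from a shift-invariant finitely dependent process on a tiling space by large boxes; the interiors of those big boxes are filled with samples of $\mu$, and \Cref{lemma: extending configurations} is used to close them off. Fix $\mu\in\mathcal{M}_\sigma(X(\R))$, $l\in\N$ and $\epsilon>0$. We must produce a shift-invariant finitely dependent $\nu$ with $|\nu([\alpha])-\mu([\alpha])|<\epsilon$ for every pattern $\alpha$ on $\B(l)$. Let $N$ be as in \Cref{lemma: extending configurations}. We will also need the following: there is $N_1$ such that every box whose side lengths are at least $N_1$ can be tiled by $\R$. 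This should follow from the gcd hypotheses, since one can stack columns of tiles with a common horizontal length $h$, realising any large height via $\gcd(V_h)=1$, and then combine columns with widths from $H$. For this last step we need $\gcd(H)=1$, which follows from $\gcd(H_v)=1$.

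First, choose $m$ much larger than $N$, $N_1$ and $l/\epsilon$. By \Cref{corollary: perfect tilings}, there is a shift-invariant finitely dependent process $T$ supported on $X(\R')$, where $\R'$ consists of the boxes with sides in $\{m,m+1\}$. Each box $B$ of $T$ is a block of $T$ (finite coding radius), and its lower left corner serves as a canonical centre, so no random centre selection is needed. Given $B$, let $F_B$ be the concentric box obtained by shrinking $B$ by $N+N_1$ on every side. Independently for each $B$, sample a pattern on $F_B$ from the marginal of $\mu$, translated to the position of $F_B$. This marginal is supported on globally allowed patterns, because $\mu$ is supported on $X(\R)$.

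Second, apply \Cref{lemma: extending configurations} to extend this pattern to a closed-boundary tiling of $F_B^{(N)}$. Make the choice of extension measurably, for example uniformly among all valid extensions, using fresh independent randomness. The remaining frame $B\setminus F_B^{(N)}$ is what needs care. I would handle it by decomposing it into four boxes, each having one side of length at least $N_1$ and the other of length at least roughly $m$, and tiling each of these using the $N_1$ fact. Some bookkeeping of the side lengths will be needed so that every piece has both sides at least $N_1$; this is the main technical obstacle. A cleaner alternative is to choose the frame width $N_1$ large enough that the annulus splits into four boxes each of size at least $N_1\times N_1$. With closed boundaries on $\partial F_B^{(N)}$ and on $\partial B$, the tilings glue into an element of $X(\R)$, because boxes with closed boundaries can be concatenated freely.

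The resulting process is a block factor of $T$ together with iid randomness. Cell diameters are bounded by $2(m+1)$, so it is shift-invariant and finitely dependent. For the approximation, call $\mathbf 0$ \emph{good} if $\B(l)$ lies inside $F_B$ for the box $B$ containing $\mathbf 0$. Conditioned on being good, the pattern on $\B(l)$ has law exactly the $\mu$-marginal, by shift-invariance of $\mu$. The set of bad positions makes up a fraction $O((N+N_1+l)/m)$ of each box, so the ergodic-theorem argument from \Cref{onedimension} shows that the probability of a bad position is below $\epsilon/2$ once $m$ is large. Hence $|\nu([\alpha])-\mu([\alpha])|<\epsilon$.
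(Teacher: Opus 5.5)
Your proof is correct and follows essentially the same route as the paper: a finitely dependent tiling by $m$- and $(m+1)$-sided boxes from \Cref{corollary: perfect tilings}, $\mu$-samples placed in the tile interiors, closing off with \Cref{lemma: extending configurations}, and the ergodic-theorem estimate on bad positions. The only difference is that the paper shrinks each tile by exactly $N$, so $F_B^{(N)}$ is already the whole tile and the lemma directly gives a closed-boundary tiling of it; your extra frame of width $N_1$, and the auxiliary fact that all large boxes are $\R$-tileable (which you prove correctly), are therefore unnecessary.
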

\begin{proof}

 Let $ \mu \in \mathcal{M}_{\sigma}(X(\mathcal{R}))$ be an invariant measure which we wish to approximate to accuracy $ \epsilon$. For $N$ specified by \Cref{lemma: extending configurations}, choose $m>N$ such that $ \frac{N}{m} =C\epsilon$ where $ C$ will be specified later. Now, there exists a shift-invariant finitely dependent process $(J_v)_{v \in \mathbb{Z}^2}$ which is supported on $X(\R')$ where $ \R'$ is as specified in \Cref{corollary: perfect tilings} for this choice of $m$. Now, within each tile $ \mathcal{T}$ in the random tiling $ (J_v)_{v \in \mathbb{Z}^2}$, sample patterns according to the measure $ \mu$ in $int_{\B(N)}(\mathcal{T}) = \{ \mathbf{w} \in \mathcal{T} \cap \Z^2: d_{\infty}(\mathbf{w}, \partial \mathcal{T}) \geq N\}$. By \Cref{lemma: extending configurations}, we can, within each tile $ \mathcal{T}$, extend these central patterns to a pattern on the whole $ \mathcal{T}$ which has a closed boundary. We can simultaneously and independently perform this procedure in all the tiles $ \mathcal{T}$ in the random tiling $ (J_v)_{v \in \mathbb{Z}^2}$, since all the extended configurations agree on the boundary whenever they intersect. This gives a shift-invariant finitely dependent process $ (Z_{v})_{v \in \mathbb{Z}^2}$ which is supported on $X(\mathcal{R})$. As in the proof of \Cref{FEPcase}, we can choose $ C$ such that the resulting process $\epsilon$-approximates $ \mu$ in weak-\textasteriskcentered{}. \end{proof}
 \begin{remark}
     The construction does not yield fully supported finitely dependent processes. This would follow immediately if these tiling spaces have FEP but this is open. See also \Cref{section: Mixing properties of tiling spaces}. 
 \end{remark}
\section{Cocycles on strongly irreducible subshifts}\label{Section: cocycle rigidity}
In this section, we prove that cocycles defined on strongly irreducible subshifts valued in certain groups must be small perturbations of a homomorphism in some sense. In the next section, we will use this to prove that there are no strongly irreducible subshifts on certain subshifts.

In general, the expectation is that the 1-cohomology of strongly irreducible subshifts should not be too complicated. In Theorem \ref{triviality of cocycles}, we prove that this is indeed the case for cocycles taking values in a certain class of groups.

We will make repeated use of the following observation about the existence of periodic points in strongly irreducible subshifts on $\Z$. However, we note that Hochman (\cite{hochman}) showed that there are strongly irreducible subshifts on $ \Z^2$ which do not contain any fully periodic point. An element $ x= (\cdots x_{-2}x_{-1}x_0x_1x_2\cdots)$ of $ X$ is a periodic point of period $ N$ if $ x_{i}= x_{i+N}$ for all $ i \in \Z$.
\begin{proposition}
    Let $X \subset A^{\Z}$ be a strongly irreducible subshift. Then there exists $N_0$ such that $X$ contains a periodic point of period $ N$ for every $N \geq N_0$.
    \label[proposition]{proposition: periodic in SI}
\end{proposition}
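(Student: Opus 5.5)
The plan is to build a periodic point by repeating a single allowed block of length $N$ whose two ends can be glued to each other across the SI gap, using compactness to pass from finite to bi-infinite words. Let $k$ be the SI distance of $X$. Since $X$ is nonempty, fix a globally allowed word $w$, say of length $\ell$. We aim to show that for every $N \geq \ell + k$ there is a word $u$ of length $N$ such that $u^n \in L(X)$ for all $n$. Then the periodic configuration $\cdots uuu\cdots$ has every finite subword appearing in some $u^n$, hence in $X$. Since $X$ is closed, this configuration lies in $X$ and has period $N$. So we take $N_0 = \ell + k$, or any larger convenient constant.

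To produce such a $u$, we use strong irreducibility inductively to glue copies of $w$ at spacing exactly $N$. Place $w$ on $[0,\ell-1]$ and $w$ again on $[N, N+\ell-1]$. The gap between them has size $N-\ell \ge k$, which is more than the SI distance, so by strong irreducibility (applied to the two patterns on these disjoint sets) some $x\in X$ contains both. However, naive iteration only gives some word $w v_1 w v_2 w \cdots$ with possibly different fillers $v_i$. That word is not periodic, and this is the main obstacle.

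To overcome it, we use a pigeonhole/compactness argument. Consider $x^{(n)}\in X$ with $w$ at positions $0, N, 2N, \dots, nN$. Such configurations exist by repeatedly applying strong irreducibility to the union of already placed copies and the next copy, which is separated by distance $N - \ell + 1 > k$. Each filler $v_i$ (the word between consecutive copies of $w$) belongs to the finite set $\A^{N-\ell}$. Hence the sequence of blocks $b_i = w v_i$ of length $N$ takes only finitely many values.

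Rather than forcing equality of fillers, we work in the SFT-free way via a finite graph. Define a directed graph on the finite vertex set $\A^{N}\cap L(X)$, with an edge $b\to b'$ when $bb'\in L(X)$. For long runs we also need longer-range compatibility, since $X$ need not be of finite type. To handle this, we instead pass to a limit: by compactness take $y\in X$ with $w$ at all positions $iN$, $i\in\Z$. Then $\sigma^N$ acts on the closed nonempty set $Y=\{z\in X: z|_{[iN,iN+\ell-1]}=w\ \forall i\}$.

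We still need an honest fixed point, and Brouwer-type arguments fail here. The cleaner route is to choose $w$ to be a synchronizing word, using \Cref{sync word}, and to fix a filler $v\in\mathcal S_{N-\ell}(w,w)$, which is nonempty by SI since $N-\ell\ge k$. Then $wvw\in L(X)$. Synchronization of $w$ lets us concatenate: from $wvw\in L(X)$ and $wvw \in L(X)$ we get $wvwvw\in L(X)$, and by induction $(wv)^n w\in L(X)$ for all $n$. Thus $u=wv$ works, with $N_0=\ell+k$ where $\ell$ is the length of the synchronizing word. The expected crux is exactly this use of \Cref{sync word} to make the gluing consistent.
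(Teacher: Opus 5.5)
Your final argument is correct and is exactly the paper's proof. You take a synchronizing word $w$ from Proposition~\ref{sync word}, use strong irreducibility to get a filler $v$ of length $N-|w|$ with $wvw\in L(X)$, extend inductively to $(wv)^n w\in L(X)$ via synchronization, and conclude that $\cdots wvwv\cdots\in X$ by closedness, with $N_0=k+|w|$. The earlier exploratory paragraphs (the compactness limit, the graph on $\A^N\cap L(X)$, the $\sigma^N$-invariant set $Y$) play no role and should simply be cut from a written proof.
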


\begin{proof}
We know from Proposition \ref{sync word} that there exists a synchronizing pattern $w $ in $ L(X)$. Let $ k$ be the SI constant of $ X$ and $ |w|$ be the length of the pattern $w$. Then, by the strong irreducibility of $ X$, for every $ N\geq N_0= k+|w|$, there exists a pattern $ u \in L(X)$ of length $ N-|w|$ such that $ wuw$ is an allowed pattern in $ X$. Since $ w$ is a synchronizing pattern, $wuwu$ is also an allowed word. Continuing in inductive manner, we see that $x_N := \cdots wuwuwu \cdots$ is a periodic point of period $N$ in $X$. \end{proof}

Let $G$ be a finitely generated group with a symmetric set of generators $S$ and $ c: X \times \Z^d \rightarrow G$ be a continuous cocycle. Since $X$ is compact, we see that
$$\sup_{x\in X, 1\leq i \leq d}\{ |c(x,\pm\mathbf e_i)|\}:=L < \infty$$
and in fact the supremum is achieved for some $x_0 \in X$ and $ 1 \leq i_0 \leq d$. We refer to $L$ as the Lipschitz constant of the cocycle $c$.
The following proposition follows easily from the ideas in \cite[Section 9]{NoSIsubshifts}.
\begin{proposition}\label{Proposition: SI_subshift_cocycle}
    Let $ X \subset \A^{\Z^d}$ be a strongly irreducible subshift for $ d \geq 2$. Let $G$ be a finitely generated group and $c:X\times \Z^d\to G$ be a continuous cocycle. For all $x\in X$, consider the map $c_x:\Z^d\to G$ given by $c_x(v)=c(x,v)$. Then there exists a constant $K$ such that
    \begin{enumerate}
            \item For all $x,y\in X$ and $u\in \Z^d$, there are group elements $g_1, g_2\in G$ such that $|g_1|, |g_2|<K$ and $c_x(u)=g_1c_y(u)g_2$.
        \item For all $x\in X$ and $v,w\in \Z^d$, there are group elements $g_1, g_2\in G$ such that $|g_1|, |g_2|<K$ such that $c_x(v+w)=c_x(v)g_1c_x(w)g_2$.
    \end{enumerate}
\end{proposition}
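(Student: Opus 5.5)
Both parts rest on one observation: the value $c_x(u)$ depends, up to bounded left and right error, only on the pattern of $x$ near a path from $0$ to $u$. Strong irreducibility then lets us splice configurations while keeping the error bounded.

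\emph{Locality.} By continuity and compactness, there is $R$ such that $c(x,\pm\mathbf e_i)$ is determined by $x|_{\B(R)}$. By the cocycle equation, if $\gamma=(0=p_0,p_1,\dots,p_n=u)$ is a nearest-neighbour path, then $c_x(u)$ is the product of the increments $c(\sigma^{p_j}x,p_{j+1}-p_j)$. So $c_x(u)$ is determined by $x$ restricted to the $R$-neighbourhood of $\gamma$. More generally, $c_x(u)^{-1}c_x(u')=c(\sigma^u x,u'-u)$ depends only on $x$ near any path from $u$ to $u'$. Let $k$ be the SI distance.

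\emph{Part (1).} Given $x,y,u$, take $\gamma$ to be a straight lattice path from $0$ to $u$, and let $A$ be the $R$-neighbourhood of the interior of $\gamma$ with small balls around $0$ and $u$ removed. Since $d\ge 2$, the "tube'' $A$ does not separate anything essential. Use strong irreducibility to build $z\in X$ agreeing with $y$ on $A$ and with $x$ outside the $(R+k)$-neighbourhood of $\gamma$. Then compare $c_x(u)$ and $c_z(u)$ along a path that detours around the tube. This detour path lies where $z=x$, except within bounded neighbourhoods of the endpoints $0$ and $u$; the increments there have length at most $L(2R+2k+2)$. Hence $c_x(u)=g\,c_z(u)\,g'$ with $g,g'$ bounded.

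The real argument needs only this: take paths $\gamma'$ from $0$ to $u$ lying at distance greater than $R+k$ from $\gamma$, except near the endpoints. Along $\gamma'$ we have $z=x$, so $c_z(u)=a\,c_x(u)\,b$, where $a$ and $b$ come from the short connecting segments near $0$ and $u$. Symmetrically, along $\gamma$ we have $z=y$ away from the endpoint balls, so $c_z(u)=a'\,c_y(u)\,b'$. Combining the two gives (1) with $K$ of order $L(R+k)$. The key geometric input is that in $\Z^d$ with $d\ge 2$, two paths from $0$ to $u$ can be chosen to be $(R+k+1)$-separated apart from balls of bounded radius around $0$ and $u$. Concretely, offset $\gamma'$ in a coordinate direction transverse to $u$, by a bounded amount.

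\emph{Part (2).} Write $c_x(v+w)=c_x(v)\,c(\sigma^v x,w)$. The needed statement is that $c(\sigma^v x,w)$ differs from $c_x(w)$ by bounded two-sided factors, which is exactly part (1) applied to $x'=\sigma^v x$ and $y=x$ with $u=w$. This yields
$$c_x(v+w)=c_x(v)\,g_1\,c_x(w)\,g_2.$$

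The main obstacle is the splicing step in (1). One must ensure the glued configuration $z$ exists in $X$ when two patterns are prescribed on arbitrary, possibly infinite, sets at distance greater than $k$. This follows from the SI definition by compactness, taking finite approximations and a limit point. One must also ensure that the endpoint correction factors are genuinely uniformly bounded, independent of $|u|$. This is where $d\ge 2$ is used.
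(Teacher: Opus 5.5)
Your argument is correct and is essentially the paper's proof. You splice $x$ and $y$ via strong irreducibility along $\gamma$ and a bounded transverse translate $\gamma+v$, read off the cocycle around the resulting thin parallelogram $0,v,u+v,u$, and obtain (2) from (1) applied to $\sigma^v x$ and $x$.

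The paper's version is slightly leaner. It first passes to a higher block presentation so that $R=0$, then prescribes only $x|_U$ on $U$ and the translate of $y|_U$ on $U+v$, both finite sets, so no compactness step is needed. Also, your offset must make $\gamma'$ lie at distance greater than $2R+k$ from $\gamma$, not $R+k$, so that the $R$-neighbourhood of $\gamma'$ avoids the region where $z$ was not set equal to $x$; this is harmless because the offset is a free bounded parameter.
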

In this proof, we will use what is called a higher block presentation of a subshift: Given a subshift $X\subset A^{\Z^d}$ and a natural number $r>1$, consider the subshift $X^{(r)}$ on the alphabet $A'=A^{[-r,r]^d}$ defined as follows: Let $ \B(v, r) = v + [-r, r]^d$ and
\[ X^{(r)} = \{ \left(x|_{\B(v, r)} \right)_{v \in \Z^d}: x \in X \}\]
  The subshift $X^{(r)}$ is called a higher block presentation of the subshift $X$. It is easy to see that these subshifts are isomorphic, meaning that there is a continuous equivariant map from $ X$ to $ X^{(r)}$ which admits a continuous equivariant inverse. In fact, $ \Psi: X \rightarrow X^{(r)}$ given by $ \Psi(x)_{v} := x|_{\B(v, r)}$ is such a map. We will use the well-known fact that if $c: X\to G$ is a continuous map into a topological space $G$, then we can define a corresponding continuous map $\tilde c: X^{(r)}\to G$ given by $\tilde c(x)=c(\psi^{-1}(x))$. This can be used to transfer cocycles from $X$ to $X^{(r)}$ and vice-versa.
\begin{proof}
For each $i$, $ c(\cdot, e_i): X \rightarrow G$ is a (uniformly) continuous map and therefore there exists a radius $R$ such that for any $ 1 \leq i \leq d$, $ c(x, e_i) = c(y, e_i)$ whenever $ x|_{\B(R)} = y|_{\B(R)}$. By considering a  higher-block presentation of $X$ if necessary, we can assume without loss of generality that the value of $ c(x, e_i)$ depends only on $ x_{\mathbf{0}}$ for each $ 1 \leq i \leq d$, that is, for any two elements $ x, y \in X$ with $ x_{\mathbf{0}} = y_{\mathbf{0}}$, we have $ c(x, e_i) = c(y, e_i)$ for each $ 1 \leq i \leq d$.

Let us begin by proving the first part. Let $L=\max_{x\in X, 1\leq i \leq d}(|c(x,\pm\mathbf e_i)|)$ and $k$ be the SI constant of $X$. Choose a set $U$ consisting of a shortest path connecting the origin $0$ and $u$ approximating the straight line joining them in $\mathbb R^d$. Then there exists a vector $v$ such that $\|v\|_1\leq 2(k+d+1)$ and $d(U, U+v)>k$. Then there exists $z\in X$ such that $z|_{U}=x|_{U}$ and $z|_{U+v}= y|_U$. Since $U$ is a connected set containing the origin and $u$ it follows then that
$$c(x, u)= c(z, u)= c(z, v)c(\sigma^v(z), u)c(\sigma^{u+v}(z),-v)=c(z,v)c(y, u)c(\sigma^{u+v}(z),-v).$$
The first equality holds since $x|_U=z|_{U}$. Taking $K=2L(k+1)$, the required inequality follows.

For the second equality, note that $c(x, v+w)=c(x,v)c(\sigma^v(x),w)$ and that it follows from the first, taking $y=\sigma^v(x)$. \end{proof}
\begin{remark}\label{cocycles for subgroups}
    The above proposition holds even if $c$ is a continuous cocycle with respect to a subgroup $ W \leq \Z^d$ of finite index.
\end{remark}
It follows from the above proposition that if $ c$ is a continuous cocycle (with respect to a finite index subgroup $ W \leq \Z^d$) on a strongly irreducible subshift $X$, then  
\begin{equation} D:=\sup_{x,y \in X, v \in \Z^d}|(|c(x, v)| -|c(y, v)| )|< \infty. \label{bounded difference} \end{equation}
\begin{definition}
    Given a strongly irreducible subshift $X$, a cocycle $c$ and a configuration $x\in X$, the \textbf{slope} of $x$ in the direction $v$ for $c$ is given by
\[ \Phi(v) := \limsup_{n \in \N} \frac{|c(x,nv)|}{n||v||_1}.\]
\end{definition}
\begin{remark}
    Note that $\Phi(v)$ is independent of the configuration $x$. Indeed, since $X$ is a strongly irreducible subshift, by \Cref{bounded difference} 
\[ |c(x, (n+m)v)| \leq |c(x, nv)|+ |c(\sigma^{nv}x, mv)| = |c(x, nv)|+ |c(x, mv)|+D\]
for each $ x$. A variant of Fekete's lemma implies then that the limit superior in the definition of slope is actually a limit. Thus, by \Cref{bounded difference}, $ \Phi(v)$ is independent of $x$.
\end{remark}
Note that all cocycles introduced in Section \ref{section:cocycle} were valued in the free product of cyclic groups or direct products of such groups (this includes free groups and $\Z^d$). With this in mind, we say that a finitely generated group $G$ has \textbf{CIC} property if the centraliser of any infinite order element in $G$ is a cyclic subgroup.

For example, all torsion-free Gromov hyperbolic groups (\cite[Chapter III$.\Gamma$, Corollary 3.10]{Bridson}) (in particular free groups) and free products of two cyclic groups (\cite[Chapter 4, Exercise 28]{Combinatorialgrouptheory}) satisfy the CIC property. 

Let $C_{G}(g)$ denote the centraliser of $g$ in $ G$. We say that $ g \in G$ is \textbf{primitive} if $ C_{G}(g) = \langle g \rangle$.
\begin{lemma}\label{conjuagate powers}
    Let $ G$ be a finitely generated group which satisfies the $CIC$ property. If $g$ and $ h$ are two elements of infinite order such that $g^m = \eta h^n \eta^{-1}$ for some $m, n \in \Z \setminus\{ 0\}$ and $ \eta \in G$. Then there exists a primitive element $ g_0 \in G$ and $ r, s \in \Z$ such that $ g = g_0^r$ and $ h = \eta^{-1} g_{0}^{s} \eta$ and $ mr = ns$.
\end{lemma}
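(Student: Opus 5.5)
The idea is to work inside the centraliser of the single element $g^m$ and use the CIC property to make everything live in one cyclic group. Set $k := g^m = \eta h^n \eta^{-1}$. Since $g$ has infinite order and $m \neq 0$, $k$ has infinite order. By the CIC property, $C_G(k)$ is cyclic, say $C_G(k) = \langle g_0 \rangle$. The generator $g_0$ has infinite order because $k \in C_G(k)$ does.

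First I will check that $g_0$ is primitive, i.e.\ $C_G(g_0) = \langle g_0 \rangle$.
\begin{itemize}
\item Since $k \in \langle g_0 \rangle$, we can write $k = g_0^t$ for some $t \neq 0$.
\item Any $y$ commuting with $g_0$ then commutes with $g_0^t = k$, so $y \in C_G(k) = \langle g_0 \rangle$.
\item Hence $C_G(g_0) \subseteq \langle g_0 \rangle$, and the reverse inclusion is trivial.
\end{itemize}

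Next I will locate $g$ and $h$ in $\langle g_0 \rangle$.
\begin{itemize}
\item $g$ commutes with $g^m = k$, so $g \in C_G(k)$ and $g = g_0^r$ for some $r$.
\item The element $\eta h \eta^{-1}$ commutes with $\eta h^n \eta^{-1} = k$, so $\eta h \eta^{-1} = g_0^s$ for some $s$. Equivalently, $h = \eta^{-1} g_0^s \eta$.
\end{itemize}

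Finally, for the exponent relation, compare the two expressions for $k$:
\[
g_0^{mr} = g^m = k = \eta h^n \eta^{-1} = (\eta h \eta^{-1})^n = g_0^{ns}.
\]
Since $g_0$ has infinite order, this forces $mr = ns$.

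The argument is essentially routine. The only step needing care is to take the centraliser of $k$ rather than of $g$, so that $g$ and $\eta h \eta^{-1}$ land in the same cyclic group, and then to verify that its generator is primitive. Torsion is not an issue: both $g_0$ and $k$ have infinite order, so the CIC property applies to $k$ and the exponent comparison at the end is legitimate.
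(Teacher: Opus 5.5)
Your proof is correct and follows the paper's argument: take $g_0$ to be a generator of the cyclic centraliser $C_G(g^m)$, place $g$ and $\eta h\eta^{-1}$ in $\langle g_0\rangle$, and compare exponents. The only differences are cosmetic. You prove primitivity by writing $g^m = g_0^t$ and concluding $C_G(g_0)\subseteq C_G(g^m)$ directly, whereas the paper uses the chain $C_G(g_0)\subset C_G(g)\subset C_G(g^m)$. You also spell out that $mr=ns$ follows because $g_0$ has infinite order, which the paper just calls clear.
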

\begin{proof}
     We have
    \[ g^m = \eta h^n \eta^{-1}.\]
     Now, since $g^m$ has infinite order, the centraliser $ C_{G}(g^m)$ is a cyclic subgroup. Let $ g_0$ be the generator of $ C_{G}(g^m)$. Note that $ g $ and $ \eta h \eta^{-1}$ commute with $ g^m$. Hence, there exist $ r $ and $ s$ such that $ g = g_0^r$ and $ \eta h \eta^{-1} = g_0^s$, that is, $ h = \eta^{-1} g_0^{s}\eta$. Clearly, we have $mr=ns$. But since $ g= g_{0}^{r}$, $$ C_{G}(g_0) \subset C_{G}(g) \subset C_{G}(g^m) = \langle g_0 \rangle \subset C_{G}(g_0).$$
    Therefore, $ g_0$ is primitive.
    \end{proof}
    
In the following lemma, $ d$ is the Cayley graph metric on $G$. 

\begin{lemma}\label{bounded distance}
    Let $ G$ be a finitely generated group that satisfies the CIC property and $ g\in G$ be a primitive element of infinite order. If for some $ h \in G$,
    \[ \sup_{n \in \N} d(g^{n}, hg^{n})< \infty,\]
    then $ h = g^{t}$ for some $ t$.
\end{lemma}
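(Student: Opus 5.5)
The plan is a pigeonhole argument that produces an element commuting with a power of $g$, after which the CIC property and primitivity of $g$ finish the job. Set $B:=\sup_{n\in\N} d(g^n, hg^n)<\infty$. Since $d(g^n,hg^n)=|g^{-n}hg^n|$, every conjugate $g^{-n}hg^n$, $n\in\N$, lies in the finite ball $\B(B+1)$ of the Cayley graph of $G$; finiteness uses that $G$ is finitely generated.

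By pigeonhole there are $n_1<n_2$ with $g^{-n_1}hg^{n_1}=g^{-n_2}hg^{n_2}$. Put $p=n_2-n_1>0$. Conjugating by $g^{n_1}$ gives
$$h=g^{-p}hg^{p},$$
that is, $h$ commutes with $g^p$. Since $g$ has infinite order, so does $g^p$. Hence the CIC property says that $C_G(g^p)$ is cyclic, say $C_G(g^p)=\langle g_1\rangle$.

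Next, $g\in C_G(g^p)$, so $g=g_1^a$ for some $a$. Then $g_1$ commutes with $g$, so $g_1\in C_G(g)=\langle g\rangle$ by primitivity; this is the same chain of inclusions as in \Cref{conjuagate powers}. Thus
$$C_G(g^p)=\langle g_1\rangle\subseteq\langle g\rangle\subseteq C_G(g^p),$$
so $C_G(g^p)=\langle g\rangle$. Since $h\in C_G(g^p)$, we get $h=g^t$ for some $t\in\Z$.

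No step is a serious obstacle. The only points needing care are the finiteness of balls in $G$, which gives the pigeonhole, and the fact that $g^p$ still has infinite order, so that CIC applies. Hyperbolicity is not needed.
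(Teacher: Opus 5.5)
Your proposal is correct and follows essentially the same route as the paper's proof. Both use pigeonhole on the finitely many conjugates $g^{-n}hg^n$ to get $h\in C_G(g^p)$, apply CIC to write $C_G(g^p)=\langle g_1\rangle$, and then use primitivity of $g$ to conclude $C_G(g^p)=\langle g\rangle$.
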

\begin{proof}
    Suppose that $ \sup_{n \in \N} d(g^{n}, hg^{n})< \infty$. Then $ \{ g^{-n}hg^{n}: n \in \N \}$ is a finite subset of $ G$. By the pigeonhole principle, there exist $ i \neq j \in \N$ such that
    \[ g^{-i} h g^{i} = g^{-j}h g^{j}.\]
    Thus, $ h$ commutes with $ g^{i-j}$ and therefore lies in $ C_{G}(g^{i-j})$. Since $ g^{i-j}$ has infinite order, $ C_G(g^{i-j})$ is cyclic. Let $C_G(g^{i-j})=\langle g_0\rangle$. Then $\langle g\rangle= C_G(g)\subset C_G(g^{i-j})=\langle g_0 \rangle \subset C_{G}(g_0)$. Hence, $g =g_0^{r}$ for some $r$. Therefore, $C_{G}(g_0)\subset C_G(g)$. Thus, it follows that $C_G(g^{i-j})= C_G(g)=\langle g \rangle$ and there exists a $t$ such that $h=g^t$. \end{proof}
We will also use the following observation.
\begin{lemma}\label{lemma: quasimorphism}
    If $ (b_v)_{v \in \Z^d}$ is such that $ b_v \in \Z$, $b_\mathbf{0}=0$ and there exists a homomorphism $ \phi :\Z^d \rightarrow \Z$ and $ K < \frac{1}{4}\min\{|\phi(e_i)|: \phi(e_i) \neq 0\}$ such that       
    \begin{equation}\label{eq: Lipschitz}
    |(|b_v - b_w| - |\phi(v-w)|)| \leq K \end{equation} 
    for every $ v, w \in \Z^d$. Then, there exists $ \epsilon_{i} \in \{ -1, 1\}$ such that the homomorphism 
    \[ \theta(v_1, v_2, \cdots, v_d) := \sum _{i=1}^{d}{\epsilon_i} \phi(v_i e_i)\]
    satisfies $ |b_v -\theta(v)| \leq 2Kd$ for all $ v \in \Z^d$.
\end{lemma}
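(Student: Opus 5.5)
We need to prove Lemma \ref{lemma: quasimorphism}. The plan is to determine the signs coordinate by coordinate and then use the near-additivity forced by \eqref{eq: Lipschitz} to control $b_v$ along monotone lattice paths.

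First, we fix the signs. For each $i$ with $\phi(e_i)\neq 0$, apply \eqref{eq: Lipschitz} with $w=\mathbf 0$ and $v=e_i$ to get $\bigl||b_{e_i}|-|\phi(e_i)|\bigr|\le K$. Since $K<|\phi(e_i)|/4$, $b_{e_i}$ has a definite sign. Let $\epsilon_i$ be the sign of $b_{e_i}\phi(e_i)$, so that $|b_{e_i}-\epsilon_i\phi(e_i)|\le K$. When $\phi(e_i)=0$ we choose $\epsilon_i$ arbitrarily.

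Second, we need a one-step increment control, which is the key step. We want to show that for every $w$ and every $i$,
\[ |b_{w+e_i}-b_w-\epsilon_i\phi(e_i)|\le K. \]
Applying \eqref{eq: Lipschitz} to the pair $w+e_i,w$ gives $|b_{w+e_i}-b_w|\in[\,|\phi(e_i)|-K,\ |\phi(e_i)|+K\,]$. So the increment is within $K$ of either $+|\phi(e_i)|$ or $-|\phi(e_i)|$, and we must show the sign agrees with $\epsilon_i$.

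To fix the sign, use \eqref{eq: Lipschitz} for the pair $w+e_i$ and $w-e_i$, so that $|b_{w+e_i}-b_{w-e_i}|\approx 2|\phi(e_i)|$ within $K$. This forces consecutive increments along the $e_i$-line to have the same sign: if they had opposite signs, their sum would be at most $2K<|\phi(e_i)|$ in absolute value, which contradicts $2|\phi(e_i)|-K>|\phi(e_i)|$. Hence the sign is constant along every line parallel to $e_i$.

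Next we propagate the sign across lines. Compare the lines through $w$ and through $w+e_j$ using the pair $w+e_i$ and $w+e_j$, whose difference $e_i-e_j$ has $|\phi(e_i-e_j)|$ determined, together with the pair $w+e_i+e_j$ and $w$. Concretely, set $s$ and $s'$ to be the signs of the $e_i$-increments at $w$ and at $w+e_j$. The square $w,w+e_i,w+e_j,w+e_i+e_j$ yields $|b_{w+e_i+e_j}-b_w|\approx|\phi(e_i)+\phi(e_j)|$ along both paths. If $s\neq s'$, the two path sums differ by about $2|\phi(e_i)|$, which is more than $4K$ away from $0$, contradicting that they are equal. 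Since $\mathbb Z^d$ is connected, the sign is then globally constant and equals the sign at $\mathbf 0$, which is $\epsilon_i$.

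This propagation across lines is the main obstacle. The error bookkeeping must use $K<\tfrac14\min|\phi(e_i)|$ carefully, and it is likely that one should argue directly with long segments ($n e_i$ versus the shifted segment) rather than single squares to obtain clean inequalities.

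Third, we sum along a path. For $v=(v_1,\dots,v_d)$, walk from $\mathbf 0$ along each coordinate in turn. Summing the step estimates naively gives an error of $K\|v\|_1$, which is too large. Instead, apply \eqref{eq: Lipschitz} to each whole segment $u\to u+v_ie_i$: it gives $|b_{u+v_ie_i}-b_u|$ within $K$ of $|v_i\phi(e_i)|$, and constancy of the sign along the segment gives the sign $\epsilon_i\operatorname{sgn}(v_i)$. Each of the $d$ segments therefore contributes error at most $K$, and combining them yields $|b_v-\theta(v)|\le Kd\le 2Kd$, using $b_{\mathbf 0}=0$.
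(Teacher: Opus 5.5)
Your along-line step (ruling out opposite signs at $w\pm e_i$ via the pair $(w-e_i,w+e_i)$) is correct, and so is your final summation over $d$ whole segments, which even gives $Kd$. The gap is the cross-line propagation, which you yourself flag as the crux. The square argument as written does not work.

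The claim that $s\neq s'$ makes the two path sums differ by about $2|\phi(e_i)|$ tacitly assumes something unproved. It assumes the two $e_j$-edges of the square, $w\to w+e_j$ and $w+e_i\to w+e_i+e_j$, carry increments of the same sign $t=t'$. That is the $j$-instance of the very statement you are proving. If both signs flip, the sign data predict a difference of about $(s-s')|\phi(e_i)|+(t'-t)|\phi(e_j)|$, which can vanish.

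Concretely, take $\phi(e_i)=\phi(e_j)=a$ and $b=0,a,a,0$ at $w,w+e_i,w+e_j,w+e_i+e_j$. Every pair in the square except $(w,w+e_i+e_j)$ satisfies \eqref{eq: Lipschitz}, and $s=+1$, $s'=-1$, yet both path sums are $0$. Only that diagonal rules this out; when $\phi(e_i)=-\phi(e_j)$ it is the other diagonal. You name both diagonals but never use them, so the step is open as written.

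It can be closed locally. In the doubly flipped case, both $|b_{w+e_i+e_j}-b_w|$ and $|b_{w+e_i}-b_{w+e_j}|$ come out at most $4K$. Then \eqref{eq: Lipschitz} forces $|\phi(e_i)\pm\phi(e_j)|\le 5K$. This contradicts $\max(|\phi(e_i)+\phi(e_j)|,|\phi(e_i)-\phi(e_j)|)=|\phi(e_i)|+|\phi(e_j)|>8K$. The case $\phi(e_j)=0$ is immediate.

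The cleaner fix is the one you suggest at the end, and it is essentially the paper's. Once the sign is constant along $e_i$-lines, \eqref{eq: Lipschitz} gives, for all $k\ge 1$,
\[
\bigl|(b_{w+ke_i}-b_w)-sk|\phi(e_i)|\bigr|\le K
\quad\text{and}\quad
\bigl|(b_{w+e_j+ke_i}-b_{w+e_j})-s'k|\phi(e_i)|\bigr|\le K.
\]
These two increments differ by at most $|b_{w+e_j}-b_w|+|b_{w+e_j+ke_i}-b_{w+ke_i}|\le 2(|\phi(e_j)|+K)$, a bound independent of $k$. If $s\neq s'$ they would differ by at least $2k|\phi(e_i)|-2K$, which is impossible for large $k$.

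The paper obtains along-line constancy a little differently. It fixes a base point $v$ and compares $b_{v+me_i}$ with $b_{v+ne_i}$ for arbitrary $m\neq n$, using $|\phi((m+n)e_i)|$ versus $|\phi((m-n)e_i)|$. It then uses exactly this coarse-Lipschitz, large-$k$ argument, stated tersely, to make $\epsilon_i$ independent of the base point.
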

\begin{proof}
    We claim that for a fixed $v \in \Z^d$ and for every $1 \leq i \leq d$, there exists $ \epsilon_{i} \in \{-1,1\}$ (independent of $ v$) such that $ |(b_{v+ke_i}-b_{v}) -\epsilon_i\phi(ke_i)| \leq 2K$ for all $ k \in \Z$, from which the lemma follows by the triangle inequality.
    
    Since $b_{\mathbf{0}} =0$, it follows from \Cref{eq: Lipschitz} that $ |(|b_{w}| -|\phi(w)|)| \leq K$ for all $w \in \Z^d$. Hence, the claim follows easily when $ \phi(e_i) =0$: Indeed we get for all $v\in \Z^d$ and $\epsilon_i\in \{\pm 1\}$,
    $$|(b_{v+ke_i}-b_v)-\epsilon_i \phi(ke_i)|=||(b_{v+ke_i}-b_v)|-|\phi(ke_i)||\leq K.$$ Thus the claim follows by choosing $\epsilon_i=1$.
    Therefore, assume that $\phi(e_i) \neq 0$. Suppose there exists $m \neq n  \in \Z \setminus \{ 0\}$ such that $ |(b_{v+me_i} - b_v) -\phi(me_i)| \leq K$ and $|(b_{v+ne_i}- b_v) +\phi(ne_i)| \leq K$.
    Then by triangle inequality we see that 
    $$|(|b_{v+me_i} -b_{v+ne_i}| - |\phi({me_i}+{ne_i})|)|\leq |b_{v+me_i}-b_{v+ne_i} - \phi({me_i}+{ne_i})| \leq 2K.$$ But from the assumption on $K$ we have that
    \[ ||\phi({me_i}+{ne_i})|-|\phi({me_i}-{ne_i})||\geq 2|\phi(e_i)|\min(|m|, |n|) \geq 4K\]
    which implies $ |(|b_{v+me_i} - b_{v+ne_i}| - |\phi({me_i} -{ne_i})|)| \geq 2K$ leading to a contradiction.
    Thus, there exists $ \epsilon_i(v) \in \{-1, 1\} $ such that $$ |(b_{v+ke_i}-b_{v}) - \epsilon_i(v) \phi(ke_i)| \leq 2K$$ for all $ k \in \Z $. But the function $ v \rightarrow b_v $ is Lipschitz and hence in particular $ \epsilon_{i}(v) = \epsilon_{i}(w)$ whenever $ v$ and $ w $ are neighbors in $\Z^d$. It follows that $ \epsilon_{i}(v)$ is independent of $v$. \end{proof}
We will now prove the main theorem of the section.
    
\cocycles*

\begin{proof}
     As in the proof of \Cref{Proposition: SI_subshift_cocycle}, considering a  higher-block presentation of $X$ if necessary, we can assume without loss of generality that the value of $ c(x, e_i)$ depends only on $ x_{\mathbf{0}}$ for each $ 1 \leq i \leq d$, that is, for any two elements $ x, y \in X$ with $ x_{\mathbf{0}} = y_{\mathbf{0}}$, we have $ c(x, e_i) = c(y, e_i)$ for each $ 1 \leq i \leq d$.

    If $ \{c(x, v): x \in X, v \in \Z^d \} $ is a finite subset of $ G$, then the conclusion of the theorem is automatic. Hence, we assume that $ \{c(x, v): x \in X, v \in \Z^d \}$ is infinite. We divide the proof into three steps.
    
    \textbf{Step 1:} Let $ k$ denote the SI distance of $X$. First, we will prove that there exists a primitive element $ g_0 \in G$, $ t_1 \in \Z$ and constant $ C_1$ such that for every $x \in X$ and $ n \in \Z$, 
    \[ c(x, ne_1) = \alpha_1 g_{0}^{t_1 n} \widetilde{\alpha_1}  \]
    with $ \max(|\alpha_1|, |\widetilde{\alpha_1}|) \leq C_1$.

    If $ \sup\{ |c(x, ne_1)| : n \in \Z\} < \infty$ for some (and hence every) $ x \in X$, then the conclusion follows by taking $ t_1=0$. Hence, we assume that $ \sup\{ |c(x, ne_1)|: n \in \Z \}= \infty $ for all $ x$. 

    We will crucially use the fact that any one-dimensional strongly irreducible subshift contains periodic points of large enough periods (Proposition \ref{proposition: periodic in SI}). The restriction of $ X$ to $\Z e_1$ is one-dimensional strongly irreducible subshift. Let $N_1$ be as given in Proposition \ref{proposition: periodic in SI} for the subshift $ X|_{\Z e_1}$. Fix an integer $p > N_1$. Then there exists $y_1, y_2 \in X$ such that $ y_{1}|_{\Z e_1}$ is a periodic point of period $ p$ and ${y_2}|_{\Z e_1}$ is a periodic point of period $ p+1$. Let $ g_1 = c(y_1, pe_1)$ and $ g_2 = c(y_2, (p+1)e_1)$.

     Let $ m >k$. Then, by the strong irreducibility of $X$, there exists $ z \in X$ such that 
    \[ (z)_{ne_1}= (y_1)_{ne_1} \text{ and } (z)_{me_2 + ne_1} = (y_2)_{ne_1} \]
for all $ n \in \Z$. 

As there are only finitely many allowed patterns in $X$ which can occur on segments of the kind $ [v, v+me_2]$ where $v$ varies over $\Z^d$, there exist $ l < l'$ such that for $ v_1 = lp(p+1)e_1$ and $ v_2 = l'p(p+1)e_1$ the pattern $z|_{[v_1, v_1+me_2]}$ is the same as the pattern $ z|_{[v_2, v_2+me_2]}$. Let $ \eta := c(\sigma^{v_1}z, me_2) = c(\sigma^{v_2}z, me_2) .$
Let $ v_3 = v_2+ m e_2$ and $ v_4= v_1+ m e_2$.
We can evaluate the difference between the cocycle values between $ v_1$ and $ v_3$ in two different ways:
\begin{align*}
    c(\sigma^{v_1}z, v_3 -v_1) & = c(\sigma^{v_1}z, v_2 -v_1)\cdot c(\sigma^{v_2}z, v_3 - v_2) \\
    &= c(\sigma^{v_1}z, v_4 - v_1)\cdot c(\sigma^{v_4}z, v_3- v_4)
\end{align*}
But by construction,
\begin{align*}
c(\sigma^{v_1}z, v_2 -v_1) &= \prod_{j=0}^{(l'-l)(p+1)-1} c(\sigma^{v_1+jpe_1}(z), pe_1)
\\
&=g_1^{(l'-l)(p+1)}
\end{align*} 
where we used the fact that $ c(x, e_1)$ just depends on $x_{\mathbf{0}}$.

Similarly, $c(\sigma^{v_4}z, v_3 - v_4)= g_{2}^{(l'-l)p} $. Therefore, we have
\[ g_1^{(l'-l)(p+1)} = \eta  g_{2}^{(l'-l)p} \eta^{-1}.\]
Since $\sup\{ |c(z, ne_1)|: n \in \Z \} = \sup \{ |c(\sigma^{me_2}z, ne_1)|: n \in \Z \} = \infty$, the elements $ g_1$ and $ g_2$ are of infinite order. Hence, according to Lemma \ref{conjuagate powers}, there exists a primitive element $g_0 \in G$ and $ r,s \in \Z \setminus \{0\}$ such that $g_1 =  g_0^{r}$ and $ g_2 = \eta^{-1} g_{0}^{s} \eta$ and $ r (l'-l)(p+1)= s(l'-l)p$. Since $ (l'-l)$ is non zero, we have 
\[ r(p+1) = sp.\]

As $ gcd(p, p+1) =1$, this in turn gives us $ r= t_1p$ and $ s= t_1(p+1)$ for some $ t_1 \in \Z \setminus \{0\}$. Thus, we see that $ g_1= g_{0}^{t_1p}$ and $ g_2 = \eta^{-1} g_{0}^{t_1(p+1)} \eta$. Therefore, for all $ k \in \Z$,
 \[ c(z, kpe_1) = g_{1}^{k} = (g_{0}^{t_1})^{kp} .\]
 
Let $ L$ denote the Lipschitz constant of the cocycle $c$. Then for all $n \in \Z$ there exists $\alpha_1'\in G$ such that $|\alpha_1'|\leq pL$ and
$$ c(z, ne_1) = g_{0}^{t_1n}{\alpha}_1'.$$
This implies by Proposition \ref{Proposition: SI_subshift_cocycle} that there exists a constant $C_1$ such that for all $x\in X$ and $n\in \Z$ there exist $\alpha_1, \widetilde{\alpha_1}\in G$ for which $ |\alpha_1|, |\widetilde{\alpha_1}|\leq C_1$ and
\[ c(x, ne_1) = \alpha_1 g_{0}^{t_1n}\widetilde{\alpha}_1.\]

A useful consequence of the above argument is the following: If $q>N_1$ and $y|_{\Z e_1}$ have period $q$, then for any $ m >k$ consider an element $ z' \in X$ with $ z'_{ne_1} = (y_1)_{ne_1}$ and $ z'_{me_2 + ne_1} = y_{ne_1}$ for all $ n \in \Z$. Then we know that for any $ l \in \Z$, 
\[ c(z', (lpq)e_1) = g_{0}^{t_1 plq}.\]
We can argue as above and conclude that there is $ \eta \in G$ with $ |\eta| \leq C_1$ such that 
\begin{equation}
c(y,qe_1)=\eta g_0^{t_1q}\eta^{-1}.\label{equation:conjugation_when_periodic}
\end{equation}

\textbf{Step 2:}
Now we will show that there exists $g_0 \in G$ and a constant $C$ such that for every coordinate direction $ e_i$, there exist $ t_i \in \Z$ such that for all $ x \in X, n \in \Z$, there exist $\alpha_i, \widetilde{\alpha}_i \in G$ with $\max(|\alpha_i|, |\widetilde{\alpha}_i|) \leq C$ satisfying,
\begin{equation} c(x, ne_i) =  \alpha_{i} g_{0}^{t_i n} \widetilde{\alpha_i} . \label{i direction}\end{equation}
Without loss of generality, assume that  $\sup\{ |c(x, ne_1)|: n \in \Z \}= \infty $ for some (and hence all) $x \in X$. Then by the previous step, there exists a primitive element $ g_0 \in G$ of infinite order, $ t_1 \in \Z \setminus \{0\}$ and a constant $ C_1$ such that for every $ x\in X$ and $ n \in \Z$,
\[ c(x, ne_1) = \alpha_1 g_{0}^{t_1 n} \widetilde{\alpha_1}.\]
for some $\alpha_1, \widetilde{\alpha_1} $ with $|\alpha_1|, |\widetilde{\alpha_1}| \leq C_1 $.
 
Let $2\leq i\leq d$. If $ \{ |c(x, ne_i)| : n \in \Z\}$ is bounded in $ G$ for some (and hence all) $ x \in X$, then we can simply take $ t_i=0$. So assume that $ \sup \{ |c(x, ne_i)| : n \in \Z\} = \infty$ for all $ x \in X$. 

As $X|_{\Z e_j}$ is a strongly irreducible subshift for all $j$, \Cref{proposition: periodic in SI} implies that there exists $N_j>k$ such that $ X|_{\Z e_j}$ contains a periodic point for every period $m \geq N_j$. Fix $ m > N_i$. Let $ p \geq N_1$ and $y_1 \in X $ be as in Step 1 ($y_1|_{\Z e_1}$ is periodic of period $p$) and $ y_3 \in X$ be such that $ (y_{3})|_{\Z e_i}$ is a periodic point of period $ m$. Let $ g_3 = c(y_3, me_i)$. From Step 1, it follows that there exists a primitive element $ h_0 \in G$ and $ t_{i} \in \Z \setminus \{ 0\}$ such that $ g_3= h_{0}^{t_{i}m}$. Note that since the set $\{ g_3^n: n \in \Z\}$ is unbounded, $g_3 \neq e$. We will prove that $ h_0 = \beta^{-1} g_{0} \beta$ for some $ \beta \in G$. This will imply \Cref{i direction} by Proposition \ref{Proposition: SI_subshift_cocycle}.

Since $m,p > k$, due to the strong irreducibility of $X$, there exists $\bar{z}\in X$ such that
\[ \bar{z}_{kme_i + ne_1} = (y_1)_{ne_1}\]
for all $ k \in \Z$ and $ n \in \N\cup\{0\}$ and also 
\[ \bar{z}_{-pe_1 + n'e_i} = (y_3)_{n'e_i}\]
for all $ n' \in \Z$.

Now $ c(\sigma^{lme_i}\bar{z}, kpe_1) = c(\sigma^{(l+1)me_i}\bar{z}, kpe_1) = c(y_1, kpe_1) = g_{0}^{t_1kp}$ for every $l \in \Z$ and the distance between $ c(\sigma^{lme_i}\bar{z}, kpe_1)$ and $ c( \sigma^{lme_i}\bar{z}, me_i + kpe_1)$ is uniformly bounded by $ mL$. As $ g_0$ is a primitive element of infinite order, it follows from Lemma \ref{bounded distance} that for all $l \in \Z$, there exists an integer $ r_l$ such that 
\[ c(\sigma^{lme_i}\bar{z}, me_i) = g_{0}^{r_{l}}.\]
By the pigeonhole principle, there exist integers $ l < l'$ such that
\[ c(\sigma^{lme_i}\bar{z}, -pe_1) = c(\sigma^{l'me_i}\bar{z}, -pe_1) =: \beta.\]
Note that $|\beta| \leq pL$. Now, consider the box with endpoints $ v_1 =lme_i - pe_1$, $ v_2 = lme_i$, $ v_3= l'm e_i$ and $ v_4 = l'me_i - pe_i$. 
We evaluate the difference between the cocycle values at $v_4$ and $v_2$ in two different ways.
\begin{align*}
    c(\sigma^{v_2}\bar{z}, v_4-v_2) &= c(\sigma^{v_2}\bar{z}, v_3 - v_2)\cdot c(\sigma^{v_3}\bar{z}, v_4 - v_3)\\
    &= c(\sigma^{v_2}\bar{z}, v_1 - v_2) \cdot c(\sigma^{v_1}\bar{z}, v_4 -v_1).
\end{align*}
We know that $c(\sigma^{v_2}\bar{z}, v_3 - v_2) = g_{0}^{r'}$ for some $r'$ and $ c(\sigma^{v_1}\bar{z}, v_4 - v_1) = g_3^{(l'-l)}$. Thereby
\[ h_{0}^{t_{i}m(l'-l)}=g_{3}^{l'-l} = \beta^{-1} g_{0}^{r'} \beta. \]
Since $ g_0$ is primitive, $ \beta^{-1} g_0 \beta$ is also primitive. But $h_0$ is also primitive. Hence \Cref{conjuagate powers} implies that
$$ h_0 = \beta^{-1} g_{0} \beta.$$

\textbf{Step 3:}  We complete the proof of the theorem assuming $ d=2$. The proof for $ d >2$ follows by an induction argument, which we will sketch at the end.

We know from Step 2, that there exists a primitive element $ g_0 \in G$, $ t_1, t_2 \in \Z$ such that 
\begin{equation}
c(x, ne_i) = \alpha g_{0}^{t_i n} \widetilde{\alpha} \label{directed cocycles}
 \end{equation}
for $ i=1,2$, where $ |\alpha|, |\widetilde{\alpha}| \leq C$.

If either $ t_1 =0$ or $ t_2 =0$, then we are done by Step 1 and 2. Hence, assume that $ t_1 \neq 0$ and $t_2 \neq 0$. For the constants $A(g)$ as in \Cref{lemma: linear growth}, let $ m >\max\{ N_2, k, \frac{4}{|t_1|\tau(g_0)}(4C + \max \{2A(\delta g_0 \delta^{-1}): |\delta| \leq 2C \})\}$. We will show that there exists $ \tilde{z} \in X$ and a constant $C'$ such that for each $ v \in m\Z^2$, we will have 
\[ c(\tilde{z}, v) = h_1'\theta(v)h_2'\]
for some $ h_i'= h_i'(x, v)$ satisfying $ |h_i'|\leq C'.$ By \Cref{Proposition: SI_subshift_cocycle}, this is sufficient to prove the theorem.

By \Cref{proposition: periodic in SI}, there exists $ y \in X$, such that $ y|_{\Z e_2}$ is a periodic point of period $m$. Let $ h= c(y, me_2)$. It follows from \Cref{equation:conjugation_when_periodic} that $h = \delta g_{0}^{t_2m}\delta^{-1} $ for some $ \delta \in G$.

Since $ X$ is strongly irreducible, there exists $ \tilde{z} \in X$ which contains copies of the vertical configuration $ y|_{\Z e_2}$ at all points $lme_1$, where $l$ varies over $ \Z$, that is, for all $ l , n\in \Z$ 
\[ \tilde{z}_{(lme_1+ne_2)}= (y)_{ne_2}.\]
 Since $g_0$ is primitive, \Cref{bounded distance} implies that for every $ l\in \Z$,
\[ c(\tilde{z}, lme_1) = \delta g_{0}^{b_l} \delta^{-1}\]
for some $ b_l \in \Z$. But \Cref{directed cocycles} implies that 
\[ c(\tilde{z}, lme_1) = \delta_1 g_{0}^{t_1 lm} \delta_2\]
where $ |\delta_1|, | \delta_2| \leq C.$
Thus we have, 
\[ g_0^{b_{l}} = \delta'g_{0}^{t_1lm}\delta'^{-1}\delta_{3} \]
for some $ \delta', \delta_3$ with $ |\delta'|, |\delta_3| \leq 2C$. Thus, we have 
\[ |(|g_0^{b_{l}}| - |\delta'g_{0}^{t_1lm}\delta'^{-1}|)|\leq 2C.\]

By \Cref{lemma: linear growth}, this implies that there exists a constant $K =\frac{1}{\tau(g_0)} (4C + 2\max \{A(\delta g_0 \delta^{-1}): |\delta| \leq 2C \})$ such that
\[|(|b_{l}| - |t_1lm|)| \leq K.\]
for all $l \in \Z$. Clearly, $ b_0= 0$. Thanks to our choice of $ m$, we have $K < \frac{1}{4}|t_1 m|$. Thus, by \Cref{lemma: quasimorphism}, there exists $ \epsilon \in \{ -1, 1\}$ such that $ |b_{l} - \epsilon t_1 lm| \leq 2K$. Let $ \Delta(l) = b_l-\epsilon t_1lm$.

Let $ \theta: \Z^2 \rightarrow G$ be the homomorphism such that $\theta(e_1) = g_0^{\epsilon t_1} $ and $ \theta(e_2) = g_{0}^{t_2}$. Then, $$ c(\sigma^{lme_1}\tilde{z}, kme_2) =c(y, kme_2) = \delta g_{0}^{t_2km} \delta^{-1}. $$ Thus, we see that
\begin{align*} \label{hard}
    c(\tilde{z}, lme_1+kme_2 ) &= c(\tilde{z}, lme_1)\cdot c(\sigma^{lm e_1}\tilde{z}, kme_2) \\
    & = (\delta g_{0}^{b_l}\delta^{-1})(\delta g_{0}^{t_2 km}\delta^{-1}) \\
    &= \delta g_{0}^{\epsilon t_1lm +  t_2 km + \Delta(l)} \delta^{-1}\\
    &= (\delta g_{0}^{\epsilon t_1 lm + t_2 km}\delta^{-1})( \delta g_{0}^{\Delta(l)}
    \delta^{-1}) \\
    &= (\delta \theta(lme_1+kme_2)\delta^{-1}) ( \delta g_{0}^{\Delta(l)}
    \delta^{-1}).
\end{align*}
Now, since $|\Delta(l)| \leq 2K$, we see that $ C + | \delta g_{0}^{\Delta(l)} \delta^{-1}|$ is uniformly bounded by a constant (say) $C'$ which is independent of $l$ or $k$. It follows then that for all $ v \in m\Z^2$, there exist $ h'_i \in G$ with $ |h'_i|\leq C'$ such that
\[ c(\tilde{z}, v)= h_1'\theta(v)h_2'\]
as desired. From here we can proceed as in step 1 to prove the statement for all $v\in \Z^2$ and $x\in X$.

If $ d >2$, then we proceed by induction. 
 Assume without loss of generality that $t_1 \neq 0$. If $ t_i = 0$ for all $ i \neq 1$, then the conclusion of the theorem follows easily from Step 1. So assume that $ t_i \neq 0$ for some $ i \neq 1$. 
 Let $W_i$ be the $(d-1)$-dimensional hyperplane in $ \Z^d$ spanned by coordinate vectors $ \{ e_j~:~ j \neq i\}. $ 
 
 The restriction of $ X$ to $ W_i$ is also a strongly irreducible subshift.
By induction, there exists a homomorphism $ \phi: W_i \rightarrow G$ and a constant $ C_i>0$ such that for all $ w \in W_i$ and $ x \in X$, 
\[ c(x, w) = h_1 \phi(w)h_2 \]
where $ |h_1|, |h_2| \leq C_i$.

Let $c_i = \min\{ |\phi(e_j)|: j \neq i \text{ with }  \phi(e_j)\neq 0\}$. Choose $ m$ greater than $ \max \{ N_i, k,  \frac{4}{|c_i| \tau(g_0)}(4C_i + \max \{2A(\delta g_0 \delta^{-1}): |\delta| \leq 2C_i \})$.
Let $ y \in X$ be such that $ y|_{\Z{e_i}}$ is a periodic point of period $m$. Then by the strong irreducibility of $ X$, there exists $ \tilde{z} \in X$ such that for all $ w \in mW_i$ and $ n \in \Z$, 
\[ \tilde{z}_{w+ne_i} = y_{ne_i}.\]
The conclusion now follows by arguing as in the $ d=2$ case. 
 \end{proof}

The function $c_{rib}$ on the space $ X(ribbon, n)$ of ribbon tilings (\Cref{ribbon tiling cocycle}) is a cocycle with respect to a proper subgroup of $ \Z^2$ taking values in $ \Z^n$. However, note that $\Z^n$ is not a CIC group for $n \geq 2$. Hence, we will need the following corollary in \Cref{subsection: NO SI in ribbon tilings}. 
\begin{corollary}\label{ribbon tiling slope}
   Let $ X \subset \A^{\Z^d}$ be a strongly irreducible subshift for $ d \geq 2$. Let $G = G_1\times G_2\times\cdots\times G_r$, where each $G_i$ is a hyperbolic group with the CIC property and $ c: X \times \Z^d \rightarrow G$ is a cocycle with respect to the subgroup $W \leq \Z^d$ of finite index. Then there exists a homomorphism $ \theta: W \rightarrow G$ and a constant $ C$ such that for all $ x \in X\text{ and } w \in W$, there exist $h_1= h_1(x, w)$ and $h_2 = h_2(x, w)$ with $ \max(|h_1|, |h_2|) \leq C$ such that
   $$c(x, w ) = h_1 \theta(w) h_2.$$
\end{corollary}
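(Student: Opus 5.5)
The plan is to reduce the corollary to \Cref{triviality of cocycles} in two moves: first pass from a cocycle for the $W$-subaction to a genuine cocycle for a $\Z^d$-action, then split a $G_1\times\cdots\times G_r$-valued cocycle into its coordinates.

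\textbf{Reduction to a full $\Z^d$-action.} Since $W\leq\Z^d$ has finite index, it is free abelian of rank $d$; fix a basis $w_1,\dots,w_d$ and the isomorphism $\iota:\Z^d\to W$, $e_i\mapsto w_i$. Pick a fundamental domain $F$ for $\Z^d/W$. Consider the subshift
\[ \widetilde X=\{(\sigma^{f}x|_W)_{f\in F}\ \text{re-indexed via }\iota : x\in X\}\subset (\A^F)^{\Z^d},\]
so $\widetilde X$ is the ``$W$-higher block presentation'' of $X$, on which $\Z^d$ acts through $W$. Then $\tilde c(\tilde x,v):=c(x,\iota(v))$ is a continuous cocycle for the full $\Z^d$-action on $\widetilde X$. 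I must check that $\widetilde X$ is strongly irreducible. If two sets $A,B\subset\Z^d$ are far apart, then so are $\iota(A)+F$ and $\iota(B)+F$ in $\Z^d$, because $\iota$ is bi-Lipschitz and $F$ is finite. Strong irreducibility of $X$ then lets me glue the corresponding patterns. Conversely, bounds on word lengths transfer back unchanged, since $G$ is the same group.

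\textbf{Splitting coordinates.} Let $p_j:G\to G_j$ be the projections and use the product generating set, the union of generators of the factors. Each $p_j\circ\tilde c$ is a continuous $G_j$-valued cocycle on the strongly irreducible $\widetilde X$, with $d\ge2$. So \Cref{triviality of cocycles} gives homomorphisms $\theta_j:\Z^d\to G_j$ and constants $C_j$ with
\[ p_j(\tilde c(\tilde x,v))=h_1^{(j)}\theta_j(v)h_2^{(j)},\qquad |h_i^{(j)}|\le C_j .\]
Set $\theta=(\theta_1,\dots,\theta_r)\circ\iota^{-1}:W\to G$, which is a homomorphism, and $h_i=(h_i^{(1)},\dots,h_i^{(r)})$. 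In the product word metric $|h_i|\le\sum_j C_j=:C$, giving $c(x,w)=h_1\theta(w)h_2$.

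\textbf{Main obstacle.} The only real issue is verifying that the re-indexed subshift $\widetilde X$ is strongly irreducible, with an explicit SI distance in terms of that of $X$, the diameter of $F$ and the Lipschitz constant of $\iota^{-1}$. I would do this by direct bookkeeping. Alternatively, Remark \ref{cocycles for subgroups} together with rerunning the proof of \Cref{triviality of cocycles} with $W$ in place of $\Z^d$ (periodic points along $\Z w_i$, gluing via strong irreducibility of $X$) should work verbatim, as a fallback.
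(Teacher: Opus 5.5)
Your proposal is correct and is essentially the paper's proof. The paper likewise projects onto each factor $G_i$ to reduce to $r=1$, re-indexes $X$ along a basis of $W$ into a strongly irreducible $\Z^d$-subshift $X^W$ over the alphabet $\A^D$ (with $D$ a fundamental domain) on which $c\circ\Psi$ is a genuine cocycle, and then applies \Cref{triviality of cocycles}. The only differences are that you perform the two reductions in the opposite order, and you spell out the strong-irreducibility check via the bi-Lipschitz property of $\iota$ and the finiteness of $F$, which the paper merely asserts.
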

\begin{proof}
    Considering the natural projection maps from $G$ to $ G_i$, we can assume without loss of generality that $r=1$, i.e.  $G$ is a hyperbolic group satisfying the CIC property. 
    
    Let $ \{\tilde{e}_1, \tilde{e}_2, \cdots, \tilde{e}_d \} $ be a basis of $W$. Then there exists unique isomorphism $ \Psi: \Z^d \rightarrow W$ such that $ \Psi(e_i)= \tilde{e}_1$. Consider the map $ c': X \times \Z^d \rightarrow G$ defined by 
    \[ c'(x, v) := c(x, \Psi(v)).\]
    Since, $c$ is a cocycle with respect to the $W$, $c'$ satisfies the equation:
    $$c'(x, v+w)= c'(x,v)c'(\sigma^{\Psi(v)}(x), w).$$
    Thus, it follows that $c'$ is a cocycle for a different subshift: Consider the fundamental domain $D$ for $\Z^d/W$ which is the parallelopiped in $ \Z^d$ with the endpoints in $\{ \sum_{i=1}^{d} \epsilon_i  \tilde{e}_i : \epsilon_{i} \in \{0,1\} \}$. Then the subshift
    $$X^W=\{y\in (A^D)^{\Z^d}~:~\text{ there exists $x\in X$ such that $x_{v+w}=(y_{\Psi^{-1}(v)})_w$ for $v\in W, w\in D$}\}$$
     is also strongly irreducible since $X$ is also strongly irreducible. The conclusion now follows easily from \Cref{triviality of cocycles}.   
\end{proof}
\begin{remark}
    In \Cref{triviality of cocycles}, we do not provide any information about $ h_i(x,v)$ other than the fact that they are uniformly bounded. A cocycle $c$ on $ X$ is said to be cohomologous to a homomorphism (or trivial) if there exists a continuous function $ h: X \rightarrow G$ and a homomorphism $\theta: \Z^d \rightarrow G $ such that $ c(x, w) = h(x) \theta(w) h(\sigma^{w}(x))^{-1}$ for all $ x \in X \text{ and } w \in \Z^d$. It is unclear to us whether continuous cocycles on a strongly irreducible subshift taking values in a hyperbolic group are trivial.
\end{remark}

\section{Non existence of strongly irreducible subshifts in certain subshifts}\label{NO SI SUBSHIFTS}

We will now apply \Cref{triviality of cocycles} to prove that there are no strongly irreducible subshifts (and therefore no shift-invariant finitely dependent processes) in various examples.

The following lemma says that if $ X$ is a topologically mixing $ \Z^d$ subshift, then any cocycle on $ X$ with some `structure' must be unbounded.

\begin{lemma}\label{mixing implies infinite}
    Let $ X$ be a topologically mixing $ \Z^d$ subshift and $ c: X \times \Z^d \rightarrow G$ be a cocycle, where $G$ is any finitely generated group. Suppose that there is an infinite subset $C\subset \Z^d$ such that for every $ x \in X$ and $ v \in C$, $ c(x, v)$ is an infinite order element. Then $\{ c(x, v) : x \in X, v \in \Z^d\}$ is an unbounded subset of $ G$.
\end{lemma}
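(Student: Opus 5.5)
Suppose, for contradiction, that the set $\{c(x,v) : x\in X,\ v\in\Z^d\}$ is bounded. Then it is a finite subset $F\subset G$, since balls in a finitely generated group are finite. The plan is to show that the cocycle identity, combined with topological mixing, forces some infinite order element $g=c(x,v)$ with $v\in C$ to have powers $g^j$ which are themselves cocycle values. These powers form an infinite set, which contradicts finiteness of $F$.

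\textbf{Step 1: reduce to a local cocycle.} As in the proof of \Cref{Proposition: SI_subshift_cocycle}, pass to a higher block presentation. After this we may assume each $c(x,e_i)$ depends only on $x_{\mathbf 0}$. Consequently, for any $v$, the value $c(x,v)$ is determined by $x$ restricted to a finite path from $\mathbf 0$ to $v$.

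\textbf{Step 2: build an approximately periodic configuration.} Fix $v\in C$ and $x\in X$, and set $g=c(x,v)$. Let $a$ be the pattern of $x$ on a lattice path $P$ from $\mathbf 0$ to $v$. Using topological mixing, choose $n$ with $[a]\cap (w\cdot[a])\neq\emptyset$ for all $\|w\|>n$.

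The direct idea is to paste copies of $a$ along $P, P+v, P+2v,\dots$ and read off $c(z,jv)=g^j$. Mixing, however, only glues two patterns at a time; it does not give $v$-periodic gluing, since the translates abut.

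To avoid this, use the infinitude of $C$. Because $C$ is infinite, it contains elements $v$ of arbitrarily large norm; fix one with $\|v\|>n'$, for a threshold $n'$ to be chosen. The cocycle value along the pasted path is then no longer a clean power. It takes the form $g\,h_1\,g\,h_2\cdots$, where the connector elements $h_i$ lie in $F$, because they are themselves cocycle values.

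\textbf{Step 3 (main obstacle): extract a contradiction without using hyperbolicity.} The difficulty is that the connectors $h_i$ may be nontrivial, so the products need not be powers of $g$, and there is no hyperbolicity to control them.

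The cleaner route I would try first avoids concatenation entirely. Every value $c(x,v)$ lies in the finite set $F$, and for $v\in C$ it has infinite order. Since $C$ is infinite, pick $v\in C$. Then the cocycle identity gives
$$c(x,jv)=\prod_{i=0}^{j-1} c(\sigma^{iv}x, v),$$
a product of $j$ infinite order elements of $F$.

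This alone does not bound anything. So apply mixing along the single direction $v$ to the one-letter pattern $x_{\mathbf 0}$, producing a point whose $v$-orbit visits the cylinder $[x|_P]$ recurrently. Then use pigeonhole on the finitely many possible patterns along $P$: choose $y$ and $j$ with $y|_{P}=\sigma^{jv}y|_{P}$. Iterating this yields $c(y,jv)$ with arbitrarily large powers $c(y,jv)^k=c(y',kjv)$ for a suitably glued $y'$. Once $c(y,jv)$ is shown to be a cocycle value of infinite order, these powers are infinitely many distinct elements, all lying in $F$, which is the desired contradiction.

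Making the gluing produce exact powers is the delicate point. I would handle it by choosing $v\in C$ with $\|v\|$ larger than the mixing constant for the single pattern $a$, and then inductively gluing $a$ at $0,v,2v,\dots$. Each new pattern is glued to the previously constructed finite pattern, which requires the mixing constant to be uniform. If it is not, I would fall back on the weaker claim that infinitely many distinct elements $c(x,v)$ already appear as $v$ ranges over $C$.
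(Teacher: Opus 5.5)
Your proposal has a genuine gap. The step that should produce infinitely many powers of an infinite-order element as cocycle values is never carried out, and none of your suggested routes can be carried out under topological mixing alone.

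The gluing of copies of the path pattern $a$ at $\mathbf 0, v, 2v,\dots$ fails for three reasons:
\begin{itemize}
\item $a$ lives on a path from $\mathbf 0$ to $v$, so $a$ and its translate by $v$ abut at $v$. Mixing only glues patterns that are far apart, so it says nothing here.
\item The mixing threshold for $a$ depends on $a$, hence on $v$. So ``choose $v\in C$ with $\|v\|$ larger than the mixing constant of $a$'' is circular.
\item Iterated gluing needs a gap that does not grow with the already-built pattern. That is strong irreducibility or specification, not topological mixing.
\end{itemize}
The pigeonhole variant has the same defect: realizing $c(y,jv)^k$ as $c(y',kjv)$ again requires periodic gluing. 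Moreover $jv$ need not lie in $C$, so nothing shows that $c(y,jv)$ has infinite order. Finally, your fallback claim, that infinitely many distinct values $c(x,v)$ with $v\in C$ occur, is just a restatement of the conclusion.

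The missing idea is a maximality argument that needs only a single two-pattern gluing.
\begin{enumerate}
\item Since the set $F$ of all cocycle values is finite, among the sets $S_x=\{c(x,w):w\in\Z^d\}\subseteq F$ choose one that is maximal under inclusion.
\item Choose a box $[-n,n]^d$ on which every element of $S_x$ is already attained.
\item By mixing and the infinitude of $C$, there are $v\in C$ and $z\in X$ such that both $z$ and $\sigma^v z$ agree with $x$ on this box. Enlarge the box by the continuity radius of $c$, or use your Step 1.
\item The identity $c(z,v+w)=c(z,v)\,c(\sigma^v z,w)$ gives $S_x\cup c(z,v)S_x\subseteq S_z$.
\item Maximality forces $S_z=S_x$, hence $c(z,v)S_x\subseteq S_x$.
\item Since $e\in S_x$, every power $c(z,v)^j$ with $j\geq 1$ lies in the finite set $S_x$. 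This contradicts the fact that $c(z,v)$ has infinite order, which holds because $v\in C$.
\end{enumerate}
Closing a maximal finite set under left multiplication is exactly what lets you get all powers without ever gluing more than two patterns.
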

\begin{proof}
For the sake of contradiction, assume that the cocycle is bounded. Then $ S=\{c(x, v)~:~ x \in X, v\in \Z^d\}$ is a bounded subset of $G$. For every $x\in X$, let $S_x=\{c(x, v)~:~v\in \Z^d\}$. Then the subsets $ S_x \subset S$ are partially ordered by inclusion. Choose $ x \in X$ such that $ S_x$ is maximal. Consider a finite box $[-n,n]^d\subset \Z^d$ such that
$$\{c(x, v)~:~v\in [-n,n]^d\}=S_x.$$  
Since the subshift $X$ is topologically mixing, we see that there exist $ v \in C$ and $ z \in X$ such that
$$z|_{[-n,n]^d}=z|_{v+[-n,n]^d}=x|_{[-n,n]^d}.$$
Since $c$ is a cocycle we have $S_x\cup (c(z,v)\cdot S_x)\subset S_z$. Then we have that $c(z, v)\cdot S_x = S_x $ due to maximality of $ S_x$. Let $ g= c(z, v)$. Since $ e = c(x, \textbf{0}) \in S_x$, $ g \cdot e= g$ lies in $ S_x$. But then $ g\cdot g= g^2$ also lies in $S_x$. Repeating the same argument we have that $g^n\in S_x$ for all $n\in \N$. However, since $v$ lies in $ C$, $ \{ g^n, n \in \N\}$ is an unbounded subset of $S_x$, which contradicts the boundedness of $ S_x$.   
\end{proof}

The cocycle in the next two examples takes values in the free product of cyclic groups. Let $ G = C_1*C_2*\cdots *C_n $ be a free product of groups $C_i$. The groups $C_i$ are called free factors of $G$. Every element $ g \neq e$ in $G$ can be written (\cite[Theorem 4.1]{Combinatorialgrouptheory}) in the form $ g= g_1 g_2\cdots g_r$, where $ g_i \neq e$ for each $i$, each $ g_i$ belongs to one of the free factors of $G$ and $ g_i, g_{i+1}$ are not in the same free factor of $G$ for any $i$. This representation of $g$ is called a reduced representation. The element $ g \neq e$ is called \textbf{cyclically reduced} if either $r=1$ or $ g_1$ and $g_r$ belong to different free factors of $G$. Every element in $ G$ is conjugate to a cyclically reduced element (\cite[Theorem 4.2]{Combinatorialgrouptheory}).

\subsection{Homshifts into four cycle free graphs}\hspace*{\fill} \\
Let $ H= (V, E)$ be any finite undirected graph. We assume further that $H$ is a four cycle free simple graph (See \Cref{Hom cocycle}). Let $ X \subset \textit{Hom}(\Z^d, H)$ be any strongly irreducible subshift for $ d \geq 2$. The cocycle $c_{H}$ introduced in Example \ref{Hom cocycle} restricts to a cocycle on $X$ which we also denote by $c_{H}$. Then for $1 \leq i \leq d$, we have
\[ |c_{H}(x, ne_i)| \leq |n|\]
for every $ x \in X$ and $ n \in \Z$. Therefore,
\[ \Phi(e_i) = \lim_{n \rightarrow + \infty} \frac{|c_{H}(x, ne_i)|}{n} \leq 1.\]
It follows from the ideas in \cite{entropyminimality} that equality cannot be achieved. Here we sketch a proof for completeness.

The following lemma follows from the proof of \cite[Lemma 8.1]{entropyminimality} and \Cref{remark: Hom cocycle is same as universal cover}.
\begin{lemma}\label{lemma: rigidity given slope 1}
    Let $ H$ be a four cycle free simple graph. Let $ X \subset \textit{Hom}(\Z^d, H)$ be a subshift where $ d \geq 2$. Suppose that for some $1 \leq i \leq d$, $|c_{H}(x, re_i)|=r$ for all $ x \in X$ and $ r \in \N$. Then for any $k \in \N$, any two allowed patterns $ a, b \in L(X, \B(k))$ which agree on the boundary of $ \B(k)$ (i.e. $ a|_{\partial \B(k)} = b|_{\partial \B(k)})$ must agree on entire $ \B(k)$ (i.e. $ a=b$).
\end{lemma}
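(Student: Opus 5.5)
The plan is to exploit the fact that when $H$ is four-cycle free, $\mathcal G_H=F(E)$ is free, and the cocycle $c_H(x,\cdot)$ is the lift of $x$ to the universal cover $\tilde H$, which is a tree (\Cref{remark: Hom cocycle is same as universal cover}). Under this identification, $|c_H(x,v)|$ is the tree distance between the lifted values at $\mathbf 0$ and at $v$. The hypothesis $|c_H(x,re_i)|=r$ for all $x$ and $r$, applied to all shifts $\sigma^u x\in X$, says that every line in direction $e_i$ lifts to a geodesic in the tree. So along every $e_i$-line the lift never backtracks.

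Fix $a,b\in L(X,\B(k))$ agreeing on $\partial\B(k)$. Take $x,y\in X$ realizing them, and lift both so that the lifts $\tilde a,\tilde b$ agree at a common boundary point. Since $\partial\B(k)$ is connected (for $d\ge2$) and the lift is determined by the pattern along a path, $\tilde a$ and $\tilde b$ agree on all of $\partial\B(k)$. It suffices to show $\tilde a=\tilde b$ on $\B(k)$, since projecting back to $H$ then gives $a=b$.

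Consider any segment of $\B(k)$ parallel to $e_i$, with endpoints $p,q\in\partial\B(k)$ and length $\ell$. Under both lifts, the segment is a path of length $\ell$ in the tree joining the same two vertices $\tilde a(p)=\tilde b(p)$ and $\tilde a(q)=\tilde b(q)$, and both paths are geodesics. Geodesics in a tree are unique, so the two lifts coincide pointwise on the segment. Every point of $\B(k)$ lies on such a segment, hence $\tilde a=\tilde b$ on $\B(k)$.

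The main obstacle is the identification step. One must check that a single lift of each configuration gives a genuine graph-homomorphism lift to the tree whose tree-distances equal word lengths $|c_H(x,v)|$. This uses the fact that the free group on $E$ modulo $\bar e=e^{-1}$ has reduced words corresponding to non-backtracking paths, i.e.\ tree geodesics starting at a base vertex. One must also handle the basepoint: the lift of $b$ must be normalized so that it starts at the same tree vertex as that of $a$ at a chosen boundary point. Both follow from the proof of \cite[Lemma 8.1]{entropyminimality}, which I would cite for the precise lifting statement.
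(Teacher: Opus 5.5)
Your proof is correct and takes the route the paper relies on. The paper gives no argument of its own; it defers to the proof of Lemma 8.1 in \cite{entropyminimality} together with \Cref{remark: Hom cocycle is same as universal cover}. That argument lifts to the universal-cover tree, observes that the hypothesis makes every $e_i$-line a tree geodesic, and concludes from uniqueness of geodesics in a tree once the lifts agree on $\partial\B(k)$.

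The one step that depends on the shape of $\B(k)$ is your claim that $\partial\B(k)$ is connected. This holds when $\B(k)$ is a box $[-k,k]^d$ with $d\geq 2$, as you implicitly assume. If $\B(k)$ were an $\ell_1$-ball, its internal boundary would be disconnected. In that case you would instead pass agreement of the lifts between boundary points at distance two through their interior common neighbour. This works because two distinct vertices of a four-cycle free graph have at most one common neighbour.
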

 A subshift $ X$ is called \textbf{frozen} if for any two $ x, y \in X$ for which the set $ \{ v \in \Z^d : x_v \neq y_v\}$ is finite, we have $x=y$. A subshift is called trivial if it consists of the constant configuration $ \{ a\}^{\Z^d}$. Clearly, any non-trivial strongly irreducible subshift $X$ is not frozen.
\begin{lemma}\label{slope}
    Let $ H$ be a four cycle free simple graph. Let $ X \subset \textit{Hom}(\Z^d, H)$ be a strongly irreducible subshift where $ d \geq 2$. Then $ \Phi(e_i) <1$ for every $ 1 \leq i \leq d$.
\end{lemma}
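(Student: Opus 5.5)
We argue by contradiction: assume $\Phi(e_i)=1$ for some $i$, upgrade this to the exact identity $|c_H(x,re_i)|=r$ for all $x\in X$ and $r\in\N$, and then combine \Cref{lemma: rigidity given slope 1} with strong irreducibility to reach a contradiction. The upgrade is where the free group enters. Since $H$ is four-cycle free, $\mathcal G_H=F(E)$ is free on the edge set, modulo the identification $\bar e=e^{-1}$. The value $c_H(x,re_i)$ is the image of the word $w=(x_0,x_{e_i})(x_{e_i},x_{2e_i})\cdots$ read along the segment, and the word is reduced exactly when the walk $x_0,x_{e_i},\dots,x_{re_i}$ in $H$ does not backtrack, i.e.\ $x_{(j-1)e_i}\neq x_{(j+1)e_i}$ for all $j$. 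Hence $|c_H(x,re_i)|=r$ fails precisely when some backtrack $x_{(j-1)e_i}=x_{(j+1)e_i}$ appears along the segment. By shift invariance it therefore suffices to show: if some $x\in X$ has a backtrack $x_{-e_i}=x_{e_i}$ at the origin, then $\Phi(e_i)<1$.

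Write $u=x|_{\{-e_i,0,e_i\}}$ for this backtracking pattern, and let $k$ be the SI distance. By strong irreducibility there is $z\in X$ containing a copy of $u$ at every point $j(k+3)e_i$, $j\in\Z$. Cutting $c_H(z,n(k+3)e_i)$ into blocks of length $k+3$, each block contains a backtrack, so each block word has a cancellation and reduced length at most $k+1$. By subadditivity, $|c_H(z,n(k+3)e_i)|\le n(k+1)$. By \Cref{bounded difference} and the remark that $\Phi$ is independent of $x$, we get $\Phi(e_i)\le \frac{k+1}{k+3}<1$.

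It remains to treat the case where no configuration of $X$ has a backtrack in direction $e_i$. Then $|c_H(x,re_i)|=r$ for every $x\in X$ and $r\in\N$, and \Cref{lemma: rigidity given slope 1} applies: any two allowed patterns on $\B(k')$ that agree on $\partial\B(k')$ agree everywhere. A standard argument then shows $X$ is frozen. Indeed, if $x,y\in X$ differ on a finite set, enclose that set in a large box $\B(v,k')$; the restrictions to this box agree on its boundary, hence agree on the whole box, so $x=y$.

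On the other hand, $X$ is non-trivial. A homomorphism to a simple graph cannot be constant, so $X$ contains no constant configuration. A non-trivial strongly irreducible subshift is not frozen. Explicitly, take two distinct allowed patterns $a\neq b$ on a single site, pick $x$ with $x_0=a$, and use strong irreducibility to produce $y$ with $y_0=b$ and $y$ agreeing with $x$ outside $\B(k+1)$. This contradicts frozenness.

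The step I expect to need the most care is the frozenness argument. \Cref{lemma: rigidity given slope 1} is stated only for patterns on the boxes $\B(k')$ centered at the origin, so I would check that applying it to translated boxes (by shift invariance) and using $\partial\B(k')$ gives a clean argument. I would also confirm that the non-frozen witness $y$ agrees with $x$ on the boundary of a box containing $\B(k+1)$, which is exactly what that argument requires.
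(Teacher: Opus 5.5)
Your proof is correct and follows the same route as the paper. Assuming $\Phi(e_i)=1$, you upgrade it to $|c_H(x,re_i)|=r$ for all $x$ and $r$ by pasting a defective segment periodically via strong irreducibility, and your backtrack characterization simply makes that defect explicit as a $3$-site pattern. \Cref{lemma: rigidity given slope 1} then forces $X$ to be frozen, which contradicts non-triviality.

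One small fix: because the copies of $u$ are centered at the cut points $j(k+3)e_i$ of your blocks, each backtrack straddles two blocks, so no single block contains one. Either shift the blocks by $e_i$, or count the $n-1$ disjoint cancelling pairs directly in the length-$n(k+3)$ word to get $|c_H(z,n(k+3)e_i)|\le n(k+3)-2(n-1)$.
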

\begin{proof}
 Since $ H$ does not contain any self-loops, $X$ cannot be a trivial subshift. Suppose that $ \Phi(e_i) = 1$ for some $ i$. Then we will show that for every $x \in X$ and for all $ r \in \N$,
 \begin{equation}\label{rigid configuration}
     |c_{H}(x, re_i)| = r.
 \end{equation} 
 If not, then there exists $ x_0 \in X$ such that $ |c_{H}(x_0, re_i)| < r$. Let $ k$ be the strong irreducibility constant of $X$. Then due to the strong irreducibility of $ X$, there exists $ y_0 \in X$ such that for every $ l \in \Z$, 
\[ y_0|_{[(2lk)e_i, (2lk+r)e_i]} = x_{0}|_{[\textbf{0}, re_i]}\]
where $ [me_i, ne_i]$ denotes the segment in the direction $ e_i$ joining the points $ me_i$ and $ne_i$. 

It follows that 
\[ \Phi(e_i) = \lim_{n \rightarrow +\infty} \frac{|c_{H}(y_0, ne_i)|}{n} <1\]
contrary to our assumption.

Now it follows from \Cref{rigid configuration} and \Cref{lemma: rigidity given slope 1} that for any $k \in \N$, if $a$ and $ b$ are two allowed patterns in $ L(X, \B(k))$ with $ a|_{\partial \B(k)} = b|_{\partial \B(k)}$, then $ a =b$. This implies that $ X$ is a frozen subshift which is a contradiction.     \end{proof}

As $ H$ is four cycle free, the group $ \mathcal{G}_{H} = F(E)$ (see Example \ref{Hom cocycle}) is a free group. Hence, if $ g$ is any cyclically reduced element in $ \mathcal{G}_{H}$, then $ |g^n| = n |g|$. Hence, the translation length (see \Cref{eq: translation length}) of $ g$ is, $ \tau(g) = |g| \in \N$. Since every non identity element is conjugate to a cyclically reduced element and $\tau(hgh^{-1}) = \tau(g)$ for any $ h \in G_{H}$, $ \tau(g) \in \N$ for every $ g \neq e$. 
\NOSIINHOM*
\begin{proof}
    Suppose for the contradiction that $ X \subset \textit{Hom}(\Z^d, H)$ is a strongly irreducible subshift. Since $ \mathcal{G}_{H}$ is a free group, its Cayley graph is a tree. Therefore, if $ v \in \Z^d$ is odd (i.e. $ v \notin 2\Z^d$), then $ c(x, v) \neq e$. Since every non-identity element in $ \mathcal{G}_{H}$ is of infinite order, Lemma \ref{mixing implies infinite} implies that $\{c_{H}(x, v) : x \in X, v \in \Z^d \}$ is an unbounded subset of $ G$. But then for some $ 1 \leq i \leq d$, $ \{c_{H}(x, ne_{i}) : x \in X, n \in \Z \}$ is unbounded.

    Since $ \mathcal{G}_{H}$ is a free group, Theorem \ref{triviality of cocycles} implies that there is a primitive element $ g_0 \in \mathcal{G}_{H}$, $ t \in \Z$ and a constant $ C$ such that for all $ x \in X$ and $ n \in \Z$, there exist $ h_1 = h_1(x, ne_i)$ and $ h_2 = h_2(x, ne_i)$ with $\max({|h_1|, |h_2|})\leq C$ satisfying,
    \[ c_{H}(x, ne_i)= h_1 g_{0}^{t n} h_2.\] 
     As $ \{c_{H}(x, ne_i): n \in 
    \Z\}$ is unbounded in $ \mathcal{G_{H}}$ for every $ x \in X$, we see that $ t\neq 0$ and $ \tau(g_0) \in \N$.

    Thus, the slope in direction $ e_i$ is, 
    \[ \Phi(e_i)= \lim_{n \rightarrow \infty} \frac{|c_{H}(x, ne_i)|}{|n|} = |t|\tau(g_0).\]
    Since $ \tau(g_0) \in \N$, $\Phi(e_i)$ is a positive integer which contradicts Lemma \ref{slope}.    
\end{proof}

\begin{corollary}
    If $ H$ is a finite four-cycle free simple graph, then for $d \geq 2$, $\textit{Hom}(\Z^d, H)$ does not support any shift-invariant finitely dependent processes.
\end{corollary}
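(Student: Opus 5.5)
The corollary follows from \Cref{NO SI shifts in Hom} and \Cref{FINDEP implies SI}. I will argue by contradiction. Suppose $(X_v)_{v\in\Z^d}$ is a shift-invariant finitely dependent process whose samples lie almost surely in $\textit{Hom}(\Z^d,H)$. Let $\Omega$ denote its support.

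First, I will check that $\Omega\subset \textit{Hom}(\Z^d,H)$. Take $w\in\Omega$ and an edge $\{u,u+e_i\}$ of $\Z^d$. The cylinder event $\{X_u=w_u,\ X_{u+e_i}=w_{u+e_i}\}$ has positive probability. Since the process lies almost surely in $\textit{Hom}(\Z^d,H)$, this event must contain a homomorphism, so $(w_u,w_{u+e_i})\in E$. Hence every $w\in\Omega$ is a graph homomorphism. Moreover, $\Omega$ is nonempty: the alphabet is finite, so by compactness the support of any probability measure on $V^{\Z^d}$ is nonempty.

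Second, \Cref{FINDEP implies SI} says that the support of a shift-invariant finitely dependent process is a strongly irreducible subshift. So $\Omega$ is a strongly irreducible subshift contained in $\textit{Hom}(\Z^d,H)$.

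This contradicts \Cref{NO SI shifts in Hom}, which states that for a finite four-cycle free simple graph $H$ and $d\ge 2$, $\textit{Hom}(\Z^d,H)$ contains no strongly irreducible subshift.

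I do not expect a real obstacle here. The only point needing care is to confirm that the support is indeed nonempty and contained in the homshift, and that the definition of ``strongly irreducible subshift'' used in \Cref{NO SI shifts in Hom} does not exclude any degenerate case (it does not, since $\Omega\neq\emptyset$).
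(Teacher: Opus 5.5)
Your proof is correct and is the paper's argument: the paper treats the corollary as an immediate consequence of \Cref{NO SI shifts in Hom} combined with \Cref{FINDEP implies SI}, since the support of such a process would be a strongly irreducible subshift of $\textit{Hom}(\Z^d,H)$. Your extra checks, that the support is nonempty and contained in the homshift, simply make explicit what the paper leaves implicit.
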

\subsection{Space of tilings by boxes}\hspace*{\fill} \\
Let $\R = \{ R_1, R_2\}$ where $ R_1$ is a $ m \times 1$ box and $ R_2$ is a $ 1 \times n$ box in the plane where $ m$ and $n$ are integers greater than 1. Then cocycle $ c_{\R} $ defined on the tiling space $X(\R)$ (see Example \ref{tiling cocycle}) takes values in the group $ G = \Z/m\Z * \Z/n\Z = \langle h, v| h^m, v^n \rangle $. Let $ d(\cdot, \cdot)$ denote the distance in the Cayley graph of $ G$ with respect to generators $h$ and $v$. We will use yet another metric $D(\cdot, \cdot)$ on $ G$ for convenience. This metric was used in \cite{Remila} to give necessary and sufficient conditions for tileability of regions by boxes in $ \R$. Every element $ g \in G$ can be represented as $ g = x_1^{a_1}x_2^{a_2}\cdots x_{k}^{a_k}$ where each $ x_i $ is either $ h$ or $ v$ and $ x_i \neq x_{i+1}$ for all $ 1 \leq i \leq k-1$ . Such a representation is reduced if $ a_i \notin {m\Z}$ whenever $ x_i =h$ and $ a_j \notin n \Z$ whenever $x_j = v$. We define $ D(g,h):=k$ as the number of factors in the reduced representation of $gh^{-1}$. It is easy to see that $D$ is a metric.
\begin{lemma}\label{tiling slope}
    For all $ x \in X$, 
    \begin{equation}\label{bound on slope}  
    \limsup_{r \rightarrow \infty} \frac{D(c_{\R}(x, re_i), e)}{r} \leq 2
    \end{equation} 
    for $ i =1,2$.
\end{lemma}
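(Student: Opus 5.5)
Fix $x\in X(\R)$, treat $i=1$ (the case $i=2$ is symmetric), and set $g_r=c_{\R}(x,re_1)$. The plan is to compute $g_r$ by walking along the horizontal segment from $\mathbf 0$ to $re_1$ and to show that its reduced form has at most about $2(r/m)$ plus $O(1)$ syllables. By the cocycle equation,
\[
g_r=\prod_{j=0}^{r-1}c_{\R}(\sigma^{je_1}x,e_1),
\]
and the proof of the modulo-class lemma shows that each factor has the form $v^{k_j}hv^{-k_j}$ with $k_j\in\{0,\dots,n-1\}$, where $k_j$ is the vertical offset of the edge $[je_1,(j+1)e_1]$ inside the $1\times n$ tile it crosses. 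If the edge lies on a tile boundary, then $k_j=0$, and the edge is the bottom edge of an $m\times1$ tile or of a $1\times n$ tile.

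\textbf{Grouping the edges.} I would split the segment into maximal runs of consecutive edges.
\begin{enumerate}
\item A run where the edge lies along horizontal $m\times 1$ tiles (on their top or bottom boundary, with $k_j=0$). Such a run contributes a power of $h$. If a full tile length is covered, this power is $h^m=e$, so only partial tiles at the two ends of the segment contribute.
\item An edge crossing or bordering a vertical $1\times n$ tile at offset $k$. It contributes $v^{k}hv^{-k}$, which is a conjugate of $h$.
\end{enumerate}
The product of these contributions gives $g_r$ as an alternating product of $v$-powers and $h$-powers.

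\textbf{Counting syllables.} The concern is that a sequence of vertical tiles at distinct offsets $k_1,k_2,\ldots$ could produce up to about $4$ syllables per unit step, since $v^{k_1}hv^{k_2-k_1}hv^{-k_2}\cdots$ has one $v$-syllable and one $h$-syllable per step. So a naive count gives slope $2$ per edge, not per tile length, and this alone would give a bound of $2$ for $D/r$. Indeed, after cancellation between consecutive factors, each new edge contributes at most one new $h$-syllable and at most one new $v$-syllable to the product. Reduction can only decrease the syllable count, so $D(g_r,e)\le 2r+1$. This already gives \eqref{bound on slope}, and no sharper bound is needed.

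\textbf{Plan.}
\begin{enumerate}
\item Write $g_r$ as the product of the $r$ edge labels, using the cocycle equation.
\item Observe that consecutive labels $v^{k_j}hv^{-k_j}$ and $v^{k_{j+1}}hv^{-k_{j+1}}$ merge into $v^{k_j}\,h\,v^{k_{j+1}-k_j}\,h\,v^{-k_{j+1}}$.
\item Conclude that the full product is $v^{k_0}\,h\,v^{a_1}\,h\,v^{a_2}\cdots h\,v^{-k_{r-1}}$. This word has $r$ $h$-syllables and at most $r+1$ $v$-syllables, so its reduced form has at most $2r+1$ syllables.
\item Divide by $r$ and let $r\to\infty$, which gives the limsup bound $2$.
\end{enumerate}

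The only point needing care is the claim in step 2 that every horizontal unit edge, including one on a tile boundary, has a label of the form $v^{k}hv^{-k}$. This holds because, on a boundary edge, the path can be taken directly along that edge, which gives the label $h$ with $k=0$.
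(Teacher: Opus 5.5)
Your argument is correct and is essentially the paper's proof: you write $c_{\R}(x,re_1)$ via the cocycle equation as a product of $r$ conjugates $v^{k_j}hv^{-k_j}$, merge adjacent $v$-powers into an alternating word $v^{k_0}hv^{a_1}h\cdots hv^{-k_{r-1}}$ with at most $2r+1$ syllables, and divide by $r$. The opening aim of $2r/m+O(1)$ syllables and the edge-grouping discussion are unnecessary, and that sharper bound is in fact false in general (for domino tilings, columns of vertical dominoes with alternating offsets give the reduced word $hvhv\cdots$), so drop them and keep only steps 1--4 of your plan.
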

\begin{proof}
For every $x\in X$ we know that $c_{\mathcal R}(x, e_1)=v^jhv^{-j}$. Thus it follows that
$$c_{\mathcal R}(x, ne_1)=\prod_{i=0}^{n-1}c_{\mathcal R}(\sigma^{ie_1}(x),e_1)=v^{j_1}hv^{j_2}\ldots hv^{j_{n+1}}$$
for some $j_1, j_2, \ldots , j_n$. This implies $D(c_{\R}(x, re_1), e) \leq 2r+1$ for all natural number $r$ which implies the claim. The analogous statement in $e_2$ direction follows similarly. \end{proof}

\begin{theorem}
   Let $m,n>1$. $\R= \{R_1, R_2\}$ where $R_1$ and $R_2$ are boxes of dimensions $m\times 1$ and $1\times n$ respectively. There is no strongly irreducible subshift contained in $X(\R)$.
\end{theorem}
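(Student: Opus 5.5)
Suppose, for contradiction, that $X\subset X(\R)$ is a strongly irreducible subshift. The plan mirrors the proof of \Cref{NO SI shifts in Hom}: show the cocycle $c_{\R}$ restricted to $X$ is unbounded, apply \Cref{triviality of cocycles} to get linear growth in some coordinate direction governed by a power of a primitive element, and then contradict the slope bound of \Cref{tiling slope} together with a rigidity/frozenness argument. The group $G=\Z/m\Z*\Z/n\Z$ is hyperbolic (virtually free) and has the CIC property, so \Cref{triviality of cocycles} applies.

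\textbf{Step 1: unboundedness.} By the modulo-class lemma, $\pi(c_{\R}(x,(a,b)))=(a \bmod m, b\bmod n)$. Hence for $v=(a,b)$ with $a\not\equiv 0\pmod m$ and $b\not\equiv 0 \pmod n$, the element $c_{\R}(x,v)$ is not conjugate into either free factor, since conjugates of $h^j$ have modulo class $(j,0)$. In a free product, every element not conjugate into a factor has infinite order. This gives an infinite set $C\subset\Z^2$ on which $c_{\R}$ takes infinite-order values. Since $X$ is strongly irreducible, it is topologically mixing, so \Cref{mixing implies infinite} makes $c_{\R}$ unbounded. Consequently, for some $i$, the set $\{c_{\R}(x,ne_i)\}$ is unbounded. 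Then \Cref{triviality of cocycles} gives $c_{\R}(x,ne_i)=h_1g_0^{tn}h_2$ with $t\neq0$, $|h_j|\le C$, and $g_0$ primitive of infinite order.

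\textbf{Step 2: the slope.} I would measure growth with the syllable metric $D$. Conjugating, we may take $g_0$ cyclically reduced with $2s$ syllables, so $D(g_0^{tn},e)=2s|t|n$ exactly. Bounded perturbations change $D$ by a bounded amount. Therefore
\[
\lim_n D(c_{\R}(x,ne_i),e)/n = 2s|t| \ge 2,
\]
and \Cref{tiling slope} forces $s|t|=1$, so the slope is exactly $2$.

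\textbf{Step 3: rigidity (the main obstacle).} I would show that slope exactly $2$ is incompatible with strong irreducibility, as in \Cref{slope}.

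First, a gluing argument promotes the maximal slope to every configuration. If some $x_0$ had a segment where the word $v^{j_1}hv^{j_2}\cdots$ fails to be maximally reduced (some $j_\ell\equiv0\pmod n$, i.e.\ two consecutive horizontal unit steps lie in the same horizontal tile row without a vertical offset), then strong irreducibility lets us plant that defect periodically along $\Z e_i$. This lowers the asymptotic syllable rate below $2$, which is a contradiction. So every configuration in $X$ has every $e_1$-step conjugated by a nontrivial $v$-power with no cancellation.

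Next, I would translate this into a geometric constraint: no horizontal unit edge lies on a horizontal tile boundary. Every row then has all its length-one horizontal steps crossing vertical $1\times n$ tiles, so all tiles are vertical. The same reasoning should force the tiling to be a stacking of vertical columns determined by their offsets. Such a configuration is frozen in the horizontal direction: one column's offsets do not constrain its neighbours, but local patterns cannot be changed on a finite set without changing them along whole columns.

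To finish, I would either show $X$ is frozen, which is impossible for a nontrivial strongly irreducible subshift, or directly exhibit two patterns on distant boxes, stacked vertically in the same column with incompatible offsets, that cannot be glued, contradicting strong irreducibility. I expect the cleanest version is the latter: vertical tiles in a fixed column at heights differing by a non-multiple of $n$ cannot coexist arbitrarily far apart. The hardest part will be the careful combinatorial translation from ``no syllable cancellation'' to ``all tiles vertical''.
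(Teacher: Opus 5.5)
Your argument works, but it departs from the paper at the decisive step.

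\textbf{The paper's route.} The paper never has to face rate exactly $2$. It takes $y$ whose restriction to $\Z e_1$ is periodic of period $p\equiv 1\pmod n$. It then uses the refined conjugacy $c_{\R}(y,pe_1)=\alpha g^{tp}\alpha^{-1}$ established inside the proof of \Cref{triviality of cocycles} (\Cref{equation:conjugation_when_periodic}). Applying the abelianization $\pi$ gives $(p,0)=p\,\pi(g_0^t)$ for the cyclically reduced $g_0$. Hence the $v$-exponent of $g_0^t$ is $0 \bmod n$. This rules out the two-syllable case and forces a syllable rate above $2$, contradicting \Cref{tiling slope}.

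\textbf{Your route.} You use only the statement of the theorem, to quantize the rate to $2s|t|$. You then eliminate rate $2$ combinatorially, in the spirit of \Cref{slope}: plant a defect periodically, conclude no configuration of $X$ has one, deduce that all tiles are vertical, and contradict strong irreducibility via the rigidity of a column of $1\times n$ bars. What makes the planting step contradictory is the uniform bound $D(c_{\R}(x,Ne_1),e)\ge 2N-O(1)$ over all $x\in X$, which comes from the theorem. This is set against the bound $(2-1/M)N+O(1)$ obtained from subadditivity of syllable length over blocks of length $M$, each containing a defect.

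\textbf{Comparison.} The paper's route is shorter and needs no geometry of the tilings, but it relies on the periodic-point refinement and the choice $p\equiv 1\pmod n$. Yours is closer to the homshift argument and also yields a structural fact: maximal rate forces a column tiling.

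\textbf{One intermediate claim needs fixing.} Write $c_{\R}(\sigma^{\ell e_1}x,e_1)=v^{k_\ell}hv^{-k_\ell}$. The absence of defects means $k_\ell\not\equiv k_{\ell-1}\pmod n$ for consecutive steps. It does not mean every $k_\ell\neq 0$, and that stronger claim is false:
\begin{itemize}
\item the bottom edge of any tile has $k_\ell=0$;
\item the column domino tiling with offsets alternating $0,1$ has rate exactly $2$.
\end{itemize}
So it is also false that every horizontal unit step crosses a vertical tile.

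The conclusion you need still follows in one line, so the step you flagged as hardest is immediate. An $m\times 1$ tile has $m\ge 2$ consecutive bottom edges with $k_\ell=0$, i.e.\ a factor $h\cdot h$, which is a defect. Hence every $x\in X$ is a union of columns of $1\times n$ bars. Your final argument then finishes the proof: take two windows in the same column, both globally allowed because they come from one configuration by vertical shifts, showing tile boundaries at heights that differ by a non-multiple of $n$ and are arbitrarily far apart.
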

For the following proof recall the notation in \Cref{tiling cocycle}.
\begin{proof}
    Suppose, on the contrary, that $ X \subset X(\R)$ is a strongly irreducible subshift. Let $ k$ be the SI distance of $X$. 

    Since every $g \in G$ is conjugate to a cyclically reduced element, we see that $g$ has finite order in $G$ if and only if $ g$ is a conjugate of $ h^i$ for some $i$ or $v^j$ for some $j$. Let $w = (w_1, w_2)\in \Z^2$ be such that $ w_1 \neq 0 \Mod{m}$ and $ w_2 \neq 0 \Mod{n}$. Then for every $x \in X$, we have $ \pi(c_{\R}(x, w)) = (a \Mod{m}, b \Mod{n})$ where $ a \neq 0 \Mod{m}$ and $ b \neq 0 \Mod{n}$. Thus, $ c_{\R}(x,w)$ has infinite order in $G$ for all $x \in X$. Therefore, by Lemma \ref{mixing implies infinite}, $\{ c_{\R}(x, w'): x \in X, w' \in \Z^2\}$ is an unbounded subset of $G$. Without loss of generality, assume that $ \{ c_{\R}(x, ne_1) : n \in \Z\}$ is unbounded for some (and hence all) $ x$ in $X$.
    
     The subshift $ X|_{\Z e_1}$ is strongly irreducible. Let $ N_0 \geq k$ be as guaranteed by Proposition \ref{proposition: periodic in SI}. Choose $ p \geq N_0$ such that $ p = 1 \Mod{n}$. Then there exists $y \in X $ such that $ y|_{\Z e_1}$ is a periodic point of period $ p$. As $ p \geq k$, there exists $ z \in X$ such that $ z|_{n e_1} = y|_{ne_1}$ and $ z|_{pe_2 + ne_1} = y|_{ ne_1}$. By the proof of \Cref{triviality of cocycles} (more specifically look at \Cref{equation:conjugation_when_periodic}), there exists a primitive element $g \in G$, $ t \in \Z$ and $ \alpha \in G$ such that
     \[ c_{\R}(z, pe_1) = \alpha g^{tp} \alpha^{-1}.\]
     Now, there exists a cyclically reduced element $g_0 \in G$ such that $ g = \beta g_0 \beta^{-1}$ for some $ \beta \in G$. Therefore,
     \[ c_{\R}(z, pe_1) = (\alpha \beta) g_{0}^{tp} (\beta\alpha)^{-1}.\]
     Let the modulo class of $ g_{0}^{t} $ be $ \pi(g_{0}^{t})= (a \Mod{m}, b \Mod{n})$. Then since $ p = 1 \Mod{n}$,
     \[ (p \Mod{m}, 0\Mod{n}) = \pi(c_{\R}(z, pe_1))= \pi((g_{0}^t)^p)= (pa \Mod{m}, pb \Mod{n}) = (pa \Mod{m}, b \Mod{n}).\]
     Therefore $ b= 0 \Mod{n}$. Furthermore, as $\{ g_{0}^{tn}: n \in \Z\} $ is unbounded, $ g_{0}^{t}$ is not equal to $ h^i$ or $ v^j$ for any $ i, j$. This implies that the minimal representation of $ g_{0}^{t}$ has at least three factors. Hence, $ D(g_{0}^{t}, e) \geq 3$. But since $g_{0}$ is cyclically reduced, $D(g_{0}^{tr},e) = r D(g_{0}^{t},e).$ Thereby,
     \[ \limsup_{r \rightarrow +\infty} \frac{D(c_{\R}(z, rp),e)}{rp} \geq 3\]
     which contradicts \Cref{tiling slope}. 
\end{proof}
\NOSIINTILINGS*

\begin{proof}
    Let $ \R' = \{R_1', R_2'\}$ where $ R_1'$ is a $ m \times 1$ box and $ R_2'$ is a $ 1 \times n$ box. Then each box in $\R$ is tileable by tiles in $ \R'$. Hence, the tiling space $ X(R)$ is a subshift of $ X(\R')$. But by Theorem \ref{theorem: no_SI_rectangular_tiling}, there are no strongly irreducible subshifts in $ X(\R')$. \end{proof}
  
\begin{corollary}\label{corollary: NO FINDEP on tilings}
     Let $\R$ be a set of boxes which satisfies the assumptions of Theorem \ref{theorem: no_SI_rectangular_tiling}. Then there is no shift-invariant finitely dependent process supported on $X(\R)$.
\end{corollary}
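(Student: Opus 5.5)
This is a direct consequence of results already in hand: combine the support argument of \Cref{FINDEP implies SI} with \Cref{theorem: no_SI_rectangular_tiling}. I will argue by contradiction.

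Suppose $(Y_v)_{v\in\Z^2}$ is a shift-invariant finitely dependent process supported on $X(\R)$, meaning $Y\in X(\R)$ almost surely. Let $\Omega$ denote its support in the sense of \Cref{definitions}.

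\begin{enumerate}
\item By \Cref{FINDEP implies SI}, $\Omega$ is a strongly irreducible subshift.
\item $\Omega$ is nonempty. The space $\A^{\Z^2}$ is compact, and the cylinder events of positive probability along an exhausting sequence of boxes give nested nonempty closed sets. Their intersection is therefore nonempty.
\item $\Omega\subset X(\R)$. Take $w\in\Omega$. Every finite pattern of $w$ has positive probability. Since $Y\in X(\R)$ almost surely, each such pattern appears in some element of $X(\R)$, so it contains no forbidden pattern of the finite type presentation of $X(\R)$ from \Cref{Rectangular tilings}. As $X(\R)$ is closed (of finite type), $w\in X(\R)$.
\end{enumerate}

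Hence $X(\R)$ contains the strongly irreducible subshift $\Omega$, which contradicts \Cref{theorem: no_SI_rectangular_tiling} under its hypotheses on $\R$.

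The main point needing care is step 3, that the support really lies inside $X(\R)$ and not merely that $Y$ does almost surely. Closedness of the subshift together with the characterization of subshifts by forbidden patterns settles this. The only other detail is the nonemptiness in step 2, so that "contains a strongly irreducible subshift" is not vacuous. All the substantive work already sits in \Cref{theorem: no_SI_rectangular_tiling}, which in turn rests on the cocycle rigidity of \Cref{triviality of cocycles}.
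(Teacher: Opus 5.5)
Your proof is correct and follows the paper's route: the support of a shift-invariant finitely dependent process is a strongly irreducible subshift (\Cref{FINDEP implies SI}), it would have to lie inside $X(\R)$, and that contradicts \Cref{theorem: no_SI_rectangular_tiling}. The paper leaves implicit that the support is nonempty and contained in $X(\R)$; your steps 2 and 3 verify both correctly.
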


\begin{corollary}
     Let $\R$ be a set of boxes which satisfies assumptions of Theorem \ref{theorem: no_SI_rectangular_tiling}. Then there is no continuous tiling of $ F(2^{\Z^2})$ by $ X(\mathcal{R})$.
\end{corollary}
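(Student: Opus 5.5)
This follows by combining results already established, with no new construction needed. Suppose for contradiction that there is a continuous equivariant map $\Psi: F(2^{\Z^2}) \rightarrow X(\R)$, where $\R$ satisfies the hypotheses of \Cref{theorem: no_SI_rectangular_tiling}. I will invoke \Cref{continuous imples findep} to turn $\Psi$ into a shift-invariant finitely dependent process supported on $X(\R)$. That theorem requires $X(\R)$ to be a subshift of finite type, and this was noted in \Cref{Rectangular tilings}: $X(\R)$ is recoded as an SFT via a finite set of forbidden patterns. One small point to check is that a continuous tiling in the sense used here, namely a continuous equivariant map into $X(\R)$, is exactly the input \Cref{continuous imples findep} asks for. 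Since the definition of continuous tiling was given in those terms, this matches directly.

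Next, \Cref{corollary: NO FINDEP on tilings} says that no shift-invariant finitely dependent process is supported on $X(\R)$, which contradicts the previous step. To spell that corollary out: the support of such a process would be a strongly irreducible subshift by \Cref{FINDEP implies SI}. It would also sit inside $X(\R)$, which \Cref{theorem: no_SI_rectangular_tiling} rules out. That theorem in turn rests on the containment $X(\R) \subset X(\{m\times 1, 1\times n\})$, together with the cocycle rigidity result \Cref{triviality of cocycles} applied in $\Z/m\Z * \Z/n\Z$.

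The only potential obstacle is the bridging step \Cref{continuous imples findep}, which is the Holroyd / Bernshteyn transfer from continuous combinatorics to finitely dependent processes. This is cited rather than proved here, so in this proof it reduces to checking that its hypotheses are met.
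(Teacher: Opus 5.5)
Your proposal is correct and is the same argument as the paper, which proves the corollary in one line by combining \Cref{continuous imples findep} with \Cref{corollary: NO FINDEP on tilings}; the theorem applies because $X(\R)$ is a subshift of finite type. Your further unpacking through \Cref{FINDEP implies SI} and \Cref{theorem: no_SI_rectangular_tiling} is accurate, though not needed for this corollary.
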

\begin{proof}
    This follows from \Cref{continuous imples findep} and \Cref{corollary: NO FINDEP on tilings}.
\end{proof}
\subsection{Ribbon tilings}\label{subsection: NO SI in ribbon tilings}\hspace*{\fill} \\
If $ n=1$, then the space $ X(ribbon, 1) = \{ \square\}$ is a singleton set consisting of the tiling of $\Z^2$ by the unit square and is trivially strongly irreducible. However, we prove that this is not the case for $ n \geq 2$.  
\NOSIINRIBBON*
\begin{proof}
    If $n=2$, then $ X(ribbon, n)$ is actually the space of domino tilings. Hence, from Theorem \ref{theorem: no_SI_rectangular_tiling} we know that there are no strongly irreducible subshifts in $ X(ribbon,2)$. Therefore, we assume that $ n \geq 3$.

 Suppose, on the contrary, that $ X(ribbon, n) $ contains a strongly irreducible subshift $X$. Let $v_0 = (1, -1) \in \Z^2$. Then $ X|_{\Z v_0}$ is a strongly irreducible subshift. Let $ N_0$ be as guaranteed by Proposition \ref{proposition: periodic in SI} and $ p \geq N_0$ be an odd integer. Then there exists $ y \in X$, such that $ y|_{\Z v_0}$ is a periodic point of period $ p$. Let $ c_{rib}(y, pv_0) = (a_0, a_1, \cdots, a_{n-1}).$

    Consider the subgroup $ W = \{ (i_1, i_2) \in \Z^2: i_1+i_2 = 0 \Mod{n}\} \leq \Z^2$. Recall that $ c_{rib}$ is a cocycle with respect to $W$. Hence, by Corollary \ref{ribbon tiling slope}, there exists a homomorphism $ \theta: W \rightarrow \Z^n $ such that for all $ x \in X$ and $ w \in W$,
    \begin{equation}\label{fh}
       ||c_{rib}(x, w) - \theta(w)||_{1}\leq C  
    \end{equation}
    where $ C$ is independent of $ x$ and $w$. Let $ \theta(v_0) = (r_0, r_1, \cdots, r_{n-1})$.
    
    Note that
    \begin{align*}
        \overline{\Phi}(v_0) :=\lim_{k \rightarrow +\infty} \frac{c_{rib}(x, kv_0)}{k}
    \end{align*}
    is independent of $x \in X$ (see \Cref{cocycles for subgroups} and discussion following it). Since $c_{rib}$ is a cocycle with respect to $W$ and $n v_0 \in W$ for all $ n \in \Z$, we see that
    \begin{align*}
        \overline{\Phi}(v_0) :=\lim_{k \rightarrow +\infty} \frac{c_{rib}(y, kpv_0)}{kp} =\lim_{k \rightarrow +\infty} \frac{kc_{rib}(y, pv_0)}{kp}= \frac{1}{p}(a_0, a_1, \cdots, a_{n-1}).
    \end{align*}
     But \Cref{fh} implies $ \overline{\Phi}(v_0) = \theta(v_0) = (r_0, r_1, \cdots, r_{n-1})$. Thus, we see that $ a_j = r_j p$ for every $ 0 \leq j \leq n-1.$ 

    By \Cref{equation:diagonal_descent},
    $$ c_{rib}(x, u+v_0) - c_{rib}(x, u) = 2 \mathbf{e}_{j, j+1}$$ for some $j$ which depends on $x$ and $ u$. It follows that all the coordinates $ a_j$ of $ c_{rib}(y, p v_0)$ are non-negative even numbers. But since $a_j = r_jp$ and $ p$ is odd, this implies that $ r_j$ is a non-negative even number for each $ j $, $ 0 \leq j \leq n-1$. But by Lemma \ref{ribbonlemma},
    \[ \sum_{j=0}^{n-1} r_j p = \sum_{j=0}^{n-1} a_j = \sum_{j=0}^{n-1} P_j(c_{rib}(y, pv_0)) = p-(-p)= 2p. \]
    Therefore, $ \sum_{j=0}^{n-1} r_j = 2$.  Thus we must have $ r_j= 2$ for some $j$ and $ r_l = 0$ for all $ l \neq j$.
    
     We will show that this implies for all $x \in X$ that 
    $$ P_j(c_{rib}(x, v_0)) = 2.$$ 
    Note that $ P_j(c_{rib}(x, v_0)) \leq 2$ for all $ x \in X$. Assume that there exists $ x_0 \in X$ such that $ P_j(c_{rib}(x_0, v_0)) < 2$. The value $ c_{rib}(x_0, v_0)$ can be deduced by looking at the finite pattern $ x_0|_{\B(2n)}$. Let $ k$ be the strong irreducibility constant in $ X$. Let $ M = 2k+4n$. Then, by the strong irreducibility of $X$, there exists $ y_0 \in X$ such that the restriction of $ y_0$ to the subset $ Mlv_0 + \B(2n)$ is equal to $x_0|_{\B(2n)}$ for every $ l \in \Z$. But the subadditivity of the function $ t \rightarrow P_j(c_{rib}(x, tv_0))$, implies that
    \[ r_j =P_j(\theta(v_0)) = \lim_{t \rightarrow + \infty} \frac{P_j(c_{rib}(x_0, tv_0))}{t} <2\]
    which is a contradiction.

    Consider the path $ p'= (\mathbf0, e_1, v_0)$ in  $\Z^2$ where $ \mathbf0$ denotes origin. Note that the edges $ (\mathbf0,e_1)$ and $ (e_1, v_0)$ are of type $ (n-1, 0)$ where the square of the color $0$ is on the left. We will use \Cref{equation:diagonal_descent} in the following. We have two cases to consider.
    
    Case 1) $j =n-1$: Choose any $ x \in X$. Since $P_j(c_{rib}(x, v_0)) =2$, the edges $ (\mathbf0, e_1)$ and $ (e_1, v_0)$ must lie on the tiling $x$. But since $\sigma^{v}(x) \in X$ as well, we see that the edges $(v,v+e_1) $ and $ (v+e_1,v +v_0)$ also lie on the tiling $ x$ for every $ v \in \Z^2$, that is, every edge in $\Z^2$ lies on a tiling $x$. This is a contradiction, since $ n \geq 2$ and some edge in $\Z^2$ must cross some tile in $x$.

    Case 2) $ j \neq n-1$: Choose any $ x \in X$. Then $ P_j(c_{rib}(x, v_0))=2$ implies that one of the edges $ (\mathbf0, e_1) $ and $(e_1, v_0)$ must cross a tile of type $(j , j+1)$ in the tiling $x$ and the other edge lies on the tiling $x$. But since $\sigma^{v}(x) \in X$ as well, we see that for every $ v \in \Z^2$, one of the edges $ (v, v+e_1)$ and $(v+e_1, v+v_0)$ must cross the tiling $ x$. This is a contradiction, since for every ribbon tiling $x$, there exists some $v$ (namely a point near a corner of any of the ribbon tiles) for which both the edges $ (v, v+e_1)$ and $ (v+e_1, v+v_0)$ lie on the tiling $x$. \end{proof}

    \begin{corollary}\label{No FINDEP on ribbon tilings}
        For $ n \geq 2$, there is no shift-invariant finitely dependent process supported on $ X(ribbon, n)$.
    \end{corollary}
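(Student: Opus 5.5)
This follows by contradiction from Proposition \ref{FINDEP implies SI} and Theorem \ref{No SI subshifts in ribbon tilings}. Suppose $(Y_v)_{v\in\Z^2}$ is a shift-invariant finitely dependent process supported on $X(ribbon,n)$ for some $n\geq 2$. We must make sure that ``supported on'' gives containment of the support. The process lies in $X(ribbon,n)$ almost surely. Every finite pattern of positive probability therefore occurs in some element of $X(ribbon,n)$, and since $X(ribbon,n)$ is closed, the support $\Omega$ of $Y$ is contained in $X(ribbon,n)$.

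By Proposition \ref{FINDEP implies SI}, $\Omega$ is a strongly irreducible subshift. Here $\Omega$ is closed and shift-invariant, and strong irreducibility comes from the factorisation of joint probabilities of patterns on sets separated by more than the dependence range. Since $\Omega\subset X(ribbon,n)$, this contradicts Theorem \ref{No SI subshifts in ribbon tilings}, which says $X(ribbon,n)$ contains no strongly irreducible subshift for $n\geq 2$.

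The only point that needs care is that $\Omega$ is nonempty, so that it genuinely counts as a strongly irreducible subshift inside $X(ribbon,n)$. This holds because the process is a random element of $X(ribbon,n)$: for each finite box some pattern has positive probability, and compactness then gives a configuration all of whose finite restrictions have positive probability. There is no other obstacle, since all the substance lies in Theorem \ref{No SI subshifts in ribbon tilings}, which is already established (with the case $n=2$ reduced to domino tilings via Theorem \ref{theorem: no_SI_rectangular_tiling}).
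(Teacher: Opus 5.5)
Your proposal is correct and is essentially the paper's argument: the corollary is stated without proof as an immediate consequence of Theorem \ref{No SI subshifts in ribbon tilings}, via Proposition \ref{FINDEP implies SI} (the support of a shift-invariant finitely dependent process is a strongly irreducible subshift). Your additional checks are sound and only make explicit what the paper leaves implicit: the support lies in $X(ribbon,n)$ because that set is closed, and the support is nonempty.
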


\section{Open questions}\label{Section: open directions}
We list some open questions below. 
\subsection{Finitely dependent processes and mixing properties}\label{Question: Finitely dependent processes and mixing properties}\hspace*{\fill} \\
In \Cref{onedimension}, we proved that shift-invariant finitely dependent processes are dense for every strongly irreducible $\Z$-subshift. However, we do not know if every strongly irreducible $\Z$-subshift supports a shift-invariant finitely dependent process with full support. 

For example, for $S =\{n_1, n_2, \cdots,\} \subset \N \cup \{ 0\}$ where $ n_i < n_{i+1}$, the corresponding $S$-gap subshift is 
$$ X(S) : = \{x \in \{0,1\}^\Z~:~ \text{ the gap between consecutive $1$'s lies in $S$}\}.$$
Then $X(S)$ is not sofic if the sequence $(n_{i+1} - n_i)_{i \in \N}$ is not eventually periodic (\cite{LindMarcus}) and $X(S)$ is strongly irreducible if and only if $ \sup\{|n_{i+1}-n_i|: i \in \N\} < \infty$ and $ gcd\{s+1: s \in S\} =1$ (\cite{S-gap}). We do not know if there is a fully supported shift-invariant finitely dependent process on such $X(S)$.

\stronglyirreducible*

\subsection{Finitely dependent processes on homshifts}\hspace*{\fill} \\
 It follows from \cite[Lemma 5.2]{Dismantlable} that if $ H$ is a finite dismantleable graph then $ Hom(\Z^d, H)$ has the finite extension property and therefore supports shift-invariant finitely dependent processes by \Cref{FEPcase}. We have proved here that if $ H$ is a four cycle free simple graph then $ Hom(\Z^d, H)$ does not support any shift-invariant finitely dependent processes.
 \begin{question}
     \begin{enumerate}
         \item Is there a natural characterization of graphs $H$ for which there exist shift-invariant finitely dependent processes supported on \textit{Hom}($ \Z^d, H$)?
         \item Is it decidable whether given a graph $H$ if there are shift-invariant finitely dependent processes on \textit{Hom}$(\Z^d, H)$? 
     \end{enumerate}
 \end{question}
 We note that the problem of determining the graphs for which there exists a continuous coloring of $ F(2^{\Z^d})$ by the subshift \textit{Hom}$(\Z^d, H)$ is undecidable (\cite[Theorem 4.4.1]{gao2023continuouscombinatoricsabeliangroup}).
\subsection{Mixing properties of tiling spaces}\hspace*{\fill} \\
\label{section: Mixing properties of tiling spaces}The study of tiling spaces (even by objects as simple as boxes) of $\mathbb R^d$ for $d\geq 3$ is extremely complicated.  
\begin{question}
    \begin{enumerate}
\item A $d-$dimensional domino is a $ d$-dimensional box whose one side has length $2$ and all other sides have length 1 with its vertices in $\Z^d$. Does there exist a strongly irreducible subshift contained in the space of domino tilings of $\mathbb{R}^d$ for $d\geq 3$?
    \item In general, is there a natural characterization of the set of boxes $\R$ for which the space of tilings of $\mathbb{R}^d$ by the boxes in $\R$ contains a strongly irreducible subshift?
    
    We know that if the collection $ \R$ satisfies the condition of \Cref{thm:eventuallyuniversaltiles}, then there exists a shift-invariant finitely dependent process on the tiling space $X(\R)$. However, if $ p_1, p_2, \cdots, p_d$ are prime numbers and $ \R = \{ R_1, R_2, \cdots, R_d\}$ is a collection of bars where $ R_i$ is a bar whose sidelength in $ e_i$ direction is $ p_i$, then does $ X(\R)$ contain a strongly irreducible subshift?
    \end{enumerate}
\end{question}

\subsection{Finitely dependent processes over general groups}\hspace*{\fill} \\
 Let $ G$ be a countable amenable group. Then one can directly use the Rokhlin lemma for countable amenable groups (\cite{RokhlinLemmaAmenableGroups}) to show that the block factors of iid processes are weak-\textasteriskcentered{} dense in the shift-invariant measures on the full shift $ \{ 0,1\}^{G}$ (or any nearest neighbor subshift of finite type with a ``safe symbol" \cite[Definition 3.7]{TSSM}). Briefly, given sufficiently invariant Rokhlin towers for a non-trivial iid measure $ \mu $ on $ A^G$ for some finite set $ A$, we can arbitrarily approximate the bases of the towers by finite disjoint union of cylinder sets. We paste patterns from the measure we want to approximate on the towers based on the cylinder sets while coloring the error set (in the Rokhlin lemma and in the approximation by cylinder sets) with $ 0$'s (or safe symbols). We don't know the answer to the following question.
 \begin{question}
     For a countable amenable group $G$, does any $G$-subshift which satisfies the finite extension property support a dense set of shift-invariant finitely dependent processes? Is there a fully supported one?
 \end{question}
 The question becomes even more interesting when $G$ is replaced by the free group, where we do not know the answer for the density question even for the full shift. We heard the following question from Yinon Spinka which is a special case of Question 4 in \cite{Symmetrization}.
 \begin{question}
     Is there a finitely dependent process on the set of $m$-nets on $\mathbb R$?
 \end{question}

 \subsection{Finitely dependent Markov random fields over subshifts}\hspace*{\fill} \\
 It was shown by Holroyd and Liggett (\cite{Holroyd}) that no shift-invariant (or stationary) finitely dependent process supported on proper $q$-colorings of $ \Z$ can be a stationary Markov chain. Note that shift-invariant finite-valued Markov random fields are Markov chains (\cite{MarkovRandomFieldMarkovChains}). This leaves the question for higher-dimensions open.
 \begin{question}
     \begin{enumerate}
     \item Are there shift-invariant finitely dependent Markov random fields supported on proper $q$-colorings of $ \Z^d$ for $d \geq 2$ and $q \geq 4$?
     \item Are there shift-invariant finitely dependent Gibbs measures corresponding to nearest neighbor interactions supported on proper $q$-colorings of $ \Z^d$ for $ d \geq 2$ and $q \geq 4$?
     \item Is there a characterization of graphs $H$ for which there exists a shift-invariant finitely dependent Markov random field/Gibbs measure on \textit{Hom}$(\Z^d, H)$? 
 \end{enumerate}
 \end{question}
 For the last question, the following example was mentioned to the first author by Alexander Holroyd. Let $H_1, H_2$ be graphs such that \textit{Hom}$(\Z, H_i)$ have shift-invariant finitely dependent processes on them. Let $H$ be the Cartesian product of the graphs given by vertices $(v,w)$ where $v,w$ are vertices of $H$ and $((v_1, w_1), (v_2, w_2))$ form an edge in $H$ if either $(v_1, v_2)$ forms an edge in $H_1$ and $w_1=w_2$ or $(w_1, w_2)$ forms an edge in $H_2$ and $v_1=v_2$. Then it follows that there is a shift-invariant finitely dependent Markov random field on \textit{Hom}$(\Z^2, H)$.

\subsection{Finitely dependent processes and periodic points}\hspace*{\fill} \\
An element $ x \in \A^{\Z^d}$ is called periodic if the subgroup $\{ v \in \Z^d: \sigma^{v}x=x\}$ has a finite index in $ \Z^d$. The twelve tiles theorem in \cite{gao2023continuouscombinatoricsabeliangroup} implies that if there is a continuous coloring of $F(2^{\Z^d})$ by a subshift of finite type $ X$, then $ X$ must contain periodic points.
\begin{question}
    If a subshift $ X \subset \A^{\Z^d}$ is the support of a shift-invariant finitely dependent process, then does $X$ contain periodic points? Are periodic points dense in $X$?
\end{question}

This is true in one dimension since any one dimensional strongly irreducible subshift has dense set of periodic points. Given that there are strongly irreducible $\Z^2$-subshifts without any periodic points (\cite{hochman}), an affirmative answer to this question will negatively answer Question \ref{question:stronglyirreducible}. 

\subsection{Finitely dependent processes invariant under the subactions}\hspace*{\fill} \\
We learned about this question from Deepak Dhar. While in this paper we focus on shift-invariant finitely dependent processes, the question of finitely dependent processes invariant under a sub-action is largely ignored. For instance, though there are no shift-invariant finitely dependent processes on the space of domino tilings, there are plenty of finitely dependent processes which are invariant under translations by the $2\Z\times 2\Z$. As an example, note that the delta measure on a periodic configuration invariant under $2\Z\times 2\Z$ is finitely dependent. It follows in fact from standard well-known results (see \cite{zbMATH01566263}) that the space of finitely dependent processes on domino tilings of $\Z^2$, invariant under full rank subgroups is dense in the space of $\Z^2$-ergodic probability measures. It is however unclear whether for any $k \in \N$, the 
finitely dependent processes on domino tilings invariant under the $k\Z\times k\Z$ sub-actions are dense in the space of ergodic measures.

\subsection{Structure of finitely dependent processes} \label{section:structure}\hspace*{\fill} \\
While we know ways to construct shift-invariant finitely dependent processes, we know very little about their structure. As discussed in the introduction, it was earlier conjectured and later refuted that they are block factors of iid processes. We suggest the following modification.
\begin{question}\label{question: structure of findep}
    Is every shift-invariant finitely dependent process a block factor of the product of such a process on $m$-nets for some $m$ and an iid process?
\end{question}
A question along these lines was asked by Tom Meyerovitch. Note that shift-invariant finitely dependent processes are finitary factors of iid processes \cite{zbMATH00177172,Yinonfd}.

\bibliographystyle{amsalpha}
\bibliography{mybibliography}

@article {Holroyd,
    AUTHOR = {Holroyd, Alexander E. and Liggett, Thomas M.},
     TITLE = {Finitely dependent coloring},
   JOURNAL = {Forum Math. Pi},
  FJOURNAL = {Forum of Mathematics. Pi},
    VOLUME = {4},
      YEAR = {2016},
     PAGES = {e9, 43},
      ISSN = {2050-5086},
       DOI = {10.1017/fmp.2016.7},
       URL = {https://doi.org/10.1017/fmp.2016.7},
}

@article {Symmetrization,
    AUTHOR = {Holroyd, Alexander E.},
     TITLE = {Symmetrization for finitely dependent colouring},
   JOURNAL = {Electron. Commun. Probab.},
  FJOURNAL = {Electronic Communications in Probability},
    VOLUME = {29},
      YEAR = {2024},
     PAGES = {Paper No. 44, 12},
      ISSN = {1083-589X},
   MRCLASS = {60C05 (05C15)},
  MRNUMBER = {4806804},
       DOI = {10.1214/24-ECP600},
       URL = {https://doi.org/10.1214/24-ECP600},
}

@misc{arXiv:2510.11969,
 author = {Chandgotia, Nishant and Gangloff, Silv{\`e}re and de Menibus, Benjamin Hellouin and Oprocha, Piotr},
 title = {On the cohomology of homshifts},
 year = {2025},
 howpublished = {Preprint, {arXiv}:2510.11969 [math.{DS}] (2025)},
 url = {https://arxiv.org/abs/2510.11969},
 arXiv = {arXiv:2510.11969}
}

@incollection{zbMATH01256699,
 author = {Kenyon, Claire and Kenyon, Richard},
 title = {Tiling a polygon with rectangles},
 booktitle = {33rd annual symposium on Foundations of computer science (FOCS). Proceedings, Pittsburgh, PA, USA, October 24--27, 1992},
 pages = {610--619},
 year = {1992},
 publisher = {Washington, DC: IEEE Computer Society Press},
 zbMATH = {1256699},
 Zbl = {0915.05039}
}

@article { FEP,
    AUTHOR = {Brice\~no, Raimundo and McGoff, Kevin and Pavlov, Ronnie},
     TITLE = {Factoring onto {$\mathbb{Z}^d$} subshifts with the finite
              extension property},
   JOURNAL = {Proc. Amer. Math. Soc.},
  FJOURNAL = {Proceedings of the American Mathematical Society},
    VOLUME = {146},
      YEAR = {2018},
    NUMBER = {12},
     PAGES = {5129--5140},
      ISSN = {0002-9939,1088-6826},
       DOI = {10.1090/proc/14267},
       URL = {https://doi.org/10.1090/proc/14267},
}

@article {Schramm,
    AUTHOR = {Holroyd, Alexander E. and Schramm, Oded and Wilson, David B.},
     TITLE = {Finitary coloring},
   JOURNAL = {Ann. Probab.},
  FJOURNAL = {The Annals of Probability},
    VOLUME = {45},
      YEAR = {2017},
    NUMBER = {5},
     PAGES = {2867--2898},
      ISSN = {0091-1798,2168-894X},
       DOI = {10.1214/16-AOP1127},
       URL = {https://doi.org/10.1214/16-AOP1127},
}

@article {Aaronson,
    AUTHOR = {Aaronson, Jon and Gilat, David and Keane, Michael and de Valk,
              Vincent},
     TITLE = {An algebraic construction of a class of one-dependent
              processes},
   JOURNAL = {Ann. Probab.},
  FJOURNAL = {The Annals of Probability},
    VOLUME = {17},
      YEAR = {1989},
    NUMBER = {1},
     PAGES = {128--143},
      ISSN = {0091-1798,2168-894X},
       URL =
              {http://links.jstor.org/sici?sici=0091-1798(198901)17:1<128:AACOAC>2.0.CO;2-6&origin=MSN},
}

@article {Yinonfd,
    AUTHOR = {Spinka, Yinon},
     TITLE = {Finitely dependent processes are finitary},
   JOURNAL = {Ann. Probab.},
  FJOURNAL = {The Annals of Probability},
    VOLUME = {48},
      YEAR = {2020},
    NUMBER = {4},
     PAGES = {2088--2117},
      ISSN = {0091-1798,2168-894X},
       DOI = {10.1214/19-AOP1417},
       URL = {https://doi.org/10.1214/19-AOP1417},
}

@article {Schmidt,
    AUTHOR = {Schmidt, Klaus},
     TITLE = {The cohomology of higher-dimensional shifts of finite type},
   JOURNAL = {Pacific J. Math.},
  FJOURNAL = {Pacific Journal of Mathematics},
    VOLUME = {170},
      YEAR = {1995},
    NUMBER = {1},
     PAGES = {237--269},
      ISSN = {0030-8730,1945-5844},
       URL = {http://projecteuclid.org/euclid.pjm/1102371116},
}

@article {Einsiedler,
    AUTHOR = {Einsiedler, Manfred},
     TITLE = {Fundamental cocycles of tiling spaces},
   JOURNAL = {Ergodic Theory Dynam. Systems},
  FJOURNAL = {Ergodic Theory and Dynamical Systems},
    VOLUME = {21},
      YEAR = {2001},
    NUMBER = {3},
     PAGES = {777--800},
      ISSN = {0143-3857,1469-4417},
       DOI = {10.1017/S0143385701001389},
       URL = {https://doi.org/10.1017/S0143385701001389},
}

@article {Mixing,
    AUTHOR = {Alon, Noga and Brice\~{n}o, Raimundo and Chandgotia, Nishant
              and Magazinov, Alexander and Spinka, Yinon},
     TITLE = {Mixing properties of colourings of the {$\mathbb{Z}^d$} lattice},
   JOURNAL = {Combin. Probab. Comput.},
  FJOURNAL = {Combinatorics, Probability and Computing},
    VOLUME = {30},
      YEAR = {2021},
    NUMBER = {3},
     PAGES = {360--373},
      ISSN = {0963-5483,1469-2163},
       DOI = {10.1017/s0963548320000395},
       URL = {https://doi.org/10.1017/s0963548320000395},
}

@article {NoSIsubshifts,
    AUTHOR = {Chandgotia, Nishant and Meyerovitch, Tom},
     TITLE = {Borel subsystems and ergodic universality for compact {$\mathbb
              Z^d$}-systems via specification and beyond},
   JOURNAL = {Proc. Lond. Math. Soc. (3)},
  FJOURNAL = {Proceedings of the London Mathematical Society. Third Series},
    VOLUME = {123},
      YEAR = {2021},
    NUMBER = {3},
     PAGES = {231--312},
      ISSN = {0024-6115,1460-244X},
       DOI = {10.1112/plms.12398},
       URL = {https://doi.org/10.1112/plms.12398},
}

@article {Burton,
    AUTHOR = {Burton, Robert M. and Goulet, Marc and Meester, Ronald},
     TITLE = {On {$1$}-dependent processes and {$k$}-block factors},
   JOURNAL = {Ann. Probab.},
  FJOURNAL = {The Annals of Probability},
    VOLUME = {21},
      YEAR = {1993},
    NUMBER = {4},
     PAGES = {2157--2168},
      ISSN = {0091-1798,2168-894X},
       URL =
              {http://links.jstor.org/sici?sici=0091-1798(199310)21:4<2157:O1PAF>2.0.CO;2-B&origin=MSN},
}

@article {Computerscience,
    AUTHOR = {Greb\'{\i}k, Jan and Rozho\v{n}, V\'{a}clav},
     TITLE = {Local problems on grids from the perspective of distributed
              algorithms, finitary factors, and descriptive combinatorics},
   JOURNAL = {Adv. Math.},
  FJOURNAL = {Advances in Mathematics},
    VOLUME = {431},
      YEAR = {2023},
     PAGES = {Paper No. 109241, 56},
      ISSN = {0001-8708,1090-2082},
       DOI = {10.1016/j.aim.2023.109241},
       URL = {https://doi.org/10.1016/j.aim.2023.109241},
}

@article {nonuniformspecification,
    AUTHOR = {Pavlov, Ronnie},
     TITLE = {On intrinsic ergodicity and weakenings of the specification
              property},
   JOURNAL = {Adv. Math.},
  FJOURNAL = {Advances in Mathematics},
    VOLUME = {295},
      YEAR = {2016},
     PAGES = {250--270},
      ISSN = {0001-8708,1090-2082},
       DOI = {10.1016/j.aim.2016.03.013},
       URL = {https://doi.org/10.1016/j.aim.2016.03.013},
}

@article {AaronsonMarkov,
    AUTHOR = {Aaronson, Jon and Gilat, David and Keane, Michael},
     TITLE = {On the structure of {$1$}-dependent {M}arkov chains},
   JOURNAL = {J. Theoret. Probab.},
  FJOURNAL = {Journal of Theoretical Probability},
    VOLUME = {5},
      YEAR = {1992},
    NUMBER = {3},
     PAGES = {545--561},
      ISSN = {0894-9840,1572-9230},
       DOI = {10.1007/BF01060435},
       URL = {https://doi.org/10.1007/BF01060435},
}

@article {Conway,
    AUTHOR = {Conway, John H. and Lagarias, Jeffrey C.},
     TITLE = {Tiling with polyominoes and combinatorial group theory},
   JOURNAL = {J. Combin. Theory Ser. A},
  FJOURNAL = {Journal of Combinatorial Theory. Series A},
    VOLUME = {53},
      YEAR = {1990},
    NUMBER = {2},
     PAGES = {183--208},
      ISSN = {0097-3165,1096-0899},

       DOI = {10.1016/0097-3165(90)90057-4},
       URL = {https://doi.org/10.1016/0097-3165(90)90057-4},
}

@book{Combinatorialgrouptheory,
 author = {Magnus, Wilhelm and Karrass, Abraham and Solitar, Donald},
 title = {Combinatorial group theory. {Presentations} of groups in terms of generators and relations.},
 edition = {Reprint of the 1976 second edition},
 isbn = {0-486-43830-9},
 year = {2004},
 publisher = {Mineola, NY: Dover Publications},
 zbMATH = {2146477},
 Zbl = {1130.20307}
}

@article{BLAND_2025,
   title={Homomorphisms from aperiodic subshifts to subshifts with the finite extension property},
   ISSN={1469-4417},
   url={http://dx.doi.org/10.1017/etds.2025.16},
   DOI={10.1017/etds.2025.16},
   journal={Ergodic Theory and Dynamical Systems},
   publisher={Cambridge University Press (CUP)},
   author={Bland, Robert and Mcgoff, Kevin},
   year={2025},
   month=apr, pages={1–17} }

@book{gao2023continuouscombinatoricsabeliangroup,
 author = {Gao, Su and Jackson, Steve and Krohne, Edward and Seward, Brandon},
 title = {Continuous combinatorics of abelian group actions},
 fseries = {Memoirs of the American Mathematical Society},
 series = {Mem. Am. Math. Soc.},
 issn = {0065-9266},
 volume = {1573},
 isbn = {978-1-4704-7479-9; 978-1-4704-8392-0},
 year = {2025},
 publisher = {Providence, RI: American Mathematical Society (AMS)},
 doi = {10.1090/memo/1573},
 zbMATH = {8075988}
}

@article{timar2024finitelydependentrandomcolorings,
 author = {Tim{\'a}r, {\'A}d{\'a}m},
 title = {Finitely dependent random colorings of bounded degree graphs},
 fjournal = {Annales de l'Institut Henri Poincar{\'e}. Probabilit{\'e}s et Statistiques},
 journal = {Ann. Inst. Henri Poincar{\'e}, Probab. Stat.},
 issn = {0246-0203},
 volume = {62},
 number = {1},
 pages = {747--750},
 year = {2026},
 language = {English},
 doi = {10.1214/26-AIHP1653},
 url = {projecteuclid.org/journals/annales-de-linstitut-henri-poincare-probabilites-et-statistiques/volume-62/issue-1/Finitely-dependent-random-colorings-of-bounded-degree-graphs/10.1214/26-AIHP1653.full},
 zbMATH = {8172793}
}

@misc{poirier2024contractiblesubshifts,
 author = {Leo Poirier and Ville Salo},
 title = {Contractible subshifts},
 year = {2024},
 howpublished = {Preprint, {arXiv}:2401.16774},
 url = {https://arxiv.org/abs/2401.16774},
 arXiv = {arXiv:2401.16774}
}

@book {LindMarcus,
    AUTHOR = {Lind, Douglas and Marcus, Brian},
     TITLE = {An introduction to symbolic dynamics and coding},
    SERIES = {Cambridge Mathematical Library},
   EDITION = {Second},
 PUBLISHER = {Cambridge University Press, Cambridge},
      YEAR = {2021},
     PAGES = {xix+550},
      ISBN = {978-1-108-82028-8},
       DOI = {10.1017/9781108899727},
       URL = {https://doi.org/10.1017/9781108899727},
}

@incollection {SyncWord,
    AUTHOR = {Bertrand, Anne},
     TITLE = {Specification, synchronisation, average length},
 BOOKTITLE = {Coding theory and applications ({C}achan, 1986)},
    SERIES = {Lecture Notes in Comput. Sci.},
    VOLUME = {311},
     PAGES = {86--95},
 PUBLISHER = {Springer, Berlin},
      YEAR = {1988},
      ISBN = {3-540-19368-5},
       DOI = {10.1007/3-540-19368-5\_9},
       URL = {https://doi.org/10.1007/3-540-19368-5_9},
}

@misc{Onlinelocality,
      title={Online Locality Meets Distributed Quantum Computing}, 
      author={Amirreza Akbari and Xavier Coiteux-Roy and Francesco d'Amore and François Le Gall and Henrik Lievonen and Darya Melnyk and Augusto Modanese and Shreyas Pai and Marc-Olivier Renou and Václav Rozhoň and Jukka Suomela},
      year={2024},
      eprint={2403.01903},
      archivePrefix={arXiv},
      primaryClass={cs.DC},
      url={https://arxiv.org/abs/2403.01903}, 
}

@misc{hochman,
 author = {Michael Hochman},
 title = {Irreducibility and periodicity in $\mathbb{Z}^{2}$ symbolic systems},
 year = {2024},
 howpublished = {Preprint, {arXiv}:2401.02273},
 url = {https://arxiv.org/abs/2401.02273},
 arXiv = {arXiv:2401.02273}
}

@misc{chandgotia2025undecidabilityblockgluingclasses,
 author = {Chandgotia, Nishant and Gangloff, Silv{\`e}re and de Menibus, Benjamin Hellouin and Oprocha, Piotr},
 title = {Undecidability of the block gluing classes of homshifts},
 year = {2025},
 howpublished = {Preprint, {arXiv}:2507.21342 [math.{DS}] (2025)},
 url = {https://arxiv.org/abs/2507.21342},
 arXiv = {arXiv:2507.21342}
}

@article {entropyminimality,
    AUTHOR = {Chandgotia, Nishant},
     TITLE = {Four-cycle free graphs, height functions, the pivot property
              and entropy minimality},
   JOURNAL = {Ergodic Theory Dynam. Systems},
  FJOURNAL = {Ergodic Theory and Dynamical Systems},
    VOLUME = {37},
      YEAR = {2017},
    NUMBER = {4},
     PAGES = {1102--1132},
      ISSN = {0143-3857,1469-4417},
       DOI = {10.1017/etds.2015.88},
       URL = {https://doi.org/10.1017/etds.2015.88},
}

@article {Remila,
    AUTHOR = {R\'emila, Eric},
     TITLE = {On the structure of some spaces of tilings},
   JOURNAL = {SIAM J. Discrete Math.},
  FJOURNAL = {SIAM Journal on Discrete Mathematics},
    VOLUME = {16},
      YEAR = {2002},
    NUMBER = {1},
     PAGES = {1--19},
      ISSN = {0895-4801,1095-7146},
       DOI = {10.1137/S0895480100373297},
       URL = {https://doi.org/10.1137/S0895480100373297},
}

@article{zbMATH04175886,
 author = {Thurston, William P.},
 title = {Conway's tiling groups},
 fjournal = {American Mathematical Monthly},
 journal = {Am. Math. Mon.},
 issn = {0002-9890},
 volume = {97},
 number = {8},
 pages = {757--773},
 year = {1990},
 doi = {10.2307/2324578},
 zbMATH = {4175886},
 Zbl = {0714.52007}
}

@article {ScottRibbon,
    AUTHOR = {Sheffield, Scott},
     TITLE = {Ribbon tilings and multidimensional height functions},
   JOURNAL = {Trans. Amer. Math. Soc.},
  FJOURNAL = {Transactions of the American Mathematical Society},
    VOLUME = {354},
      YEAR = {2002},
    NUMBER = {12},
     PAGES = {4789--4813},
      ISSN = {0002-9947,1088-6850},
       DOI = {10.1090/S0002-9947-02-02981-1},
       URL = {https://doi.org/10.1090/S0002-9947-02-02981-1},
}

@incollection{zbMATH00177172,
 author = {Smorodinsky, Meir},
 title = {Finitary isomorphism of {{\(m\)}}-dependent processes},
 booktitle = {Symbolic dynamics and its applications. Proceedings of an AMS conference in honor of Roy L. Adler, held at Yale University, New Haven, Connecticut, USA, July 28-August 2, 1991},
 isbn = {0-8218-5146-2},
 pages = {373--376},
 year = {1992},
 publisher = {Providence, RI: American Mathematical Society},
 zbMATH = {177172},
 Zbl = {0768.28009}
}

@article{zbMATH03656251,
 author = {Marcus, Brian},
 title = {A note on periodic points for ergodic toral automorphisms},
 fjournal = {Monatshefte f{\"u}r Mathematik},
 journal = {Monatsh. Math.},
 issn = {0026-9255},
 volume = {89},
 pages = {121--129},
 year = {1980},
 doi = {10.1007/BF01476590},
 url = {https://eudml.org/doc/177959},
 zbMATH = {3656251},
 Zbl = {0422.28013}
}

@article{zbMATH01566263,
 author = {Cohn, Henry and Kenyon, Richard and Propp, James},
 title = {A variational principle for domino tilings},
 fjournal = {Journal of the American Mathematical Society},
 journal = {J. Am. Math. Soc.},
 issn = {0894-0347},
 volume = {14},
 number = {2},
 pages = {297--346},
 year = {2001},
 doi = {10.1090/S0894-0347-00-00355-6},
 zbMATH = {1566263},
 Zbl = {1037.82016}
}

@article{zbMATH07206761,
 author = {Holroyd, Alexander E. and Hutchcroft, Tom and Levy, Avi},
 title = {Mallows permutations and finite dependence},
 fjournal = {The Annals of Probability},
 journal = {Ann. Probab.},
 issn = {0091-1798},
 volume = {48},
 number = {1},
 pages = {343--379},
 year = {2020},
 doi = {10.1214/19-AOP1363},
 zbMATH = {7206761},
 Zbl = {1456.60081}
}

@book {Bridson,
    AUTHOR = {Bridson, Martin R. and Haefliger, Andr\'e},
     TITLE = {Metric spaces of non-positive curvature},
    SERIES = {Grundlehren der mathematischen Wissenschaften [Fundamental
              Principles of Mathematical Sciences]},
    VOLUME = {319},
 PUBLISHER = {Springer-Verlag, Berlin},
      YEAR = {1999},
     PAGES = {xxii+643},
      ISBN = {3-540-64324-9},
       DOI = {10.1007/978-3-662-12494-9},
       URL = {https://doi.org/10.1007/978-3-662-12494-9},
}

@article {Dismantlable,
    AUTHOR = {Brightwell, Graham R. and Winkler, Peter},
     TITLE = {Gibbs measures and dismantlable graphs},
   JOURNAL = {J. Combin. Theory Ser. B},
  FJOURNAL = {Journal of Combinatorial Theory. Series B},
    VOLUME = {78},
      YEAR = {2000},
    NUMBER = {1},
     PAGES = {141--166},
      ISSN = {0095-8956,1096-0902},
       DOI = {10.1006/jctb.1999.1935},
       URL = {https://doi.org/10.1006/jctb.1999.1935},
}

@article {Bernshteyn,
    AUTHOR = {Bernshteyn, Anton},
     TITLE = {Probabilistic constructions in continuous combinatorics and a
              bridge to distributed algorithms},
   JOURNAL = {Adv. Math.},
  FJOURNAL = {Advances in Mathematics},
    VOLUME = {415},
      YEAR = {2023},
     PAGES = {Paper No. 108895, 33},
      ISSN = {0001-8708,1090-2082},
       DOI = {10.1016/j.aim.2023.108895},
       URL = {https://doi.org/10.1016/j.aim.2023.108895},
}

@article {GaoJacksonbreakthrough,
    AUTHOR = {Gao, Su and Jackson, Steve},
     TITLE = {Countable abelian group actions and hyperfinite equivalence
              relations},
   JOURNAL = {Invent. Math.},
  FJOURNAL = {Inventiones Mathematicae},
    VOLUME = {201},
      YEAR = {2015},
    NUMBER = {1},
     PAGES = {309--383},
      ISSN = {0020-9910,1432-1297},
       DOI = {10.1007/s00222-015-0603-y},
       URL = {https://doi.org/10.1007/s00222-015-0603-y},
}

@article {Clivio,
    AUTHOR = {Clivio, Alberto},
     TITLE = {Tilings of a torus with rectangular boxes},
   JOURNAL = {Discrete Math.},
  FJOURNAL = {Discrete Mathematics},
    VOLUME = {91},
      YEAR = {1991},
    NUMBER = {2},
     PAGES = {121--139},
      ISSN = {0012-365X,1872-681X},
       DOI = {10.1016/0012-365X(91)90104-A},
       URL = {https://doi.org/10.1016/0012-365X(91)90104-A},
}

@incollection {Frobenius,
    AUTHOR = {Labrousse, Denis and Ram\'irez Alfons\'in, Jorge L.},
     TITLE = {A tiling problem and the {F}robenius number},
 BOOKTITLE = {Additive number theory},
     PAGES = {203--220},
 PUBLISHER = {Springer, New York},
      YEAR = {2010},
      ISBN = {978-0-387-37029-3},
       DOI = {10.1007/978-0-387-68361-4\_15},
       URL = {https://doi.org/10.1007/978-0-387-68361-4_15},
}

@article {TSSM,
    AUTHOR = {Brice\~no, Raimundo},
     TITLE = {The topological strong spatial mixing property and new
              conditions for pressure approximation},
   JOURNAL = {Ergodic Theory Dynam. Systems},
  FJOURNAL = {Ergodic Theory and Dynamical Systems},
    VOLUME = {38},
      YEAR = {2018},
    NUMBER = {5},
     PAGES = {1658--1696},
      ISSN = {0143-3857,1469-4417},
       DOI = {10.1017/etds.2016.107},
       URL = {https://doi.org/10.1017/etds.2016.107},
}

@article {Undecidability,
    AUTHOR = {Berger, Robert},
     TITLE = {The undecidability of the domino problem},
   JOURNAL = {Mem. Amer. Math. Soc.},
  FJOURNAL = {Memoirs of the American Mathematical Society},
    VOLUME = {66},
      YEAR = {1966},
     PAGES = {72},
      ISSN = {0065-9266,1947-6221},
}

@article{S-gap,
   title={On the existence of open and bi-continuing codes},
   volume={363},
   ISSN={1088-6850},
   url={http://dx.doi.org/10.1090/S0002-9947-2010-05035-4},
   DOI={10.1090/s0002-9947-2010-05035-4},
   number={3},
   journal={Transactions of the American Mathematical Society},
   publisher={American Mathematical Society (AMS)},
   author={Jung, Uijin},
   year={2010},
   month=oct, pages={1399–1417} }

@article {RokhlinLemmaAmenableGroups,
    AUTHOR = {Ornstein, Donald S. and Weiss, Benjamin},
     TITLE = {Entropy and isomorphism theorems for actions of amenable
              groups},
   JOURNAL = {J. Analyse Math.},
  FJOURNAL = {Journal d'Analyse Math\'ematique},
    VOLUME = {48},
      YEAR = {1987},
     PAGES = {1--141},
      ISSN = {0021-7670,1565-8538},
       DOI = {10.1007/BF02790325},
       URL = {https://doi.org/10.1007/BF02790325},
}

@article {MarkovRandomFieldMarkovChains,
    AUTHOR = {Chandgotia, Nishant and Han, Guangyue and Marcus, Brian and
              Meyerovitch, Tom and Pavlov, Ronnie},
     TITLE = {One-dimensional {M}arkov random fields, {M}arkov chains and
              topological {M}arkov fields},
   JOURNAL = {Proc. Amer. Math. Soc.},
  FJOURNAL = {Proceedings of the American Mathematical Society},
    VOLUME = {142},
      YEAR = {2014},
    NUMBER = {1},
     PAGES = {227--242},
      ISSN = {0002-9939,1088-6826},
       DOI = {10.1090/S0002-9939-2013-11741-7},
       URL = {https://doi.org/10.1090/S0002-9939-2013-11741-7},
}

@book {Kalikow,
    AUTHOR = {Kalikow, Steven and McCutcheon, Randall},
     TITLE = {An outline of ergodic theory},
    SERIES = {Cambridge Studies in Advanced Mathematics},
    VOLUME = {122},
 PUBLISHER = {Cambridge University Press, Cambridge},
      YEAR = {2010},
     PAGES = {viii+174},
      ISBN = {978-0-521-19440-2},
       DOI = {10.1017/CBO9780511801600},
       URL = {https://doi.org/10.1017/CBO9780511801600},
}
\section{Appendix: Tileability of sufficiently large boxes by a given set of boxes (by Jan Greb\'ik, Filip Ku\v{c}er\'ak and Aditya Thorat)}
Here, we will give a necessary and sufficient condition for the tileability of integral boxes (with flat boundary) with sufficiently large sidelengths by a given collection $ \R$ of integral boxes in $ \mathbb R^d$. This condition is sufficient for the existence of continuous tiling of $ F(2^{\Z^d})$ by the tiling space $ X(\R)$.  In \cite{Clivio} and \cite{Frobenius}, similar conditions for tiling arbitrarily large tori (tilings of boxes with periodic boundary configuration instead of flat boundary) with a given set of boxes are provided. 

Let $d\ge 1$. Given a vector $\overline{n}=(n_1,\dots,n_d)$ of positive integers, we write
$$R_{\overline{n}}=\{0,\dots,n_1-1\}\times \dots\times \{0,\dots,n_d-1\}$$
for the $d$-dimensional box determined by $\overline{n}$. 
Let $\mathcal{R}=\{R_{\overline{n}^1},\dots,R_{\overline{n}^k}\}$ be a collection of $d$-dimensional boxes, where $\overline{n}^i=(n^i_1,\dots,n^i_d)$ for every $1\le i\le k$.

Let $\CR^d$ be the set of all $d$-dimensional boxes.  Let $\partial : \CR^d \to \CR^{d-1}$ be a map that sends $d$-dimensional box $R_{\bar{n}}$ to the $(d-1)$-dimensional box determined by $(n_1, ..., n_{d-1})$. We will use the following simple observation.

\newcommand\lcm{\text{lcm}}

\begin{lemma}\label{lem:tilinglift}
    Let $d \in \BN$, let $\CR$ be a collection of boxes and let $R_{\ell}$ be a box associated to $\ell = (\ell_1, ..., \ell_d)$. If $\partial R_{\ell}$ can be tiled using $\partial\CR$ and $\ell_d$ is a multiple of $\lcm(n_d(R) : R \in \CR)$, then $R_{\ell}$ can be tiled using $\CR$.
\end{lemma}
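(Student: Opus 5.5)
The plan is to build the tiling of $R_{\ell}$ as a stack of slabs along the $d$-th coordinate. The map $\partial$ forgets the last coordinate, so $\partial R_{\ell}$ is the $(d-1)$-dimensional box $R_{(\ell_1,\dots,\ell_{d-1})}$, the base of $R_\ell$. Fix a tiling $\mathcal{T}$ of $\partial R_{\ell}$ by translates of boxes in $\partial\CR$. Each tile $T\in\mathcal{T}$ has the form $v_T+\partial R_T$ for some $R_T\in\CR$; if several boxes of $\CR$ have the same image under $\partial$, choose one.

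Set $L=\lcm(n_d(R) : R\in\CR)$ and write $\ell_d = qL$. Decompose
$$R_\ell = \bigsqcup_{T\in\mathcal{T}} T\times\{0,\dots,\ell_d-1\}.$$
This is a partition because $\mathcal{T}$ partitions the base. Each column $T\times\{0,\dots,\ell_d-1\}$ is a translate of $\partial R_T\times\{0,\dots,\ell_d-1\}$. Since $n_d(R_T)$ divides $L$, it divides $\ell_d$, so the column is exactly the disjoint union of the $\ell_d/n_d(R_T)$ translates
$$(v_T, j\,n_d(R_T)) + R_T, \qquad 0\le j< \ell_d/n_d(R_T).$$
Taking the union over all $T\in\mathcal{T}$ gives a tiling of $R_\ell$ by translates of boxes in $\CR$. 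All corners lie on the integer lattice because $v_T$ and the offsets $j\,n_d(R_T)$ are integral.

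The only step that needs any care is the bookkeeping. We must check that a translate of $\partial R$ crossed with an interval of length $n_d(R)$ is a translate of $R$. This holds because $R_{\overline n}=R_{(n_1,\dots,n_{d-1})}\times\{0,\dots,n_d-1\}$ by definition. We must also check that distinct pieces are disjoint: pieces in different columns are disjoint because their bases are disjoint, and pieces in the same column are disjoint because their height intervals are disjoint. I do not expect a genuine obstacle. The divisibility hypothesis is used only to guarantee that every column height is a multiple of the corresponding tile height. Requiring divisibility by the lcm is stronger than necessary, since divisibility by $n_d(R_T)$ for each $T$ actually used would suffice, but the lcm makes the condition independent of the chosen base tiling.
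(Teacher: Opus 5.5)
Your proof is correct. The paper gives no proof of this lemma and calls it a ``simple observation''; your argument (fix a tiling of the base $\partial R_\ell$, extrude each base tile into a column of height $\ell_d$, and fill each column with $\ell_d/n_d(R_T)$ stacked translates of $R_T$) is exactly the evident argument the paper leaves implicit.
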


\begin{theorem}\label{thm:eventuallyuniversaltiles}
    The following are equivalent:
    \begin{enumerate}
        \item There is $k_0 \in \mathbb{N}$ such that $\mathcal{R}$ tiles $R_{\ell}$ for every $\ell=(\ell_1,\dots,\ell_d)$ such that $\ell_1,\dots,\ell_d\ge k_0$.
        \item For every partition $\mathcal{R}=A_1\sqcup\dots\sqcup A_d$, (where some of the $ A_i$'s are possibly empty) there is $1\le j\le d$ such that
        $$\gcd\left(n_j(R):R \in A_j\right)=1.$$
    \end{enumerate}
\end{theorem}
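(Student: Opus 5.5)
The direction (1)$\Rightarrow$(2) is a contrapositive via a coloring invariant; the converse is an induction on $d$ using Lemma \ref{lem:tilinglift}.

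\emph{Proof of (1)$\Rightarrow$(2).} Suppose there is a partition $\mathcal R=A_1\sqcup\dots\sqcup A_d$ with $g_j:=\gcd(n_j(R):R\in A_j)>1$ for every $j$ (an empty $A_j$ has gcd $0$, and we then take $g_j$ to be any prime).

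\begin{itemize}
\item Every tile in $A_j$ has $j$-th side divisible by $g_j$. Hence any tiling of $R_\ell$ by $\mathcal R$ tiles $R_\ell$ by boxes each having \emph{some} side $j$ divisible by $g_j$.
\item We use the de Bruijn/Klarner type invariant. Weight the point $x$ by $\prod_j \omega_j^{x_j}$, where $\omega_j$ is a primitive $g_j$-th root of unity.
\item A box with $j$-th side divisible by $g_j$ has total weight $0$, because its sum factors and the $j$-th factor is $\sum_{t<n_j}\omega_j^t=0$.
\item $R_\ell$ has weight $\prod_j \frac{1-\omega_j^{\ell_j}}{1-\omega_j}$. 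This is nonzero whenever no $\ell_j$ is divisible by $g_j$.
\item Choosing $\ell_j\equiv 1 \pmod{g_j}$ with all $\ell_j\ge k_0$ therefore gives arbitrarily large untileable boxes. This contradicts (1).
\end{itemize}

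\emph{Proof of (2)$\Rightarrow$(1), by induction on $d$.}

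For $d=1$, condition (2) says $\gcd(n_1(R))=1$. By Frobenius/Schur, every sufficiently large length is a nonnegative combination of the $n_1(R)$, which gives (1).

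For the inductive step, we would like to produce long slabs by applying Lemma \ref{lem:tilinglift} in each coordinate direction. Let $L_j=\operatorname{lcm}(n_j(R):R\in\mathcal R)$, and fix a direction $j$. Consider the set $B_j\subset\mathcal R$ of boxes $R$ such that the projected family $\partial_j$ of $B_j$ (faces orthogonal to $e_j$) satisfies the $(d-1)$-dimensional condition (2). If it does, induction says $B_j$ tiles $\ell'\times L_j$-type slabs for large $\ell'$.

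The key combinatorial claim is the following. Condition (2) for $\mathcal R$ implies that, for every residue vector $\rho\in\prod_j \Z/L_j$ of side lengths, some direction $j$ has the property that $R_{\ell}$ with $\ell\equiv\rho$ can be decomposed into slabs which are handled by induction together with an arithmetic combination in direction $j$.

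Concretely, I would prove the following. For each $j$, let $S_j$ be the set of $j$-th lengths $t$ such that every sufficiently large box of thickness $t$ in direction $j$ (other sides large) is tileable. Then:
\begin{itemize}
\item $S_j$ is closed under addition, since slabs stack.
\item $S_j$ contains $n_j(R)$ whenever the boxes with $j$-th side $n_j(R)$ satisfy the $(d-1)$-dimensional condition on the remaining coordinates; here we use induction together with Lemma \ref{lem:tilinglift}, applied to the other coordinates.
\end{itemize}
Finally, (1) holds if some $S_j$ has gcd $1$. More generally, we can mix: tile $R_\ell$ by first splitting along direction $1$ into pieces whose thicknesses lie in $S_1$, treating the remainder recursively.

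The main obstacle is the converse logic. If every such attempt fails, we must extract a bad partition $A_1\sqcup\dots\sqcup A_d$ with all gcds $>1$. I would do this by a recursive peeling argument:
\begin{itemize}
\item Suppose the boxes failing the inductive hypothesis in direction $1$ all have first side divisible by a prime $p$. Put those boxes into $A_1$.
\item Recurse on the remaining boxes in lower dimension, using the $(d-1)$-dimensional induction hypothesis in its contrapositive form to obtain $A_2,\dots,A_d$.
\end{itemize}
Carefully matching the quantifiers in this peeling step, namely which subfamily is used for which slab, is where I expect the real work to lie.
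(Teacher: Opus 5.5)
Your proof of (1)$\Rightarrow$(2) is correct. It uses the same root-of-unity invariant as the paper, but you evaluate it directly on the tiles of $\mathcal R$. The paper instead first replaces each $A_i$ by a bar of prime length $p_i$ and then goes through Proposition~\ref{prop:onebarisenough}, so your version is somewhat more direct.

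The direction (2)$\Rightarrow$(1), however, has a genuine gap. Reducing to ``some $S_j$ has gcd $1$'' is fine. But the only concrete way you give of putting elements into $S_j$ is a thickness class $\{R: n_j(R)=t\}$ satisfying the $(d-1)$-dimensional condition, and this is far too weak. The peeling step that is supposed to close the argument is also not specified; ``the boxes failing the inductive hypothesis'' is not a well-defined family.

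Concretely, take $d=2$ and tiles of sizes $2\times 5$, $3\times 7$, $5\times 2$, $7\times 3$. Condition (2) holds: any $A_1$ with at least two tiles has coprime widths, and otherwise $A_2$ has at least three tiles with coprime heights. Yet every thickness class, in either direction, is a single tile whose other side is a prime. So your criterion puts nothing into $S_1$ or $S_2$. Your initial idea of using all of $\mathcal R$ with thickness $L_j$ gives only $L_j\in S_j$, which does not have gcd $1$ either. The residue-vector ``key claim'' is neither proved nor needed.

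The missing idea is to fix one direction, say $d$, and for each prime $p$ use the subfamily $B^p=\{R\in\mathcal R: p\nmid n_d(R)\}$.
\begin{itemize}
\item Apply (2) to partitions whose last part is $A_d=\mathcal R\setminus B^p$; the $d$-th sides of this part have gcd $0$ or a multiple of $p$. This shows $B^p\neq\emptyset$ and that $\partial B^p$ satisfies (2) in dimension $d-1$.
\item The induction hypothesis together with Lemma~\ref{lem:tilinglift} then shows that slabs of thickness $m^p=\mathrm{lcm}(n_d(R):R\in B^p)$ with large cross-section are tileable. In other words, $m^p\in S_d$.
\item Since $p\nmid m^p$, no prime divides all of $S_d$. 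Hence finitely many $m^p$ already have gcd $1$, and Frobenius plus stacking finishes the proof.
\end{itemize}
In the example above, $m^2=105$, $m^3=70$, $m^5=42$, $m^7=30$. The point is that the thickness must be the lcm over a subfamily of mixed thicknesses, not a single side length; this is exactly what your peeling step would have to discover. No mixing of directions is required.
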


\begin{proof}[Proof of (2) implies (1)]
    The proof proceeds using induction on $d$. Assume $d = 1$. Then using the assumption with the trivial partition, we know that $\gcd( n_1(R) : R \in \CR ) = 1$ and thus there exists a positive integer $k_0$ such that for every $\ell_1 \ge k_0$ there exists a collection of positive integers $\ell_1^R$ for $R \in \CR$ with the property that $\ell_1 = \sum_i \ell_1^Rn_1(R)$. Using such decomposition of $\ell_1$, we can clearly tile any one-dimensional box $R_{\ell_1}$ with $\ell_1 \ge k_0$ using $\ell^R_1$ many copies of each $R \in \CR$.

    Let $d > 1$ and let $p$ be a prime. Let $A^p$ be the set of boxes $R
    \in \CR$ with the property that $n_d(R)$ is divisible by $p$ and let $B^p$
    be its complement in $\CR$. Let $B_1^p \sqcup ... \sqcup B_{d-1}^p$ be a partition of $B^p$ so that $B_1^p \sqcup ... \sqcup B_{d-1}^p \sqcup A^p$ is a partition of $\CR$. As $\gcd(n_d(R) : R \in A^p) \neq 1$ (it is
    zero if $A^p$ is empty and {a multiple of} $p$ otherwise), we conclude that there exists $1 \le j \le d-1$ such that $\gcd(n_j(R) : R \in B_{j}^{p}) = 1$. { In particular, we must have that $A^p\not=\mathcal{R}$, or equivalently $B^{p}\not=\emptyset$, for any prime $p$.} As the partition of $B^p$ is arbitrary, we conclude that $\partial B^p$ satisfies (2) and so by the induction hypothesis, there exists a positive integer $k^p_0$ with the property that any
    $(d-1)$-dimensional box $R_{\ell}$ with $\ell_1, ..., \ell_{d-1} \ge
    k^p_0$ can be tiled with $\partial B^p$. 

    Let $m^p = \lcm(n_d(R) : R \in B^p)$. Assume that $\gcd(m^p : \text{$p$ prime})$ is not one, i.e. it is larger than one: the gcd cannot be zero as $B^p\not=\emptyset$ for every prime $p$. It thus holds that there exists a prime
    $q$ such that $q$ divides all $m^p$ and specifically divides $m^q$. By properties of least common multiple,
    there exists a box $R \in B^q$ with the property that $q$ divides
    $n_d(R)$; this is in direct contradiction with the definition of the set $B^q$.
    
    As $\gcd(m^p : \text{$p$ prime})$ is one, there exists a finite set of primes $P$ such that $\gcd(m^p : \text{$p \in P$})=1$. Thus, there exists a natural
    number $m_0$ with the property that for every $\ell_d \ge m_0$, there exists for every $p \in P$ a non-negative integer $\ell^p_d$  such that $\sum_{p \in P} \ell^p_d m^p = \ell_d$.  Let $k_0$ be a positive integer such that it is larger than
    $k_0^p$ for every $p \in P$ and $m_0$.

    Let $\ell \in \BN^d$ be a collection of sidelengths with $\ell_1, ..., \ell_d \ge k_0$ and let $R_\ell$ be the box determined by these sidelengths. Let $R_p$ be for every $p$ in $P$ the $d$-dimensional box determined by $(\ell_1, ..., \ell_{d-1}, m^p)$. 
    As $\ell_d \ge k_0 \ge m_0$, there exists for every $p \in P$ a non-negative integer $\ell^p_d$ with the property that $\sum_{p\in P} \ell^p_dm^p = \ell_d$. This implies that 
    $R_\ell$ can be tiled using the set $\{ R_p : p \in P \}$ by stacking $\ell_d^p$ many copies of each $R_p$ in the $d$-th dimension.
    It is enough to show that we can for each $p \in P$ tile $R_p$ by the set $\CR$. As $\ell_1, ..., \ell_{d-1} \ge k_0 \ge k^p_0$, we use the induction hypothesis to conclude that $\partial B^p$ tiles $\partial R_p$. By \Cref{lem:tilinglift}, it is immediate that $B^p$ and consequently $\CR$ tiles $R_p$ as $n_d(R_p) = m^{p} = \lcm( n_d(R) : R \in B^p )$.
\end{proof}

We say that a $d$-dimensional box $R$ is a {\em bar} if there exists $i \in [d]$ with the property that for every $j \in [d]$ it holds that $n_j(R) \neq 1$ if and only if $j = i$. We say that a collection of bars $\CB$ is orthogonal, if for every {distinct} $B_1, B_2 \in \CB$ and every $i \in [d]$, it holds that $n_i(B_1) \neq 1$ implies $n_i(B_2) = 1$.

\begin{proposition}\label{prop:onebarisenough}

    Let $\CB$ be an orthogonal collection of $d$-dimensional bars and let $R$ be a $d$-dimensional box. 
    If $\CB$ tiles $R$, then there exists $B \in \CB$ such that $B$ tiles $R$.
\end{proposition}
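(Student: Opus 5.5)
Since $\CB$ is orthogonal, each coordinate direction $i$ has at most one bar $B_i \in \CB$ whose long side points in direction $i$. Write $a_i = n_i(B_i) \geq 2$, and define $I$ as the set of directions for which such a bar exists. The plan is to generalise the cocycle argument of \Cref{lemma: length or breadth is tileable} to $d$ dimensions with a Conway--Lagarias style tiling group. Let
$$G = \langle x_1, \dots, x_d \mid x_i^{a_i} \ (i \in I),\ x_j \ (j \notin I),\ [x_j, x_k] \ (j,k \notin I) \rangle.$$
The last two relations are redundant, since each $x_j$ with $j\notin I$ is trivial. So $G \cong \ast_{i \in I} \Z/a_i\Z$, with the free factors generated by the $x_i$, $i \in I$. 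For a tiling of $R$ by $\CB$, give each lattice edge in direction $i$ the label $x_i$. The goal is to show that the product of labels along a closed lattice path on the union of tile boundaries is trivial. The reason is that every two-dimensional face of a bar $B_i$ lies in a coordinate plane spanned by $e_i$ and some $e_j$ with $j \neq i$. Such a face is an $a_i \times 1$ rectangle, and its boundary word $x_i^{a_i} x_j x_i^{-a_i} x_j^{-1}$ is trivial because $x_i^{a_i}=e$.

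With that in hand, I would restrict attention to a single 2-dimensional coordinate slice of the boundary of $R$. Take $j \neq k$ with $j,k\in I$, and consider the face $F$ of $R$ spanned by $e_j,e_k$, of size $\ell_j \times \ell_k$. The tiling of $R$ restricts to a tiling of $F$ by the 2-dimensional faces of bars touching $F$. These faces are $a_j\times 1$, $1\times a_k$, or unit squares (from bars in other directions). Applying \Cref{lemma: length or breadth is tileable}, or its proof via the boundary word $x_j^{\ell_j}x_k^{\ell_k}x_j^{-\ell_j}x_k^{-\ell_k}=e$ in $\Z/a_j\Z * \Z/a_k\Z$, we get that $a_j \mid \ell_j$ or $a_k \mid \ell_k$. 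The extra unit squares make the boundary word argument cleaner than the lemma itself, since they just contribute trivial loops. So for every pair $j\neq k$ in $I$, at least one of $a_j\mid\ell_j$, $a_k\mid \ell_k$ holds.

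Next, let $J = \{ i \in I : a_i \nmid \ell_i\}$. By the pairwise statement, $|J|\leq 1$. If $J=\emptyset$, then any $B_i$ with $i\in I$ tiles $R$ by stacking, since $a_i\mid \ell_i$. We also need $\CB\neq\emptyset$, which is automatic if $R$ is tiled, because then $I\neq\emptyset$; if some bar has all sides equal to $1$ it trivially tiles $R$. If $J=\{i_0\}$, then any $B_i$ with $i \in I\setminus\{i_0\}$ tiles $R$, provided $|I|\geq 2$. If instead $I=\{i_0\}$, then only $B_{i_0}$ and possibly unit cubes are present. Volume considerations, or the 1-dimensional projection onto direction $i_0$, then give $a_{i_0}\mid \ell_{i_0}$, unless a unit cube is in $\CB$, in which case the unit cube tiles $R$.

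The step I expect to be the main obstacle is justifying that the restriction of a $d$-dimensional tiling to a 2-dimensional face of $R$ is a genuine tiling by rectangles of the stated shapes. An alternative is to run the path-independence argument in the full $d$-dimensional tiling group. In that version one must check that the 2-skeleton relations generate all relations needed for path independence on the tiling's boundary complex. The face-restriction route seems cleaner: the intersection of a box tile with a facet of $R$ is either empty, a lower-dimensional set, or a full face of the tile. Only the last kind contributes area, and those pieces tile $F$.
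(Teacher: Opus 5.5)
There is a genuine gap. It sits in the pairwise step, not in the face-restriction step you flagged. That step is harmless: in the discrete setting, fixing all coordinates other than $j,k$ cuts each tile either in nothing or in a full $n_j(T)\times n_k(T)$ rectangle, so every such slice of $R$ is tiled by the slices of the tiles.

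The real problem is your claim that every $2$-dimensional face of $B_i$ lies in a plane containing $e_i$. For $d\ge 3$ the bar $B_i$ also has unit-square faces in every plane $\langle e_j,e_k\rangle$ with $j,k\neq i$. Their boundary word $x_jx_kx_j^{-1}x_k^{-1}$ is a nontrivial reduced word in $\mathbb{Z}/a_j\mathbb{Z}*\mathbb{Z}/a_k\mathbb{Z}$, because $a_j,a_k\ge 2$. So unit squares do not ``contribute trivial loops''. They destroy the boundary-word argument, since any rectangle is tiled by unit squares, and \Cref{lemma: length or breadth is tileable} says nothing about tilings that use them.

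In fact the pairwise claim is false. Take $d=3$, $\mathcal{B}=\{B_1,B_2,B_3\}$ with $a_1=a_2=a_3=2$, and $R$ of size $3\times 3\times 2$, which $B_3$ tiles alone. Then $2\nmid\ell_1$ and $2\nmid\ell_2$, so $|J|\ge 2$ and your case analysis collapses. Your argument is correct exactly when $|I|\le 2$, in particular for $d=2$, because then the face spanned by the two bar directions receives no unit squares.

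The alternative you mention, path independence in the full $d$-dimensional complex, does not rescue it either. Once $|I|\ge 3$, for any $j,k\in I$ some third bar $B_i$ has a unit-square face in $\langle e_j,e_k\rangle$. So all commutators $[x_j,x_k]$ must be killed, and the group becomes the abelian group $\prod_{i\in I}\mathbb{Z}/a_i\mathbb{Z}$. In that group every boundary loop $x_j^{\ell_j}x_k^{\ell_k}x_j^{-\ell_j}x_k^{-\ell_k}$ is trivial, so no information survives.

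What is missing is a genuinely $d$-dimensional invariant. The conclusion to aim for is also different: ``some $i$ has $a_i\mid\ell_i$'', not ``at most one $i$ fails''. The paper does this as follows:
\begin{enumerate}
\item Discard the directions without bars, where every tile has side $1$.
\item Note that a tiling puts $f_R=\sum_t x^{\alpha(t)}f_{\beta(t)}$ in the ideal generated by the $f_{B_i}$.
\item Evaluate at $\omega=(\omega_1,\dots,\omega_d)$, with $\omega_m$ a primitive $a_m$-th root of unity. Every $f_{B_i}(\omega)=0$, hence $f_R(\omega)=\prod_m(\omega_m^{\ell_m}-1)/(\omega_m-1)=0$.
\item So some $a_m\mid\ell_m$, and $B_m$ tiles $R$ by stacking.
\end{enumerate}
Equivalently, weight the cell $v$ by $\prod_m\omega_m^{v_m}$. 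Every placed bar then has total weight $0$, while $R$ has nonzero weight unless some $a_m\mid\ell_m$.

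A minor point: by the paper's definition a bar has exactly one side different from $1$, so the unit-cube cases you treat are vacuous.
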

\begin{proof}[Proof of (1) implies (2) in \Cref{thm:eventuallyuniversaltiles} using Proposition~\ref{prop:onebarisenough}]
    Assume that there exists a partition $\CR = A_1 \sqcup ... \sqcup A_d$ such that 
    \[
        \gcd( n_i(R) : R \in A_i ) = d_i \neq 1
    \]
    for all $i \in [d]$. Let $I \subseteq [d]$ be the set of indices $i \in [d]$ for which $d_i \neq 0$ holds, i.e, the set of indices $i \in [d]$ such that the set $A_i$ is non-empty. For $i \in I$, let $p_i$ be a prime which divides $d_i$. Let $B_i$ be a bar with $n_i(B_i) = p_i$; clearly $B_i$ tiles the set $A_i$ and consequently $\CB = \{ B_i : i \in I \}$ tiles each element of $\CR = \bigsqcup_{i \in I} A_i$.
    
    Consider $R = R_{\ell}$, where $\ell=(\ell_1,\dots, \ell_d)$ is such that $\ell_1,\dots\ell_d\ge k_0$ and $\ell_i$ is not divisible by $p_i$ for every $i\in I$.
    By (1), we have that $\CR$ tiles $R$. As $\CB$ tiles $\CR$ it also tiles $R$. 
    As $\CB$ is an orthogonal set of bars, \Cref{prop:onebarisenough} yields that there is a bar $B \in \CB$ such that $B$ tiles $R$. Let $i \in I$ be an index such that $n_i(B) = p_i$, then $p_i$ divides $n_i(R)=\ell_i$ which contradicts the choice of $\ell_i$.
\end{proof}

Fix a dimension $d \in \BN$. Let $S = \BC[x_1, ..., x_d]$ be a ring of complex polynomials over $d$ variables, where we associate to a point $a \in \BN_0^d$ a monomial $f_a = x^a =  \prod_{i=1}^dx_i^{a_i}$ and to finite set of points $A \subset \BN_0^d$ we associate a polynomial $f_A = \sum_{a \in A}x^a$. Let $s \in \BN_0^d$ be a direction of a shift, we define the shift injection $\sigma_s : \BN_0^d \to \BN_0^d$ by $\sigma_s(a) = a + s$. Notice that $f_{\sigma_s A} = x^s f_A$.

Let $\CB = \{ B_1, ..., B_k \}$ be a collection of subsets $B_i \subseteq \BN_0^d$, we say that $\CB$ tiles $A$, if there exists a finite $n \in \BN$, choice of tiles $\beta : [n] \to \CB$ and choice of shifts $\alpha : [n] \to \BN_0^d$ such that $A = \bigsqcup_{i \in [n]}\sigma_{\alpha(i)} \beta(i)$. By previous discussion, it holds 
that if $\CB$ tiles $A$ with $\beta$ and $\alpha$, it holds that $f_A = \sum_{i \in [n]}x^{\alpha(i)}f_{\beta(i)}$ and consequently, $f_A$ is in the ideal $I_\CB$ generated by $f_{B_1}, ..., f_{B_k}$.

We now relate these concepts to our earlier definition of a box.
Namely, to a given box $R=R_\ell$, where $\ell=(\ell_1,\dots,\ell_d)\in \mathbb{N}^d$, we assign a polynomial
    \begin{equation}\label{eq:Poynomial}
            f_R =\sum_{a \in R}{x^a}
          =(1 + x_1 + ... + x_1^{\ell_1 - 1})(1 + x_2 + ... + x_2^{\ell_2 - 1})...(1 + x_d + ... + x_d^{\ell_d - 1}).
    \end{equation}
That is, $f_R$ coincide with the polynomial associated with the set $R_\ell=\{0,\dots,\ell_1-1\}\times \dots\{0,\dots,\ell_d-1\}$.
We are now ready to prove \Cref{prop:onebarisenough}.

\begin{proof}[Proof of \Cref{prop:onebarisenough}]
    Let $\CB = \{ B_1, ...,B_k \}$ be an orthogonal set of $d$-dimensional bars. 
    Without loss of generality, assume that $n_i(B_i) = b_i \neq 1$ for all $i \in [k]$. 
    Let $R=R_\ell$ be a $d$-dimensional box, where $\ell=(\ell_1,\dots,\ell_d)\in \mathbb{N}^d$. 
    Clearly, $R$ can be tiled by $\CB$ if and only if $\partial^{d-k}R$ can be tiled by $\partial^{d-k}\CB$. 
    As such, we can assume that $k = d$.

    Let $f_R$ be the polynomial associated to $R$ as in \eqref{eq:Poynomial}.
    Similarly, we have $f_{B_i} = 1 + x_i + ... + x_i^{b_i-1}$ for $B_i\in \CB$.
    Let $\omega=(\omega_m)_{m=1}^d \in \BC^d$ be such that $\omega_m=e^{2 \pi i / b_m}$.
    For every $j \in [d]$, it holds that
    \begin{align*}
        f_{B_j}(\omega) = 1 + \omega_j + ... + \omega_j^{b_j - 1} = \frac{\omega_j^{b_j} - 1}{\omega_j - 1 } = 0,
    \end{align*}
    and as such, $\omega$ is a common root of all polynomials $f_B$ for $B \in \CB$. As the polynomial $f_R$ associated to the box $R$ is in the ideal $I_\CB$ generated by the polynomials $f_B$ for $B \in \CB$ by assumption that $\CB$ tiles $R$, it holds that the polynomial $f_R$ vanishes on $\omega$ as well. Evaluating $f_R$ at $\omega$ yields
    
    \begin{align*}
        f_R(\omega)
          &=(1 + \omega_1 + ... + \omega_1^{\ell_1 - 1})(1 + \omega_2 + ... + \omega_2^{\ell_2 - 1})...(1 + \omega_d + ... + \omega_d^{\ell_d - 1})\\
          &=
          \bigg(\frac{\omega_1^{\ell_1} - 1}{\omega_1 - 1}\bigg)
          \bigg(\frac{\omega_2^{\ell_2} - 1}{\omega_2 - 1}\bigg)
          ...
          \bigg(\frac{\omega_d^{\ell_d} - 1}{\omega_d - 1}\bigg)
          = 0.
    \end{align*}
    As $\BC$ is a field, it holds that for some $i \in [d]$ we have $(\omega_i^{\ell_i} - 1)/(\omega_i - 1) = 0$ and consequently $\omega_i^{\ell_i} = 1$ as 
    $\omega_i$ has order $b_i$ as an element of $\BC^{\times}$, it holds that $b_i$ divides $\ell_i$. 
    We can now tile the box $R$ using the bar $B_i$. Without loss of generality, assume that $i = 1$. Using $\ell_1 / b_1 \in \BN$ many bars $B_1$, we can tile the box $S$ determined by $(\ell_1, 1, ..., 1) \in \BN^d$ and consequently any box determined by $(\ell_1, n_2, ..., n_d)$ for arbitrary $n_2, ..., n_d \in \BN$. As $R$ is a box of this form, we are done.
\end{proof}
\begin{corollary}\label{corollary: positive condition for CONTINUOUS}
    Let $ \R$ be a collection of boxes in $ \mathbb{R}^d$ with integer sidelengths such that for every partition $\mathcal{R}=A_1\sqcup\dots\sqcup A_d$, there is $1\le j\le d$ such that
        $$\gcd\left(n_j(R):R \in A_j\right)=1.$$
    Then there exists a continuous equivariant map from $ F(2^{\Z^d})$ to the tiling space $ X(\R)$.
\end{corollary}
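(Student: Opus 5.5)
The plan is to reduce to the tileability of sufficiently large boxes, \Cref{thm:eventuallyuniversaltiles}, and then build the map in the manner of \Cref{rectangle}, using as backbone the continuous tiling by boxes with sides $m$ or $m+1$ from \Cref{theorem: perfect tiligs}.

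First, by \Cref{thm:eventuallyuniversaltiles} ((2) implies (1)), the partition hypothesis gives $k_0$ such that $\R$ tiles every box $R_\ell$ with all $\ell_i\geq k_0$. Fix such a tiling $T_\ell$ for each $\ell\in\{m,m+1\}^d$, where $m\geq k_0$. A tiling of a box region has closed boundary automatically: no tile crosses the boundary of the region it tiles.

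Next, let $\R_m$ be the set of all $d$-dimensional boxes whose sidelengths are each $m$ or $m+1$. By \Cref{theorem: perfect tiligs} there is a continuous equivariant map $\Psi_0: F(2^{\Z^d})\to X(\R_m)$. Define $\Psi$ by refining: every tile of $\Psi_0(x)$ is a translate of some $R_\ell$ with $\ell\in\{m,m+1\}^d$, and we replace it by the corresponding translate of the fixed tiling $T_\ell$. The resulting tiles are disjoint and cover $\mathbb R^d$, so $\Psi(x)\in X(\R)$.

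It remains to check continuity and equivariance. The symbol of $\Psi(x)$ at $v$ is determined by the position of $v$ inside the tile of $\Psi_0(x)$ containing it, together with that tile's shape. Both are read from $\Psi_0(x)$ on a ball of radius about $2(m+1)$ around $v$, so $\Psi$ is $\Psi_0$ followed by a sliding block code. Hence $\Psi$ is continuous, and it is equivariant because the refinement rule depends only on tile shape and is translation-commuting.

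The main obstacle is purely bookkeeping: the encoding of $X(\R)$ as an SFT, as in \Cref{Rectangular tilings}, has to record each tile's type and relative position. With that encoding the block map above is well defined. The substantive input is entirely in \Cref{thm:eventuallyuniversaltiles} and \Cref{theorem: perfect tiligs}.
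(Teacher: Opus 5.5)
Your proposal is correct and is essentially the paper's proof. You take $k_0$ from \Cref{thm:eventuallyuniversaltiles}, fix tilings by $\mathcal{R}$ of the boxes with sides in $\{m,m+1\}$; the paper uses $m=k_0$, and your choice of any $m\ge k_0$ changes nothing. You then compose the continuous tiling from \Cref{theorem: perfect tiligs} with the refining sliding block code $X(\mathcal{R}_m)\to X(\mathcal{R})$, and your remarks on continuity and equivariance just spell out what the paper leaves implicit.
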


\begin{proof}
    Let $k_0$ be as in \Cref{thm:eventuallyuniversaltiles}.  Let $ \R'= \{ R'_1, R'_2, \cdots, R'_N\}$ be the collection of all boxes in $\mathbb{R}^d$ whose sidelengths lie in $\{k_0, k_0+1 \}$.  Then \Cref{thm:eventuallyuniversaltiles} guarantees that each of the boxes $R'_i$ can be tiled by the tiles in $ \R$.  We fix a tiling $\mathcal{T}_i'$ of $R'_i$ by boxes in $\R$. Then there is a continuous equivariant map $\Psi': X(\R') \rightarrow X(\R) $ obtained by further tiling each occurrence of  the tile $ R'_i$ in an element $x \in X(\R')$ by the tiling $\mathcal{T}'_i$.  But by \Cref{theorem: perfect tiligs}, there exists a continuous equivariant map $ \Phi':F(2^{\Z^d}) \rightarrow X(\mathcal{R}')$.  Thus, $ \Phi= \Psi'\circ \Phi'$ gives us the required map.
\end{proof}

%\comm{Nishant}{Refer to theorem from introduction}
\end{document}